\def\rg{\hbox to 30pt{\rightarrowfill}}
\def\lg{\hbox to 30pt{\leftarrowfill}}
          \newtheorem{theorem}{Theorem}[section]
      \newtheorem{proposition}[theorem]{Proposition}
      \newtheorem{corollary}[theorem]{Corollary}
      \newtheorem{lemma}[theorem]{Lemma}
      \newtheorem{remark}[theorem]{Remark}
      \newcommand{\BB}{{\mathbb B}}
      \newcommand{\CC}{{\mathbb C}}
      \newcommand{\NN}{{\mathbb N}}
      \newcommand{\ZZ}{{\mathbb Z}}
      \newcommand{\DD}{{\mathbb D}}
      \newcommand{\FF}{{\mathbb F}}
      \newcommand{\TT}{{\mathbb T}}
      \newcommand{\cA}{{\mathcal A}}
      \newcommand{\cC}{{\mathcal C}}
      \newcommand{\cD}{{\mathcal D}}
      \newcommand{\cE}{{\mathcal E}}
      \newcommand{\cG}{{\mathcal G}}
      \newcommand{\cH}{{\mathcal H}}
      \newcommand{\cK}{{\mathcal K}}
      \newcommand{\cM}{{\mathcal M}}
      \newcommand{\cN}{{\mathcal N}}
      \newcommand{\cQ}{{\mathcal Q}}
      \newcommand{\cR}{{\mathcal R}}
      \newcommand{\cS}{{\mathcal S}}
      \newcommand{\cV}{{\mathcal V}}
      \newdimen\expt
      \def\boxit#1{\setbox0\hbox{$\displaystyle{#1}$}
            \hbox{\lower.4\expt
       \hbox{\lower3\expt\hbox{\lower\dp0
            \hbox{\vbox{\hrule height.4\expt
       \hbox{\vrule width.4\expt\hskip3\expt
            \vbox{\vskip3\expt\box0\vskip2\expt}%
       \hskip3\expt\vrule width.4\expt}\hrule height.4\expt}}}}}}
\begin{document}
       \pagestyle{myheadings}
      \markboth{ Gelu Popescu}{  Noncommutative polydomains,   Berezin transforms,   and operator model theory    }

      \title [    Similarity problems in  noncommutative polydomains ]
      {          Similarity problems in  noncommutative polydomains  }
        \author{Gelu Popescu}
\date{September 20, 2014}
      \thanks{Research supported in part by an NSF grant}
      \subjclass[2000]{Primary:  46L07;   47A20;  Secondary: 47A45; 47A62; 47A63}
      \keywords{  Berezin transform;  Fock space; Multivariable operator theory;  Noncommutative polydomain; Noncommutative variety; Positive map; Representation; Similarity;
        von Neumann inequality;  Weighted shift.
}

      \address{Department of Mathematics, The University of Texas
      at San Antonio \\ San Antonio, TX 78249, USA}
      \email{\tt gelu.popescu@utsa.edu}

\begin{abstract}
In this paper we consider  several problems of joint similarity  to tuples of bounded linear operators in noncommutative polydomains and varieties associated with sets of noncommutative polynomials. We obtain analogues  of classical results such as Rota's model theorem for operators with spectral radius less than one, Sz.-Nagy characterization of operators similar to isometries (or unitary operators), and the refinement obtained by Foia\c s and by de Branges and Rovnyak for strongly stable contractions. We also provide analogues  of these results in the context of joint similarity of commuting tuples  of positive linear maps on the algebra of bounded linear operators on a separable Hilbert space. An important role in this paper is played by a class of noncommutative cones associated with positive linear maps, the Fourier type representation of their elements, and the constrained noncommutative Berezin transforms associated with these elements. It is shown that there is a intimate relation between the similarity problems and the existence of positive invertible  elements in these noncommutative cones and the corresponding Berezin kernels.

\end{abstract}

      \maketitle

\bigskip

\section*{Introduction}

Throughout this paper, we denote by $B(\cH)$ the algebra of bounded
linear operators on a separable  Hilbert space $\cH$. Let $B(\cH)^{n_1}\times_c\cdots \times_c B(\cH)^{n_k}$ be
   the set of all tuples  ${\bf X}:=({ X}_1,\ldots, { X}_k)$ in the cartesian product $B(\cH)^{n_1}\times\cdots \times B(\cH)^{n_k}$
     with the property that the entries of ${X}_s:=(X_{s,1},\ldots, X_{s,n_s})$  are commuting with the entries of
      ${X}_t:=(X_{t,1},\ldots, X_{t,n_t})$  for any $s,t\in \{1,\ldots, k\}$, $s\neq t$. Note that the operators $X_{s,1},\ldots, X_{s,n_s}$ are not necessarily commuting.  Denote by $\CC\left< Z_{i,j}\right>$  the algebra of all polynomials in noncommutative indeterminates $Z_{i,j}$, $i\in \{1,\ldots,k\}$, $j\in \{1,\ldots, n_i\}$.
 In an attempt to unify the multivariable operator model theory  for  ball-like domains and  commutative polydiscs,  we developed in \cite{Po-Berezin-poly} an operator  model  theory and a theory of free holomorphic functions on  {\it regular polydomains} of the form
$$
{\bf D_q^m}(\cH):=\left\{ {\bf X}=(X_1,\ldots, X_k)\in B(\cH)^{n_1}\times_c\cdots \times_c B(\cH)^{n_k}: \ {\bf \Delta_{q,X}^p}(I)\geq 0 \ \text{ for }\ {\bf 0}\leq {\bf p}\leq {\bf m}\right\},
$$
where ${\bf m}:=(m_1,\ldots, m_k)$ and ${\bf n}:=(n_1,\ldots, n_k)$ are in $\NN^k$ with  $\NN:=\{1,2,\ldots\}$,
 the {\it defect mapping} ${\bf \Delta_{q,X}^p}:B(\cH)\to  B(\cH)$ is defined by
$$
{\bf \Delta_{q,X}^p}:=\left(id -\Phi_{q_1, X_1}\right)^{p_1}\circ \cdots \circ\left(id -\Phi_{q_k, X_k}\right)^{p_k},
$$
and ${\bf q}=(q_1,\ldots, q_k)$ is  a $k$-tuple of positive regular polynomials $q_i\in \CC\left<Z_{i,1},\ldots, Z_{i,n_i}\right>$, i.e. all the coefficients  of $q_i$ are positive, the constant term is zero, and the coefficients of the linear terms $Z_{i,1},\ldots, Z_{i,n_i}$ are different from zero.  If the polynomial $q_i$ has the form $q_i=\sum_{\alpha} a_{i,\alpha} Z_{i,\alpha}$, the  completely positive linear map
$\Phi_{q_i,X_i}:B(\cH)\to B(\cH)$  is defined by setting $\Phi_{q_i,X_i}(Y):=\sum_{\alpha} a_{i,\alpha} X_{i,\alpha} Y X_{i,\alpha} ^*$ for $Y\in B(\cH)$.

 In \cite{Po-Berezin3}, we studied  noncommutative varieties in the polydomain ${\bf D_q^m}(\cH)$, given by
$$
\cV_\cQ(\cH):=\{{\bf X}=\{X_{i,j}\}\in {\bf D_q^m}(\cH):\ g({\bf X})=0 \text{ for all } g\in \cQ\},
$$
where $\cQ$ is a set of polynomials in noncommutative indeterminates
$Z_{i,j}$ which generates a nontrivial ideal in $\CC\left<Z_{i,j}\right>$.
We showed that there is a {\it universal model}
${\bf S}=\{{\bf S}_{i,j}\}$  for the {\it abstract  noncommutative variety}
   $${\bf \cV_\cQ}:=\{{\bf \cV_\cQ}(\cH):\ \cH \text { is a Hilbert space}\}
   $$
   such that $g({\bf S})=0$, $g\in \cQ$, acting on a subspace of a tensor product of full Fock spaces.   We studied  the universal model ${\bf S}$, its joint invariant subspaces and the representations of the universal operator algebras it generates: the {\it variety algebra} $\cA(\cV_\cQ)$, the Hardy algebra $F^\infty(\cV_\cQ)$, and   the $C^*$-algebra $C^*(\cV_\cQ)$. Using noncommutative Berezin transforms associated with each variety, we developed  an operator model theory and dilation theory for large classes of   varieties in  noncommutative polydomains.

In the present paper, we solve   several problems of joint similarity  to tuples of bounded operators in noncommutative  regular polydomains
$
{\bf D_q^m}(\cH)$ and varieties $\cV_\cQ(\cH)$ associated with sets $\cQ$  of noncommutative polynomials. We obtain analogues  of  the classical result of  Rota \cite{R} regarding  the model theorem for operators with spectral radius less than one,  the Sz.-Nagy  \cite{SzN} characterization of operators similar to isometries (or unitary operators), and the refinement obtained by Foia\c s \cite{Fo} and by de Branges and Rovnyak \cite{BR} for strongly stable contractions. We also provide analogues  of these results in the context of joint similarity of commuting tuples  of positive linear maps on the algebra of bounded linear operators on a separable Hilbert space.

If $\varphi:B(\cH)\to
B(\cH)$ is  a linear map we denote by  $\varphi^k$  the $k$ iterate of $\varphi$ with respect to the composition, i.e. $\varphi^k:=\varphi\circ \varphi^{k-1}$ and $\varphi^0:=id$, the identity map on $B(\cH)$.
 For  information  on  positive (resp. completely  positive or bounded maps),  we refer the reader to the excelent books by
  Paulsen \cite{Pa-book} and Pisier \cite{Pi-book}. Let $\Phi=(\varphi_1,\ldots, \varphi_k)$ be a $k$-tuple of
 positive linear maps on $B(\cH)$.
  For each ${\bf p}:=(p_1,\ldots, p_k)\in \ZZ_+^k$, where $\ZZ_+:=\{0,1,\ldots\}$, we define the linear map
  ${\bf \Delta}_{\Phi}^{\bf p}:B(\cH)\to B(\cH)$ by setting
 $$
 {\bf \Delta}_{\Phi}^{(p_1,\ldots,p_k)}={\bf \Delta}_{\Phi}^{\bf p}:= (id-\varphi_1)^{ p_1}\circ\cdots \circ(id-\varphi_k)^{ p_k}.
 $$
 If ${\bf p}:=(p_1,\ldots, p_k)\in \ZZ_+^k$ and  ${\bf s}:=(s_1,\ldots, s_k)\in \ZZ_+^k$, we set ${\bf p}\leq {\bf s}$ iff $p_i\leq s_i$ for all $i\in \{1,\ldots,k\}$.
Given
${\bf m}:=(m_1,\ldots, m_k)\in \NN^k$,   we  define the noncommutative cone
$$
\cC_{\geq}({\bf \Delta}_{\Phi}^{\bf m})^+:=\left\{X\in B(\cH):\  X\geq 0 \text{ and }
{\bf \Delta}_{\Phi}^{\bf p}(X)\geq 0 \text{ \rm  for } 0\leq {\bf p}\leq
 {\bf m}\right\}.
$$
In Section 1, we prove  that if each  positive map $\varphi_i$ is {\it pure}, i.e. $\varphi_i^s(I)\to 0$ weakly as $s\to\infty$, then
${\bf\Delta_\Phi^m}$ is a one-to-one map and each $X\in \cC_{\geq}({\bf \Delta}_{\Phi}^{\bf m})^+$ has a Fourier type representation
$$
X=\sum_{(s_1,\ldots,s_k)\in \ZZ_+^k}\left(\begin{matrix} s_1+m_1-1\\m_1-1\end{matrix}\right)\cdots \left(\begin{matrix} s_k+m_k-1\\m_k-1\end{matrix}\right)\varphi_1^{s_1}\circ \cdots \circ \varphi_k^{s_k}({\bf\Delta_\Phi^m}(X)),
$$
where
 the convergence of the series is in the weak operator topology. An important role in this paper is played by the  class $\cC_{\geq}({\bf \Delta}_{\Phi}^{\bf m})^+ $ of noncommutative cones associated with commuting positive linear maps $\Phi=(\varphi_1,\ldots, \varphi_k)$  and the Fourier type representations of their elements. Basic properties of these noncommutative cones are provided.

Let ${\bf q}:=(q_1,\ldots, q_k)$ be a $k$-tuple of positive regular polynomials $q_i\in \CC[Z_{i,1},\ldots, Z_{i,n_i}]$.
Consider  two tuples of operators ${\bf A}:=({ A}_1,\ldots, {A}_k)\in  B(\cH)^{n_1}\times\cdots \times B(\cH)^{n_k}$, where ${ A}_i:=(A_{i,1},\ldots, A_{i,n_i})\in B(\cH)^{n_i}$, and ${\bf B}:=({ B}_1,\ldots, {B}_k)\in  B(\cK)^{n_1}\times\cdots \times B(\cK)^{n_k}$, where ${ B}_i:=(B_{i,1},\ldots, B_{i,n_i})\in B(\cK)^{n_i}$. We say the ${\bf A}$ is jointly similar to ${\bf B}$ if there exists an invertible operator $Y:\cK\to \cH$
such that
$$
A_{i,j}=Y B_{i,j} Y^{-1}
$$
for all  $i\in \{1,\ldots,k\}$ and  $j\in \{1,\ldots, n_i\}$. We call $Y$ the similarity operator.

In Section 2, using some ideas from \cite{Ber},  \cite{Po-similarity-domains} and \cite{Po-Berezin3}, we introduce a class of {\it generalized constrained noncommutative  Berezin kernels} ${\bf K}_\omega$ associated with certain compatible tuples $\omega:=({\bf f,m, A},R,\cQ)$.
These kernels will play an important role in proving some of the similarity results, namely, that of similarity operators. Due to their explicit forms, we are able to estimate the magnitude of $\|Y\|\|Y^{-1}\|$ and provide von Neumann type inequalities. We introduce the  {\it constrained noncommutative Berezin transform} ${\bf
B}_\omega$
 associated with a compatible  tuple $\omega:=({\bf f,m, A},R,\cQ)$ to be the operator
 ${\bf B}_\omega: B(\cN_\cQ)\to B(\cH)$ given by
 $$
 {\bf B}_\omega[\chi]:={\bf K}_\omega^*[\chi\otimes I_\cR] {\bf K}_\omega,\qquad \chi\in B(\cN_\cQ).
 $$
 where $\cN_\cQ$ is an appropriate subspace of  a tensor product of full Fock spaces and  $\cR:=\overline{ R^{1/2}(\cH)}$.
 We prove  that
  the elements of the noncommutative cone  $C_{\geq}({\bf \Delta_{{\bf \Phi}_{\bf A}}^m})^+$, where
  ${\bf \Phi}_{\bf A}:=(\Phi_{q_1,A_1},\ldots, \Phi_{q_k,A_k})$,
are in one-to-one correspondence
 with the elements of a class
of extended noncommutative Berezin transforms. We will see throughout this paper that there is an intimate relation between the similarity problems and the existence of positive invertible  elements in these noncommutative  cones.

The fact that the unilateral shift on the Hardy space $H^2(\TT)$   plays  the role of {\it universal model} in $B(\cH)$
was discovered by Rota  \cite{R}.
 Rota's model theorem  asserts that any bounded linear operator on a Hilbert space with spectral radius less than one is similar to the adjoint of the unilateral shift of infinite multiplicity restricted to an invariant subspace. An analogue of this result was obtained by Herrero \cite{He} and Voiculescu \cite{Vo} for operators with spectrum   in a certain class of bounded open sets of the complex plane.
 Clark \cite{C} obtained a several variable version of Rota's model theorem for commuting strict contractions, and Ball  \cite{Ba} extended the result to a more general commutative multivariable setting. In the noncommutative multivariable setting, joint similarity to elements in  ball-like domains or their universal models were considered in \cite{Po-models}, \cite{Po-similarity}, \cite{Po-charact2}, and
 \cite{Po-similarity-domains}.

In Section 3, we obtain the
following
  analogue of Rota's  model  theorem  for  similarity to tuples of operators  in the  noncommutative
varieties.
Let $\cQ$ be a set of polynomials in indeterminates $\{Z_{i,j}\}$,  where $i\in \{1,\ldots,k\}$ and  $j\in \{1,\ldots, n_i\}$, and  let
 ${\bf A}:=({ A}_1,\ldots, {A}_k)\in  B(\cH)^{n_1}\times_c\cdots \times_c B(\cH)^{n_k}$, where ${ A}_i:=(A_{i,1},\ldots, A_{i,n_i})\in B(\cH)^{n_i}$ has the property that $q({\bf A})=0$ for any $ q\in \cQ$.
 If
$$
 \sum_{(s_1,\ldots,s_k)\in \ZZ_+^k}\left(\begin{matrix} s_1+m_1-1\\m_1-1\end{matrix}\right)\cdots \left(\begin{matrix} s_k+m_k-1\\m_k-1\end{matrix}\right)\Phi_{q_1,A_1}^{s_1}\circ \cdots \circ \Phi_{q_k, A_k}^{s_k}(I)\leq bI
$$
for some constant  $ b>0$, then there exists an invertible
operator $Y:  \cH\to \cG$ such that
$$
A_{i,j}^*=Y^{-1}[({\bf S}_{i,j}^*\otimes I_\cH)|_\cG]Y
$$
for all  $i\in \{1,\ldots,k\}$ and  $j\in \{1,\ldots, n_i\}$,
where
 $\cG\subseteq \cN_\cQ\otimes \cH$ is an invariant  subspace under  each operator ${\bf S}_{i,j}^*\otimes
 I_\cH$  and  ${\bf S}:=({\bf S}_1,\ldots, {\bf S}_k)$, with ${\bf S}_i:=({\bf S}_{i,1}\ldots, {\bf S}_{i,n_i})$, is the universal model associated with the abstract
noncommutative variety $\cV_Q$. In the particular case when $n_i=m_i=1$, $q_i=Z_i$, and $\cQ=\{0\}$, the universal model ${\bf S}_i$ is the multiplication by the coordinate function $z_i$ on   the Hardy space of the polydisc $H^2(\DD^k)$. As a consequence of the result above
  we obtain an analogue of Foia\c s \cite{Fo} (see also \cite{SzFBK-book}) and de Branges--Rovnyak \cite{BR}
  model theorem for pure tuples of operators in  the noncommutative variety $\cV_\cQ(\cH)$.

Rota \cite{R} also proved  that any bounded linear operator on a Hilbert space with spectral radius less then one is similar to a strict contraction.  In Section 3, we obtain an analogue of this result for noncommutative polydomains (see Theorem \ref{simi2}).
To give the reader some flavor of this result we state  it  in the particular case of polyballs, i.e. $m_i=1$ and $q_i:=Z_{i,1}+\cdots +Z_{i,n_i}$.
Let $\FF_{n_i}^+$ be the  free monoid on $n_i$ generators
$g_{1}^i,\ldots, g_{n_i}^i$ and the identity $g_{0}^i$.  We recall \cite{Po-models} that the joint spectral radius of a row contraction $T=[T_1\cdots T_n]$ is defined by $r(T):=\lim_{k\to \infty}\|\Phi_T^k(I)\|^{1/2k}$, where $\Phi_T(X):=\sum_{i=1}^n T_iXT_i^*$. We say that  $\pi_i:\FF_{n_i}^+\to B(\cH)$ is  a strictly row contractive  representation if its generators form a strict row contraction, i.e.
$\|[\pi_i(g_1^i)\cdots \pi_i(g_{n_i}^i)]\|<1$. We  denote the joint spectral radius of the row operator $[\pi_i(g_1^i)\cdots \pi_i(g_{n_i}^i)]$ by
$$
r(\pi_i):=r( \pi_i(g_1^i),\ldots, \pi_i(g_{n_i}^i))
$$
and call it the joint spectral radius of  $\pi_i$.
We prove that if $\pi_i:\FF_{n_i}^+\to B(\cH)$, $i\in \{1,\ldots,k\}$, are representations with commuting ranges and   $\sigma: \FF_{n_1}^+\times \cdots \times \FF_{n_k}^+\to \cH$  is  the direct product representation defined by
$$
\sigma(\alpha_1,\ldots, \alpha_k)=\pi_1(\alpha_1)\cdots \pi_k(\alpha_k),\qquad (\alpha_1,\ldots, \alpha_k)\in \FF_{n_1}^+\times \cdots \times \FF_{n_k}^+,
$$
then the following statements are equivalent:
\begin{enumerate}
\item[(i)] There is an invertible operator $Y \in B(\cH)$ such that $Y^{-1}\sigma(\cdot) Y$ is the direct product of strictly row contractive representations, i.e. $Y^{-1}\pi_i(\cdot) Y$ is a  strictly row contractive representation for each $i\in \{1,\ldots, k\}$.

\item[(ii)] $r(\pi_i)<1$  for each $i\in \{1,\ldots, k\}$.
\end{enumerate}
In \cite{Pi2}, Pisier proved that  there are commuting operators $T_1$, $T_2$  on a Hilbert space  which are each similar to a contraction, i.e. there are invertible operators $\xi_1, \xi_2$ such that $\xi_j^{-1} T_j \xi_j$ is a contraction for $j=1,2$, but such that $(T_1,T_2)$ is not jointly similar to a pair of contractions, i.e. there is no invertible operator $\xi$ such that $\xi^{-1} T_1 \xi$ and $\xi^{-1} T_1 \xi$ are contractions. The proof of this result uses some ideas from Pisier's remarkable  paper \cite{Pi} (see also \cite{DP}), where  he solves the long-standing Halmos' similarity problem \cite{H1}, \cite{H2}, as well as Paulsen's beautiful  similarity criterion \cite{Pa}.

We remark that, in the  particular case when $n_1=\cdots n_k=1$, the  above-mentioned Rota type result for polyballs  shows that
 a $k$-tuple of commuting operators $(C_1,\ldots, C_k)\in B(\cH)^k$ is jointly similar to a $k$-tuple of commuting strict contractions $(G_1,\ldots, G_k)\in B(\cH)$    if and only if
$$ r(C_i)<1,\qquad  i\in \{1,\ldots,k\},
$$
where $r(C_i)$ denotes the spectral radius of $C_i$. Rephrasing this result, on can see that for tuples of commuting operators similarity of each of them to  a strict contraction is equivalent to joint similarity to  strict contractions. In this case, we deduce the following   inequality
$$
\|[q_{s,t}(C_1,\ldots, C_k)]_{m\times m}\|\leq \sqrt{b} \sup_{|z_i| \leq 1}\|[q_{s,t}(z_1,\ldots, z_k)]_{m\times m}\|
$$
for any  matrix $[q_{s,t}]_{m\times m}$ of polynomials   in $k$ variables and any $m\in \NN$, where
$b=\prod_{i=1}^k \left(\sum_{s_i=0}^\infty \|C_i^{s_i}\|^2\right).
$
We remark that, in the particular case when
$\|C_i\|\leq r<1$ for $i\in \{1,\ldots, k\}$,  we obtain the inequality
$$
\|[q_{s,t}(C_1,\ldots, C_k)]_{m\times m}\|\leq \frac{1}{(1-r^2)^{k/2}} \sup_{|z_i| \leq 1}\|[q_{s,t}(z_1,\ldots, z_k)]_{m\times m}\|,
$$
which seems to be new if $k\geq 3$ and $m\geq 2$. We remark that, when $m=1$, the inequality above is an immediate consequence of Cauchy-Schwartz inequality. When $k=m=1$,  due to a result by Bombieri and Bourgain \cite{BB},
the constant $1/\sqrt{1-r^2}$ is best, in a certain sense,  as $r\to 1$.

In 1947, Sz.-Nagy \cite{SzN} found necessary and sufficient conditions
for an operator to be similar to a unitary operator. As a consequence,  an
operator $T$ is
similar to an isometry if and only if there are constants $a,b>0$ such that
$$
a\|h\|^2\leq  \|T^nh\|^2\leq b\|h\|^2,\qquad  h\in \cH, n\in \NN.
$$
In Section 4,   we obtain an analogue of  Sz.-Nagy's similarity result
 for noncommutative  polydomains (see Theorem \ref{simi}). We shall    mention the corresponding result in the particular case of the polyball.
We say that  $\pi_i:\FF_{n_i}^+\to B(\cH)$ is  a Cuntz representation if its generators form a row operator matrix
$[\pi_i(g_1^i)\cdots \pi_i(g_{n_i}^i)]$  which is a  unitary from the direct sum $\cH^{(n_i)}:=\cH\oplus\cdots \oplus \cH$ to $\cH$.
Let $\pi_i:\FF_{n_i}^+\to B(\cH)$, $i\in \{1,\ldots,k\}$, be representations with commuting ranges and let $\sigma: \FF_{n_1}^+\times \cdots \times \FF_{n_k}^+\to \cH$ be the direct product representation.
Then there is an invertible operator $Y \in B(\cH)$ such that $Y^{-1}\sigma(\cdot) Y$ is the direct product of   Cuntz representations, i.e. $Y^{-1}\pi_i(\cdot) Y$ is a Cuntz type representation for each $i\in \{1,\ldots, k\}$, if and only if
the generators of each representation $\pi_i$  form a one-to-one row operator matrix $[\pi_i(g_1^i)\cdots \pi_i(g_{n_i}^i)]$ and
 there exist constants $0<c\leq d$ such
$$
c\|h\|^2\leq \|\sigma(\alpha_1,\ldots, \alpha_k)h\|^2\leq d\|h\|^2,\qquad h\in \cH,
$$
for any $(\alpha_1,\ldots, \alpha_k)\in \FF_{n_1}^+\times \cdots \times \FF_{n_k}^+$.

 In the particular case when $n_1=\cdots= n_k=1$, we also prove  that  a $k$-tuple of commuting operators $(C_1,\ldots, C_k)\in B(\cH)^k$ is jointly similar to a $k$-tuple of commuting isometries $(V_1,\ldots, V_k)\in B(\cH)^k$ if and only if there are constants $0<c\leq d$ such that
$$
c\|h\|^2\leq \|C_1^{s_1}\cdots C_k^{s_k}h\|^2\leq d\|h\|^2, \quad h\in \cH,
$$
for any $s_1,\ldots, s_k\in \ZZ^+$. Moreover, there is an  invertible operator $\xi:\cH\to \cH$  such that $V_i=\xi C_i \xi^{-1}$  for $i\in \{1,\ldots, k\}$ and $\xi$ is in the von Neumann algebra generated by $C_1,\ldots, C_n$ and the identity.
As a consequence, we deduce the well-known result  of Dixmier (see  \cite{Di}, \cite{Da}) that any uniformly bounded representation $u:\ZZ^k\to B(\cH)$ is similar to a unitary representation.

In Section 5, we provide analogues  of all the similarity results presented in the previous sections in the context of joint similarity of commuting tuples  of positive linear maps on the algebra of bounded linear operators on a separable Hilbert space.

We remark that all the similarity results regarding the noncommutative polydomain ${\bf D_q^m}(\cH)$
and the noncommutative variety $\cV_\cQ(\cH)$
are presented in the more general  setting where  the $k$-tuple ${\bf q}=(q_1,\ldots, q_k)$  of positive regular polynomials is repaced by  a $k$-tuple ${\bf f}:=(f_1,\ldots, f_k)$  of positive regular free holomorphic functions.

\bigskip

\section{Noncommutative cones associated with  positive linear maps}

In this section, we provide basic properties for certain  noncommutative sets associated with commuting positive linear maps  and  obtain a Fourier type representation for  their elements. These results are needed in the next sections.

  Let $\Phi=(\varphi_1,\ldots, \varphi_k)$ be a $k$-tuple of
 positive linear maps on $B(\cH)$.
  For each ${\bf p}:=(p_1,\ldots, p_k)\in \ZZ_+^k$, where $\ZZ_+:=\{0,1,\ldots\}$, we define the linear map
  ${\bf \Delta}_{\Phi}^{\bf p}:B(\cH)\to B(\cH)$ by setting
 $$
 {\bf \Delta}_{\Phi}^{(p_1,\ldots,p_k)}={\bf \Delta}_{\Phi}^{\bf p}:= (id-\varphi_1)^{ p_1}\circ\cdots \circ(id-\varphi_k)^{ p_k}.
 $$
  Given  $A,B\in B(\cH)$ two self-adjoint  operators, we say that
$A<B$ if $B-A$ is positive and invertible, i.e., there exists a
constant $\gamma>0$ such that $\left<(B-A)h,h\right>\geq
\gamma\|h\|^2$ for any $h\in \cH$.
Let
${\bf m}:=(m_1,\ldots, m_k)\in \NN^k$  and define the following sets:
\begin{equation*}
\begin{split}
\cC_{\geq}({\bf \Delta}_{\Phi}^{\bf m})^{sa}&:=\left\{X\in B(\cH):\  X=X^* \text{ and }
{\bf \Delta}_{\Phi}^{\bf p}(X)\geq 0 \text{ \rm  for } 0\leq {\bf p}\leq {\bf m}, {\bf p}\neq 0\right\},\\
\cC_{=}({\bf \Delta}_{\Phi}^{\bf m})&:=\left\{X\in B(\cH):\
{\bf \Delta}_{\Phi}^{\bf p}(X)= 0 \text{ \rm  for } 0\leq {\bf p}\leq {\bf m}, {\bf p}\neq 0\right\},\\
\cC_{>}({\bf \Delta}_{\Phi}^{\bf m})^{sa}&:=\left\{X\in B(\cH):\  X=X^* \text{ and }
{\bf \Delta}_{\Phi}^{\bf p}(X)> 0 \text{ \rm  for } 0\leq {\bf p}\leq {\bf m}, {\bf p}\neq 0\right\}.
\end{split}
\end{equation*}
The definitions for the sets $\cC_{\geq}({\bf \Delta}_{\Phi}^{\bf m})^+$, $\cC_{=}({\bf \Delta}_{\Phi}^{\bf m})^{sa}$, $\cC_{=}({\bf \Delta}_{\Phi}^{\bf m})^+$, and $\cC_{>}({\bf \Delta}_{\Phi}^{\bf m})^+$ are clear. We also introduce the set
$\cC_{\geq}^{pure}({\bf \Delta}_{\Phi}^{\bf m})^{+}$  of all $X\in \cC_{\geq}({\bf \Delta}_{\Phi}^{\bf m})^{+}$ with the property that, for each $i\in \{1,\ldots, k\}$, $\varphi_i^s(X)\to 0$ weakly as $s\to \infty$.

A linear map $\varphi:B(\cH)\to
B(\cH)$ is called {\it power bounded} if there exists a constant $M>0$
such that $\|\varphi^k\|\leq M$ for any $k\in \NN$, where $\varphi^k$ is the $k$ iterate of $\varphi$ with respect to the composition. We say that a $k$-tuple $\Phi=(\varphi_1,\ldots, \varphi_k)$  of  linear maps on $B(\cH)$
is {\it commuting} if
 $\varphi_i \circ\varphi_j=\varphi_j \circ\varphi_i$ for  $i,j\in \{1,\ldots,k\}$.
A positive linear map $\varphi$ on $B(\cH)$ is called {\it pure} if $\varphi^p(I)\to 0$ weakly as $p\to\infty$.

\begin{proposition} \label{Delta-ineq} Let
$\Phi=(\varphi_1,\ldots, \varphi_k)$ be a $k$-tuple of commuting  positive linear maps on $B(\cH)$ and let
${\bf m}\in \NN^k$.
\begin{enumerate}
\item[(i)]
     If   $Y\in \cC_{\geq}({\bf \Delta}_{\Phi}^{\bf m})^{sa}$ and  $0\neq{\bf q}\in \ZZ_+^k$ is such that  ${\bf q}\leq {\bf m}$, then
$$
0\leq{\bf \Delta}_{\Phi}^{\bf m}(Y)\leq {\bf \Delta}_{\Phi}^{\bf q}(Y).
$$
If, in addition, $Y\geq 0$ and  ${\bf \Delta}_{\Phi}^{\bf m}(Y)>0$, then
$$Y\in \cC_{>}({\bf \Delta}_{\Phi}^{\bf m})^+ \quad \text{ and } \quad Y>0.$$
\item[(ii)] If each $\varphi_i$ is pure and  $Y\in B(\cH)$ is a self-adjoint operator   with
 ${\bf \Delta}_{\Phi}^{\bf m}(Y)\geq 0$,
then   $$Y\in \cC_{\geq}^{pure}({\bf \Delta}_{\Phi}^{\bf m})^+.$$
\item[(iii)] If  $\Psi=(\psi_1,\ldots, \psi_k)$  is a $k$-tuple of  commuting  positive linear maps on $B(\cH)$ such that $\psi_i\leq \varphi_i$ and $\psi_i\circ\varphi_j=\varphi_j\circ\psi_i$ for any $i,j\in \{1,\ldots,k\}$, then
$$\cC_{\geq}({\bf \Delta}_{\Phi}^{\bf m})^+\subseteq \cC_{\geq}({\bf \Delta}_{\Psi}^{\bf m})^+.
$$
\end{enumerate}
\end{proposition}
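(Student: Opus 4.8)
We will derive all three parts from the single commutation identity
$$
{\bf \Delta}_{\Phi}^{\bf p}(Z)-{\bf \Delta}_{\Phi}^{{\bf p}+{\bf e}_i}(Z)=\varphi_i\big({\bf \Delta}_{\Phi}^{\bf p}(Z)\big),\qquad Z\in B(\cH),
$$
where ${\bf e}_i\in\ZZ_+^k$ is the $i$-th standard vector; it follows at once from ${\bf \Delta}_{\Phi}^{{\bf p}+{\bf e}_i}={\bf \Delta}_{\Phi}^{\bf p}\circ(id-\varphi_i)$ and the commutativity of $\Phi$. For part (i), the bound $0\le{\bf \Delta}_{\Phi}^{\bf m}(Y)$ is the defining property of $\cC_{\geq}({\bf \Delta}_{\Phi}^{\bf m})^{sa}$ taken at ${\bf p}={\bf m}$. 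For ${\bf \Delta}_{\Phi}^{\bf m}(Y)\le{\bf \Delta}_{\Phi}^{\bf q}(Y)$ I would connect ${\bf q}$ to ${\bf m}$ by a chain of multi-indices that raises one coordinate at a time, staying in $\{{\bf p}:{\bf q}\le{\bf p}\le{\bf m}\}$; at a step ${\bf p}\to{\bf p}+{\bf e}_i$ one has $0\ne{\bf p}\le{\bf m}$, hence ${\bf \Delta}_{\Phi}^{\bf p}(Y)\ge 0$, so the identity and positivity of $\varphi_i$ give ${\bf \Delta}_{\Phi}^{{\bf p}+{\bf e}_i}(Y)\le{\bf \Delta}_{\Phi}^{\bf p}(Y)$, and chaining gives the claim. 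For the ``in addition'' clause I would run the reverse chain, lowering ${\bf m}$ to ${\bf 0}$ one coordinate at a time: at a step ${\bf p}\to{\bf p}-{\bf e}_i$ the identity gives ${\bf \Delta}_{\Phi}^{{\bf p}-{\bf e}_i}(Y)-{\bf \Delta}_{\Phi}^{\bf p}(Y)=\varphi_i\big({\bf \Delta}_{\Phi}^{{\bf p}-{\bf e}_i}(Y)\big)\ge 0$, using ${\bf \Delta}_{\Phi}^{{\bf p}-{\bf e}_i}(Y)\ge 0$ when ${\bf p}-{\bf e}_i\ne{\bf 0}$ and $Y\ge 0$ when ${\bf p}-{\bf e}_i={\bf 0}$; this yields $Y={\bf \Delta}_{\Phi}^{\bf 0}(Y)\ge{\bf \Delta}_{\Phi}^{\bf m}(Y)>0$, and together with ${\bf \Delta}_{\Phi}^{\bf q}(Y)\ge{\bf \Delta}_{\Phi}^{\bf m}(Y)>0$ for every ${\bf 0}\ne{\bf q}\le{\bf m}$ (from the first part) it gives $Y\in\cC_{>}({\bf \Delta}_{\Phi}^{\bf m})^+$.

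For part (ii) only ${\bf \Delta}_{\Phi}^{\bf m}(Y)\ge 0$ is assumed, and the plan is to propagate positivity \emph{downward} in the multi-index using purity. The inductive step: if ${\bf \Delta}_{\Phi}^{{\bf p}+{\bf e}_i}(Y)\ge 0$ with ${\bf p}+{\bf e}_i\le{\bf m}$, then ${\bf \Delta}_{\Phi}^{\bf p}(Y)\ge 0$. Indeed, with $W:={\bf \Delta}_{\Phi}^{\bf p}(Y)=W^*$ the identity gives $(id-\varphi_i)(W)\ge 0$, i.e. $W\ge\varphi_i(W)$, and iterating the positive map $\varphi_i$ gives $W\ge\varphi_i^s(W)$ for all $s$; since $-\|W\|I\le W\le\|W\|I$ we get $-\|W\|\varphi_i^s(I)\le\varphi_i^s(W)\le\|W\|\varphi_i^s(I)$, so purity of $\varphi_i$ forces $\varphi_i^s(W)\to 0$ weakly, and hence $W\ge 0$. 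Applying this step repeatedly from ${\bf \Delta}_{\Phi}^{\bf m}(Y)\ge 0$ makes every ${\bf \Delta}_{\Phi}^{\bf p}(Y)\ge 0$ for $0\le{\bf p}\le{\bf m}$, in particular $Y\ge 0$; and the same squeeze $-\|Y\|\varphi_i^s(I)\le\varphi_i^s(Y)\le\|Y\|\varphi_i^s(I)$ shows $\varphi_i^s(Y)\to 0$ weakly for each $i$, so $Y\in\cC_{\geq}^{pure}({\bf \Delta}_{\Phi}^{\bf m})^+$.

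For part (iii) the plan is to expand ${\bf \Delta}_{\Psi}^{\bf p}$ in terms of the ${\bf \Delta}_{\Phi}^{{\bf p}-{\bf j}}$ with manifestly positive coefficients. Writing $id-\psi_i=(id-\varphi_i)+(\varphi_i-\psi_i)$ and observing that the hypotheses (including $\psi_i\circ\varphi_i=\varphi_i\circ\psi_i$) make all the maps $id-\varphi_1,\dots,id-\varphi_k,\varphi_1-\psi_1,\dots,\varphi_k-\psi_k$ pairwise commute, the binomial theorem gives
$$
{\bf \Delta}_{\Psi}^{\bf p}=\prod_{i=1}^{k}(id-\psi_i)^{p_i}=\sum_{{\bf 0}\le{\bf j}\le{\bf p}}\Big(\prod_{i=1}^{k}\binom{p_i}{j_i}\Big)\,\Theta_{\bf j}\circ{\bf \Delta}_{\Phi}^{{\bf p}-{\bf j}},\qquad \Theta_{\bf j}:=\prod_{i=1}^{k}(\varphi_i-\psi_i)^{j_i}.
$$
Each $\Theta_{\bf j}$, being a composition of the positive maps $\varphi_i-\psi_i\ge 0$, is positive; and for $X\in\cC_{\geq}({\bf \Delta}_{\Phi}^{\bf m})^+$ and ${\bf 0}\le{\bf j}\le{\bf p}\le{\bf m}$ we have ${\bf 0}\le{\bf p}-{\bf j}\le{\bf m}$, so ${\bf \Delta}_{\Phi}^{{\bf p}-{\bf j}}(X)\ge 0$ and therefore $\Theta_{\bf j}\big({\bf \Delta}_{\Phi}^{{\bf p}-{\bf j}}(X)\big)\ge 0$. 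Summing, ${\bf \Delta}_{\Psi}^{\bf p}(X)\ge 0$ for all $0\le{\bf p}\le{\bf m}$, and since $X\ge 0$ we conclude $X\in\cC_{\geq}({\bf \Delta}_{\Psi}^{\bf m})^+$.

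The only genuinely analytic point, and the step I expect to be the main obstacle, is in part (ii): extracting $W\ge 0$ from the weak convergence $\varphi_i^s(W)\to 0$ together with the monotonicity $W\ge\varphi_i^s(W)$ — this is exactly where purity is essential. Everything else is bookkeeping with commuting positive maps; the one thing to check with care in part (iii) is that the cross-commutation hypothesis $\psi_i\circ\varphi_j=\varphi_j\circ\psi_i$, with $i=j$ allowed, is precisely what legitimizes the binomial expansion.
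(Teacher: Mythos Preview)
Your proofs of parts (i) and (ii) are correct and follow essentially the same line as the paper: step along a chain of multi-indices using ${\bf \Delta}_{\Phi}^{{\bf p}+{\bf e}_i}={\bf \Delta}_{\Phi}^{\bf p}-\varphi_i\circ{\bf \Delta}_{\Phi}^{\bf p}$ for (i), and for (ii) use the squeeze $-\|W\|\varphi_i^s(I)\le\varphi_i^s(W)\le\|W\|\varphi_i^s(I)$ together with purity to push positivity downward.

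Part (iii) is where you genuinely diverge from the paper. The paper argues by iterative replacement: starting from the observation that for $G\ge 0$, $(id-\varphi_i)(G)\ge 0$ implies $(id-\psi_i)(G)\ge 0$, it swaps one $\varphi$-factor for a $\psi$-factor at a time inside ${\bf \Delta}_{\Phi}^{\bf p}(Y)$, repeatedly invoking commutativity to rearrange the factors so the implication applies. Your approach instead writes $id-\psi_i=(id-\varphi_i)+(\varphi_i-\psi_i)$ and expands the product via the binomial theorem into a nonnegative combination of $\Theta_{\bf j}\circ{\bf \Delta}_{\Phi}^{{\bf p}-{\bf j}}$. This is correct: the needed pairwise commutations of $id-\varphi_i$, $\varphi_j-\psi_j$ follow from the hypotheses (including the case $i=j$, which uses $\psi_i\circ\varphi_i=\varphi_i\circ\psi_i$), each $\Theta_{\bf j}$ is a composition of positive maps $\varphi_i-\psi_i$ and hence positive, and ${\bf \Delta}_{\Phi}^{{\bf p}-{\bf j}}(X)\ge 0$ holds for all ${\bf 0}\le{\bf j}\le{\bf p}$ since $X\ge 0$ covers the case ${\bf p}-{\bf j}={\bf 0}$. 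Your route is more symmetric and makes the positivity visible in one line, at the cost of having to verify several commutation relations up front; the paper's route is more hands-on but uses only the single elementary implication above. Both are valid.
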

\begin{proof} Set ${\bf m}:=(m_1,\ldots, m_k)\in \NN^k$ and ${\bf m}':=(m_1-1, m_2,\ldots, m_k)$.
Since ${\bf \Delta}_{\Phi}^{{\bf m}' }(Y)\geq 0$  and $\varphi_1$ is a positive map,  we deduce that
$$
0\leq{\bf \Delta}_{\Phi}^{\bf m}(Y)={\bf \Delta}_{\Phi}^{{\bf m}'}(Y) -\varphi_1({\bf \Delta}_{\Phi}^{{\bf m}'}(Y))\leq {\bf \Delta}_{\Phi}^{{\bf m}'}(Y)
$$
 Using  the fact that $\varphi_i \circ\varphi_j=\varphi_j \circ\varphi_i$ for $i,j\in \{1,\ldots,k\}$, one can continue this process and show that
 $0\leq{\bf \Delta}_{\Phi}^{\bf m}(Y)\leq {\bf \Delta}_{\Phi}^{\bf q}(Y)$ for any ${\bf q}\in \ZZ_+^k$ with ${\bf q}\leq {\bf m}$ and ${\bf q}\neq 0$. Similarly, if $Y\geq 0$ and  ${\bf \Delta}_{\Phi}^{\bf m}(Y)>0$,  we deduce that
  $0<{\bf \Delta}_{\Phi}^{\bf m}(Y)\leq {\bf \Delta}_{\Phi}^{\bf q}(Y)\leq Y$ and  $Y\in \cC_{>}({\bf \Delta}_{\Phi}^{\bf m})^+$.

To prove (ii),   set  ${\bf m}':=(m_1-1, m_2,\ldots, m_k)$ and note that due to the fact that  ${\bf \Delta}_{\Phi}^{\bf m}(Y)\geq 0$ and $\varphi_1$ is a positive linear map, we have
$$0\leq {\bf \Delta}_{\Phi}^{\bf m}(Y)={\bf \Delta}_{\Phi}^{{\bf m}'}(Y)-\varphi_1({\bf \Delta}_{\Phi}^{{\bf m}'}(Y)).
$$
Hence,  we deduce that $\varphi_1^p({\bf \Delta}_{\Phi}^{{\bf m}'}(Y))\leq {\bf \Delta}_{\Phi}^{{\bf m}'}(Y)$ for any $p\in \NN$. Since ${\bf \Delta}_{\Phi}^{{\bf m}'}(Y)$ is a self-adjoint operator, we have
$$
-\|{\bf \Delta}_{\Phi}^{{\bf m}'}(Y)\| \varphi_1^p(I)\leq \varphi_1^p({\bf \Delta}_{\Phi}^{{\bf m}'}(Y))\leq \|{\bf \Delta}_{\Phi}^{{\bf m}'}(Y)\| \varphi_1^p(I).
$$
Now, taking into account that  $\varphi_i^p(I)\to 0$ weakly as $p\to\infty$, we deduce that $\varphi_1^p({\bf \Delta}_{\Phi}^{{\bf m}'}(Y))\to 0$ as $p\to\infty$, which leads to   ${\bf \Delta}_{\Phi}^{{\bf m}'}(Y)\geq 0$. Using the commutativity of $\varphi_1,\ldots, \varphi_k$, one can continue this process and obtain  $Y\in \cC_{\geq}({\bf \Delta}_{\Phi}^{\bf m})^+$. Since $\varphi_i^p(I)\to 0$ weakly as $p\to\infty$ and
$$
-\|Y\|\varphi_i^p(I)\leq \varphi_i^p(Y)\leq \|Y\|\varphi_i^p(I),
$$
we deduce that $\varphi_i^p(Y)\to 0$, as $p\to\infty$.

Now, we prove (iii). Note that if $G\in B(\cH)$, $G\geq 0$, then, for each $i\in \{1,\ldots, k\}$,
 \begin{equation}
 \label{ine2}
 (id- \varphi_i)(G)\geq 0\quad \implies  \quad (id-\psi_i )(G)\geq 0, \quad r\in [0,1].
 \end{equation}
  Assume that $Y\in \cC_{\geq}({\bf \Delta}_{\Phi}^{\bf m})^+$. If ${\bf p}\in \ZZ_+^k$ with  ${\bf p}\geq e_1:=(1,0,\ldots,0)\in \ZZ_+^k$, then
 $(id- \varphi_1)({\bf \Delta}_{\Phi }^{{\bf p}-e_1}(Y))\geq 0$
 for any ${\bf p}\in \ZZ_+^k$  with $e_1\leq {\bf p}\leq {\bf m}$.
 Consequently, due to relation \eqref{ine2}, we have
 \begin{equation}
 \label{ine3}
 (id- \psi_1)({\bf \Delta}_{\Phi }^{{\bf p}-e_1}(Y))\geq 0
 \end{equation}
 for any ${\bf p}\in \ZZ_+^k$  with $e_1\leq {\bf p}\leq {\bf m}$. Due to the commutativity of  the maps $\varphi_1,\ldots, \varphi_k, \psi_1,\ldots, \psi_k$, the latter inequality is equivalent to
 \begin{equation*}
 (id- \varphi_1)({\bf \Delta}_{\Phi }^{{\bf p}-2e_1}\circ(id- \psi_1)(Y))\geq 0
 \end{equation*}
 for   any ${\bf p}\in \ZZ_+^k$  with $2e_1\leq {\bf p}\leq {\bf m}$.
 Due to \eqref{ine3}, we have ${\bf \Delta}_{\Phi }^{{\bf p}-2e_1}\circ(id- \psi_1)(Y)\geq 0$ and, applying again relation \eqref{ine2}, we deduce that
  \begin{equation*}
 (id- \varphi_1)({\bf \Delta}_{\Phi }^{{\bf p}-3e_1}\circ(id-\psi_1 )^2(Y))\geq 0
 \end{equation*}
 for any   ${\bf p}\in \ZZ_+^k$  with $3e_1\leq {\bf p}\leq {\bf m}$. Continuing this process, we obtain the inequality
 $$
 (id- \varphi_2)^{p_2}\circ\cdots \circ(id- \varphi_k)^{p_k}\circ(id-\psi_1 )^{p_1}(Y)\geq 0
 $$
 for any ${\bf p}\in \ZZ_+^k$  with $e_1\leq {\bf p}\leq {\bf m}$.
Similar arguments lead to the inequality  $ {\bf \Delta}_{\Psi }^{\bf p}(Y)\geq 0$ for any ${\bf p}\in \ZZ_+^k$  with $0\leq {\bf p}\leq {\bf m}$ and ${\bf p}\neq 0$. Therefore, $Y\in \cC_{\geq}({\bf \Delta}_{\Psi}^{\bf m})^+$.
The proof is complete.
\end{proof}

 If  $\phi_1,\ldots, \phi_k$ are  positive linear maps on $B(\cH)$, $p\in \NN$, and $i\in \{1,\ldots, k\}$, we define
\begin{equation*}
\begin{split}
\Lambda_i^{[1]}(Y)&:=\sum_{s_i=0}^\infty \phi_i^{s_i}(Y)\quad \text{ and}\\
\Lambda_i^{[p]}(Y)&:=\sum_{s_i=0}^\infty \phi_i^{s_i}(\Lambda_i^{[p-1]}(Y)), \quad p\geq2,
\end{split}
\end{equation*}
for those $Y\in B(\cH)$ for which all the series converge in the weak operator topology.

\begin{theorem}\label{reproducing} Let ${\bf m}=(m_1,\ldots, m_k)\in \NN^k$ and let  $\phi=(\phi_1,\ldots, \phi_k)$ be a $k$-tuple of positive linear maps on $B(\cH)$ such that each $\phi_i$ is pure. Then
${\bf\Delta_\phi^m}:=(id-\phi_1)^{m_1}\circ \cdots \circ (id -\phi_k)^{m_k}$ is a one-to-one map and each $X\in B(\cH)$ has the representation
$$
X= \Lambda_k^{[m_k]}\left(\cdots \left(\Lambda_1^{[m_1]}({\bf \Delta_{\phi}^m}(X))\right)\right),
$$
where the iterated series converge   in the weak operator topology.  If, in addition, ${\bf\Delta_\phi^m}(X)\geq 0$, then

$$
X=\sum_{(s_1,\ldots,s_k)\in \ZZ_+^k}\left(\begin{matrix} s_1+m_1-1\\m_1-1\end{matrix}\right)\cdots \left(\begin{matrix} s_k+m_k-1\\m_k-1\end{matrix}\right)\phi_1^{s_1}\circ \cdots \circ \phi_k^{s_k}({\bf\Delta_\phi^m}(X)),
$$
where
 the convergence of the series is in the weak operator topology.
\end{theorem}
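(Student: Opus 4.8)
The plan is to reduce everything to the one-variable, first-order case and then iterate. First, for a pure positive linear map $\psi$ on $B(\cH)$: if $(id-\psi)(Y)=0$ then $Y=\psi(Y)=\psi^{2}(Y)=\cdots$, and decomposing $Y$ into self-adjoint parts and using $-\|Z\|\,\psi^{p}(I)\le\psi^{p}(Z)\le\|Z\|\,\psi^{p}(I)$ for $Z=Z^{*}$ together with $\psi^{p}(I)\to 0$ weakly shows $\psi^{p}(Y)\to 0$ weakly, so $Y=0$ and $id-\psi$ is one-to-one. The same estimate, applied to the telescoping identity $\sum_{s=0}^{N}\psi^{s}((id-\psi)(Y))=Y-\psi^{N+1}(Y)$, shows that $\Lambda_{\psi}^{[1]}((id-\psi)(Y))=\sum_{s\ge 0}\psi^{s}((id-\psi)(Y))=Y$ in the weak operator topology for \emph{every} $Y\in B(\cH)$; in particular $\psi(\Lambda_{\psi}^{[1]}(W))=\Lambda_{\psi}^{[1]}(W)-W$ whenever $W\in\ran(id-\psi)$ (a ``resolvent identity''). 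Since $\Lambda_{\psi}^{[p]}=\Lambda_{\psi}^{[1]}\circ\Lambda_{\psi}^{[p-1]}$ wherever the series converge, an induction on $p$ combining these facts shows that $(id-\psi)^{p}$ is one-to-one and $\Lambda_{\psi}^{[p]}((id-\psi)^{p}(Y))=Y$ for all $Y\in B(\cH)$.

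Now set $W_{k+1}:=X$ and $W_{j}:=(id-\phi_{j})^{m_{j}}\circ\cdots\circ(id-\phi_{k})^{m_{k}}(X)$ for $1\le j\le k$, so that ${\bf\Delta_\phi^m}(X)=W_{1}$ and $W_{j}=(id-\phi_{j})^{m_{j}}(W_{j+1})$. Applying the previous paragraph with $\psi=\phi_{1},\dots,\phi_{k}$ in turn gives $\Lambda_{j}^{[m_{j}]}(W_{j})=W_{j+1}$, whence $\Lambda_{k}^{[m_{k}]}(\cdots(\Lambda_{1}^{[m_{1}]}({\bf\Delta_\phi^m}(X)))\cdots)=X$; this is the first representation, and one-to-oneness of ${\bf\Delta_\phi^m}=(id-\phi_{1})^{m_{1}}\circ\cdots\circ(id-\phi_{k})^{m_{k}}$ follows since each factor is one-to-one (no commutativity is needed so far). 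Suppose now $W:={\bf\Delta_\phi^m}(X)\ge 0$. As the $\phi_{i}$ are $*$-preserving, ${\bf\Delta_\phi^m}(X^{*})=W^{*}=W$, so injectivity forces $X=X^{*}$; hence Proposition \ref{Delta-ineq}(ii) gives that every defect ${\bf\Delta_\phi^p}(X)$, ${\bf 0}\le{\bf p}\le{\bf m}$, is positive (here, as in the rest of this part, one uses that the $\phi_{i}$ commute). For such ${\bf p}$ with $p_{i}<m_{i}$ one has $\phi_{i}({\bf\Delta_\phi^p}(X))={\bf\Delta_\phi^p}(X)-{\bf\Delta_\phi^{p+e_i}}(X)$, which turns every $\phi_{i}^{s}$ acting on a defect into a \emph{finite} telescoping, while ${\bf\Delta_\phi^p}(X)=\Lambda_{i}^{[1]}({\bf\Delta_\phi^{p+e_i}}(X))$ by the first paragraph. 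Feeding these two relations into an induction on $\sum_{i}(m_{i}-p_{i})$ --- trivial when ${\bf p}={\bf m}$ --- together with the hockey-stick identity $\sum_{t=0}^{n}\binom{t+q-1}{q-1}=\binom{n+q}{q}$ and the fact that a weakly convergent series of positive operators may be summed in any order, one obtains
\[
{\bf\Delta_\phi^p}(X)=\sum_{{\bf t}\in\ZZ_+^k}\prod_{i=1}^{k}\binom{t_{i}+m_{i}-p_{i}-1}{m_{i}-p_{i}-1}\;\phi_{1}^{t_{1}}\circ\cdots\circ\phi_{k}^{t_{k}}(W)
\]
for all ${\bf 0}\le{\bf p}\le{\bf m}$; the case ${\bf p}={\bf 0}$ is the asserted Fourier type representation.

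The combinatorial identities above are routine bookkeeping; the real work is keeping control of weak-operator convergence. One point is deducing $\psi^{p}(Y)\to 0$ weakly from purity, which is exactly the sandwiching estimate of the first paragraph together with polarization. The subtler point, and the step I expect to be the main obstacle, is the rearrangement of the iterated series in the second representation: this \emph{cannot} be done by simply distributing a positive map over an infinite weak sum, since a positive map on $B(\cH)$ need not be normal. The purpose of the resolvent identity is precisely to reduce every such manipulation to a \emph{finite} telescoping, after which only rearrangements of series of positive operators remain --- and those are innocuous, since the finite partial sums form a bounded increasing net whose supremum does not depend on the summation order. This is exactly where the hypothesis ${\bf\Delta_\phi^m}(X)\ge 0$ is used, and without it the second representation need not hold.
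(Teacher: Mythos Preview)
Your proof is correct. For the first part (the iterated $\Lambda$-representation and injectivity of ${\bf\Delta_\phi^m}$) you follow exactly the paper's route: the one-variable telescoping $\sum_{s=0}^{N}\psi^{s}((id-\psi)(Y))=Y-\psi^{N+1}(Y)$, the purity estimate $\psi^{p}(Z)\to 0$ weakly (via the sandwich bound for self-adjoint $Z$ and polarization; the paper uses Cauchy--Schwarz instead, which is equivalent), and then iteration through the factors $(id-\phi_i)^{m_i}$.

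For the second part the two arguments diverge in organization but rest on the same ingredients. The paper does not induct on ${\bf p}$: it writes the iterated $\Lambda$-series at once as a single multi-sum over $(s_i^{(j)})\in\ZZ_+^{m_1+\cdots+m_k}$, observes that the finite partial sums are increasing (because $W\ge 0$), invokes the first part to identify the limit as $X$, and then collapses to the stated formula by the stars-and-bars count of solutions to $s_i^{(1)}+\cdots+s_i^{(m_i)}=s_i$. Your route instead peels off one factor $(id-\phi_i)$ at a time and uses the hockey-stick identity, and in doing so you prove the stronger intermediate statement that \emph{every} defect ${\bf\Delta_\phi^p}(X)$, ${\bf 0}\le{\bf p}\le{\bf m}$, has the analogous Fourier representation. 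Both arguments ultimately avoid pushing a possibly non-normal $\phi_i$ through a WOT-limit by the same mechanism---finite telescoping against the defect operators---and both then rely on free rearrangement of series of positive terms; so the difference is packaging rather than substance. Your presentation has the virtue of making explicit where commutativity of the $\phi_i$ enters (it is needed to match the order $\phi_1^{s_1}\circ\cdots\circ\phi_k^{s_k}$ in the statement with the order $\phi_k\circ\cdots\circ\phi_1$ that the iterated $\Lambda$'s naturally produce), which the paper leaves implicit. One minor remark: your appeal to Proposition~\ref{Delta-ineq}(ii) for positivity of the intermediate defects is harmless but not strictly necessary---the rearrangements in your induction only use $W\ge 0$, hence $\phi^{\bf t}(W)\ge 0$.
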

\begin{proof} We use the notation ${\bf \Delta}_\phi^{(m_1,\ldots, m_k)}:={\bf\Delta_\phi^m}$ when we need to emphasize the coordinates of ${\bf m}$. Note that
\begin{equation}
\label{rec}
\sum_{s_1=0}^{q_1} \phi_1^{s_1}({\bf \Delta_{\phi}^m}(X))
= {\bf \Delta}_\phi^{(m_1-1,m_2,\ldots, m_k)}(X)-\phi_1^{q_1+1}({\bf \Delta}_\phi^{(m_1-1,m_2,\ldots, m_k)}(X)).
\end{equation}
If $Z\in B(\cH)$ is a positive operator and $x,y\in \cH$, the Cauchy-Schwarz inequality implies
$$
\left|\left<\phi_i^{q_i}(Z)x, y\right>\right|\leq \|Z\|\left<\phi_i^{q_i}(I)x, x\right>^{1/2} \left<\phi_i^{q_i}(I)y, y\right>^{1/2},\qquad q_i\in \NN.
$$
Since  $\phi_i^{q_i}(I) \to 0$ weakly as $q_i\to \infty$, we deduce that $\left<\phi_i^{q_i}(Z)x, y\right>\to 0$ as $q_i\to\infty$. Taking into account that any bounded linear operator  is a linear combination of positive operators, we conclude that the convergence above holds for any $Z\in B(\cH)$. Passing to the limit in  relation \eqref{rec} as $q_1\to\infty$, we obtain
$$
\sum_{s_1=0}^{\infty} \phi_1^{s_1}({\bf \Delta_{\phi}^m}(X))
= {\bf \Delta}_\phi^{(m_1-1,m_2,\ldots, m_k)}(X).
$$
Similarly, we obtain
$$
\sum_{s_1=0}^{\infty} \phi_1^{s_1}({\bf \Delta}_{\phi}^{(m_1-1,m_2,\ldots, m_k)}(X))
= {\bf \Delta}_\phi^{(m_1-2,m_2\ldots, m_k)}(X)  $$
and, continuing this process,
$$\sum_{s_1=0}^{\infty} \phi_1^{s_1}({\bf \Delta}_{\phi}^{(1,m_2\ldots, m_k)}(X))
= {\bf \Delta}_\phi^{(0,m_2\ldots, m_k)}(X).
$$
Putting together these relations, we deduce that
$
\Lambda_1^{[m_1]}({\bf \Delta_{\phi}^m}(X))={\bf \Delta}_\phi^{(0,m_2\ldots, m_k)}(X).
$
Similar arguments show that
$
\Lambda_2^{[m_2]}({\bf \Delta}_\phi^{(0,m_2\ldots, m_k)}(X))={\bf \Delta}_\phi^{(0,0,m_3\ldots, m_k)}(X)
$
and, eventually, we get
$$
\Lambda_k^{[m_k]}({\bf \Delta}_\phi^{(0,\ldots,0, m_k)}(X))={\bf \Delta}_\phi^{(0,\ldots, 0)}(X)=X.
$$
Putting these relations together we obtain the first  equality in the theorem, which implies  that ${\bf \Delta^m_\phi}$ is a one-to-one map.
 Now, we assume that ${\bf\Delta_\phi^m}(X)\geq 0$. Then the multi-sequence of positive operators
$$
\sum_{s_k^{(1)}=0}^{q_k^{(1)}}\cdots \sum_{s_k^{(m_k)}=0}^{q_k^{(m_k)}}\cdots
\sum_{s_1^{(1)}=0}^{q_1^{(1)}}\cdots \sum_{s_1^{(m_1)}=0}^{q_1^{(m_1)}}
\phi_k^{s_k^{(1)} }\circ \cdots \circ \phi_k^{ s_k^{(m_k)}}\circ\cdots \circ
\phi_1^{s_1^{(1)} }\circ \cdots \circ \phi_1^{ s_1^{(m_1)}} ({\bf\Delta_\phi^m}(X))
$$
is increasing with respect to each of the indexes $q_k^{(1)},\ldots, q_k^{(m_k)},\ldots, q_1^{(1)},\ldots, q_1^{(m_1)}\in \ZZ_+$. Using the first part of the theorem, we  deduce that
$$
\sum \phi_k^{s_k^{(1)}+\cdots +s_k^{(m_k)}}\circ\cdots \circ\phi_1^{s_1^{(1)}+\cdots +s_1^{(m_1)}} ({\bf\Delta_\Phi^m}(X))=X,
$$
where
the summation is taken over all tuples $(s_1^{(1)},\ldots, s_1^{(m_1)},\ldots, s_k^{(1)},\ldots, s_k^{(m_k)})\in \ZZ_+^{m_1+\cdots +m_k}$ and the convergence is in the weak operator topology. Note that, for each $i\in \{1,\ldots,k\}$ and $s_i\in \ZZ_+$, the equation $s_i^{(1)}+\cdots +s_i^{(m_i)}=s_i$  has $\left(\begin{matrix} s_i+m_i-1\\m_i-1\end{matrix}\right)$  distinct solutions in $\ZZ_+^{m_i}$. Combining this fact with the equality above, one can complete the proof.
\end{proof}

\begin{theorem}\label{reproducing2} Let ${\bf m}=(m_1,\ldots, m_k)\in \NN^k$ and let  $\Phi=(\phi_1,\ldots, \phi_k)$ be a $k$-tuple of commuting  positive linear maps on $B(\cH)$ such that each $\phi_i$ is weakly continuous on bounded sets. If $X$ is in the noncommutative cone $\cC_{\geq}({\bf \Delta}_{\Phi}^{\bf m})^+$, then
$$\lim_{q_k\to\infty}\ldots \lim_{q_1\to\infty}  (id-\phi_k^{q_k})\circ\cdots \circ(id-\phi_1^{q_1})(X)
$$
coincides with
 $$\sum_{(s_1,\ldots,s_k)\in \ZZ_+^k}\left(\begin{matrix} s_1+m_1-1\\m_1-1\end{matrix}\right)\cdots \left(\begin{matrix} s_k+m_k-1\\m_k-1\end{matrix}\right)\phi_1^{s_1}\circ \cdots \circ \phi_k^{s_k}({\bf\Delta_\Phi^m}(X)),
$$
where
 the convergence of the series is in the weak operator topology.
\end{theorem}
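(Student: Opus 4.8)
The plan is to peel off, one coordinate direction at a time, the $\phi_i$-invariant part of $X$, reducing the statement to the pure situation already handled in Theorem \ref{reproducing}. Write $D:={\bf \Delta}_{\Phi}^{\bf m}(X)$ and, for $i\in\{1,\dots,k\}$, let $e_i\in\ZZ_+^k$ be the tuple with $1$ in slot $i$ and $0$ elsewhere. Recall that every $\phi_j$, hence every polynomial in $\phi_1,\dots,\phi_k$ (in particular each ${\bf \Delta}_{\Phi}^{\bf p}$), is a bounded linear map that is weakly continuous on bounded sets.

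First I would analyze the effect of one iterated limit. For $Z\in\cC_{\geq}({\bf \Delta}_{\Phi}^{\bf m})^{+}$ and $i\in\{1,\dots,k\}$, since $e_i\leq{\bf m}$ we have $(id-\phi_i)(Z)={\bf \Delta}_{\Phi}^{e_i}(Z)\geq 0$, so $0\leq\phi_i^q(Z)\leq Z$ and the decreasing sequence $\big(\phi_i^q(Z)\big)_{q}$ converges strongly to some $L_i(Z)\geq 0$; weak continuity of $\phi_i$ on bounded sets gives $\phi_i(L_i(Z))=L_i(Z)$, so that $\lim_{q}(id-\phi_i^q)(Z)=Z-L_i(Z)=:\Theta_i(Z)$ (strongly). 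I would then verify, using weak continuity on bounded sets to move $\phi_j$ and ${\bf \Delta}_{\Phi}^{\bf p}$ past the limit defining $L_i$ and commutativity to reorder maps: (a) for $0\leq{\bf p}\leq{\bf m}$, $0\leq{\bf \Delta}_{\Phi}^{\bf p}(\Theta_i(Z))\leq{\bf \Delta}_{\Phi}^{\bf p}(Z)$ --- when $p_i\geq 1$ it is an equality, since $L_i(Z)$ being $\phi_i$-fixed forces ${\bf \Delta}_{\Phi}^{\bf p}(L_i(Z))=0$; when $p_i=0$, ${\bf p}+e_i\leq{\bf m}$ makes $\phi_i^q\big({\bf \Delta}_{\Phi}^{\bf p}(Z)\big)$ decreasing; consequently $\Theta_i(Z)\in\cC_{\geq}({\bf \Delta}_{\Phi}^{\bf m})^{+}$, $0\leq\Theta_i(Z)\leq Z$, and ${\bf \Delta}_{\Phi}^{\bf m}(\Theta_i(Z))={\bf \Delta}_{\Phi}^{\bf m}(Z)$ because $m_i\geq 1$; (b) $\phi_i^s(\Theta_i(Z))=\phi_i^s(Z)-L_i(Z)\to 0$; (c) if $l\neq i$ and $\phi_l^s(Z)\to 0$ weakly, then $0\leq\phi_l^s(L_i(Z))=\lim_q\phi_i^q\big(\phi_l^s(Z)\big)\leq\phi_l^s(Z)$, the net $\phi_i^q\big(\phi_l^s(Z)\big)$ being decreasing because $(id-\phi_i)(Z)\geq 0$ and $\phi_l^s$ is positive, hence $\phi_l^s(L_i(Z))\to 0$ weakly and $\phi_l^s(\Theta_i(Z))\to 0$ weakly.

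Next I would set $Y_0:=X$, $Y_j:=\Theta_j(Y_{j-1})$, and prove by induction on $j\in\{1,\dots,k\}$, via (a)--(c), that $Y_j\in\cC_{\geq}({\bf \Delta}_{\Phi}^{\bf m})^{+}$ with ${\bf \Delta}_{\Phi}^{\bf m}(Y_j)=D$ and $\phi_i^s(Y_j)\to 0$ weakly for all $i\leq j$, and, at the same time, that
$$
\lim_{q_j\to\infty}\ldots\lim_{q_1\to\infty}(id-\phi_k^{q_k})\circ\cdots\circ(id-\phi_1^{q_1})(X)=(id-\phi_k^{q_k})\circ\cdots\circ(id-\phi_{j+1}^{q_{j+1}})(Y_j),
$$
with $q_{j+1},\dots,q_k$ left free (for $j=k$ the right-hand side is simply $Y_k$). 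The inductive step is justified because $(id-\phi_k^{q_k})\circ\cdots\circ(id-\phi_{j+1}^{q_{j+1}})$ is bounded linear and weakly continuous on bounded sets, while $\big((id-\phi_j^{q_j})(Y_{j-1})\big)_{q_j}$ is bounded and converges strongly to $\Theta_j(Y_{j-1})=Y_j$, so $\lim_{q_j}$ can be carried through the outer composition. Taking $j=k$ gives $Y:=Y_k=\lim_{q_k\to\infty}\ldots\lim_{q_1\to\infty}(id-\phi_k^{q_k})\circ\cdots\circ(id-\phi_1^{q_1})(X)\in\cC_{\geq}^{pure}({\bf \Delta}_{\Phi}^{\bf m})^{+}$ with ${\bf \Delta}_{\Phi}^{\bf m}(Y)=D$.

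Finally I would run the argument of Theorem \ref{reproducing} for $Y$. By Proposition \ref{Delta-ineq}(i), $0\leq{\bf \Delta}_{\Phi}^{\bf p}(Y)\leq Y$ for all $0\leq{\bf p}\leq{\bf m}$, hence $0\leq\phi_i^q\big({\bf \Delta}_{\Phi}^{\bf p}(Y)\big)\leq\phi_i^q(Y)\to 0$ weakly --- exactly the weak null convergences needed to let $q_i\to\infty$ in the telescoping identity \eqref{rec}, written for $Y$, and in all its iterates; the concluding combinatorial step (counting the $\binom{s_i+m_i-1}{m_i-1}$ solutions of $s_i^{(1)}+\cdots+s_i^{(m_i)}=s_i$) is unchanged. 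This yields
$$
Y=\sum_{(s_1,\dots,s_k)\in\ZZ_+^k}\binom{s_1+m_1-1}{m_1-1}\cdots\binom{s_k+m_k-1}{m_k-1}\phi_1^{s_1}\circ\cdots\circ\phi_k^{s_k}(D),
$$
the series converging in the weak operator topology, its finite partial sums being increasing positive operators bounded above by $Y$. Since $D={\bf \Delta}_{\Phi}^{\bf m}(X)$ and $Y$ is the iterated limit in the statement, this is the asserted identity. I expect the main obstacle to lie in (a)--(c): checking that the peeling maps $\Theta_i$ keep $Y_j$ inside $\cC_{\geq}({\bf \Delta}_{\Phi}^{\bf m})^{+}$ with ${\bf \Delta}_{\Phi}^{\bf m}$ unchanged and --- the delicate point --- do not destroy the purity already gained in the previously treated directions, which is precisely where commutativity and positivity of the $\phi_i$ are used essentially.
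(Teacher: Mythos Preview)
Your proof is correct. The approach is closely related to the paper's but is organized differently, and arguably more cleanly. The paper proceeds by establishing, via an induction on $j$, the explicit formula
\[
\Lambda_i^{[j]}\big({\bf \Delta}_\Phi^{\bf p}(Y)\big)={\bf \Delta}_\Phi^{{\bf p}-j e_i}\Big(Y-\lim_{q\to\infty}\phi_i^{q}(Y)\Big)
\]
and then chains these identities for $i=1,\dots,k$ to obtain directly that $\Lambda_k^{[m_k]}\!\big(\cdots\Lambda_1^{[m_1]}({\bf \Delta}_\Phi^{\bf m}(X))\big)$ equals the iterated limit; the combinatorial conversion to the binomial series is then the same as yours. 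You instead isolate the ``peeling'' operators $\Theta_i$, show in one pass that $Y:=\Theta_k\circ\cdots\circ\Theta_1(X)$ equals the iterated limit, lies in $\cC_{\geq}^{pure}({\bf \Delta}_\Phi^{\bf m})^+$, and has ${\bf \Delta}_\Phi^{\bf m}(Y)={\bf \Delta}_\Phi^{\bf m}(X)$, and then rerun the telescoping argument of Theorem~\ref{reproducing} with the purity hypothesis replaced by the sandwich $0\le\phi_i^q\big({\bf \Delta}_\Phi^{\bf p}(Y)\big)\le\phi_i^q(Y)\to 0$. The two arguments use exactly the same ingredients (commutativity, positivity, weak continuity on bounded sets, and $\phi_i$-fixed parts vanishing under $(id-\phi_i)$); what your formulation buys is a clean separation between ``reduce to a pure element'' and ``apply the pure-case Fourier representation,'' while the paper's formulation buys a single self-contained computation without a second pass. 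Your verifications (a)--(c), including the preservation of earlier-direction purity in (c), are precisely where the paper's inductive formula does the same work implicitly.
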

\begin{proof}
 Let  $Y\in B(\cH)$ be a positive operator such that
   $$
  {\bf \Delta_{\Phi}^p}(Y)  := (id-\phi_1)^{p_1}\circ\cdots \circ(id-\phi_k)^{p_k}(Y)\geq 0
    $$
 for any  ${\bf p}:=(p_1,\ldots, p_k)\in \ZZ_+^k$ with $p_i\in \{0,1,\ldots, m_i\}$ and
       $i\in \{1,\ldots,k\}$.
Fix $i\in \{1,\ldots,k\}$ and assume that $1\leq p_i\leq m_i$.  Then, due to the commutativity of  $\phi_1,\ldots, \phi_k$, we have
$$
(id-\phi_i){\bf \Delta_{\Phi}^{p-e_i}}(Y) ={\bf \Delta_{\Phi}^{p}}(Y)\geq 0,
$$
where $\{{\bf e}_i\}_{i=1}^k$ is the canonical basis in $\CC^k$. Hence, and using Proposition \ref{Delta-ineq} part (i), we have
$$
0\leq \phi_i({\bf \Delta_{\Phi}^{p-e_i}}(Y))\leq {\bf \Delta_{\Phi}^{p-e_i}}(Y)\leq Y,
$$
which proves that $\{\phi_i^s({\bf \Delta_{\Phi}^{p-e_i}}(Y))\}_{s=0}^\infty$ is a decreasing sequence of positive operators which is convergent in the weak operator topology.  Note also that, due to the fact that $0\leq \phi_i(Y)\leq Y$, the sequence $\{\phi_i^s(Y)\}_{s=0}^\infty$ is decreasing and convergent in the weak operator topology.  Since $\phi_i$  is WOT-continuous on bounded sets and  $\phi_1,\ldots, \phi_k$ are commuting,
  we deduce that
\begin{equation}
\label{wot}
\lim_{s\to\infty}\phi_i^s({\bf \Delta_\Phi^{p-e_i}}(Y))
={\bf \Delta_\Phi^{p-e_i}}\left(\lim\limits_{s\to\infty}\phi_i^s(Y)\right).
\end{equation}
 Then we have
 \begin{equation*}
 \begin{split}
 \Lambda_i^{[1]}({\bf \Delta_\Phi^p}(Y))&:=\sum_{s=0}^\infty \phi_{i}^{s}({\bf \Delta_\Phi^p}(Y))
 =\sum_{s=0}^\infty \phi_{i}^{s}\left[ {\bf \Delta_\Phi^{p-e_i}}(Y)
 -\phi_{i}({\bf \Delta_\Phi^{p-e_i}}(Y))\right]\\
 &={\bf \Delta_\Phi^{p-e_i}}(Y)- \lim_{q_i\to \infty}
 \phi_{i}^{q_i}({\bf \Delta_\Phi^{p-e_i}}(Y))\leq {\bf \Delta_\Phi^{p-e_i}}(Y)\leq Y.
 \end{split}
 \end{equation*}
 Due to relation \eqref{wot}, and the WOT-continuity and commutativity of $\phi_1,\ldots, \phi_k$, we deduce that
 $$
 0\leq \Lambda_i^{[1]}({\bf \Delta_\Phi^p}(Y))={\bf \Delta_\Phi^{p-e_i}}\left(Y- \lim_{q_i\to \infty}
 \phi_{i}^{q_i}(Y)\right), \qquad {\bf p}\leq {\bf m}, 1\leq p_i.
 $$
Define
$\Lambda_{i}^{[j]}({\bf \Delta_\Phi^p}(Y))
:=\sum_{s=0}^\infty \phi_{i}^s(\Lambda_{i}^{[j-1]}({\bf \Delta_\Phi^p}(Y)))$,
 where $j=2,\ldots p_i$.
Inductively, we can prove that
\begin{equation}
\label{induct}
0\leq \Lambda_{i}^{[j]}({\bf \Delta_\Phi^p}(Y))={\bf \Delta_\Phi^{ p-{\it j}e_i}}
\left(Y- \lim_{q_j\to \infty}
 \phi_{i}^{q_j}(Y)\right)\leq {\bf \Delta_\Phi^{ p-{\it j}e_i}}(Y)\leq Y,
 \qquad j\leq p_i.
\end{equation}
Indeed, if $j\leq p_i-1$ and  setting $Z:=Y- \lim\limits_{q_j\to \infty}
 \phi_{i}^{q_j}(Y)$, relation
\eqref{induct} implies
\begin{equation*}
\begin{split}
\Lambda_{i}^{[j+1]}({\bf \Delta_\Phi^p}(Y))
&=\lim_{q_{j+1}\to \infty}\sum_{s=0}^{q_{j+1}} \phi_{i}^s\left[{\bf \Delta_\Phi^{ p-{\it j}e_i}}(Z)\right] \\
&=
{\bf \Delta_\Phi^{ p-\text{$(j+ 1)$}e_i}}\left[
Z- \lim_{q_{j+1}\to \infty} \phi_{i}^{q_{j+1}}(Z)\right]\\
&=
{\bf \Delta_\Phi^{ p-\text{$(j+ 1)$}e_i}}(Z)-{\bf \Delta_\Phi^{ p-\text{$(j+ 1)$}e_i}}\left( \lim_{q_{j+1}\to \infty} \phi_{i}^{q_{j+1}}(Z) \right).
\end{split}
\end{equation*}
On the other hand, we have
 \begin{equation*}
 \begin{split}
 \lim_{q_{j+1}\to \infty} \phi_{i}^{q_{j+1}}(Z)
 &=\lim_{q_{j+1}\to \infty} \phi_{i}^{q_{j+1}}
 \left(Y- \lim\limits_{q_j\to \infty}
 \phi_{i}^{q_j}(Y)\right)\\
 &=
 \lim_{q_{j+1}\to \infty} \phi_{i}^{q_{j+1}}(Y)-
 \lim_{q_{j+1}\to \infty}\lim_{q_{j}\to \infty} \phi_{i}^{q_{j+1}}\left( \phi_{i}^{q_{j}}(Y)\right)=0.
 \end{split}
 \end{equation*}
Combining these results, we obtain
$$
\Lambda_{i}^{[j+1]}({\bf \Delta_\Phi^p}(Y)) =
{\bf \Delta_\Phi^{ p-\text{$(j+ 1)$}e_i}}\left( Y- \lim\limits_{q_j\to \infty}
 \phi_{i}^{q_j}(Y)\right)\leq {\bf \Delta_\Phi^{ p-\text{$(j+ 1)$}e_i}}(Y)\leq Y,
$$
for any  ${\bf p}:=(p_1,\ldots, p_k)\in \ZZ_+^k$ with ${\bf p}\leq {\bf m}$ and  $p_i\geq 1$,
which proves our assertion. When $j=p_i$, relation \eqref{induct} becomes
\begin{equation*}
\label{pi}
0\leq \Lambda_{i}^{[p_i]}({\bf \Delta_\Phi^p}(Y)) =
{\bf \Delta_\Phi^{ p-\text{$p_i$}e_i}}\left( Y- \lim\limits_{q_i\to \infty}
 \phi_{i}^{q_i}(Y)\right)\leq Y.
\end{equation*}
Due to the  results above,  we have
\begin{equation}
\label{Psi}
\begin{split}
0\leq \Lambda_{i}^{[m_i]}({\bf \Delta_\Phi^p}(Y))
&={\bf \Delta_\Phi^{ p-\text{$m_i$}e_i}}
\left(Y-\lim_{q_{i}\to \infty} \phi_{i}^{q_{i}} (Y)\right)\\
&
\leq {\bf \Delta_\Phi^{ p-\text{$m_i$}e_i}}(Y)\leq Y,
\end{split}
\end{equation}
for any  ${\bf p}:=(p_1,\ldots, p_k)\in \ZZ_+^k$ with ${\bf p}\leq {\bf m}$ and
 $p_i=m_i$.
 Applying relation \eqref{Psi} in the  particular case when $i=1$, $p_1=m_1$, and $Y=X$, we have
$$
0\leq \Lambda_{1}^{[m_1]}({\bf \Delta_\Phi^{p'}}(X))
={\bf \Delta_\Phi^{ p'-\text{$m_1$}e_1}}\left(X-\lim_{q_{1}\to \infty} \phi_{1}^{q_{1}} (X)\right)
\leq {\bf \Delta_\Phi^{ p'-\text{$m_1$}e_1}}(X)\leq X
$$
for any ${\bf p}'=(m_1,p_2,\ldots, p_k)$ with ${\bf p}'\leq {\bf m}$. Hence and
using again relation \eqref{Psi}, when $i=2$, ${\bf p}=(0,m_2, p_3\ldots, p_k)$, and
$Y=X-\lim_{q_{1}\to \infty} \phi_{1}^{q_{1}} (X)\geq 0$,  we obtain
\begin{equation*}
\begin{split}
0\leq \Lambda_2^{[m_2]}\left({\bf \Delta_\Phi^{ p''-\text{$m_1$}e_1}}\left(X-\lim_{q_{1}\to \infty} \phi_{1}^{q_{1}} (X)\right)\right)
&={\bf \Delta_\Phi^{ p''-\text{$m_1$}e_1-\text{$m_2$}e_2}}
\lim_{q_{2}\to \infty}\lim_{q_{1}\to \infty}\left(id- \phi_{2}^{q_{2}}\right)\circ
\left(id- \phi_{1}^{q_{1}}\right)(X)\\
&\leq {\bf \Delta_\Phi^{ p''-\text{$m_1$}e_1-\text{$m_2$}e_2}}(X)\leq X
\end{split}
\end{equation*}
for any ${\bf p}''=(m_1,m_2, p_3,\ldots, p_k)$. Continuing  this process, a repeated application of \eqref{Psi}, leads to the relation
\begin{equation*}
\label{PsiPsi}
0\leq \Lambda_k^{[m_k]}\left(\cdots \left(\Lambda_1^{[m_1]}({\bf \Delta_{\Phi}^m}(X))\right)\right)=
\lim_{q_k\to\infty}\ldots \lim_{q_1\to\infty}  (id-\phi_{k}^{q_k})\circ\cdots \circ(id-\phi_{1}^{q_1})(X)\leq X,
\end{equation*}
where ${\bf m}=(m_1,\ldots,m_k)$.  Since ${\bf \Delta_{\Phi}^m}(X)\geq 0$, we can  easily see that
$$
\sum \phi_k^{s_k^{(1)}+\cdots +s_k^{(m_k)}}\circ\cdots \circ\phi_1^{s_1^{(1)}+\cdots +s_1^{(m_1)}} ({\bf\Delta_\Phi^m}(X))
=\Lambda_k^{[m_k]}\left(\cdots \left(\Lambda_1^{[m_1]}({\bf \Delta_{\Phi}^m}(X))\right)\right),
$$
where
the summation is taken over all $(s_1^{(1)},\ldots, s_1^{(m_1)},\ldots, s_k^{(1)},\ldots, s_k^{(m_k)})\in \ZZ_+^{m_1+\cdots +m_k}$ and the convergence is in the weak operator topology. As in the proof of Theorem \ref{reproducing}, one can show that the left-hand side of the equality above coincides with
$$
\sum_{(s_1,\ldots,s_k)\in \ZZ_+^k}\left(\begin{matrix} s_1+m_1-1\\m_1-1\end{matrix}\right)\cdots \left(\begin{matrix} s_k+m_k-1\\m_k-1\end{matrix}\right)\phi_1^{s_1}\circ \cdots \circ \phi_k^{s_k}({\bf\Delta_\phi^m}(X)).
$$
Now, one can easily complete  the proof.
\end{proof}

 We recall that  $Y\in \cC_{\geq}({\bf \Delta}_{\Phi}^{\bf m})^{+}$ is called pure if, for each $i\in \{1,\ldots, k\}$, $\varphi_i^s(Y)\to 0$ weakly as $s\to \infty$.

\begin{proposition}\label{pure2} Let ${\bf m}=(m_1,\ldots, m_k)\in \NN^k$ and let  $\Phi=(\phi_1,\ldots, \phi_k)$ be a $k$-tuple of commuting  positive linear maps on $B(\cH)$ such that each $\phi_i$ is weakly continuous on bounded sets, and  let $Y\in B(\cH)$ be a positive operator.
\begin{enumerate}
  \item[(i)] If
 $Y\in \cC_{\geq}({\bf \Delta}_{\Phi}^{\bf m})^+$, then
       \begin{equation*}
0\leq
\lim_{q_k\to\infty}\ldots \lim_{q_1\to\infty}  (id-\phi_{k}^{q_k})\circ\cdots \circ(id-\phi_{1}^{q_1})(Y)\leq Y.
\end{equation*}
\item[(ii)] The operator $Y\in \cC_{\geq}({\bf \Delta}_{\Phi}^{\bf m})^+$ is  pure if and only if
\begin{equation*}
\lim_{{\bf q}=(q_1,\ldots, q_k)\in \ZZ_+^k}(id-\phi_{k}^{q_k})\circ\cdots \circ(id-\phi_{1}^{q_1})(Y)=Y.
\end{equation*}
\end{enumerate}
\end{proposition}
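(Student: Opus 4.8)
The plan is to read off part (i) directly from Theorem \ref{reproducing2} and to reduce part (ii) to a single order inequality comparing $\phi_i^{q_i}(Y)$ with the truncation $(id-\phi_k^{q_k})\circ\cdots\circ(id-\phi_1^{q_1})(Y)$. Part (i) requires nothing new: since $Y\in\cC_{\geq}({\bf \Delta}_{\Phi}^{\bf m})^{+}$, Theorem \ref{reproducing2} applies, and the chain of inequalities obtained in its proof asserts precisely that
$$
0\leq\Lambda_k^{[m_k]}\Big(\cdots\Big(\Lambda_1^{[m_1]}\big({\bf \Delta}_{\Phi}^{\bf m}(Y)\big)\Big)\Big)=\lim_{q_k\to\infty}\cdots\lim_{q_1\to\infty}(id-\phi_k^{q_k})\circ\cdots\circ(id-\phi_1^{q_1})(Y)\leq Y,
$$
which is the content of (i).

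The engine of part (ii) is the following claim: for every $Y\in\cC_{\geq}({\bf \Delta}_{\Phi}^{\bf m})^{+}$, every ${\bf q}=(q_1,\ldots,q_k)\in\ZZ_+^k$ and every $i\in\{1,\ldots,k\}$,
$$
0\leq(id-\phi_k^{q_k})\circ\cdots\circ(id-\phi_1^{q_1})(Y)\leq(id-\phi_i^{q_i})(Y),\quad\text{hence}\quad 0\leq\phi_i^{q_i}(Y)\leq Y-(id-\phi_k^{q_k})\circ\cdots\circ(id-\phi_1^{q_1})(Y).
$$
I would prove this by induction on $k$, in the spirit of Proposition \ref{Delta-ineq}(i) and the opening of the proof of Theorem \ref{reproducing2}; only positivity of the maps, not weak continuity, is used. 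It is convenient to run the induction under the weaker hypothesis that $Y\geq0$ and ${\bf \Delta}_{\Phi}^{\bf p}(Y)\geq0$ for all ${\bf 0}\leq{\bf p}\leq(1,\ldots,1)$, which holds on $\cC_{\geq}({\bf \Delta}_{\Phi}^{\bf m})^{+}$ because each $m_i\geq1$. For $k=1$ the defining inequality $(id-\phi_1)(Y)\geq0$ forces $\phi_1^{q_1}(Y)\leq\phi_1(Y)\leq Y$, so $(id-\phi_1^{q_1})(Y)=Y-\phi_1^{q_1}(Y)\geq0$ and the claim holds with equality. For the inductive step put $Z:=(id-\phi_k^{q_k})(Y)$ and write the composition, using commutativity, as $(id-\phi_{k-1}^{q_{k-1}})\circ\cdots\circ(id-\phi_1^{q_1})$ applied to $Z$; one checks that $Z$ satisfies the analogous hypothesis for the shorter tuple $(\phi_1,\ldots,\phi_{k-1})$ (which amounts to ${\bf \Delta}_{\Phi}^{({\bf p}',1)}(Y)\geq0$ for ${\bf 0}\leq{\bf p}'\leq(1,\ldots,1)$), so the inductive hypothesis yields $0\leq(id-\phi_{k-1}^{q_{k-1}})\circ\cdots\circ(id-\phi_1^{q_1})(Z)\leq Z=(id-\phi_k^{q_k})(Y)$ — already settling $i=k$ — and also $\leq(id-\phi_i^{q_i})(Z)$ for $i<k$; combining the latter with the elementary inequality $(id-\phi_i^{q_i})\circ(id-\phi_k^{q_k})(Y)\leq(id-\phi_i^{q_i})(Y)$ (a consequence of $\phi_i^{q_i}(Y)\leq Y$ and positivity of $\phi_k$) closes the induction. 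I expect the bookkeeping with several commuting positive maps here to be the only delicate point; everything afterward is a short squeeze.

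Granting the claim, part (ii) follows. If $(id-\phi_k^{q_k})\circ\cdots\circ(id-\phi_1^{q_1})(Y)\to Y$ in the weak operator topology, then $Y-(id-\phi_k^{q_k})\circ\cdots\circ(id-\phi_1^{q_1})(Y)\to0$ weakly; given $h\in\cH$ and $\varepsilon>0$, choose $N$ so that $\langle[Y-(id-\phi_k^{q_k})\circ\cdots\circ(id-\phi_1^{q_1})(Y)]h,h\rangle<\varepsilon$ whenever all $q_j\geq N$, and then take $q_i=s\geq N$ with the remaining coordinates equal to $N$; the claim gives $0\leq\langle\phi_i^s(Y)h,h\rangle<\varepsilon$ for all $s\geq N$, so $\phi_i^s(Y)\to0$ weakly (by polarization) for each $i$, i.e. $Y$ is pure. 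Conversely, if $Y$ is pure, expand by inclusion--exclusion
$$
Y-(id-\phi_k^{q_k})\circ\cdots\circ(id-\phi_1^{q_1})(Y)=\sum_{\emptyset\neq S\subseteq\{1,\ldots,k\}}(-1)^{|S|+1}\prod_{j\in S}\phi_j^{q_j}(Y);
$$
for each nonempty $S$ pick $i\in S$ and note $0\leq\prod_{j\in S}\phi_j^{q_j}(Y)=\phi_i^{q_i}\big(\prod_{j\in S\setminus\{i\}}\phi_j^{q_j}(Y)\big)\leq\phi_i^{q_i}(Y)$, since $0\leq\prod_{j\in S\setminus\{i\}}\phi_j^{q_j}(Y)\leq Y$ (iterating $\phi_j(Y)\leq Y$). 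As $\phi_i^{q_i}(Y)\to0$ weakly when $q_i\to\infty$, each of the finitely many summands tends to $0$ weakly as ${\bf q}\to\infty$ in $\ZZ_+^k$, hence so does their sum, and therefore $(id-\phi_k^{q_k})\circ\cdots\circ(id-\phi_1^{q_1})(Y)\to Y$ weakly, which completes the proof.
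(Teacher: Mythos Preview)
Your proof is correct and follows essentially the same route as the paper's. The paper obtains the key chain $0\leq (id-\phi_{k}^{q_k})\circ\cdots\circ(id-\phi_{1}^{q_1})(Y)\leq (id-\phi_{i}^{q_i})(Y)\leq Y$ in one stroke via the telescoping identity $(id-\phi_j^{q_j})=\sum_{s=0}^{q_j-1}\phi_j^{s}\circ(id-\phi_j)$ applied to all factors at once (which also exhibits the net as increasing), whereas you reach the same inequality by induction on $k$; and the paper proves (i) self-containedly rather than by invoking Theorem~\ref{reproducing2}, but since that theorem's proof contains exactly the displayed inequality your citation is legitimate. Your inclusion--exclusion argument for the ``pure $\Rightarrow$ limit $=Y$'' direction makes explicit what the paper compresses into ``Hence, we can deduce\ldots''.
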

\begin{proof}
 Since  $(id-\phi_{k})\circ\cdots \circ(id-\phi_{1})(Y)\geq 0$ and  taking into account that $\phi_{1}, \ldots, \phi_{k}$ are commuting, we have
 $$
 0\leq (id-\phi_{k}^{q_k})\circ\cdots \circ(id-\phi_{1}^{q_1})(Y)
 =\sum_{s_k=0}^{q_k-1}\phi_{k}^{s_k}\circ\cdots \circ\sum_{s_1=0}^{q_1-1}\phi_{1}^{s_1}\circ
(id-\phi_{k})\circ\cdots \circ(id-\phi_{1})(Y).
$$
Therefore, $\{(id-\phi_{k}^{q_k})\cdots (id-\phi_{1}^{q_1})(Y)\}_{{\bf q}=(q_1,\ldots, q_k)\in \ZZ_+^k}$ is an  increasing sequence of positive operators.
Note also that
$$
 0\leq (id-\phi_{k}^{q_k})\circ\cdots \circ(id-\phi_{1}^{q_1})(Y)\leq  (id-\phi_{{k-1}}^{q_{k-1}})\circ\cdots \circ(id-\phi_{1}^{q_1})(Y)\leq\cdots \leq   (id-\phi_{1}^{q_1})(Y)\leq Y
$$
and, similarly, we have
$$
0\leq (id-\phi_{k}^{q_k})\circ\cdots \circ(id-\phi_{1}^{q_1})(Y)\leq (id-\phi_{i}^{q_i})(Y)\leq Y
$$
for each $i\in \{1,\ldots, k\}$.
Hence, we can deduce that  an  operator $Y\in \cC_{\geq}({\bf \Delta}_{\Phi}^{\bf m})^+$ satisfies relation in (ii) if and only if
$\phi_{i}^s(Y)\to 0$ weakly as $s\to\infty$    for each $i\in \{1,\ldots,k\}$.
\end{proof}

\begin{proposition}\label{exemp} Let
$\Phi=(\varphi_1,\ldots, \varphi_k)$ be a $k$-tuple of commuting, power bounded,   positive linear maps on $B(\cH)$ and let
${\bf m}=(m_1,\ldots, m_k)\in \NN^k$.
If $Y\in B(\cH)$ is a positive operator such that $\phi_{i}(Y)\leq Y$ for $i\in \{1,\ldots, k\}$, then the following statements hold.
\begin{enumerate}
  \item[(i)] If  $
m_1 \phi_{1}(Y)+\cdots + m_k\phi_{k}(Y)\leq Y,
$
then  $Y\in \cC_{\geq}({\bf \Delta}_{\Phi}^{\bf m})^+$.
\item[(ii)]
 If  $Y\in \cC_{\geq}({\bf \Delta}_{\Phi}^{\bf m})^+$, then   ${\bf \Delta}_{\Phi}^{\bf m}(Y)= 0$ if and only if
$$(id-\phi_{1})\circ\cdots \circ(id-\phi_{k})(Y) =0.
$$
\end{enumerate}
\end{proposition}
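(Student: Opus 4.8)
I will deduce part (i) from the following self-contained fact, proved by induction on the number $k$ of maps: \emph{if $\psi_1,\dots,\psi_k$ are commuting positive linear maps on $B(\cH)$, $C\geq 0$, and $C\geq\sum_{i=1}^k a_i\psi_i(C)$ for some ${\bf a}=(a_1,\dots,a_k)\in\NN^k$, then ${\bf \Delta}_\Psi^{\bf p}(C)\geq 0$ for every ${\bf 0}\leq{\bf p}\leq{\bf a}$}, where $\Psi:=(\psi_1,\dots,\psi_k)$. Part (i) is the special case $\psi_i=\varphi_i$, ${\bf a}={\bf m}$, $C=Y$: the hypothesis $m_1\varphi_1(Y)+\cdots+m_k\varphi_k(Y)\leq Y$ is precisely $C\geq\sum a_i\psi_i(C)$ (and then $\varphi_i(Y)\leq Y$ is automatic, since $\varphi_i(Y)\leq m_i\varphi_i(Y)\leq Y$), so together with $Y\geq 0$ we obtain $Y\in\cC_{\geq}({\bf \Delta}_\Phi^{\bf m})^+$. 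Note that power-boundedness is not needed for (i).

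The base case $k=1$ of the fact is elementary: if $\psi$ is positive, $C\geq 0$, and $C\geq p\,\psi(C)$ with $p\in\ZZ_+$, then $(id-\psi)^p(C)\geq 0$. Indeed, by induction on $p$, set $D:=(id-\psi)(C)=C-\psi(C)$, which is $\geq 0$ because $C\geq p\psi(C)\geq\psi(C)$; since $D-(p-1)\psi(D)=\big(C-p\psi(C)\big)+(p-1)\psi^2(C)\geq 0$, the inductive hypothesis applied to $D$ gives $(id-\psi)^p(C)=(id-\psi)^{p-1}(D)\geq 0$. For the step $k-1\to k$, fix ${\bf 0}\leq{\bf p}\leq{\bf a}$ and put $C_1:=(id-\psi_k)^{p_k}(C)$. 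By the one-variable case $C_1\geq 0$ (as $C\geq a_k\psi_k(C)\geq p_k\psi_k(C)$). The crux is that $C_1$ still satisfies the defining inequality for the remaining maps, i.e.\ $C_1\geq\sum_{i=1}^{k-1}a_i\psi_i(C_1)$: by commutativity this says $(id-\psi_k)^{p_k}(\widetilde C)\geq 0$ for $\widetilde C:=C-\sum_{i<k}a_i\psi_i(C)$, and here $\widetilde C=a_k\psi_k(C)+\big(C-\sum_{i=1}^k a_i\psi_i(C)\big)\geq 0$ while $\widetilde C-p_k\psi_k(\widetilde C)=\big(C-\sum_{i=1}^k a_i\psi_i(C)\big)+(a_k-p_k)\psi_k(C)+p_k\psi_k\big(\sum_{i<k}a_i\psi_i(C)\big)\geq 0$, so the one-variable case applies to $\widetilde C$. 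Finally the inductive hypothesis for $(\psi_1,\dots,\psi_{k-1})$ and $C_1$ yields ${\bf \Delta}_{(\psi_1,\dots,\psi_{k-1})}^{(p_1,\dots,p_{k-1})}(C_1)\geq 0$, which by commutativity equals ${\bf \Delta}_\Psi^{\bf p}(C)$.

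For part (ii), one implication is immediate: if $(id-\varphi_1)\circ\cdots\circ(id-\varphi_k)(Y)=0$, then, writing ${\bf 1}:=(1,\dots,1)$ and using ${\bf 1}\leq{\bf m}$ with commutativity, ${\bf \Delta}_\Phi^{\bf m}(Y)={\bf \Delta}_\Phi^{{\bf m}-{\bf 1}}\big({\bf \Delta}_\Phi^{\bf 1}(Y)\big)=0$. For the converse, power-boundedness enters through the sub-lemma: \emph{if $\psi$ is positive and power bounded, $l\geq 1$, and $V_0,\dots,V_l$ are positive operators with $V_{j+1}=(id-\psi)(V_j)$ and $V_l=0$, then $V_1=0$}. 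Indeed, if $l\geq 2$ then $V_l=0$ gives $\psi(V_{l-1})=V_{l-1}$, hence $\psi^n(V_{l-1})=V_{l-1}$; and $V_{l-1}=(id-\psi)(V_{l-2})$ gives $\psi(V_{l-2})=V_{l-2}-V_{l-1}$, hence $\psi^n(V_{l-2})=V_{l-2}-nV_{l-1}$ for all $n$; power-boundedness makes $\{V_{l-2}-nV_{l-1}\}_n$ norm-bounded, which (since $V_{l-1}\geq 0$) forces $V_{l-1}=0$, and induction on $l$ finishes it. To conclude (ii) when ${\bf \Delta}_\Phi^{\bf m}(Y)=0$, peel coordinates off one at a time, by induction on $k$ (base case $k=1$ being the sub-lemma): apply the sub-lemma with $\psi=\varphi_1$ to $V_j:={\bf \Delta}_\Phi^{(j,m_2,\dots,m_k)}(Y)$, $0\leq j\leq m_1$ — each $V_j$ is $\geq 0$ because it is a defect ${\bf \Delta}_\Phi^{\bf q}(Y)$ with ${\bf 0}\neq{\bf q}\leq{\bf m}$ (or $V_0$, of exponent $(0,m_2,\dots,m_k)$), $V_{j+1}=(id-\varphi_1)(V_j)$, and $V_{m_1}={\bf \Delta}_\Phi^{\bf m}(Y)=0$ — to get ${\bf \Delta}_\Phi^{(1,m_2,\dots,m_k)}(Y)=0$; then $Y':=(id-\varphi_1)(Y)\geq 0$ lies in $\cC_{\geq}({\bf \Delta}_{\Phi'}^{{\bf m}'})^+$ with $\Phi'=(\varphi_2,\dots,\varphi_k)$, ${\bf m}'=(m_2,\dots,m_k)$ (since ${\bf \Delta}_{\Phi'}^{(r_2,\dots,r_k)}(Y')={\bf \Delta}_\Phi^{(1,r_2,\dots,r_k)}(Y)\geq 0$), and ${\bf \Delta}_{\Phi'}^{{\bf m}'}(Y')=0$, so by the inductive hypothesis $(id-\varphi_2)\circ\cdots\circ(id-\varphi_k)\circ(id-\varphi_1)(Y)=0$, i.e.\ ${\bf \Delta}_\Phi^{\bf 1}(Y)=0$ by commutativity.

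I expect the fact underlying part (i) to be the main obstacle. A naive induction on $|{\bf p}|$ — trying to preserve ${\bf \Delta}_\Phi^{\bf p}(Y)\geq 0$ alongside $\sum(m_i-p_i)\varphi_i({\bf \Delta}_\Phi^{\bf p}(Y))\leq{\bf \Delta}_\Phi^{\bf p}(Y)$ — stalls, because after one further step one is left needing $(id-\varphi_j)$ applied to ${\bf \Delta}_\Phi^{\bf p}(Y)-\sum(m_i-p_i)\varphi_i({\bf \Delta}_\Phi^{\bf p}(Y))$ to remain positive, which it need not. Inducting on $k$ instead, the essential and slightly delicate point is the verification above that $(id-\varphi_k)^{p_k}$ preserves the inequality $C\geq\sum a_i\varphi_i(C)$ once the $\varphi_k$-term is absorbed; everything else is bookkeeping with commutativity. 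For part (ii) the only subtlety is that positivity of all mixed defects does not by itself propagate ${\bf \Delta}_\Phi^{\bf m}(Y)=0$ down to ${\bf \Delta}_\Phi^{\bf 1}(Y)=0$ — power-boundedness is exactly what removes the fixed-vector obstruction in the sub-lemma.
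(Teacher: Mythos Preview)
Your proof is correct. For part (ii) your argument is essentially the paper's: both reduce $(id-\varphi_i)^{m_i}$ to $(id-\varphi_i)$ one coordinate at a time via the one-variable fact that for power-bounded positive $\psi$ and $D\geq 0$ with $(id-\psi)(D)\geq 0$, $(id-\psi)^\gamma(D)=0$ forces $(id-\psi)(D)=0$. The paper cites this as Lemma~6.2 of \cite{Po-Berezin}, while you prove it directly as your sub-lemma; the subsequent coordinate-by-coordinate reduction is then identical in spirit.

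For part (i) the routes genuinely differ. The paper invokes a characterization from \cite{Po-Berezin-poly} (which uses power-boundedness) reducing membership in $\cC_{\geq}({\bf\Delta}_\Phi^{\bf m})^{sa}$ to positivity of the ``corner'' defects $(id-\varphi_1)^{\epsilon_1 m_1}\circ\cdots\circ(id-\varphi_k)^{\epsilon_k m_k}(Y)$ with $\epsilon_i\in\{0,1\}$, and checks these by a short sequential induction: listing the factors as $(id-\phi_{i_1})\cdots(id-\phi_{i_p})$ with the appropriate repetitions and maintaining the sandwich $Y\geq Y_{i_n}\geq Y-\sum_{j\leq n}\phi_{i_j}(Y)\geq 0$. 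Your induction on the number $k$ of maps instead proves \emph{all} defects ${\bf\Delta}_\Phi^{\bf p}(Y)\geq 0$ directly, the crux being your verification that $(id-\psi_k)^{p_k}$ preserves the inequality $C\geq\sum_{i<k}a_i\psi_i(C)$. What your approach buys is self-containment (no external characterization is cited) and, as you note, the observation that power-boundedness is unnecessary for (i); what the paper's approach buys is brevity once the characterization is available, together with the rather elegant two-sided bound on the sequential defects.
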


\begin{proof}
  To prove part (i), we recall (see \cite{Po-Berezin-poly}) that if
$\Phi=(\varphi_1,\ldots, \varphi_k)$ is a $k$-tuple of commuting,   power bounded, positive linear maps on $B(\cH)$,
   $Y\in B(\cH)$ is self-adjoint,  and ${\bf m}:=(m_1,\ldots, m_k)\in \NN^k$, then
  $$Y\in \cC_{\geq}({\bf \Delta}_{\Phi}^{\bf m})^{sa}\quad \text{ \rm
   if and only if }\quad
  (id-\varphi_1)^{\epsilon_1 m_1}\circ\cdots \circ(id-\varphi_k)^{\epsilon_k m_k}(Y)\geq 0 $$
   for all $\epsilon_i\in \{0,1\}$ with $(\epsilon_1,\ldots, \epsilon_k)\neq 0$.

  Let $p:=m_1+\cdots +m_k$ and   set $i_j:=1$ if $1\leq j\leq m_1$, $i_j:=2$ if $m_1+1\leq j\leq m_1+m_2,\ldots$, and $i_j:=k$ if $m_1+\cdots +m_{k-1}+1\leq j\leq m_1+\cdots +m_k$. Due to the remarks above, to prove (ii) is equivalent to showing that if $\sum_{j=1}^p \phi_{{i_j}}(Y)\leq Y$, then
$$
(id-\phi_{{i_1}})\cdots (id-\phi_{{i_p}})(Y)\geq 0.
$$
Set $Y_{i_0}:=Y$ and $Y_{i_j}:=(id -\phi_{{i_j}})(Y_{i_{j-1}})$ if $j\in \{1,\ldots, p\}$. We proceed inductively.
Note that
$Y=Y_{i_0}\geq Y_{i_1}=(id -\phi_{{i_1}})(Y)\geq 0$.   Let $n<p$ and assume that
$$
Y\geq Y_{i_n}\geq (id-\phi_{i_1}-\cdots -\phi_{{i_n}})(Y)\geq 0.
$$
Hence, we deduce that
\begin{equation*}
\begin{split}
Y&\geq Y_{i_n}\geq Y_{i_{n+1}}= Y_{i_n}-\phi_{{i_{n+1}}}(Y_{i_n})\\
&\geq (id-\phi_{{i_1}}-\cdots -\phi_{{i_n}})(Y)
-\phi_{{i_{n+1}}}(Y),
\end{split}
\end{equation*}
which proves item (i).
Now, we prove part (ii).
If  $Y\in \cC_{\geq}({\bf \Delta}_{\Phi}^{\bf m})^+$, then
   $$
   (id-\phi_{1})^{p_1}\circ\cdots \circ(id-\phi_{k})^{p_k}(Y)\geq 0
    $$
for any $p_i\in \{0,1,\ldots, m_i\}$ and
       $i\in \{1,\ldots,k\}$. Due to   Lemma 6.2 from  \cite{Po-Berezin}, if $\varphi:B(\cH)\to B(\cH)$ is a power bounded positive linear map such that $D\in B(\cH)$ is a positive operator with $(id-\varphi)(D)\geq 0$, and $\gamma\geq 1$,  then
$$
(id-\varphi)^\gamma(D)=0 \quad \text{if and only if} \quad (id-\varphi)(D)=0.
$$
Applying this result  in our setting when
$\varphi=\phi_{1}$, $\gamma=m_1$, and $D= (id-\phi_{2})^{m_2}\circ\cdots \circ(id-\phi_{k})^{m_k}(Y)\geq 0$, we deduce that   relation ${\bf \Delta}_{\Phi}^{\bf m}(Y)= 0$ is equivalent to
$(id-\phi_{1})(D)=0$. Due to the commutativity of $\phi_{1},\cdots, \phi_{k}$, the latter equality is equivalent to
$(id-\phi_{2})^{m_2}(\Lambda)=0$, where
 $$\Lambda:=(id-\phi_{3})^{m_3}\circ\cdots \circ (id-\phi_{k})^{m_k}\circ(id-\phi_{1})(Y)\geq 0.
 $$
 Applying again the result mentioned above,  we deduce that the latter equality is equivalent to $(id-\phi_{2})(\Lambda)=0$. Continuing this process, we can complete the proof of part (ii).
\end{proof}

\bigskip

\section{ Generalized Noncommutative  Berezin transforms  }

In this section, we introduce  a class of generalized (constrained) noncommutative  Berezin kernels ${\bf K}_\omega$ associated with certain compatible tuples $\omega:=({\bf f,m, A},R,\cQ)$.
These kernels will play an important role in proving most of the similarity results.
We also prove  that
  the elements of the noncommutative cones  $C_{\geq}({\bf \Delta_{{\bf \Phi}_{\bf A}}^m})^+$ and   $C_{\geq}^{pure}({\bf \Delta_{{\bf \Phi}_{\bf A}}^m})^+$
are in one-to-one correspondence
 with the elements of certain  classes
of extended noncommutative Berezin transforms.

Let ${\bf n}:=(n_1,\ldots, n_k)$, where $n_i\in\NN$.
For each $i\in \{1,\ldots, k\}$,
let $\FF_{n_i}^+$ be the free monoid on $n_i$ generators
$g_{1}^i,\ldots, g_{n_i}^i$ and the identity $g_{0}^i$.  The length of $\alpha\in
\FF_{n_i}^+$ is defined by $|\alpha|:=0$ if $\alpha=g_0^i$  and
$|\alpha|:=p$ if
 $\alpha=g_{j_1}^i\cdots g_{j_p}^i$, where $j_1,\ldots, j_p\in \{1,\ldots, n_i\}$.
If $Z_{i,1},\ldots,Z_{i,n_i}$  are  noncommuting indeterminates,   we
denote $Z_{i,\alpha}:= Z_{i,j_1}\cdots Z_{i,j_p}$  and $Z_{i,g_0^i}:=1$.
 Let  $f_i:= \sum_{\alpha\in
\FF_{n_i}^+} a_{i,\alpha} Z_{i,\alpha}$, \ $a_{i,\alpha}\in \CC$,  be a formal power series in $n_i$ noncommuting indeterminates $Z_{i,1},\ldots,Z_{i,n_i}$. We say that $f_i$ is
a {\it
positive regular free holomorphic function} if the following conditions hold:
$a_{i,\alpha}\geq 0$ for
any $\alpha\in \FF_{n_i}^+$, \ $a_{i,g_{0}^i}=0$,
   $a_{i,g_{j}^i}>0$ for  $j=1,\ldots, n_i$, and
 $$
\limsup_{p\to\infty} \left( \sum_{\alpha\in \FF_{n_i}^+,|\alpha|=p}
|a_{i,\alpha}|^2\right)^{1/2p}<\infty.
 $$
 For each $i\in \{1,\ldots,k\}$,  let $f_i:=\sum\limits_{\alpha\in \FF_{n_i}^+, |\alpha|\geq 1} a_{i,\alpha} Z_{i,\alpha}$ be
a positive regular free holomorphic function in $n_i$ variables
 and let $A_i:=(A_{i,1},\ldots, A_{i,n_i})\in B(\cH)^{n_i}$ be an $n_i$-tuple of
 operators such that
$\sum_{|\alpha|\geq 1} a_{i,\alpha} A_{i,\alpha} A_{i,\alpha}^* $ is convergent
in the weak operator topology.
 One can easily prove that the map
$\Phi_{f_i,A_i}:B(\cH)\to B(\cH)$, defined by
$$\Phi_{f_i,A_i}(X)=\sum_{|\alpha|\geq 1} a_{i,\alpha} A_{i,\alpha}
XA_{i,\alpha}^*,\qquad X\in B(\cH),
$$
where the convergence is in the weak operator topology, is a
 completely positive linear map which is WOT-continuous on bounded
 sets. Moreover, if $0<r<1$, then
$$\Phi_{f_i,A_i}(X)=\text{\rm WOT-}\lim_{r\to1} \Phi_{f_i,rA_i}(X), \qquad
X\in B(\cH).
$$
We denote by $B(\cH)^{n_1}\times_c\cdots \times_c B(\cH)^{n_k}$
   the set of all tuples  ${\bf X}=({ X}_1,\ldots, { X}_k)\in B(\cH)^{n_1}\times\cdots \times B(\cH)^{n_k}$, where ${ X}_i:=(X_{i,1},\ldots, X_{i,n_i})\in B(\cH)^{n_i}$, $i\in \{1,\ldots, k\}$,
     with the property that, for any $p,q\in \{1,\ldots, k\}$, $p\neq q$, the entries of ${ X}_p$ are commuting with the entries of ${ X}_q$. In this case we say that ${ X}_p$ and ${ X}_q$ are commuting tuples of operators.

 Let
${\bf A}:=({ A}_1,\ldots, {A}_k)\in  B(\cH)^{n_1}\times_c\cdots \times_c B(\cH)^{n_k}$, where ${ A}_i:=(A_{i,1},\ldots, A_{i,n_i})\in B(\cH)^{n_i} $ for all   $i=1,\ldots, k$,  be such that $\Phi_{f_i, A_i}(I)$ is well-defined in the weak operator topology.  If ${\bf p}:=(p_1,\ldots, p_k)\in \ZZ_+^k$ and ${\bf f}:=(f_1,\ldots,f_k)$, we denote  $\Phi_{\bf A}:=(\Phi_{f_1,A_1},\ldots, \Phi_{f_k,A_k})$ and    define the {\it defect   mapping}
${\bf \Delta_{f,A}^p}: B(\cH)\to  B(\cH)$ by setting
$$
{\bf \Delta_{f,A}^p} ={\bf \Delta}_{\bf \Phi_A}^{\bf p}:=(id-\Phi_{f_1,A_1})^{p_1}\circ\cdots \circ(id-\Phi_{f_k,A_k})^{p_k}.
$$

Let ${\bf n}:=(n_1,\ldots, n_k)$ and ${\bf m}:=(m_1,\ldots, m_k)$, where $n_i,m_i\in\NN$  and let ${\bf f}:=(f_1,\ldots,f_k)$ be a $k$-tuple of positive regular free holomorphic functions.
In \cite{Po-Berezin-poly}, we developed an operator model theory for the  noncommutative polydomain
$$
{\bf D_f^m}(\cH):=\left\{ {\bf X}=(X_1,\ldots, X_k)\in B(\cH)^{n_1}\times_c\cdots \times_c B(\cH)^{n_k}: \ {\bf \Delta_{f,X}^p}(I)\geq 0 \ \text{ for }\ {\bf 0}\leq {\bf p}\leq {\bf m}\right\}.
$$
We refer to ${\bf D_f^m}:=\{{\bf D_f^m}(\cH): \ \cH \text{ is a Hilbert space}\}$ as the {\it abstract noncommutative (regular) polydomain}, and ${\bf D_f^m}(\cH)$ as its representation on the Hilbert space $\cH$.
Let $H_{n_i}$ be
an $n_i$-dimensional complex  Hilbert space with orthonormal basis
$e_1^i,\dots,e_{n_i}^i$.
  We consider the full Fock space  of $H_{n_i}$ defined by
$$F^2(H_{n_i}):=\CC1\oplus \bigoplus_{p\geq 1} H_{n_i}^{\otimes p},$$
where   $H_{n_i}^{\otimes p}$ is the
(Hilbert) tensor product of $p$ copies of $H_{n_i}$. Set $e_\alpha^i :=
e^i_{j_1}\otimes \cdots \otimes e^i_{j_p}$ if
$\alpha=g^i_{j_1}\cdots g^i_{j_p}\in \FF_{n_i}^+$
 and $e^i_{g^i_0}:= 1\in \CC$.
It is  clear that $\{e^i_\alpha:\alpha\in\FF_{n_i}^+\}$ is an orthonormal
basis of $F^2(H_{n_i})$.
 We  define
 the {\it weighted left creation  operators} $W_{i,j}:F^2(H_{n_i})\to
F^2(H_{n_i})$,   associated with the abstract noncommutative
domain  ${\bf D}_{f_i}^{m_i} $    by setting
\begin{equation}\label{Wij}
W_{i,j} e_\alpha^i:=\frac {\sqrt{b_{i,\alpha}^{(m_i)}}}{\sqrt{b_{i,g_j^i
\alpha}^{(m_i)}}} e^i_{g_j^i \alpha}, \qquad \alpha\in \FF_{n_i}^+,
\end{equation}
where
\begin{equation} \label{b-coeff}
  b_{i,g_0^i}^{(m_i)}:=1\quad \text{ and } \quad
 b_{i,\alpha}^{(m_i)}:= \sum_{p=1}^{|\alpha|}
\sum_{{\gamma_1,\ldots,\gamma_p\in \FF_{n_i}^+}\atop{{\gamma_1\cdots \gamma_p=\alpha }\atop {|\gamma_1|\geq
1,\ldots, |\gamma_p|\geq 1}}} a_{i,\gamma_1}\cdots a_{i,\gamma_p}
\left(\begin{matrix} p+m_i-1\\m_i-1
\end{matrix}\right)
\end{equation}
for all  $ \alpha\in \FF_{n_i}^+$ with  $|\alpha|\geq 1$.
For each $i\in \{1,\ldots, k\}$ and $j\in \{1,\ldots, n_i\}$, we
define the operator ${\bf W}_{i,j}$ acting on the tensor Hilbert space
$F^2(H_{n_1})\otimes\cdots\otimes F^2(H_{n_k})$ by setting
$${\bf W}_{i,j}:=\underbrace{I\otimes\cdots\otimes I}_{\text{${i-1}$
times}}\otimes W_{i,j}\otimes \underbrace{I\otimes\cdots\otimes
I}_{\text{${k-i}$ times}},
$$
where the operators $W_{i,j}$ are defined by relation \eqref{Wij}.
 If  ${\bf W}_i:=({\bf W}_{i,1},\ldots,{\bf W}_{i,n_i})$, then   ${\bf W}:=({\bf W}_1,\ldots, {\bf W}_k)$ is  a pure $k$-tuple, i.e. $\phi_{f_i,{\bf W}_i}^s(I)\to 0$ weakly as $s\to \infty$,   in the
noncommutative polydomain $ {\bf D_f^m}(\otimes_{i=1}^kF^2(H_{n_i}))$.
  The $k$-tuple ${\bf W}$
    is the {\it universal model} associated
  with the abstract noncommutative
  polydomain ${\bf D_f^m}$ (see \cite{Po-Berezin-poly}).

In what follows, we introduce a   generalized noncommutative Berezin kernel
associated with any {\it compatible quadruple} $({\bf f,m, A},R)$ satisfying the following
  conditions:
\begin{enumerate}
\item[(i)]  ${\bf f}:=(f_1,\ldots, f_k)$ is a $k$-tuple of positive regular free holomorphic functions with
  $f_i:=\sum_{\alpha_i\in \FF_{n_i}^+} a_{i,\alpha} Z_{i,\alpha}$ and ${\bf m}=(m_1,\ldots,m_k)\in \NN^k$;
\item[(ii)] ${\bf A}:=({ A}_1,\ldots, {A}_k)\in  B(\cH)^{n_1}\times_c\cdots \times_c B(\cH)^{n_k}$, where ${ A}_i:=(A_{i,1},\ldots, A_{i,n_i})\in B(\cH)^{n_i}$, has the property that $\sum_{\alpha_i\in \FF_{n_i}^+} a_{i,\alpha} A_{i,\alpha}A_{i,\alpha}^*$
     is  weakly convergent;
  \item[(iii)] $R\in
B(\cH)$ is a positive operator such that
 $$
\sum_{(s_1,\ldots,s_k)\in \ZZ_+^k}\left(\begin{matrix} s_1+m_1-1\\m_1-1\end{matrix}\right)\cdots \left(\begin{matrix} s_k+m_k-1\\m_k-1\end{matrix}\right)\Phi_{f_1,A_1}^{s_1}\circ \cdots \circ \Phi_{f_k, A_k}^{s_k}(R)\leq b I,
$$
 for some constant $b>0$, where $$\Phi_{f_i, A_i}(X):=\sum_{\alpha\in \FF_{n_i}^+} a_{i,\alpha} A_{i,\alpha}XA_{i,\alpha}^*,\qquad X\in B(\cH).$$
\end{enumerate}
The {\it generalized noncommutative Berezin kernel} associated with a compatible
quadruple $({\bf f,m, A},R)$ is the operator

   $${\bf K}_{\bf f,A}^R: \cH \to F^2(H_{n_1})\otimes \cdots \otimes  F^2(H_{n_k}) \otimes  \overline{R^{1/2} (\cH)}$$
   defined by
   $$
   {\bf K}_{\bf f,A}^Rh:=\sum_{\beta_i\in \FF_{n_i}^+, i=1,\ldots,k}
   \sqrt{b_{1,\beta_1}^{(m_1)}}\cdots \sqrt{b_{k,\beta_k}^{(m_k)}}
   e^1_{\beta_1}\otimes \cdots \otimes  e^k_{\beta_k}\otimes R^{1/2} A_{1,\beta_1}^*\cdots A_{k,\beta_k}^*h,
   $$
where   the coefficients  $b_{1,\beta_1}^{(m_1)}, \ldots, b_{k,\beta_k}^{(m_k)}$
are given by relation \eqref{b-coeff}. The fact that ${\bf K}_{\bf f,A}^R$ is a well-defined bounded operator will be proved  in the next theorem.

\begin{theorem} \label{Berezin-prop}
The generalized Berezin kernel associated with any compatible  quadruple $({\bf f,m, A},R)$
 has the following properties.
\begin{enumerate}
\item[(i)] ${\bf K}_{\bf f,A}^R$ is a bounded operator  and
$$
({\bf K}_{\bf f,A}^R)^*\,{\bf K}_{\bf f,A}^R=
\sum_{(s_1,\ldots,s_k)\in \ZZ_+^k}\left(\begin{matrix} s_1+m_1-1\\m_1-1\end{matrix}\right)\cdots \left(\begin{matrix} s_k+m_k-1\\m_k-1\end{matrix}\right)\Phi_{f_1,A_1}^{s_1}\circ \cdots \circ \Phi_{f_k, A_k}^{s_k}(R),
$$
where the convergence is  in the weak  operator topology.

\item[(ii)]  For any $i\in \{1,\ldots, k\}$ and $j\in \{1,\ldots, n_i\}$,  $$ {\bf K}_{\bf f,A}^R { A}^*_{i,j}= ({\bf W}_{i,j}^*\otimes I_\cR)  {\bf K}_{\bf f,A}^R,
    $$
    where $\cR:=\overline{R^{1/2} \cH}$ and ${\bf W}=\{{\bf W}_{i,j}\}$
    is the  universal model
     associated
  with the abstract noncommutative
  polydomain ${\bf D_f^m}$.
\end{enumerate}
\end{theorem}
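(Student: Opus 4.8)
The plan is to establish (i) first---the boundedness of ${\bf K}_{\bf f,A}^R$ together with the formula for $({\bf K}_{\bf f,A}^R)^*{\bf K}_{\bf f,A}^R$---and then deduce (ii) by a direct computation with the weighted left creation operators. For (i), fix $h\in\cH$ and note that the vectors $e^1_{\beta_1}\otimes\cdots\otimes e^k_{\beta_k}$, indexed by $(\beta_1,\ldots,\beta_k)\in\FF_{n_1}^+\times\cdots\times\FF_{n_k}^+$, are orthonormal in $F^2(H_{n_1})\otimes\cdots\otimes F^2(H_{n_k})$. Hence the terms of the series defining ${\bf K}_{\bf f,A}^Rh$ are pairwise orthogonal and, using $\|R^{1/2}v\|^2=\langle Rv,v\rangle$ together with the block-commutativity of ${\bf A}$ (which lets one reverse the order of the blocks),
$$
\Big\|\sum_{|\beta_i|\le N}\sqrt{b^{(m_1)}_{1,\beta_1}}\cdots\sqrt{b^{(m_k)}_{k,\beta_k}}\,e^1_{\beta_1}\otimes\cdots\otimes e^k_{\beta_k}\otimes R^{1/2}A_{1,\beta_1}^*\cdots A_{k,\beta_k}^*h\Big\|^2=\sum_{|\beta_i|\le N}b^{(m_1)}_{1,\beta_1}\cdots b^{(m_k)}_{k,\beta_k}\big\langle A_{1,\beta_1}\cdots A_{k,\beta_k}R\,A_{k,\beta_k}^*\cdots A_{1,\beta_1}^*h,h\big\rangle .
$$
Everything then reduces to identifying the right-hand side with $\big\langle\big[\sum_{(s_1,\ldots,s_k)}\binom{s_1+m_1-1}{m_1-1}\cdots\binom{s_k+m_k-1}{m_k-1}\Phi_{f_1,A_1}^{s_1}\circ\cdots\circ\Phi_{f_k,A_k}^{s_k}(R)\big]h,h\big\rangle$.

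The key step is the combinatorial identity for which the coefficients $b^{(m_i)}_{i,\alpha}$ were designed: expanding $\Phi_{f_i,A_i}^{s}(X)=\sum_{|\gamma_1|,\ldots,|\gamma_s|\ge1}a_{i,\gamma_1}\cdots a_{i,\gamma_s}\,A_{i,\gamma_1\cdots\gamma_s}XA_{i,\gamma_1\cdots\gamma_s}^*$ and grouping the words with $\gamma_1\cdots\gamma_s=\alpha$, one reads off from \eqref{b-coeff} that $\sum_{s\ge0}\binom{s+m_i-1}{m_i-1}\Phi_{f_i,A_i}^s(X)=\sum_{\alpha\in\FF_{n_i}^+}b^{(m_i)}_{i,\alpha}A_{i,\alpha}XA_{i,\alpha}^*$ whenever the series converge. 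Since the blocks of ${\bf A}$ commute, the maps $\Phi_{f_1,A_1},\ldots,\Phi_{f_k,A_k}$ commute, and composing the $k$ resulting series---pushing each $\Phi_{f_i,A_i}$, which is WOT-continuous on bounded sets, through the increasing bounded net of positive partial sums---gives
$$
\sum_{(s_1,\ldots,s_k)\in\ZZ_+^k}\binom{s_1+m_1-1}{m_1-1}\cdots\binom{s_k+m_k-1}{m_k-1}\Phi_{f_1,A_1}^{s_1}\circ\cdots\circ\Phi_{f_k,A_k}^{s_k}(R)=\sum_{\beta_1,\ldots,\beta_k}b^{(m_1)}_{1,\beta_1}\cdots b^{(m_k)}_{k,\beta_k}A_{1,\beta_1}\cdots A_{k,\beta_k}R\,A_{k,\beta_k}^*\cdots A_{1,\beta_1}^*,
$$
where the right-hand side is the WOT-limit of an increasing net of positive operators dominated by $bI$ thanks to compatibility condition (iii). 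Combining this with the norm identity above, the squared norms of the partial sums of the defining series of ${\bf K}_{\bf f,A}^Rh$ increase to $\big\langle\big[\sum\binom{s_1+m_1-1}{m_1-1}\cdots\binom{s_k+m_k-1}{m_k-1}\Phi_{f_1,A_1}^{s_1}\circ\cdots\circ\Phi_{f_k,A_k}^{s_k}(R)\big]h,h\big\rangle\le b\|h\|^2$; by orthogonality the series converges in norm, so ${\bf K}_{\bf f,A}^R$ is a well-defined bounded operator with $\|{\bf K}_{\bf f,A}^R\|\le\sqrt b$, and letting $N\to\infty$ yields exactly the asserted weak-operator-topology formula for $({\bf K}_{\bf f,A}^R)^*{\bf K}_{\bf f,A}^R$. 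I expect the \emph{main obstacle} to be precisely the bookkeeping here---interchanging the genuinely $(m_1+\cdots+m_k)$-fold series of positive operators with the composition of the $k$ binomial-weighted series; I would handle it by working throughout with increasing nets of positive operators bounded above by $bI$ (so WOT-limits exist as suprema) and invoking WOT-continuity on bounded sets of the $\Phi_{f_i,A_i}$ together with commutativity of the blocks of ${\bf A}$, after which the identity for the $b^{(m_i)}_{i,\alpha}$ is merely a grouping of words.

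For (ii), with ${\bf K}_{\bf f,A}^R$ now known to be bounded with a norm-convergent defining series, I would apply the bounded operator ${\bf W}_{i,j}^*\otimes I_\cR$ term by term. From \eqref{Wij} the adjoint weighted shift acts by $W_{i,j}^*e^i_{\beta}=\tfrac{\sqrt{b^{(m_i)}_{i,\alpha}}}{\sqrt{b^{(m_i)}_{i,\beta}}}\,e^i_{\alpha}$ if $\beta=g^i_j\alpha$ and $W_{i,j}^*e^i_{\beta}=0$ otherwise; since ${\bf W}_{i,j}$ acts only in the $i$-th tensor slot, applying ${\bf W}_{i,j}^*\otimes I_\cR$ restricts the sum to those $\beta_i$ of the form $g^i_j\alpha$, cancels the factor $\sqrt{b^{(m_i)}_{i,\beta_i}}$, and replaces $A_{i,\beta_i}^*=A_{i,g^i_j\alpha}^*=A_{i,\alpha}^*A_{i,j}^*$. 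Using the block-commutativity of ${\bf A}$ once more, $A_{i,j}^*$ commutes with $A_{i+1,\beta_{i+1}}^*,\ldots,A_{k,\beta_k}^*$, so it can be moved to act directly on $h$; relabeling $\alpha$ as the new $i$-th word---which then ranges over all of $\FF_{n_i}^+$---the resulting series is exactly ${\bf K}_{\bf f,A}^R(A_{i,j}^*h)$. Since $h\in\cH$ was arbitrary, this proves ${\bf K}_{\bf f,A}^RA_{i,j}^*=({\bf W}_{i,j}^*\otimes I_\cR){\bf K}_{\bf f,A}^R$.
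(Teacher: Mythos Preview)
Your proposal is correct and follows essentially the same approach as the paper. The only organizational difference is that the paper introduces the auxiliary iterated sums $\Lambda_i^{[p_i]}(R):=\sum_{s}\Phi_{f_i,A_i}^{s}(\Lambda_i^{[p_i-1]}(R))$ and proves by induction that $\Lambda_i^{[m_i]}(R)=\sum_{\alpha}b_{i,\alpha}^{(m_i)}A_{i,\alpha}RA_{i,\alpha}^*$, then links this to the binomial-weighted sum via the multiplicity count, whereas you read the identity $\sum_{s\ge0}\binom{s+m_i-1}{m_i-1}\Phi_{f_i,A_i}^{s}(X)=\sum_{\alpha}b_{i,\alpha}^{(m_i)}A_{i,\alpha}XA_{i,\alpha}^*$ directly from the definition \eqref{b-coeff}; part (ii) is identical in both.
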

\begin{proof}
Rearranging  WOT-convergent series of positive operators, we deduce that, for each $d\in \NN$,
\begin{equation*}
\begin{split}
\Phi_{f_i, A_i}^d(R)
 &=\sum_{\alpha_1\in \FF_{n_i}^+, |\alpha_1|\geq 1} a_{i,\alpha_1} A_{i,\alpha_1}\left( \cdots
\sum_{\alpha_d\in \FF_{n_i}^+, |\alpha_d|\geq 1} a_{i,\alpha_d} A_{i,\alpha_d}RA_{i,\alpha_d}^*\cdots
       \right)A_{i,\alpha_1}^*\\
     &\qquad =\sum_{\gamma\in \FF_{n_i}^+, |\gamma|\geq d}
     \sum_{{\alpha_1,\ldots,\alpha_d\in \FF_{n_i}^+}\atop{{\alpha_1\cdots \alpha_d=\gamma }\atop {|\alpha_1|\geq
1,\ldots, |\alpha_d|\geq 1}}} a_{i,\alpha_1}\cdots a_{i,\alpha_d} A_{i,\gamma}RA_{i,\gamma}^*
\end{split}
\end{equation*}
and
\begin{equation*}
\begin{split}
\Lambda_{i}^{[1]}(R)&:=\sum_{s=0}^\infty
 \Phi_{f_i,A_i}^{s}(R)
=
R+\sum_{\gamma\in \FF_{n_i}^+, |\gamma|\geq 1}
\left(\sum_{d=1}^{|\gamma|}
     \sum_{{\alpha_1,\ldots,\alpha_d\in \FF_{n_i}^+}\atop{{\alpha_1\cdots \alpha_d
     =\gamma }\atop {|\alpha_1|\geq
1,\ldots, |\alpha_d|\geq 1}}} a_{i,\alpha_1}\cdots
 a_{i,\alpha_d}\right) A_{i,\gamma}RA_{i,\gamma}^*.
\end{split}
\end{equation*}
 Since $\Lambda_{i}^{[j]}(R):=\sum_{s=0}^\infty
  \Phi_{f_i,A_i}^{s}(\Lambda_{i}^{[j-1]}(R))$ for
   $j=2,\ldots, p_i$, using a combinatorial argument and rearranging
    WOT-convergent series  of positive operators, one can prove by induction
    over $p_i$ that
\begin{equation*}
\begin{split}
\Lambda_{i}^{[p_i]}(R)&=R+
\sum_{\alpha\in \FF_{n_i}^+, |\alpha|\geq 1}\left(
 \sum_{p=1}^{|\alpha|}
\sum_{{\gamma_1,\ldots,\gamma_p\in \FF_{n_i}^+}\atop{{\gamma_1\cdots \gamma_p=\alpha }\atop {|\gamma_1|\geq
1,\ldots, |\gamma_p|\geq 1}}} a_{i,\gamma_1}\cdots a_{i,\gamma_p}
\left(\begin{matrix} p+p_i-1\\p_i-1
\end{matrix}\right)\right)A_{i,\alpha} R A_{i,\alpha}^*\\
&
=\sum_{\alpha\in \FF_{n_i}^+} b_{i,\alpha}^{(p_i)} A_{i,\alpha} R A_{i,\alpha}^*.
\end{split}
\end{equation*}
 Now, using the results above, we deduce that
\begin{equation*}
\begin{split}
\|{\bf K}_{\bf f,A}^Rh\|^2 &=
\sum_{\beta_k\in \FF_{n_k}}\cdots \sum_{\beta_1\in \FF_{n_1}}  b_{1,\beta_1}^{(m_1)}\cdots b_{k,\beta_k}^{(m_k)} \left<A_{k,\beta_k}\cdots A_{1,\beta_1}R A_{1,\beta_1}^*\cdots A_{k,\beta_k}^*h, h\right>\\
&=\left< (\Lambda_k^{[m_k]}\circ\cdots \circ\Lambda_1^{[m_1]})(R)h,h\right>\\
&=\sum_{(s_k^{(1)},\ldots,s_k^{(m_k)})\in \ZZ_+^{m_k}}\cdots \sum_{(s_1^{(1)},\ldots, s_1^{(m_1)})\in \ZZ_+^{m_1}} \left<\left[\phi_{f_k, A_k}^{s_k^{(1)}+\cdots +s_k^{(m_k)}}\circ\cdots \circ\phi_{f_1,A_1}^{s_1^{(1)}+\cdots +s_1^{(m_1)}}(R)\right]h,h\right>\\
&=\sum_{(s_1,\ldots,s_k)\in \ZZ_+^k}\left(\begin{matrix} s_1+m_1-1\\m_1-1\end{matrix}\right)\cdots \left(\begin{matrix} s_k+m_k-1\\m_k-1\end{matrix}\right)\left<\left(\Phi_{f_1,A_1}^{s_1}\circ \cdots \circ \Phi_{f_k, A_k}^{s_k}(R)\right)h,h\right>\leq b I
\end{split}
\end{equation*}
for any  $h\in \cH$.  This proves item (i).
 To prove part (ii),  note that
\begin{equation}\label{WbWb}
  W_{i,j}^*
e^i_{\beta_i} =\begin{cases} \frac
{\sqrt{b_{i,\gamma_i}^{(m_i)}}}{\sqrt{b_{i,\beta_i}^{(m_i)}}}\,e^i_{\gamma_i}& \text{ if
}
\beta_i=g_j^i\gamma_i, \ \, \gamma_i\in \FF_{n_i}^+ \\
0& \text{ otherwise }
\end{cases}
\end{equation}
 for any   $\beta_i\in \FF_{n_i}^+$ and
 $j\in \{1,\ldots, n_i\}$.
 Hence, and using the definition of the generalized noncommutative Berezin kernel, we have
 \begin{equation*}
 \begin{split}
  &({\bf W}_{i,j}^*\otimes I){\bf K}_{\bf f,A}^Rh\\
  & =\sum_{\beta_p\in \FF_{n_p}^+, p\in\{1,\ldots,k\}}
   \sqrt{b_{1,\beta_1}^{(m_1)}}\cdots \sqrt{b_{k,\beta_k}^{(m_k)}}
   e^1_{\beta_1}\otimes \cdots \otimes e^{i-1}_{\beta_{i-1}}\otimes W_{i,j}^*e^i_{\beta_i}\otimes e^{i+1}_{\beta_{i+1}}\otimes \cdots \otimes  e^k_{\beta_k}\otimes R^{1/2} A_{1,\beta_1}^*\cdots A_{k,\beta_k}^*h\\
   & =\sum_{{\beta_p\in \FF_{n_p}^+, p\in \{1,\ldots,k\}\backslash\{i\}} \atop{\gamma_i\in \FF_{n_i}}}
   \sqrt{b_{1,\beta_1}^{(m_1)}}\cdots \sqrt{b_{i,\gamma_i}^{(m_i)}}\cdots \sqrt{b_{k,\beta_k}^{(m_k)}}
   e^1_{\beta_1}\otimes \cdots \otimes  e^{i-1}_{\beta_{i-1}}\otimes e^i_{\gamma_i}\otimes e^{i+1}_{\beta_{i+1}} \otimes \cdots \otimes e^k_{\beta_k}\\
   &\qquad \qquad\qquad \otimes R^{1/2} A_{1,\beta_1}^*\cdots  A_{i-1,\beta_i}^* A_{i,g_j^i \gamma_i}^* A_{i+1, \beta_{i+1}}^*\cdots A_{k,\beta_k}^*h
 \end{split}
 \end{equation*}
for any $h\in \cH$. Using the commutativity of  the tuples $A_1,\ldots,A_k$,
we deduce that
$$
({\bf W}_{i,j}^*\otimes I){\bf K}_{\bf f,A}^R={\bf K}_{\bf f,A}^R A_{i,j}^*
$$
for any $i\in \{1,\ldots, k\}$ and $j\in \{1,\ldots, n_i\}$.
The proof is complete.
\end{proof}

For each $i\in\{1,\ldots, k\}$, let $Z_i:=(Z_{i,1},\ldots, Z_{i,n_i})$ be
an  $n_i$-tuple of noncommuting indeterminates and assume that, for any
$s,t\in \{1,\ldots, k\}$, $s\neq t$, the entries in $Z_s$ are commuting
 with the entries in $Z_t$.  The algebra of all polynomials  in indeterminates $Z_{i,j}$ is denoted by $\CC\left<Z_{i,j}\right>$.
  If $\cQ$ is a left ideal of     polynomials  in $\CC\left<Z_{i,j}\right>$, we define  the noncommutative variety
$$
\cV_\cQ(\cH):=\{{\bf X}=\{X_{i,j}\}\in {\bf D_f^m}(\cH):\ g({\bf X})=0 \text{ for all } g\in \cQ\}.
$$
Consider  the subspace
 \begin{equation*}
 \cM_\cQ:=\overline{\text{\rm span}}\{{\bf W}_{(\alpha)} q({\bf W}_{i,j}) {\bf W}_{(\beta)}(\CC): \
 (\alpha), (\beta)\in \FF_{n_1}^+\times \cdots\times \FF_{n_k}^+, q\in \cQ\},
 \end{equation*}
 where ${\bf W}_{(\alpha)}:={\bf W}_{1,\alpha_1}\cdots {\bf W}_{k,\alpha_k}$  if $(\alpha)=(\alpha_1,\ldots, \alpha_k)$,
 and let
 $$\cN_\cQ:=[\bigotimes_{i=1}^kF^2(H_{n_i})]\ominus \cM_\cQ.
  $$
  Throughout this paper,  we assume that $\cN_\cQ\neq
 \{0\}$. It is easy to see that $\cN_\cQ$ is invariant under each
 operator ${\bf W}_{i,j}^* $  for $i\in \{1,\ldots, k\}$ and  $j\in \{1,\ldots, n_i\}$.  Define ${\bf S}_{i,j}:=P_{\cN_\cQ}
{\bf W}_{i,j}|_{\cN_\cQ}$, where
 $P_{\cN_Q}$ is the orthogonal projection of $\bigotimes_{i=1}^k F^2(H_{n_i})$ onto $\cN_Q$.
 The $k$-tuple ${\bf S}:=({\bf S}_1,\ldots, {\bf S}_k)$, where ${\bf S}_i:=({\bf S}_{i,1}\ldots, {\bf S}_{i,n_i})$, is in the noncommutative variety
 $ {\bf \cV_\cQ}(\cN_\cQ)$ and  plays  the role of {\it universal model}
   for the {\it abstract  noncommutative variety}
   $${\bf \cV_\cQ}:=\{{\bf \cV_\cQ}(\cH):\ \cH \text { is a Hilbert space}\}.
   $$

Let $({\bf f,m, A},R)$ be a compatible quadruple.    In addition, we assume that the $k$-tuple ${\bf A}:=({ A}_1,\ldots, {A}_k)\in  B(\cH)^{n_1}\times_c\cdots \times_c B(\cH)^{n_k}$, where ${ A}_i:=(A_{i,1},\ldots, A_{i,n_i})\in B(\cH)^{n_i}$, has the
property that
$$
q({\bf A})=0,\qquad q\in \cQ.
$$
 Under these conditions,  the tuple  $\omega:=({\bf f,m, A},R,\cQ)$ is called compatible.
  We define the {\it (constrained) noncommutative  Berezin kernel} associated with the  tuple
  $\omega$ to be
the operator ${\bf K}_\omega:\cH\to \cN_\cQ\otimes \overline{ R^{1/2}\cH}$
given by
$${\bf K}_\omega:=(P_{\cN_{\cQ}}\otimes I_{\overline{ R^{1/2}\cH}}){\bf K}_{\bf f,A}^R,
$$
where ${\bf K}_{\bf f,A}^R$ is the generalized Berezin kernel associated with the
quadruple $({\bf f,m, A},R)$.

\begin{theorem}\label{lemma2} Let   ${\bf K}_\omega$ be  the constrained noncommutative
Berezin kernel  associated with a compatible tuple
$\omega:=({\bf f,m, A},R,\cQ)$.  Then
\begin{equation*}
    {\bf K}_\omega { A}^*_{i,j}= ({\bf S}_{i,j}^*\otimes I_\cR)  {\bf K}_\omega,\qquad i\in \{1,\ldots, k\}, j\in \{1,\ldots, n_i\},
\end{equation*}
where  $\cR:=\overline{ R^{1/2}(\cH)}$ and ${\bf S}=\{{\bf S}_{i,j}\}$
    is the  universal model
     associated
  with the abstract noncommutative
  variety $\cV_\cQ$. Moreover,
$$
{\bf K}_\omega^*\,{\bf K}_\omega=
\sum_{(s_1,\ldots,s_k)\in \ZZ_+^k}\left(\begin{matrix} s_1+m_1-1\\m_1-1\end{matrix}\right)\cdots \left(\begin{matrix} s_k+m_k-1\\m_k-1\end{matrix}\right)\Phi_{f_1,A_1}^{s_1}\circ \cdots \circ \Phi_{f_k, A_k}^{s_k}(R),
$$
where the convergence is  in the weak  operator topology.
\end{theorem}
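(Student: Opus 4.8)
The plan is to deduce both assertions from Theorem~\ref{Berezin-prop} by first observing that the compatibility hypothesis $q({\bf A})=0$, $q\in\cQ$, already forces the range of the generalized Berezin kernel ${\bf K}_{\bf f,A}^R$ to lie inside $\cN_\cQ\otimes\cR$. Once this is known, the compression by $P_{\cN_\cQ}\otimes I_\cR$ in the definition of ${\bf K}_\omega$ acts as the identity on $\ran{\bf K}_{\bf f,A}^R$, so ${\bf K}_\omega$ coincides with ${\bf K}_{\bf f,A}^R$ (regarding $\cN_\cQ\otimes\cR$ as a subspace of $\bigl(\bigotimes_{i=1}^kF^2(H_{n_i})\bigr)\otimes\cR$), and both displayed identities follow from the corresponding ones in Theorem~\ref{Berezin-prop}.

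First I would establish the range inclusion. Iterating the intertwining relation ${\bf K}_{\bf f,A}^R A_{i,j}^*=({\bf W}_{i,j}^*\otimes I_\cR){\bf K}_{\bf f,A}^R$ of Theorem~\ref{Berezin-prop}(ii) over words in the generators gives $({\bf W}_{(\gamma)}^*\otimes I_\cR){\bf K}_{\bf f,A}^R={\bf K}_{\bf f,A}^R{\bf A}_{(\gamma)}^*$ for the corresponding products of operators, and hence, by linearity, $\bigl(p({\bf W})^*\otimes I_\cR\bigr){\bf K}_{\bf f,A}^R={\bf K}_{\bf f,A}^R\,p({\bf A})^*$ for every polynomial $p\in\CC\left<Z_{i,j}\right>$. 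Taking $p:=Z_{(\alpha)}\,q\,Z_{(\beta)}$ with $q\in\cQ$ and $(\alpha),(\beta)\in\FF_{n_1}^+\times\cdots\times\FF_{n_k}^+$, for which $p({\bf A})={\bf A}_{(\alpha)}q({\bf A}){\bf A}_{(\beta)}=0$, I would obtain
$$
\left\langle {\bf K}_{\bf f,A}^R h,\ ({\bf W}_{(\alpha)}q({\bf W}){\bf W}_{(\beta)}1)\otimes y\right\rangle
=\left\langle \big[({\bf W}_{(\alpha)}q({\bf W}){\bf W}_{(\beta)})^*\otimes I_\cR\big]{\bf K}_{\bf f,A}^R h,\ 1\otimes y\right\rangle=0
$$
for all $h\in\cH$ and $y\in\cR$. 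Since the vectors ${\bf W}_{(\alpha)}q({\bf W}){\bf W}_{(\beta)}1$ span a dense subspace of $\cM_\cQ$, this shows $\ran{\bf K}_{\bf f,A}^R\perp\cM_\cQ\otimes\cR$, hence $\ran{\bf K}_{\bf f,A}^R\subseteq\cN_\cQ\otimes\cR$ and $(P_{\cN_\cQ}\otimes I_\cR){\bf K}_{\bf f,A}^R={\bf K}_{\bf f,A}^R$, i.e. ${\bf K}_\omega={\bf K}_{\bf f,A}^R$.

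Granting this, the formula for ${\bf K}_\omega^*{\bf K}_\omega$ is immediate, since ${\bf K}_\omega^*{\bf K}_\omega=({\bf K}_{\bf f,A}^R)^*{\bf K}_{\bf f,A}^R$, which is precisely the weakly convergent series of Theorem~\ref{Berezin-prop}(i). For the intertwining relation I would use that $\cN_\cQ$ is invariant under each ${\bf W}_{i,j}^*$, so that ${\bf S}_{i,j}^*={\bf W}_{i,j}^*|_{\cN_\cQ}$, equivalently $({\bf W}_{i,j}^*\otimes I_\cR)|_{\cN_\cQ\otimes\cR}={\bf S}_{i,j}^*\otimes I_\cR$; combining Theorem~\ref{Berezin-prop}(ii) with $\ran{\bf K}_{\bf f,A}^R\subseteq\cN_\cQ\otimes\cR$ then yields
$$
{\bf K}_\omega A_{i,j}^*={\bf K}_{\bf f,A}^R A_{i,j}^*=({\bf W}_{i,j}^*\otimes I_\cR){\bf K}_{\bf f,A}^R=({\bf S}_{i,j}^*\otimes I_\cR){\bf K}_{\bf f,A}^R=({\bf S}_{i,j}^*\otimes I_\cR){\bf K}_\omega
$$
for all $i\in\{1,\ldots,k\}$ and $j\in\{1,\ldots,n_i\}$.

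I expect the main obstacle to be the range inclusion: one has to justify the passage from the single-generator intertwining relation to arbitrary polynomials --- keeping track of the noncommutativity within each block of indeterminates, the cross-commutativity between distinct blocks, and the order in which the factors of ${\bf W}_{(\alpha)}q({\bf W}){\bf W}_{(\beta)}$ are translated into factors of ${\bf A}_{(\alpha)}q({\bf A}){\bf A}_{(\beta)}$ --- and to confirm that $\{{\bf W}_{(\alpha)}q({\bf W}){\bf W}_{(\beta)}1:(\alpha),(\beta)\in\FF_{n_1}^+\times\cdots\times\FF_{n_k}^+,\ q\in\cQ\}$ is total in $\cM_\cQ$. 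This is precisely where the hypothesis $q({\bf A})=0$ enters; the remainder is routine bookkeeping with the properties of ${\bf K}_{\bf f,A}^R$ already recorded in Theorem~\ref{Berezin-prop}.
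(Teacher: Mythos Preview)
Your proposal is correct and follows essentially the same approach as the paper: establish the range inclusion $\ran{\bf K}_{\bf f,A}^R\subseteq\cN_\cQ\otimes\cR$ from the intertwining in Theorem~\ref{Berezin-prop}(ii) together with $q({\bf A})=0$, and then read off both conclusions directly from Theorem~\ref{Berezin-prop}. The paper's argument is slightly more compressed (it checks orthogonality only against vectors of the form $q({\bf W}){\bf W}_{(\alpha)}(1)\otimes y$, leaving the left factor ${\bf W}_{(\gamma)}$ implicit), whereas you spell out the passage to arbitrary $Z_{(\alpha)}qZ_{(\beta)}$ explicitly; this is the same computation.
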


\begin{proof}
Since     ${\bf K}_{\bf f,A}^R { A}^*_{i,j}= ({\bf W}_{i,j}^*\otimes I)  {\bf K}_{\bf f,A}^R$
for any $i\in \{1,\ldots, k\}$ and $j\in \{1,\ldots, n_i\}$, we deduce that
\begin{equation*}
\left< {\bf K}_{\bf f,A}^R  x,  q({\bf W}_{i,j}){\bf W}_{(\alpha)}
(1)\otimes y\right>=\left<x, q({\bf A})A_{(\alpha)}
({\bf K}_{\bf f,A}^R)^*(1\otimes y)\right>= 0
\end{equation*}
for any $x\in \cH$, $y\in \overline{R^{1/2}\cH}$, $(\alpha) \in \FF_{n_1}^+\otimes \cdots \otimes \FF_{n_k}^+$, and any polynomial $q \in \cQ$.  Consequently,
\begin{equation*}
\text{\rm range}\,{\bf K}_{\bf f,A}^R \subseteq \cN_Q\otimes
\overline{ R^{1/2}\cH}.
\end{equation*}
Taking into account the definition of  the  constrained  Berezin
kernel ${\bf K}_\omega:\cH\to \cN_\cQ\otimes \overline{ R^{1/2}\cH}$,  one can
use Theorem \ref{Berezin-prop}   to complete the
proof.
\end{proof}

\begin{remark} \label{more} If $n_i\in \NN\cup \{\infty\}$ for $i\in \{1,\ldots, k\}$, all the results of this section remain true under the additional assumption that
$$ \sum_{\alpha\in \FF_{n_i}^+,|\alpha|=p}
|a_{i,\alpha}|^2<\infty,\qquad \text{ if } \ n_i=\infty,$$
for any $p\in \NN$.
\end{remark}

\begin{lemma}
\label{radial}
Let
${\bf A}=({ A}_1,\ldots, { A}_k)\in B(\cH)^{n_1}\times_c\cdots \times_c B(\cH)^{n_k}$ be such that $\Phi_{f_i, A_i}$ is power bounded  for any $i\in \{1,\ldots, k\}$,  and let $Y\in B(\cH)$ be a positive operator such that  $\Phi_{f_i, A_i}(Y)\leq Y$. If  ${\bf m}\in \ZZ_+^k$  and ${\bf m}\neq 0$, then the following statements are equivalent:
\begin{enumerate}
\item[(i)] $Y\in \cC_{\geq}({\bf \Delta_{f,A}^m})^+$;
    \item[(ii)]    $ {\bf \Delta}_{{\bf f},r{\bf A}}^{\bf m}(Y)\geq 0$ for any $r\in [0, 1]$;
\item[(iii)] there exists $\delta\in (0,1)$ such that $ {\bf \Delta}_{{\bf f},r{\bf A}}^{\bf m}(Y)\geq 0$ for any $r\in (\delta, 1)$;
\end{enumerate}
\end{lemma}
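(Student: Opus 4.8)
The plan is to prove the implications $(i)\Rightarrow(ii)\Rightarrow(iii)\Rightarrow(i)$. The implication $(ii)\Rightarrow(iii)$ is trivial (take $\delta=1/2$). For $(i)\Rightarrow(ii)$, observe that for fixed $r\in[0,1]$ the map $\Phi_{f_i,rA_i}$ is the completely positive map attached to the positive regular free holomorphic function $\tilde f_i:=\sum_{\alpha}r^{2|\alpha|}a_{i,\alpha}Z_{i,\alpha}$, whose coefficients are dominated coefficientwise by those of $f_i$; consequently $\Phi_{f_i,rA_i}(X)\leq\Phi_{f_i,A_i}(X)$ for all $X\geq 0$, and $(\Phi_{f_1,rA_1},\dots,\Phi_{f_k,rA_k})$ is again a commuting tuple because the tuples $A_1,\dots,A_k$ pairwise commute. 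The monotonicity of the noncommutative polydomain cones under this radial contraction of the symbol---established in \cite{Po-Berezin-poly}, in the spirit of Proposition \ref{Delta-ineq}(iii)---then gives $\cC_{\geq}({\bf \Delta_{f,A}^m})^+\subseteq\cC_{\geq}({\bf \Delta}_{{\bf f},r{\bf A}}^{\bf m})^+$, and in particular ${\bf \Delta}_{{\bf f},r{\bf A}}^{\bf m}(Y)\geq 0$.

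The substance of the lemma is $(iii)\Rightarrow(i)$, and the key enabling observation is that, for $0\leq r<1$, each $\Phi_{f_i,rA_i}$ is \emph{pure}. Indeed, rearranging weakly convergent series of positive operators as in the proof of Theorem \ref{Berezin-prop}, $\Phi_{f_i,rA_i}^{s}(X)=\sum_{|\gamma|\geq s}c_{\gamma}^{(s)}\,r^{2|\gamma|}A_{i,\gamma}XA_{i,\gamma}^{*}$ for suitable nonnegative scalars $c_{\gamma}^{(s)}$; since every word $\gamma$ occurring has $|\gamma|\geq s$, we get $0\leq\Phi_{f_i,rA_i}^{s}(I)\leq r^{2s}\Phi_{f_i,A_i}^{s}(I)$, whence $\|\Phi_{f_i,rA_i}^{s}(I)\|\leq r^{2s}M\to 0$ by power boundedness of $\Phi_{f_i,A_i}$. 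Now fix $r\in(\delta,1)$ (discarding the coordinates with $m_i=0$, for which there is nothing to prove). The tuple $\Phi^{(r)}:=(\Phi_{f_1,rA_1},\dots,\Phi_{f_k,rA_k})$ consists of commuting positive linear maps, each pure, and by $(iii)$ we have ${\bf \Delta}_{\Phi^{(r)}}^{\bf m}(Y)={\bf \Delta}_{{\bf f},r{\bf A}}^{\bf m}(Y)\geq 0$ with $Y=Y^{*}$. Proposition \ref{Delta-ineq}(ii) then yields $Y\in\cC_{\geq}^{pure}({\bf \Delta}_{\Phi^{(r)}}^{\bf m})^{+}$; in particular ${\bf \Delta}_{{\bf f},r{\bf A}}^{\bf p}(Y)\geq 0$ for every $\bf p$ with ${\bf 0}\leq{\bf p}\leq{\bf m}$ and every $r\in(\delta,1)$.

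It then remains to let $r\uparrow 1$. Expanding ${\bf \Delta}_{{\bf f},r{\bf A}}^{\bf p}=\sum_{{\bf 0}\leq{\bf j}\leq{\bf p}}(-1)^{|{\bf j}|}\binom{p_1}{j_1}\cdots\binom{p_k}{j_k}\,\Phi_{f_1,rA_1}^{j_1}\circ\cdots\circ\Phi_{f_k,rA_k}^{j_k}$, it suffices to prove that each composition $\Phi_{f_1,rA_1}^{j_1}\circ\cdots\circ\Phi_{f_k,rA_k}^{j_k}(Y)$ converges, in the weak operator topology, to $\Phi_{f_1,A_1}^{j_1}\circ\cdots\circ\Phi_{f_k,A_k}^{j_k}(Y)$ as $r\uparrow 1$. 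I would prove this by induction on $j_1+\cdots+j_k$, peeling off the outermost factor $\Phi_{f_{i_0},rA_{i_0}}$: since $r\mapsto\Phi_{f_i,rA_i}$ is increasing on positive operators, each $\Phi_{f_i,rA_i}$ is positive, and $\Phi_{f_i,rA_i}(Y)\leq\Phi_{f_i,A_i}(Y)\leq Y$, the inner composition $Z_r$ satisfies $0\leq Z_r\leq Y$ and is increasing in $r$, hence converges strongly; by the inductive hypothesis its limit is $Z:=\Phi_{f_{i_0},A_{i_0}}^{j_{i_0}-1}\circ\cdots(Y)$. Writing
$$
\Phi_{f_{i_0},rA_{i_0}}(Z_r)-\Phi_{f_{i_0},A_{i_0}}(Z)=\Phi_{f_{i_0},rA_{i_0}}(Z_r-Z)+\bigl(\Phi_{f_{i_0},rA_{i_0}}-\Phi_{f_{i_0},A_{i_0}}\bigr)(Z),
$$
the second term tends to $0$ in the weak operator topology by the radial continuity $\Phi_{f_{i_0},A_{i_0}}(Z)=\text{\rm WOT-}\lim_{r\to 1}\Phi_{f_{i_0},rA_{i_0}}(Z)$, while the first is a positive operator dominated by $\Phi_{f_{i_0},A_{i_0}}(Z)-\Phi_{f_{i_0},A_{i_0}}(Z_r)$, which in turn tends to $0$ weakly because $\Phi_{f_{i_0},A_{i_0}}$ is weakly continuous on bounded sets; hence $\Phi_{f_{i_0},rA_{i_0}}(Z_r)\to\Phi_{f_{i_0},A_{i_0}}(Z)$ weakly. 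Passing to the limit in ${\bf \Delta}_{{\bf f},r{\bf A}}^{\bf p}(Y)\geq 0$ gives ${\bf \Delta}_{{\bf f},{\bf A}}^{\bf p}(Y)\geq 0$ for all ${\bf 0}\leq{\bf p}\leq{\bf m}$, which together with $Y\geq 0$ is precisely $Y\in\cC_{\geq}({\bf \Delta_{f,A}^m})^+$, proving $(i)$.

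The step $(i)\Rightarrow(ii)$ merely repackages known monotonicity of these cones; the genuine difficulty lies in the limit passage of $(iii)\Rightarrow(i)$, where one must interchange $\lim_{r\uparrow 1}$ with compositions of the maps $\Phi_{f_i,rA_i}$---the argument above handles this via monotonicity together with weak continuity of $\Phi_{f_i,A_i}$ on bounded sets, the purity of $\Phi_{f_i,rA_i}$ for $r<1$ being the device that makes Proposition \ref{Delta-ineq}(ii) applicable and so reduces everything to that interchange.
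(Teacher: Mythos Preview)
Your proof is correct and follows the same route as the paper: Proposition~\ref{Delta-ineq}(iii) for $(i)\Rightarrow(ii)$, the trivial implication $(ii)\Rightarrow(iii)$, and for $(iii)\Rightarrow(i)$ the purity of $\Phi_{f_i,rA_i}$ when $r<1$ combined with Proposition~\ref{Delta-ineq}(ii) to obtain ${\bf \Delta}_{{\bf f},r{\bf A}}^{\bf p}(Y)\geq 0$ for every ${\bf p}\leq{\bf m}$, followed by the passage to the limit $r\uparrow 1$. The only difference is in how that limit is justified---the paper expands each $\Phi_{f_1,rA_1}^{q_1}\circ\cdots\circ\Phi_{f_k,rA_k}^{q_k}(Y)$ as a word-indexed series dominated termwise by $Y$ and uses a uniform tail estimate, whereas you argue by induction on the composition depth via monotonicity in $r$ and WOT-continuity of $\Phi_{f_i,A_i}$ on bounded sets; note only a small sign slip: your ``first term'' $\Phi_{f_{i_0},rA_{i_0}}(Z_r-Z)$ is $\leq 0$ (since $Z_r\uparrow Z$), so the intended bound is $0\leq\Phi_{f_{i_0},rA_{i_0}}(Z-Z_r)\leq\Phi_{f_{i_0},A_{i_0}}(Z-Z_r)\to 0$.
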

\begin{proof}
  To prove that (i) implies (ii), we apply Proposition \ref{Delta-ineq} when $\varphi_i=\Phi_{f_i,A_i}$ and $\psi_i=\Phi_{f_i,rA_i}$.
  Since the implication (ii)$\implies$(iii) is obvious, it remains to prove that (iii)$\implies$(i).

Assume that there exists $\delta\in (0,1)$ such that $ {\bf \Delta}_{{\bf f},r{\bf A}}^{\bf m}(Y)\geq 0$ for any $r\in (\delta, 1)$.
Since $\Phi_{f_i, rA_i}$ is power bounded and $\Phi^s_{f_i, rA_i}(I)\leq r^s\Phi^s_{f_i, A_i}(I)$, it is clear that $\Phi_{f_i, rA_i}^s(I)\to 0$ weakly as $s\to\infty$  for each $i\in\{1,\ldots, k\}$. Applying Proposition \ref{Delta-ineq} part (ii), we deduce that
$ {\bf \Delta}_{{\bf f},r{\bf A}}^{\bf p}(Y)\geq 0$ for any  $r\in (\delta, 1)$ and any ${\bf p}\in \ZZ_+^k$ with ${\bf p}\leq {\bf m}$.
 Note that ${\bf \Delta}_{{\bf f},r{\bf A}}^{\bf p}(Y)$ is a linear combination  of products of the form
 $\Phi_{f_1, rA_1}^{q_1}\circ\cdots \circ\Phi_{f_k, rA_k}^{q_k}(Y)$, where $(q_1,\ldots, q_k)\in \ZZ_+^k$, and
 $$
 \Phi_{f_1, A_1}^{q_1}\circ\cdots \circ\Phi_{f_k, A_k}^{q_k}(Y)=\text{\rm WOT-}\lim_{j\to \infty} \sum_{{\alpha_i\in \FF_{n_i}^+}\atop{|\alpha_1|+\cdots +|\alpha_k|\leq j}} c_{\alpha_1,\ldots,\alpha_k} A_{1,\alpha_1}\cdots A_{k,\alpha_k} Y A_{k,\alpha_k}^*\cdots A_{1,\alpha_1}^*\leq Y
 $$
 for some positive constants $c_{\alpha_1,\ldots,\alpha_k}$. If $x\in \cH$ and $\epsilon >0$, then there is $N_0\in \NN$ such that
 $$\sum_{{\alpha_i\in \FF_{n_i}^+}\atop {|\alpha_1|+\cdots +|\alpha_k|\geq j}} c_{\alpha_1,\ldots,\alpha_k} r^{2(|\alpha_1|+\cdots +|\alpha_k|)} \left<A_{1,\alpha_1}\cdots A_{k,\alpha_k}Y A_{k,\alpha_k}^*\cdots A_{1,\alpha_1}^* x,x\right><\epsilon$$ for any $j\geq N_0$
 and $r\in (\delta, 1)$.  One can use this fact  to show that
 $$
 \Phi_{f_1, A_1}^{q_1}\circ\cdots \circ\Phi_{f_k, A_k}^{q_k}(Y)=\text{\rm WOT-}\lim_{r\to 1}
 \Phi_{f_1, rA_1}^{q_1}\circ\cdots \circ\Phi_{f_k, rA_k}^{q_k}(Y).
 $$
 Hence, we deduce that
 $
  {\bf \Delta}_{{\bf f},{\bf A}}^{\bf p}(Y)=\text{\rm WOT-}\lim_{r\to 1}
  {\bf \Delta}_{{\bf f},r{\bf A}}^{\bf p}(Y)\geq 0
  $ \
 for any ${\bf p}\in \ZZ_+^k$ with ${\bf p}\leq {\bf m}$.    This  completes the proof.
 \end{proof}

We introduce now  the  {\it constrained noncommutative Berezin transform} ${\bf
B}_\omega$
 associated with a compatible  tuple $\omega:=({\bf f,m, A},R,\cQ)$ to be the operator
 ${\bf B}_\omega: B(\cN_\cQ)\to B(\cH)$ given by
 $$
 {\bf B}_\omega[\chi]:={\bf K}_\omega^*[\chi\otimes I_\cR] {\bf K}_\omega,\qquad \chi\in B(\cN_\cQ).
 $$
 where ${\bf K}_\omega$ is the  constrained noncommutative Berezin kernel introduced in Section 2, and  $\cR:=\overline{ R^{1/2}(\cH)}$.
 Now, we are ready to   show that
  the elements of the noncommutative cone  $ C_{\geq}({\bf \Delta_{f,A}^m})^+$
are in one-to-one correspondence
 with the elements of a class
of extended noncommutative Berezin transforms.
\begin{theorem}\label{poisson} Let $\cV_\cQ \subset {\bf D_f^m}$ be an abstract noncommutative variety, where  $\cQ$ is a family of
noncommutative homogeneous
 polynomials in indeterminates $\{Z_{i,j}\}$,  and let ${\bf S}=\{{\bf S}_{i,j}\}$
    be its  universal model.
      If
 ${\bf A}:=({ A}_1,\ldots, {A}_k)\in  B(\cH)^{n_1}\times_c\cdots \times_c B(\cH)^{n_k}$, where ${ A}_i:=(A_{i,1},\ldots, A_{i,n_i})\in B(\cH)^{n_i}$ has the property that $ \Phi_{f_i, A_i}$ is well-defined and $q({\bf A})=0$ for any $ q\in \cQ$,
then there is a bijection
$$\Gamma: CP(A,\cV_\cQ)\to C_{\geq}({\bf \Delta_{f,A}^m})^+, \qquad
 \Gamma(\varphi):= \varphi (I),
 $$
  where  $  CP(A,\cV_\cQ)$ is   the set
  of all completely positive linear maps \ $\varphi : \cS
 \to B(\cH)$ such that
$$ \varphi( {\bf S}_{(\alpha)} {\bf S}_{(\beta)}^*) =A_{(\alpha)} \varphi(I) A_{(\beta)}^*,
 \qquad (\alpha), (\beta)\in \FF_{n_1}^+\times \cdots\times \FF_{n_k}^+,
 $$
 where $\cS:=
\overline{\text{\rm span}} \{ {\bf S}_{(\alpha)} {\bf S}_{(\beta)}^*:\ (\alpha), (\beta)\in \FF_{n_1}^+\times \cdots\times \FF_{n_k}^+\}$. Moreover,  if $D\in C_{\geq}({\bf \Delta_{f,A}^m})^+$, then
$\Gamma^{-1}(D)$ coincides
 with the extended noncommutative
Berezin  transform  associated with $\omega:=({\bf f,m, A},R,\cQ)$ which is defined by
$$
  \overline{{\bf B}}_{\omega}[\chi]:=\lim_{r \to 1}
 {\bf K}_{\omega_r }^* (\chi\otimes I) {\bf K}_{\omega_r} ,\qquad \chi\in
   \cS,
$$
where  $\omega_r:=({\bf f,m, {\it r}A},R_r,\cQ)$ and $R_r:={\bf \Delta_{f,{\it r}A}^m}(D)$,
$r\in [0,1]$, and the limit exists in the operator norm topology.
\end{theorem}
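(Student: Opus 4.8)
The plan is to verify, in turn, that $\Gamma$ is well defined, injective and surjective, exhibiting the explicit inverse $\Gamma^{-1}=\overline{{\bf B}}_{\omega}$ in the last step.

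\emph{Well-definedness and injectivity.} Given $\varphi\in CP(A,\cV_\cQ)$, clearly $\varphi(I)\geq 0$; to obtain ${\bf \Delta_{f,A}^p}(\varphi(I))\geq 0$ for $0\leq{\bf p}\leq{\bf m}$ I would fix $r\in[0,1)$ and argue as follows. Since ${\bf S}\in{\bf D_f^m}(\cN_\cQ)$ we have $I\in\cC_{\geq}({\bf \Delta}_{(\Phi_{f_1,S_1},\ldots,\Phi_{f_k,S_k})}^{\bf p})^+$, and because $\Phi_{f_i,rS_i}\leq\Phi_{f_i,S_i}$ with the pairwise commutation hypotheses satisfied, Proposition \ref{Delta-ineq}(iii) gives ${\bf \Delta_{f,rS}^p}(I)\geq 0$. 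For $r<1$ the operator ${\bf \Delta_{f,rS}^p}(I)$ is a radially damped, norm-convergent sum $\sum_{(\alpha)}e_{(\alpha)}r^{2(|\alpha_1|+\cdots+|\alpha_k|)}{\bf S}_{(\alpha)}{\bf S}_{(\alpha)}^*$ with $e_{(\alpha)}\geq 0$; applying the bounded map $\varphi$ termwise and invoking $\varphi({\bf S}_{(\alpha)}{\bf S}_{(\beta)}^*)=A_{(\alpha)}\varphi(I)A_{(\beta)}^*$ yields $\varphi({\bf \Delta_{f,rS}^p}(I))={\bf \Delta_{f,rA}^p}(\varphi(I))$, which is positive by complete positivity of $\varphi$. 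Letting $r\to 1$ and using $\Phi_{f_i,A_i}(X)=\text{\rm WOT-}\lim_{r\to1}\Phi_{f_i,rA_i}(X)$ (hence ${\bf \Delta_{f,A}^p}(\varphi(I))=\text{\rm WOT-}\lim_{r\to1}{\bf \Delta_{f,rA}^p}(\varphi(I))$) gives ${\bf \Delta_{f,A}^p}(\varphi(I))\geq 0$, so $\Gamma$ is well defined. Injectivity is then immediate: the relation $\varphi({\bf S}_{(\alpha)}{\bf S}_{(\beta)}^*)=A_{(\alpha)}\varphi(I)A_{(\beta)}^*$ determines $\varphi$ on $\text{\rm span}\{{\bf S}_{(\alpha)}{\bf S}_{(\beta)}^*\}$, hence on $\cS$ by continuity, once $\varphi(I)$ is known.

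\emph{Surjectivity; compatibility of $\omega_r$.} Fix $D\in\cC_{\geq}({\bf \Delta_{f,A}^m})^+$ and, for $r\in[0,1)$, put $\omega_r:=({\bf f,m,}r{\bf A},R_r,\cQ)$ with $R_r:={\bf \Delta_{f,rA}^m}(D)$. Clauses (i) and (ii) of compatibility for $\omega_r$ are immediate ($r{\bf A}$ inherits the needed weak convergence from ${\bf A}$, and homogeneity of $\cQ$ gives $q(r{\bf A})=r^{\deg q}q({\bf A})=0$). For clause (iii): setting $\Psi_r:=(\Phi_{f_1,rA_1},\ldots,\Phi_{f_k,rA_k})$ and using $\Phi_{f_i,rA_i}\leq\Phi_{f_i,A_i}$, Proposition \ref{Delta-ineq}(iii) gives $D\in\cC_{\geq}({\bf \Delta}_{\Psi_r}^{\bf m})^+$, so $R_r\geq 0$; and since ${\bf \Delta_{f,A}^{e_i}}(D)\geq0$ means $\Phi_{f_i,A_i}(D)\leq D$, one has $\Phi_{f_i,rA_i}^s(D)\leq r^{2s}\Phi_{f_i,A_i}^s(D)\leq r^{2s}D\to0$ in norm as $s\to\infty$, i.e. $D$ is pure with respect to $\Psi_r$. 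Hence, by Theorem \ref{reproducing2} combined with Proposition \ref{pure2}(ii), the reproducing series of Theorem \ref{reproducing} for $\Psi_r$ and $D$ equals $D$:
$$
\sum_{(s_1,\ldots,s_k)\in\ZZ_+^k}\binom{s_1+m_1-1}{m_1-1}\cdots\binom{s_k+m_k-1}{m_k-1}\,\Phi_{f_1,rA_1}^{s_1}\circ\cdots\circ\Phi_{f_k,rA_k}^{s_k}(R_r)=D\leq\|D\|\,I,
$$
which is clause (iii) with $b=\|D\|$. So $\omega_r$ is compatible, and by Theorem \ref{lemma2}, ${\bf K}_{\omega_r}^*{\bf K}_{\omega_r}=D$ for every $r\in[0,1)$.

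\emph{Surjectivity; the limit $r\to1$ and conclusion.} From the intertwining relation of Theorem \ref{lemma2}, ${\bf K}_{\omega_r}(rA_{i,j})^*=({\bf S}_{i,j}^*\otimes I){\bf K}_{\omega_r}$; iterating it and using ${\bf K}_{\omega_r}^*{\bf K}_{\omega_r}=D$ one obtains
$$
{\bf K}_{\omega_r}^*\big({\bf S}_{(\alpha)}{\bf S}_{(\beta)}^*\otimes I\big){\bf K}_{\omega_r}=r^{\,|\alpha_1|+\cdots+|\alpha_k|+|\beta_1|+\cdots+|\beta_k|}\,A_{(\alpha)}D\,A_{(\beta)}^*.
$$
As $r\to 1$ the right side converges in operator norm to $A_{(\alpha)}DA_{(\beta)}^*$, so $\lim_{r\to1}{\bf K}_{\omega_r}^*(\chi\otimes I){\bf K}_{\omega_r}$ exists in norm for $\chi\in\text{\rm span}\{{\bf S}_{(\alpha)}{\bf S}_{(\beta)}^*\}$; the uniform bound $\|{\bf K}_{\omega_r}^*(\chi\otimes I){\bf K}_{\omega_r}\|\leq\|{\bf K}_{\omega_r}^*{\bf K}_{\omega_r}\|\,\|\chi\|=\|D\|\,\|\chi\|$ then propagates the existence of the norm limit to all $\chi\in\cS$ (an $\varepsilon/3$ argument), defining $\overline{{\bf B}}_{\omega}:\cS\to B(\cH)$. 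It is completely positive, being a pointwise-norm limit of completely positive maps, and from the displayed formula together with ${\bf K}_{\omega_r}^*{\bf K}_{\omega_r}=D$ one gets $\overline{{\bf B}}_{\omega}(I)=D$ and $\overline{{\bf B}}_{\omega}({\bf S}_{(\alpha)}{\bf S}_{(\beta)}^*)=A_{(\alpha)}D\,A_{(\beta)}^*=A_{(\alpha)}\overline{{\bf B}}_{\omega}(I)A_{(\beta)}^*$. Hence $\overline{{\bf B}}_{\omega}\in CP(A,\cV_\cQ)$ with $\Gamma(\overline{{\bf B}}_{\omega})=D$, which proves surjectivity; combined with injectivity, $\Gamma^{-1}(D)=\overline{{\bf B}}_{\omega}$.

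\emph{Main difficulty.} The structural skeleton is short; the real work is the convergence bookkeeping for infinite free holomorphic $f_i$ — showing that ${\bf \Delta_{f,rS}^p}(I)$ and the reproducing series for $\Psi_r$ converge in the right topologies and that $\varphi$, resp. the limit $r\to1$, commutes with these sums (this is precisely where the radial damping $r<1$ and the growth condition on the coefficients $a_{i,\alpha}$ enter) — together with the uniform verification of the three compatibility clauses of $\omega_r$. The observation that makes surjectivity work is that $D$ is pure with respect to $\Psi_r$ although the individual maps $\Phi_{f_i,rA_i}$ need not be pure, which is what yields ${\bf K}_{\omega_r}^*{\bf K}_{\omega_r}=D$ via Theorems \ref{reproducing2} and \ref{lemma2} and Proposition \ref{pure2}.
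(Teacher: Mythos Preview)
Your proof is correct and follows the same architecture as the paper's: well-definedness via the $r$-scaled defect inequalities and positivity of $\varphi$, injectivity by density, and surjectivity by building $\overline{{\bf B}}_\omega$ from the compatible tuples $\omega_r$, the intertwining in Theorem~\ref{lemma2}, and an $\epsilon/3$ argument for the norm limit on $\cS$.

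Two remarks. First, a harmless slip: your description of ${\bf \Delta_{f,rS}^p}(I)$ as $\sum_{(\alpha)} e_{(\alpha)} r^{2(|\alpha_1|+\cdots+|\alpha_k|)}{\bf S}_{(\alpha)}{\bf S}_{(\alpha)}^*$ with $e_{(\alpha)}\geq 0$ is not accurate, since expanding $(id-\Phi)^{p}$ produces signs. What you actually need---and what the paper also uses---is only that ${\bf \Delta_{f,rS}^p}(I)\in\cS$ (being a finite combination of norm-convergent sums $\Phi_{f_i,rS_i}^{q_i}(I)$) and that it is $\geq 0$; then positivity of $\varphi$ and the intertwining give ${\bf \Delta_{f,rA}^p}(\varphi(I))\geq 0$.

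Second, your derivation of the identity ${\bf K}_{\omega_r}^*{\bf K}_{\omega_r}=D$ via Theorem~\ref{reproducing2} and Proposition~\ref{pure2}(ii) is a small but genuine improvement over the paper, which invokes Theorem~\ref{reproducing} directly. Theorem~\ref{reproducing} requires each $\Phi_{f_i,rA_i}$ to be pure on $I$, and under the bare hypothesis that $\Phi_{f_i,A_i}(I)$ is merely weakly convergent (no power-boundedness assumed) this is not obvious. Your route only needs $D$ itself to be pure with respect to $\Psi_r$, which you obtain cleanly from $\Phi_{f_i,A_i}(D)\leq D$ and the inequality $\Phi_{f_i,rA_i}^s(D)\leq r^{2s}\Phi_{f_i,A_i}^s(D)\leq r^{2s}D$. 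This is exactly the observation you flag in your ``Main difficulty'' paragraph, and it is the right one.
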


\begin{proof}
 Assume that  $\varphi : \cS
 \to B(\cH)$ is a completely positive map such that
$$ \varphi( {\bf S}_{(\alpha)} {\bf S}_{(\beta)}^*) =A_{(\alpha)} \varphi(I) A_{(\beta)}^*,
 \qquad (\alpha), (\beta)\in \FF_{n_1}^+\times \cdots\times \FF_{n_k}^+.
 $$
 Let ${\bf W}:=\{{\bf W}_{i,j}\}$ be the universal model associated with the abstract noncommutative polydomain ${\bf D_f^m}$. Since
 ${\bf \Delta_{f,W}^p}(I)\geq 0$ for any ${\bf p}\in \ZZ_+^k$ with ${\bf p}\leq {\bf m}$, and $\cN_Q$ is invariant under each operator ${\bf W}_{i,j}^*$, we also have ${\bf \Delta_{f,S}^p}(I)\geq 0$.
 Due to Lemma \ref{radial}, we deduce that
  ${\bf \Delta_{f,{\it r}S}^p}(I)\geq 0$ for any $r\in [0,1)$ and ${\bf p}\in \ZZ_+^k$ with ${\bf p}\leq {\bf m}$.
 Let $f_i:=\sum_{\alpha_i\in \FF_{n_i}^+} a_{i,\alpha} Z_{i,\alpha}$ and note that $\Phi_{f_i, r{\bf S}_i}(I)=\sum_{k=1}^\infty
\sum_{\alpha_i\in \FF_{n_i}^+, |\alpha_i|=k} a_{i,\alpha_i} r^{|\alpha_i|} {\bf S}_{i,\alpha_i} {\bf S}_{i,\alpha_i} ^*\leq
 I$, where the convergence is in the operator norm topology. Consequently,
$\Phi_{f_i, r{\bf S}_i}(I)\in \cS$ and ${\bf \Delta_{f,{\it r}S}^p}(I)\in \cS$ for any  ${\bf p}\in \ZZ_+^k$ with ${\bf p}\leq {\bf m}$.

Setting $D:=\varphi(I)$, we deduce that $D\geq 0$ and
$$
{\bf \Delta_{f,{\it r}A}^p}(D)=\varphi \left(
{\bf \Delta_{f,{\it r}S}^p}(I)\right)\geq 0,\qquad r\in [0,1),
$$
for any  ${\bf p}\in \ZZ_+^k$ with ${\bf p}\leq {\bf m}$. Since  the series
$\sum\limits_{\alpha\in \FF_{n_i}^+, |\alpha_i|\geq 1} a_{i,\alpha_i} A_{i,\alpha_i} A_{i,\alpha_i}^* $ \ is
weakly convergent, we  deduce  that
$\Phi^s_{f_i,A_i}(D)=\text{\rm WOT-}\lim_{r\to1}\Phi_{f_i,rA_i}^s(D)$ for
$s\in\NN$ and, moreover,
$${\bf \Delta_{f,A}^p}(D)=\text{\rm
WOT-}\lim_{r\to1}{\bf \Delta_{f,{\it r}A}^p}(D)\geq 0
$$
for any  ${\bf p}\in \ZZ_+^k$ with ${\bf p}\leq {\bf m}$. This shows that $D\in  C_{\geq}({\bf \Delta_{f,A}^m})^+$.

To
prove that $\Gamma$ is one-to-one, let $\varphi_1$ and $\varphi_2$
be completely positive linear maps on the operator system $\cS$ such that
$ \varphi_j( {\bf S}_{(\alpha)} {\bf S}_{(\beta)}^*) =A_{(\alpha)} \varphi_j(I) A_{(\beta)}^*$ for any
 $(\alpha), (\beta)\in \FF_{n_1}^+\times \cdots\times \FF_{n_k}^+$ and $j=1,2$.  Assume that
$\Gamma(\varphi_1)=\Gamma(\varphi_2)$, i.e.,
$\varphi_1(I)=\varphi_2(I)$. Then we have $\varphi_1( {\bf S}_{(\alpha)} {\bf S}_{(\beta)}^*)=\varphi_2( {\bf S}_{(\alpha)} {\bf S}_{(\beta)}^*)$ for $(\alpha), (\beta)\in \FF_{n_1}^+\times \cdots\times \FF_{n_k}^+$. Taking into account  the continuity of $\varphi_1$ and
$\varphi_2$ in the operator norm, we deduce that
$\varphi_1=\varphi_2$.

 To prove surjectivity of the map $\Gamma$, fix
$D\in C_{\geq}({\bf \Delta_{f,A}^m})^+$. Then
 $D\in  B(\cH)$  is a positive operator with the property
  that
 ${\bf \Delta_{f,{\it r}A}^p}(D)\geq 0$ for any  ${\bf p}\in \ZZ_+^k$ with ${\bf p}\leq {\bf m}$ and $r\in [0,1)$. Since the set $\cQ$ consists of homogeneous noncommutative polynomials in indeterminates $Z_{i,j}$ , we have
 $q(\{rA_{i,j}\})=0$ for any $q\in \cQ$ and $r\in [0, 1)$. We show now that, for each $r\in [0, 1)$,
 the tuple $\omega_r:=({\bf f,m, {\it r}A},R_r,\cQ)$, where $R_r:={\bf \Delta_{f,{\it r}A}^m}(D)$,  is compatible. Indeed, we can use
  Theorem \ref{reproducing} to  obtain
\begin{equation}
\label{Kr*}
D=\sum_{(s_1,\ldots,s_k)\in \ZZ_+^k}\left(\begin{matrix} s_1+m_1-1\\m_1-1\end{matrix}\right)\cdots \left(\begin{matrix} s_k+m_k-1\\m_k-1\end{matrix}\right)\phi_{f_1, rA_1}^{s_1}\circ \cdots \circ \phi_{f_k, rA_k}^{s_k}( R_r), \qquad r\in [0, 1).
\end{equation}
where
 the convergence of the series is in the weak operator topology.
According  to Theorem \ref{lemma2},  the constrained noncommutative Berezin kernel   ${\bf K}_{\omega_r}$,
$r\in [0,1)$,
associated with the compatible tuple $\omega_r:=({\bf f,m, {\it r}A},R_r,\cQ)$,  has the property that

\begin{equation}\label{KK}
    {\bf K}_{\omega_r} (r{A}^*_{i,j})= ({\bf S}_{i,j}^*\otimes I_\cH)  {\bf K}_{\omega_r},
\end{equation}
where    ${\bf S}=\{{\bf S}_{i,j}\}$
    is the  universal model
     associated
  with the abstract noncommutative
  variety $\cV_\cQ$. Moreover,
$$
{\bf K}_{\omega_r}^*\,{\bf K}_{\omega_r}=
\sum_{(s_1,\ldots,s_k)\in \ZZ_+^k}\left(\begin{matrix} s_1+m_1-1\\m_1-1\end{matrix}\right)\cdots \left(\begin{matrix} s_k+m_k-1\\m_k-1\end{matrix}\right)\Phi_{f_1,rA_1}^{s_1}\circ \cdots \circ \Phi_{f_k, rA_k}^{s_k}(R_r),
$$
where the convergence is  in the weak  operator topology.
Hence, and using relation \eqref{Kr*},  we obtain
\begin{equation}\label{K*K}
{\bf K}_{\omega_r}^*{\bf  K}_{\omega_r}= D, \qquad r\in [0,, 1).
\end{equation}
For each  $r\in [0, 1)$ define the operator ${\bf B}_{\omega_r} : \cS  \to B(\cH)$
 by setting
 \begin{equation}\label{pois}
 {\bf B}_{\omega_r}(\chi):= {\bf K}_{\omega_r}^* (\chi\otimes I_\cH) {\bf K}_{\omega_r},
 \qquad \chi\in \cS.
 \end{equation}
  Using relation \eqref{KK} and \eqref{K*K},   we have
\begin{equation}\label{KSK}
{\bf K}_{\omega_r}^* ({\bf S}_{(\alpha)} {\bf S}_{(\beta)}^*\otimes I) {\bf K}_{\omega_r}= r^{|(\alpha)|+ |(\beta)|} A_{(\alpha)} DA_{(\beta)}^*,\qquad
(\alpha), (\beta)\in \FF_{n_1}^+\times \cdots\times \FF_{n_k}^+, \ r\in [0, 1),
\end{equation}
where $|(\alpha)|:=|\alpha_1|+\cdots +|\alpha_k|$ if  $(\alpha)=(\alpha_1,\ldots, \alpha_k)$.
Hence,  and due to relations \eqref{K*K} and  \eqref{pois}, we infer
that ${\bf B}_{\omega_r}$   is a completely positive  linear map with
${\bf B}_{\omega_r}(I)=D$ and  $\|{\bf B}_{\omega_r}\|=\|D\|$ for
$r\in [0,1)$.

Now, we show that $\lim_{r \to 1} {\bf B}_{\omega_r}(\chi)$ exists in the operator norm topology for each $\chi\in \cS$.
Given  a polynomial $p({\bf S}_{i,j}):= \sum_{(\alpha), (\beta)\in \FF_{n_1}^+\times \cdots\times \FF_{n_k}^+} a_{(\alpha)
(\beta)}{\bf S}_{(\alpha)} {\bf S}_{(\beta)}^*$  in the operator system $\cS$, we define
\begin{equation*}
p_D(A_{i,j}):=
\sum_{(\alpha), (\beta)\in \FF_{n_1}^+\times \cdots\times \FF_{n_k}^+} a_{(\alpha) (\beta)}A_{(\alpha)} DA_{(\beta)}^*.
\end{equation*}
The definition is correct since, according to  relation
\eqref{KSK}, we have the following von Neumman type inequality
\begin{equation}\label{von2}
\|p_D(A_{i,j})\|\leq \|D\|\|p({\bf S}_{i,j})\|.
\end{equation}
Now, fix $\chi\in   \cS$ and let
 $\{p^{(s)}({\bf S}_{i,j})\}_{s=1}^\infty$ be a sequence of polynomials in
 $ \cS$ convergent to $\chi$ in the operator norm topology.
 Define the operator
 \begin{equation}\label{fd}
\chi_D(A_{i,j}):= \lim_{s\to\infty}p^{(s)}_D(A_{i,j}).
\end{equation}
 Taking into account relation \eqref{von2}, one can see  that the
  operator $\chi_D(A_{i,j})$ is well-defined and
$
\|\chi_D(A_{i,j})\|\leq \|D\|\|\chi\|.
$
Due to relation  \eqref{KSK}, we have
$
\|p^{(s)}_D(rA_{i,j})\|\leq \|D\|\|p^{(s)}({\bf S}_{i,j})\|,
$
for any $r\in [0,, 1)$.
  Taking into account that ${\bf B}_{\omega_r}$ is a bounded linear operator and using again
   relation \eqref{KSK}, we obtain
 \begin{equation}\label{fdr}\begin{split}
  \lim_{s\to\infty} p^{(s)}_D(rA_{i,j})
 =\lim_{s\to\infty}{\bf K}_{\omega_r}^*(p^{(s)}({\bf S}_{i,j})\otimes I){\bf K}_{\omega_r}
 ={\bf B}_{\omega_r}[\chi],
 \end{split}
 \end{equation}
  for any $r\in [0, 1)$.
 Based on   relations \eqref{fd}, \eqref{fdr}, the fact that
  $\|\chi-p^{(s)}({\bf S}_{i,j})\|\to 0$ as $s\to \infty$, and
 $$
 \lim_{r\to 1}p^{(s)}_D(rA_{i,j})= p^{(s)}_D(A_{i,j}),
 $$
  we can deduce  that
 $
 \lim_{r \to 1} {\bf B}_{\omega_r}[\chi]=\chi_D(A_{i,j})
 $
 in the norm topology. Indeed, we have
 \begin{equation*}
 \begin{split}
\|\chi_D(A_{i,j})-{\bf B}_{\omega_r}[\chi]\|
&\leq \|\chi_D(A_{i,j})-p^{(s)}_D(A_{i,j})\|+\|p^{(s)}_D(A_{i,j})- {\bf B}_{\omega_r}(p^{(s)})\|
 + \|{\bf B}_{\omega_r}(p^{(k)})-{\bf B}_{\omega_r}(\chi)\|\\
&\leq \|\chi-p^{(s)}({\bf S}_{i,j})\|\|D\|+\|p^{(s)}_D(A_{i,j})-p^{(s)}_D(rA_{i,j})\|+\|\chi-p^{(s)}({\bf S}_{i,j})\|\|D\|.
 \end{split}
 \end{equation*}
 Taking into account that  ${\bf B}_{\omega_r}$ is a completely positive linear  map for any $r\in [0, 1)$
   and using
  relation \eqref{KSK},
    we infer  that
 $$
  \overline{{\bf B}}_{\omega}[\chi]:=\lim_{r \to 1}
 {\bf K}_{\omega_r }^* (\chi\otimes I) {\bf K}_{\omega_r}, \qquad   \chi\in
   \cS,
$$
 is a completely positive map such that  $  \overline{{\bf B}}_\omega(I)=D$ and
 $\overline{{\bf B}}_\omega({\bf S}_{(\alpha)} {\bf S}_{(\beta)}^*)=A_{(\alpha)} \overline{{\bf B}}_\omega(I)A_{(\beta)}$,
$\alpha, \beta\in \FF_n^+$.
 The proof is
complete.
\end{proof}
We recall that the variety algebra $\cA(\cV_\cQ)$ is the non-self-adjoint  norm closed algebra generated by universal model $\{{\bf S}_{i,j}\}$ and the identity.
 As a consequence of Theorem \ref{poisson}, we can obtain the following  extension of the noncommutative von
Neumann inequality (see \cite{von}, \cite{Po-von},
\cite{Po-poisson}, \cite{Po-Berezin}, \cite{Po-domains}).

\begin{corollary}\label{VN} Under  the hypotheses of Theorem \ref{poisson}, if
$D\in C_{\geq}({\bf \Delta_{f,A}^m})^+$, then
\begin{equation*}
  \left\|\sum_{(\alpha),(\beta)\in \Lambda} A_{(\alpha)} D
A_{(\beta)}^*\otimes C_{(\alpha),(\beta)}\right\| \leq \|D\|
\left\|\sum_{(\alpha),(\beta)\in \Lambda}
 {\bf S}_{(\alpha)} {\bf S}_{(\beta)}^*\otimes C_{(\alpha),(\beta)}\right\|
 \end{equation*}
 for any
  finite  set $\Lambda\subset \FF_{n_1}^+\times \cdots\times \FF_{n_k}^+$ and $C_{(\alpha),(\beta)}\in
  B(\cE)$, where $\cE$ is a Hilbert space. If, in addition, $D$ is
  an invertible operator, then  the map   $u:
  \cA(\cV_\cQ)\to B(\cH)$ defined by
  $$
  u(q({\bf S})):=q({\bf A}),\qquad q\in\CC\left<Z_{i,j}\right>,
  $$
  is completely bounded and $\|u\|_{cb}\leq \|D^{-1/2}\|\|D^{1/2}\|$.
\end{corollary}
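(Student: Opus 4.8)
The plan is to read both assertions off the extended noncommutative Berezin transform $\overline{{\bf B}}_\omega=\Gamma^{-1}(D)$ produced by Theorem \ref{poisson}, using the auxiliary relations established in its proof. Recall from there that, for each $r\in[0,1)$, the tuple $\omega_r:=({\bf f,m, {\it r}A},R_r,\cQ)$ with $R_r:={\bf \Delta_{f,{\it r}A}^m}(D)$ is compatible, that ${\bf K}_{\omega_r}^*{\bf K}_{\omega_r}=D$ by \eqref{K*K}, and that ${\bf K}_{\omega_r}^*({\bf S}_{(\alpha)}{\bf S}_{(\beta)}^*\otimes I){\bf K}_{\omega_r}=r^{|(\alpha)|+|(\beta)|}A_{(\alpha)}D A_{(\beta)}^*$ for all $(\alpha),(\beta)\in\FF_{n_1}^+\times\cdots\times\FF_{n_k}^+$ by \eqref{KSK}; passing to the norm limit $r\to1$ gives $\overline{{\bf B}}_\omega({\bf S}_{(\alpha)}{\bf S}_{(\beta)}^*)=A_{(\alpha)}D A_{(\beta)}^*$ and $\overline{{\bf B}}_\omega(I)=D$.

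For the von Neumann inequality, I would fix a finite set $\Lambda\subset\FF_{n_1}^+\times\cdots\times\FF_{n_k}^+$ and operators $C_{(\alpha),(\beta)}\in B(\cE)$, pad $\sum_{(\alpha),(\beta)\in\Lambda}{\bf S}_{(\alpha)}{\bf S}_{(\beta)}^*\otimes C_{(\alpha),(\beta)}$ with the identity on $\cR:=\overline{R_r^{1/2}(\cH)}$, and compress by ${\bf K}_{\omega_r}\otimes I_\cE$; by linearity and the relation above,
$$
\sum_{(\alpha),(\beta)\in\Lambda}r^{|(\alpha)|+|(\beta)|}A_{(\alpha)}D A_{(\beta)}^*\otimes C_{(\alpha),(\beta)}=({\bf K}_{\omega_r}\otimes I_\cE)^*\Big(\sum_{(\alpha),(\beta)\in\Lambda}{\bf S}_{(\alpha)}{\bf S}_{(\beta)}^*\otimes I_\cR\otimes C_{(\alpha),(\beta)}\Big)({\bf K}_{\omega_r}\otimes I_\cE).
$$
Since $\|{\bf K}_{\omega_r}\otimes I_\cE\|=\|{\bf K}_{\omega_r}\|=\|D\|^{1/2}$ by \eqref{K*K}, the right-hand side has norm at most $\|D\|\,\big\|\sum_{(\alpha),(\beta)\in\Lambda}{\bf S}_{(\alpha)}{\bf S}_{(\beta)}^*\otimes C_{(\alpha),(\beta)}\big\|$; letting $r\to1$ (the left side is a finite sum, so the convergence is in norm) yields the inequality. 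Equivalently, this records that the completely positive map $\overline{{\bf B}}_\omega$ has $\|\overline{{\bf B}}_\omega\|_{cb}=\|\overline{{\bf B}}_\omega(I)\|=\|D\|$.

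For the case in which $D$ is invertible I would use the Paulsen rescaling trick. Since $D>0$, the map $\psi:\cS\to B(\cH)$, $\psi(\chi):=D^{-1/2}\overline{{\bf B}}_\omega(\chi)D^{-1/2}$, is completely positive and, as $\psi(I)=D^{-1/2}DD^{-1/2}=I$, unital, so $\|\psi\|_{cb}=1$. Setting $B_{i,j}:=D^{-1/2}A_{i,j}D^{1/2}$ and specialising $\overline{{\bf B}}_\omega({\bf S}_{(\alpha)}{\bf S}_{(\beta)}^*)=A_{(\alpha)}D A_{(\beta)}^*$ to $(\beta)=(g_0^1,\ldots,g_0^k)$ gives $\psi({\bf S}_{(\alpha)})=D^{-1/2}A_{(\alpha)}D^{1/2}=B_{(\alpha)}$, hence $\psi(q({\bf S}))=q({\bf B})=D^{-1/2}q({\bf A})D^{1/2}$ for every $q\in\CC\left<Z_{i,j}\right>$. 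Polynomials in $\{{\bf S}_{i,j}\}$ lie in the norm-closed operator system $\cS$, so $\cA(\cV_\cQ)\subseteq\cS$; moreover $q({\bf S})=0$ forces $q({\bf A})=0$, so $u(q({\bf S})):=q({\bf A})$ is well defined on polynomials and, being equal to $D^{1/2}\psi(\cdot)D^{-1/2}$ there, extends continuously to $\cA(\cV_\cQ)$. Amplifying $\psi$ and applying it to $[q_{s,t}({\bf S})]_{m\times m}$ yields $[q_{s,t}({\bf B})]_{m\times m}$, while $[q_{s,t}({\bf A})]_{m\times m}$ is obtained from the latter by left multiplication by the block-diagonal operator with entries $D^{1/2}$ and right multiplication by the one with entries $D^{-1/2}$; therefore
$$
\big\|[q_{s,t}({\bf A})]_{m\times m}\big\|\le\|D^{1/2}\|\,\|D^{-1/2}\|\,\big\|[q_{s,t}({\bf S})]_{m\times m}\big\|,
$$
that is, $\|u\|_{cb}\le\|D^{1/2}\|\,\|D^{-1/2}\|$.

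I do not expect a genuine obstacle, since the analytic work already sits inside Theorem \ref{poisson}. The point needing the most care is the passage from \eqref{KSK}, which carries the weight $r^{|(\alpha)|+|(\beta)|}$ for $r<1$, to the unweighted identity for $\overline{{\bf B}}_\omega$ used in both parts; this is exactly the norm limit $r\to1$ already justified in the proof of Theorem \ref{poisson}, and it is what makes $\psi$ send $q({\bf S})$ to $q({\bf B})$ with ${\bf B}_{i,j}=D^{-1/2}A_{i,j}D^{1/2}$. Keeping the tensor-factor bookkeeping straight when the auxiliary space $\cE$ and the padding $I_\cR$ are inserted is the only other routine matter.
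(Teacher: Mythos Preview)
Your argument for the first inequality is essentially identical to the paper's: compress $\sum {\bf S}_{(\alpha)}{\bf S}_{(\beta)}^*\otimes C_{(\alpha),(\beta)}$ by ${\bf K}_{\omega_r}\otimes I_\cE$, use $\|{\bf K}_{\omega_r}\|^2=\|D\|$, and let $r\to 1$.

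For the second assertion you take a slightly different (though equivalent) route. The paper applies the first part directly: from $q({\bf A})=q({\bf A})D^{1/2}\cdot D^{-1/2}$ one gets
\[
\|q({\bf A})\|^2\le\|D^{-1/2}\|^2\|q({\bf A})Dq({\bf A})^*\|\le\|D^{-1/2}\|^2\|D\|\,\|q({\bf S})q({\bf S})^*\|=\|D^{-1/2}\|^2\|D^{1/2}\|^2\|q({\bf S})\|^2,
\]
the middle inequality being the von Neumann inequality just proved (with scalar coefficients), and the same estimate persists for matrices. You instead rescale $\overline{{\bf B}}_\omega$ to the \emph{unital} completely positive map $\psi=D^{-1/2}\overline{{\bf B}}_\omega(\cdot)D^{-1/2}$, observe $\psi(q({\bf S}))=D^{-1/2}q({\bf A})D^{1/2}$, and invoke $\|\psi\|_{cb}=1$. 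Both arguments encode the same similarity $B_{i,j}=D^{-1/2}A_{i,j}D^{1/2}$; the paper's is marginally more self-contained (it only uses the inequality from the first part rather than the general fact that unital completely positive maps are complete contractions), while yours makes the underlying structure—conjugation to a tuple with a unital completely positive functional calculus—more transparent.
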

\begin{proof} Note that relation \eqref{KSK} implies
\begin{equation*}
({\bf K}_{\omega_r}^* \otimes I_\cE)({\bf S}_{(\alpha)} {\bf S}_{(\beta)}^*\otimes I\otimes
C_{(\alpha),(\beta)}) ({\bf K}_{\omega_r}\otimes I_\cE)= r^{|(\alpha)|+ |(\beta)|}
A_{(\alpha)} DA_{(\beta)}^*\otimes C_{(\alpha),(\beta)}
\end{equation*}
for any  $(\alpha),(\beta)\in
\FF_{n_1}^+\times \cdots\times \FF_{n_k}^+$ and $ \ r\in [0, 1)$.
Taking into account that ${\bf K}_{\omega_r}^* {\bf K}_{\omega_r}= D$ for $ r\in [0, 1)$, one can easily
deduce the von Neumann type inequality. Now, assume that $D$ is invertible. Then the first part of this
corollary implies
\begin{equation*}
\begin{split}
\|q({\bf A})\|^2&\leq \|D^{-1/2}\|^2\|q({\bf A})D^{1/2}\|^2
= \|D^{-1/2}\|^2 \|q({\bf A})Dq({\bf A})^*\|\\
&\leq \|D^{-1/2}\|^2 \|D\| \|q({\bf S})q({\bf S})^*\|
=\|D^{-1/2}\|^2\|D^{1/2}\|^2 \|q({\bf S})\|^2
\end{split}
\end{equation*}
for any noncommutative polynomial $q$ in indeterminates $\{Z_{i,j}\}$. A similar result holds if we
pass to matrices. Therefore, we  deduce that $u$ is completely
bounded with $\|u\|_{cb}\leq \|D^{-1/2}\|\|D^{1/2}\|$. The proof is
complete.
\end{proof}

In what follows, we  study the noncommutative cone $C_{\geq}^{pure}({\bf \Delta_{f,A}^m})^+$
of all  pure solutions of the operator inequalities $
{\bf\Delta_{f,A}^p}(X)\geq 0$ for  any  ${\bf p}\in \ZZ_+^k$ with ${\bf p}\leq {\bf m}$.

\begin{theorem}\label{poisson2} Let $\cV_\cQ \subset {\bf D_f^m}$ be an abstract noncommutative variety, where  $\cQ$ is a family of
noncommutative
 polynomials in indeterminates $\{Z_{i,j}\}$ such that $\cN_\cQ\neq \{0\}$,  and let ${\bf S}=\{{\bf S}_{i,j}\}$
    be its  universal model.
      If
 ${\bf A}:=({ A}_1,\ldots, {A}_k)\in  B(\cH)^{n_1}\times_c\cdots \times_c B(\cH)^{n_k}$, where ${ A}_i:=(A_{i,1},\ldots, A_{i,n_i})\in B(\cH)^{n_i}$ has the property that $ \Phi_{f_i, A_i}$ is well-defined and $q({\bf A})=0$ for any $ q\in \cQ$,
then there is a bijection
$$\Gamma: CP^{w^*}(A,\cV_\cQ)\to C^{pure}_{\geq}({\bf \Delta_{f,A}^m})^+, \qquad
 \Gamma(\varphi):= \varphi (I),
 $$
  where  $  CP^{w^*}(A,\cV_\cQ)$ is   the set
  of all $w^*$-continuous completely positive linear maps \ $\varphi : \cS^{w^*}
 \to B(\cH)$ such that
$$ \varphi( {\bf S}_{(\alpha)} {\bf S}_{(\beta)}^*) =A_{(\alpha)} \varphi(I) A_{(\beta)}^*,
 \qquad (\alpha), (\beta)\in \FF_{n_1}^+\times \cdots\times \FF_{n_k}^+,
 $$
 where $$\cS^{w^*}:=
\overline{\text{\rm span}}^{w^*} \{ {\bf S}_{(\alpha)} {\bf S}_{(\beta)}^*:\ (\alpha), (\beta)\in \FF_{n_1}^+\times \cdots\times \FF_{n_k}^+\}.
 $$
 In addition,  if $D\in C^{pure}_{\geq}({\bf \Delta_{f,A}^m})^+$, then
$\Gamma^{-1}(D)$ coincides
 with the constrained noncommutative
Berezin  transform  associated with $\omega:=({\bf f,m, A},R,\cQ)$ which is defined by
$$
  {\bf B}_{\omega}[\chi]:=
 {\bf K}_{\omega }^* (\chi\otimes I) {\bf K}_{\omega} ,\qquad \chi\in
   \cS,
$$
where  $\omega:=({\bf f,m,A},R,\cQ)$ and $R:={\bf \Delta_{f,A}^m}(D)$.
\end{theorem}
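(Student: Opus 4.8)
The plan is to mimic the proof of Theorem~\ref{poisson}, replacing the radial approximation $r\to1$ by an appeal to purity; this substitution is forced because here $\cQ$ need not consist of homogeneous polynomials, so the scaling ${\bf A}\mapsto r{\bf A}$ would not in general preserve the relations $q({\bf A})=0$.

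To see that $\Gamma$ is well defined, let $\varphi\in CP^{w^*}(A,\cV_\cQ)$ and set $D:=\varphi(I)\geq0$. As in the proof of Theorem~\ref{poisson}, ${\bf \Delta_{f,S}^p}(I)\geq0$ for ${\bf 0}\leq{\bf p}\leq{\bf m}$, since ${\bf \Delta_{f,W}^p}(I)\geq0$ and $\cN_\cQ$ is coinvariant under ${\bf W}$; moreover, because $\Phi_{f_i,{\bf S}_i}(I)=\sum_{|\alpha|\geq1}a_{i,\alpha}{\bf S}_{i,\alpha}{\bf S}_{i,\alpha}^*$ is a weakly convergent series of elements of $\mathrm{span}\{{\bf S}_{(\alpha)}{\bf S}_{(\beta)}^*\}$, each ${\bf \Delta_{f,S}^p}(I)$ lies in $\cS^{w^*}$. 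The $w^*$-continuity of $\varphi$ lets me push $\varphi$ through these series, and combined with the moment relation this gives ${\bf \Delta_{f,A}^p}(D)=\varphi({\bf \Delta_{f,S}^p}(I))\geq0$, so $D\in C_{\geq}({\bf \Delta_{f,A}^m})^+$. Purity follows the same way: the universal model ${\bf S}$ is pure (being the compression of the pure model ${\bf W}$ to the coinvariant subspace $\cN_\cQ$, one has $\Phi_{f_i,{\bf S}_i}^s(I)\to0$ weakly), hence $\Phi_{f_i,A_i}^s(D)=\varphi(\Phi_{f_i,{\bf S}_i}^s(I))\to0$ weakly for each $i$, i.e. $D\in C^{pure}_{\geq}({\bf \Delta_{f,A}^m})^+$. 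Injectivity of $\Gamma$ is immediate: if $\varphi_1(I)=\varphi_2(I)$, the moment relation forces $\varphi_1=\varphi_2$ on $\mathrm{span}\{{\bf S}_{(\alpha)}{\bf S}_{(\beta)}^*\}$, and $w^*$-continuity extends the equality to $\cS^{w^*}$.

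For surjectivity, fix $D\in C^{pure}_{\geq}({\bf \Delta_{f,A}^m})^+$ and put $R:={\bf \Delta_{f,A}^m}(D)\geq0$. The first step is to check that $\omega:=({\bf f,m,A},R,\cQ)$ is compatible. Since $D$ is a pure element of the cone, Proposition~\ref{pure2}(ii) identifies $D$ with $\lim_{{\bf q}}(id-\Phi_{f_k,A_k}^{q_k})\circ\cdots\circ(id-\Phi_{f_1,A_1}^{q_1})(D)$, and Theorem~\ref{reproducing2}, applied to the commuting $k$-tuple $\Phi_{\bf A}=(\Phi_{f_1,A_1},\ldots,\Phi_{f_k,A_k})$ of maps WOT-continuous on bounded sets, shows this limit equals
$$
\sum_{(s_1,\ldots,s_k)\in\ZZ_+^k}\binom{s_1+m_1-1}{m_1-1}\cdots\binom{s_k+m_k-1}{m_k-1}\Phi_{f_1,A_1}^{s_1}\circ\cdots\circ\Phi_{f_k,A_k}^{s_k}(R).
$$
Thus the defining inequality of a compatible quadruple holds with $b=\|D\|$, and together with $q({\bf A})=0$ for $q\in\cQ$ this makes $\omega$ compatible. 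By Theorem~\ref{lemma2}, the constrained Berezin kernel ${\bf K}_\omega$ satisfies $A_{i,j}{\bf K}_\omega^*={\bf K}_\omega^*({\bf S}_{i,j}\otimes I_\cR)$, with $\cR=\overline{R^{1/2}(\cH)}$, and ${\bf K}_\omega^*{\bf K}_\omega=D$. The map ${\bf B}_\omega[\chi]:={\bf K}_\omega^*(\chi\otimes I_\cR){\bf K}_\omega$ is a normal completely positive map on $B(\cN_\cQ)$ with ${\bf B}_\omega(I)=D$, and iterating the intertwining relation gives ${\bf B}_\omega({\bf S}_{(\alpha)}{\bf S}_{(\beta)}^*)=A_{(\alpha)}{\bf K}_\omega^*{\bf K}_\omega A_{(\beta)}^*=A_{(\alpha)}DA_{(\beta)}^*$. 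Hence $\varphi:={\bf B}_\omega|_{\cS^{w^*}}$ belongs to $CP^{w^*}(A,\cV_\cQ)$ and $\Gamma(\varphi)=D$; by the injectivity already established, $\Gamma^{-1}(D)$ must be this ${\bf B}_\omega|_{\cS^{w^*}}$, which is the asserted formula.

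The main obstacle, just as in Theorem~\ref{poisson}, is the bookkeeping with infinite series: one must check that ${\bf \Delta_{f,S}^p}(I)$ and $\Phi_{f_i,{\bf S}_i}^s(I)$ genuinely lie in $\cS^{w^*}$ and that $w^*$-continuity of $\varphi$ legitimately interchanges $\varphi$ with the weak limits defining them, and, on the surjectivity side, that a pure element of the cone is reproduced by its own defect $R={\bf \Delta_{f,A}^m}(D)$ through the Fourier-type series — which is precisely the content of Proposition~\ref{pure2}(ii) together with Theorem~\ref{reproducing2}, serving here as the replacement for the $r\to1$ argument of Theorem~\ref{poisson}. Once these points are secured, the von Neumann type estimates for ${\bf B}_\omega$ follow as in Corollary~\ref{VN}.
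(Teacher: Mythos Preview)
Your proof is correct and follows essentially the same route as the paper's own argument: verify $\Gamma$ is well defined using $w^*$-continuity to push $\varphi$ through the weakly convergent defect operators of the universal model, prove injectivity from the moment relation plus $w^*$-density, and for surjectivity build the constrained Berezin kernel ${\bf K}_\omega$ from $R={\bf \Delta_{f,A}^m}(D)$ and recover $D$ as ${\bf K}_\omega^*{\bf K}_\omega$. Your invocation of Theorem~\ref{reproducing2} together with Proposition~\ref{pure2}(ii) in the surjectivity step is in fact slightly more precise than the paper's direct appeal to Theorem~\ref{reproducing}, since the latter is stated under the hypothesis that each $\Phi_{f_i,A_i}$ is pure (i.e.\ $\Phi_{f_i,A_i}^s(I)\to 0$), whereas here one only knows $\Phi_{f_i,A_i}^s(D)\to 0$; your combination of results is exactly what is needed to reproduce $D$ from its defect under that weaker assumption.
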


\begin{proof} Let
$\varphi : \cS^{w^*}
 \to B(\cH)$ be a $w^*$-continuous completely positive linear
map such that
$$ \varphi( {\bf S}_{(\alpha)} {\bf S}_{(\beta)}^*) =A_{(\alpha)} \varphi(I) A_{(\beta)}^*,
 \qquad (\alpha), (\beta)\in \FF_{n_1}^+\times \cdots\times \FF_{n_k}^+.
 $$
Setting $D:=\varphi(I)$ and using the fact that
$\Phi_{f_i, r{\bf S}_i}(I)=\sum_{k=1}^\infty
\sum_{\alpha_i\in \FF_{n_i}^+, |\alpha_i|=k} a_{i,\alpha_i}   {\bf S}_{i,\alpha_i} {\bf S}_{i,\alpha_i} ^* $ is
SOT convergent, we deduce that
$$
{\bf \Delta_{f,A}^p}(D)=\varphi \left( {\bf \Delta_{f,S}^m}(I)\right)\geq
0
$$
${\bf p}\in \ZZ_+^k$ with ${\bf p}\leq {\bf m}$.
On the other hand,  $\{\Phi_{f_i,{\bf S}_i}^s(I)\}_{s=1}^\infty$ is
a
 bounded decreasing sequence of positive operators  which converges
 weakly to $0$, as $s\to \infty$. Since
 $\Phi_{f_i,A_i}^s(D)=\varphi(\Phi_{f_i,{\bf S}_i}^s(I))$ for all  $s\in \NN$,
  $\{\Phi_{f_i,A_i}^k(D)\}_{s=1}^\infty$ is also
a
 bounded decreasing sequence of positive operators  which converges
 weakly, as $s\to \infty$.
Taking into account that  $\varphi$ is
  continuous  in the  $w^*$-topology, which coincides with the weak
  operator topology   on
bounded sets, we deduce that $\Phi_{f_i,A_i}^s(D)\to 0$ weakly, as
$s\to \infty$. Therefore, $D\in C^{pure}_{\geq}({\bf \Delta_{f,A}^m})^+$.

 To
prove that $\Gamma$ is one-to-one, let $\varphi_1$ and $\varphi_2$
be $w^*$-continuous completely positive linear maps on
$\cS^{w^*}$ such that $ \varphi_j( {\bf S}_{(\alpha)} {\bf S}_{(\beta)}^*) =A_{(\alpha)} \varphi_j(I) A_{(\beta)}^*$ for any
 $(\alpha), (\beta)\in \FF_{n_1}^+\times \cdots\times \FF_{n_k}^+$ and $j=1,2$.  Assume that
$\Gamma(\varphi_1)=\Gamma(\varphi_2)$, i.e.,
$\varphi_1(I)=\varphi_2(I)$. Then we have $\varphi_1( {\bf S}_{(\alpha)} {\bf S}_{(\beta)}^*)=\varphi_2( {\bf S}_{(\alpha)} {\bf S}_{(\beta)}^*)$ for $(\alpha), (\beta)\in \FF_{n_1}^+\times \cdots\times \FF_{n_k}^+$. Since     $\varphi_1$ and $\varphi_2$ are
$w^*$-continuous, we deduce that $\varphi_1=\varphi_2$.

To prove that $\Gamma$ is a surjective map, let  $D\in
 C^{pure}_{\geq}({\bf \Delta_{f,A}^m})^+$ be fixed. Due to Lemma \ref{lemma2}, the
constrained noncommutative Berezin kernel
  ${\bf K}_\omega$   associated with a compatible tuple
$\omega:=({\bf f,m, A},R,\cQ)$ satisfies the equation
\begin{equation}\label{KABK}
    {\bf K}_\omega { A}^*_{i,j}= ({\bf S}_{i,j}^*\otimes I_\cH)  {\bf K}_\omega,
\end{equation}
 where ${\bf S}=\{{\bf S}_{i,j}\}$
    is the  universal model
     associated
  with the abstract noncommutative
  variety $\cV_\cQ$. Moreover,
$$
{\bf K}_\omega^*\,{\bf K}_\omega=
\sum_{(s_1,\ldots,s_k)\in \ZZ_+^k}\left(\begin{matrix} s_1+m_1-1\\m_1-1\end{matrix}\right)\cdots \left(\begin{matrix} s_k+m_k-1\\m_k-1\end{matrix}\right)\Phi_{f_1,A_1}^{s_1}\circ \cdots \circ \Phi_{f_k, A_k}^{s_k}(R),
$$
where $R:= {\bf \Delta_{f,A}^m}(D)$ and the convergence is  in the weak  operator topology.
Using   Theorem \ref{reproducing},  we   obtain
\begin{equation*}
D=\sum_{(s_1,\ldots,s_k)\in \ZZ_+^k}\left(\begin{matrix} s_1+m_1-1\\m_1-1\end{matrix}\right)\cdots \left(\begin{matrix} s_k+m_k-1\\m_k-1\end{matrix}\right)\phi_{f_1, A_1}^{s_1}\circ \cdots \circ \phi_{f_k, A_k}^{s_k}( R),
\end{equation*}
where
 the convergence of the series is in the weak operator topology.
Consequently, we deduce that ${\bf K}_\omega^*\,{\bf K}_\omega=D$.
Define the operator ${\bf B}_{\omega} : \cS^{w^*}  \to B(\cH)$
 by setting
 \begin{equation*}
 {\bf B}_{\omega}(\chi):= {\bf K}_{\omega}^* (\chi\otimes I_\cH) {\bf K}_{\omega},
 \qquad \chi\in \cS^{w^*}.
 \end{equation*}
   Now,  due to  relation \eqref{KABK} it is easy to see that
\begin{equation*}
{\bf B}_{\omega}({\bf S}_{(\alpha)} {\bf S}_{(\beta)}^*)={\bf K}_{\omega}^* ({\bf S}_{(\alpha)} {\bf S}_{(\beta)}^*\otimes I) {\bf K}_{\omega}=  A_{(\alpha)} DA_{(\beta)}^*,\qquad
(\alpha), (\beta)\in \FF_{n_1}^+\times \cdots\times \FF_{n_k}^+.
\end{equation*}
 Consequently,  ${\bf B}_{\omega}\in CP^{w^*}(A,\cV_\cQ)$    has
 the required properties.
The proof is complete.
\end{proof}


We remark that an operator $D\in B(\cH)$ is in $C^{pure}_{\geq}({\bf \Delta_{f,A}^m})^+$
 if and only if there is a Hilbert space $\cD$ and an operator
 $K:\cH\to \cN_\cQ\otimes \cD$ such that
 \begin{equation*}
 D=K^*K\quad \text{ and } \quad KA_{i,j}^*= ({\bf S}_{i,j}^*\otimes I_\cD)K, \qquad  i\in \{1,\ldots, k\}, j\in \{1,\ldots, n_i\}.
 \end{equation*}
Indeed,  the direct
implication follows if we take $K$ to be the noncommutative Berezin
kernel ${\bf K}_\omega$.
   To prove the converse,  assume that there is a Hilbert space $\cD$ and an operator
 $K:\cH\to \cN_\cQ\otimes \cD$ such that
 \begin{equation*}
 D=K^*K\quad \text{ and } \quad KA_{i,j}^*= ({\bf S}_{i,j}^*\otimes I_\cD)K, \qquad i\in \{1,\ldots, k\}, j\in \{1,\ldots, n_i\}.
 \end{equation*}
 Then
$$
{\bf \Delta_{f,A}^p}(D)=K^* \left[{\bf \Delta_{f,A}^p}(I)\otimes I_\cD
\right]K\geq 0
$$
for ${\bf p}\in \ZZ_+^k$ with ${\bf p}\leq {\bf m}$.
  Since $\Phi_{f_i,A_i}^s(D)=K^*[\Phi_{f_i,{\bf S}_i}^s(I)\otimes I_\cD]K$,
  $\|\Phi_{f_i,{\bf S}_i}^s(I)\|\leq 1$,
and $\Phi_{f_i,{\bf S}_i}^s(I)\to 0$ weakly, as $s\to 0$, we deduce that
$D\in  C^{pure}_{\geq}({\bf \Delta_{f,A}^m})^+$. This proves our assertion.

We should mention that, in Theorem \ref{poisson2}, the set $\cQ$  is of
arbitrary noncommutative polynomials with $\cN_\cQ\neq \{0\}$,
while, in Theorem \ref{poisson}, $\cQ$ consists of homogeneous
polynomials.

The proof of the next result is similar to that of Corollary
\ref{VN}, so we shall omit it. We recall (see \cite{Po-Berezin3}) that $F^\infty(\cV_\cQ)$ is the WOT-closed algebra generated by all polynomials in ${\bf S}_{i,j}$ and the identity.
\begin{corollary}\label{VN2} Under  the hypotheses of Theorem \ref{poisson2}, if
$D\in C^{pure}_{\geq}({\bf \Delta_{f,A}^m})^+$, then
\begin{equation*}
  \left\|\sum_{(\alpha),(\beta)\in \Lambda} A_{(\alpha)} D
A_{(\beta)}^*\otimes C_{(\alpha),(\beta)}\right\| \leq \|D\|
\left\|\sum_{(\alpha),(\beta)\in \Lambda}
 {\bf S}_{(\alpha)} {\bf S}_{(\beta)}^*\otimes C_{(\alpha),(\beta)}\right\|
 \end{equation*}
 for any
  finite  set $\Lambda\subset \FF_{n_1}^+\times \cdots\times \FF_{n_k}^+$ and $C_{(\alpha),(\beta)}\in
  B(\cE)$, where $\cE$ is a Hilbert space. If, in addition, $D$ is
  an invertible operator, then  the map   $u:
  F^\infty(\cV_\cQ)\to B(\cH)$ defined by
  $$
  u( \varphi):= {\bf K}_\omega [\varphi\otimes I_\cH]{\bf K}_\omega D^{-1},\qquad \varphi\in F_n^\infty(\cV_\cQ),
  $$
  where  ${\bf K}_\omega$ is the constrained noncommutative Berezin kernel associated with the compatible tuple $\omega:=({\bf f,m, A},R,\cQ)$ and $R:={\bf \Delta_{f,A}^m}(D)$, is completely bounded and
   $\|u\|_{cb}\leq \|D^{-1/2}\|\|D^{1/2}\|$.
\end{corollary}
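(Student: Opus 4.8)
The plan is to transcribe the proof of Corollary \ref{VN}, the simplification being that in the \emph{pure} setting no radial approximation $\omega_r$ is needed: the constrained noncommutative Berezin kernel ${\bf K}_\omega$ attached to the compatible tuple $\omega:=({\bf f,m,A},R,\cQ)$, where $R:={\bf \Delta_{f,A}^m}(D)$, already reproduces $D$ exactly. First I would record, from Theorem \ref{poisson2} (equivalently, from Theorem \ref{lemma2} together with the reproducing formula of Theorem \ref{reproducing}, which applies because $D\in C^{pure}_{\geq}({\bf \Delta_{f,A}^m})^+$), the two identities ${\bf K}_\omega^*{\bf K}_\omega=D$ and ${\bf K}_\omega A_{i,j}^*=({\bf S}_{i,j}^*\otimes I_\cR){\bf K}_\omega$ for all $i,j$, where $\cR:=\overline{R^{1/2}(\cH)}$. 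Iterating the intertwining over words and taking adjoints gives $A_{(\alpha)}{\bf K}_\omega^*={\bf K}_\omega^*({\bf S}_{(\alpha)}\otimes I_\cR)$ and ${\bf K}_\omega A_{(\beta)}^*=({\bf S}_{(\beta)}^*\otimes I_\cR){\bf K}_\omega$, whence
\[
{\bf K}_\omega^*\bigl({\bf S}_{(\alpha)}{\bf S}_{(\beta)}^*\otimes I_\cR\bigr){\bf K}_\omega=A_{(\alpha)}\,D\,A_{(\beta)}^*,\qquad (\alpha),(\beta)\in\FF_{n_1}^+\times\cdots\times\FF_{n_k}^+ .
\]

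For the von Neumann type inequality I would tensor this identity on the right by the coefficients $C_{(\alpha),(\beta)}\in B(\cE)$, obtaining
\[
\sum_{(\alpha),(\beta)\in\Lambda}A_{(\alpha)}D A_{(\beta)}^*\otimes C_{(\alpha),(\beta)}=({\bf K}_\omega^*\otimes I_\cE)\Bigl(\sum_{(\alpha),(\beta)\in\Lambda}{\bf S}_{(\alpha)}{\bf S}_{(\beta)}^*\otimes I_\cR\otimes C_{(\alpha),(\beta)}\Bigr)({\bf K}_\omega\otimes I_\cE),
\]
and then taking norms and using $\|{\bf K}_\omega\|^2=\|{\bf K}_\omega^*{\bf K}_\omega\|=\|D\|$ gives the claimed bound. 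When $D$ is invertible the cleanest route to the completely bounded estimate is to observe that $V:={\bf K}_\omega D^{-1/2}$ is an isometry, since $V^*V=D^{-1/2}{\bf K}_\omega^*{\bf K}_\omega D^{-1/2}=I_\cH$; hence ${\bf K}_\omega=VD^{1/2}$ and, for $\varphi\in F^\infty(\cV_\cQ)$,
\[
u(\varphi)={\bf K}_\omega^*[\varphi\otimes I_\cR]{\bf K}_\omega D^{-1}=D^{1/2}V^*[\varphi\otimes I_\cR]V D^{-1/2}.
\]
Passing to $\ell\times\ell$ matrices, $[u(\varphi_{s,t})]=(I_\ell\otimes D^{1/2})(I_\ell\otimes V^*)[\varphi_{s,t}\otimes I_\cR](I_\ell\otimes V)(I_\ell\otimes D^{-1/2})$ has norm at most $\|D^{1/2}\|\,\|D^{-1/2}\|\,\bigl\|[\varphi_{s,t}]\bigr\|$, so $\|u\|_{cb}\le\|D^{-1/2}\|\,\|D^{1/2}\|$. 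Alternatively one may argue exactly as in Corollary \ref{VN}: prove $\|q({\bf A})\|^2\le\|D^{-1/2}\|^2\|D\|\,\|q({\bf S})q({\bf S})^*\|=\|D^{-1/2}\|^2\|D^{1/2}\|^2\|q({\bf S})\|^2$ for noncommutative polynomials $q$, pass to matrix coefficients, and then extend from $\cA(\cV_\cQ)$ to $F^\infty(\cV_\cQ)$ using the WOT-density of the polynomials and the WOT-continuity of $\varphi\mapsto{\bf K}_\omega^*[\varphi\otimes I_\cR]{\bf K}_\omega$ on bounded sets.

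There is no genuine obstacle here; the two points that deserve attention are, first, that $u$ is well defined on all of $F^\infty(\cV_\cQ)$ and restricts to the map $q({\bf S})\mapsto q({\bf A})$ on polynomials — which follows from ${\bf K}_\omega^*({\bf S}_{(\alpha)}\otimes I_\cR){\bf K}_\omega=A_{(\alpha)}D$ and $DD^{-1}=I$ — and, second, that one must invoke Theorem \ref{poisson2} rather than Theorem \ref{poisson}, since here $\cQ$ is an \emph{arbitrary} family of polynomials with $\cN_\cQ\neq\{0\}$ rather than a homogeneous one; it is precisely the purity of $D$ that makes the single kernel ${\bf K}_\omega$, with no $r\to1$ limit, reproduce $D$ exactly. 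Everything else is a line-by-line transcription of the proof of Corollary \ref{VN}.
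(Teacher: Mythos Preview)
Your proposal is correct and follows exactly the approach the paper intends: the paper omits the proof, stating only that it is similar to that of Corollary \ref{VN}, and your transcription of that argument to the pure setting (where ${\bf K}_\omega^*{\bf K}_\omega=D$ holds exactly, with no radial limit) is precisely what is required. Your isometry factorization $u(\varphi)=D^{1/2}V^*[\varphi\otimes I_\cR]VD^{-1/2}$ with $V={\bf K}_\omega D^{-1/2}$ is in fact the cleanest way to obtain the $F^\infty(\cV_\cQ)$ bound directly, without any density or continuity argument.
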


Our last result of this section  is a characterization of the noncommutative cone $C_{\geq}({\bf \Delta_{f,A}^m})^+$.

\begin{theorem}\label{pure} Let $\cV_\cQ \subset {\bf D_f^m}$ be an abstract noncommutative variety, where  $\cQ$ is a family of
noncommutative
 polynomials in indeterminates $\{Z_{i,j}\}$ such that $\cN_\cQ\neq \{0\}$,  and let
 ${\bf A}:=({ A}_1,\ldots, {A}_k)\in  B(\cH)^{n_1}\times_c\cdots \times_c B(\cH)^{n_k}$, where ${ A}_i:=(A_{i,1},\ldots, A_{i,n_i})\in B(\cH)^{n_i}$, have the property that $ \Phi_{f_i, A_i}$ is well-defined and $q({\bf A})=0$ for any $ q\in \cQ$.

 Then   a positive operator  $\Gamma\in B(\cH)$  is in $C_{\geq}({\bf \Delta_{f,A}^m})^+$
if and only
  if there   is a tuple  ${\bf T}:=({ T}_1,\ldots, {T}_k)\in  B(\cH)^{n_1}\times_c\cdots \times_c B(\cH)^{n_k}$, with ${ T}_i:=(T_{i,1},\ldots, T_{i,n_i})\in B(\cH)^{n_i}$,  in the noncommutative
 variety $ \cV_\cQ(\cH)$
  such that
 \begin{equation*}
 A_{i,j} \Gamma^{1/2}= \Gamma^{1/2} T_{i,j}, \qquad i\in \{1,\ldots, k\}, j\in \{1,\ldots, n_i\}.
 \end{equation*}
In addition,  $\Gamma\in C^{pure}_{\geq}({\bf \Delta_{f,A}^m})^+$    if and only if  \ $I_\cH \in
C^{pure}_{\geq}({\bf \Delta_{f,T}^m})^+$.
\end{theorem}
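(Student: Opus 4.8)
The plan is to deduce both implications from one algebraic observation: an intertwiner $\Gamma^{1/2}$ turns the defect mappings of ${\bf A}$ and ${\bf T}$ into a ``sandwich''. If $A_{i,j}\Gamma^{1/2}=\Gamma^{1/2}T_{i,j}$ for all $i,j$, then $A_{i,\alpha}\Gamma^{1/2}=\Gamma^{1/2}T_{i,\alpha}$ for every $\alpha\in\FF_{n_i}^+$, hence $\Phi_{f_i,A_i}(\Gamma^{1/2}X\Gamma^{1/2})=\Gamma^{1/2}\Phi_{f_i,T_i}(X)\Gamma^{1/2}$ for all $X\in B(\cH)$, and, iterating and using linearity in the definition of the defect map,
$$
{\bf \Delta_{f,A}^p}(\Gamma)=\Gamma^{1/2}\,{\bf \Delta_{f,T}^p}(I_\cH)\,\Gamma^{1/2}\quad\text{and}\quad \Phi_{f_i,A_i}^s(\Gamma)=\Gamma^{1/2}\Phi_{f_i,T_i}^s(I)\Gamma^{1/2},\qquad {\bf p}\in\ZZ_+^k,\ s\in\NN.
$$
For ``$\Leftarrow$'', if ${\bf T}\in\cV_\cQ(\cH)\subseteq{\bf D_f^m}(\cH)$ then ${\bf \Delta_{f,T}^p}(I)\geq0$ for $0\leq{\bf p}\leq{\bf m}$, and the sandwich identity gives ${\bf \Delta_{f,A}^p}(\Gamma)\geq0$; with $\Gamma\geq0$ this says $\Gamma\in C_{\geq}({\bf \Delta_{f,A}^m})^+$. (This direction does not even use $q({\bf A})=0$.)

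For ``$\Rightarrow$'', start from $\Gamma\in C_{\geq}({\bf \Delta_{f,A}^m})^+$. Taking ${\bf p}=e_i$ gives $\Phi_{f_i,A_i}(\Gamma)\leq\Gamma$, and since $f_i$ has nonnegative coefficients with $a_{i,g_j^i}>0$, this forces $A_{i,j}\Gamma A_{i,j}^*\leq c_{i,j}\Gamma$, i.e. $(A_{i,j}\Gamma^{1/2})(A_{i,j}\Gamma^{1/2})^*\leq c_{i,j}(\Gamma^{1/2})(\Gamma^{1/2})^*$. Douglas' factorization lemma then yields $T_{i,j}\in B(\cH)$ with $A_{i,j}\Gamma^{1/2}=\Gamma^{1/2}T_{i,j}$, $\ran T_{i,j}\subseteq\cK:=\overline{\ran\Gamma^{1/2}}$, and $\ker\Gamma^{1/2}\subseteq\ker T_{i,j}$; in particular each $T_{i,j}$ kills $\cK^\perp$ and maps into $\cK$. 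The next step is to check ${\bf T}\in\cV_\cQ(\cH)$. The cross-commutation of the blocks follows from $\Gamma^{1/2}(T_{p,j}T_{q,l}-T_{q,l}T_{p,j})=(A_{p,j}A_{q,l}-A_{q,l}A_{p,j})\Gamma^{1/2}=0$, which puts the commutator simultaneously in $\ker\Gamma^{1/2}=\cK^\perp$ and in $\cK$. For $q\in\cQ$ the same computation gives $\Gamma^{1/2}q({\bf T})=q({\bf A})\Gamma^{1/2}=0$, so $\ran q({\bf T})\subseteq\cK^\perp$; since every monomial of positive degree kills $\cK^\perp$ and maps into $\cK$, the $\cH=\cK\oplus\cK^\perp$ block decomposition of $q({\bf T})$ collapses and forces $q({\bf T})=0$ (here one uses that the members of $\cQ$ vanish at the origin). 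Finally ${\bf \Delta_{f,T}^p}(I)$ has the block form ${\bf \Delta_{f,\tilde T}^p}(I_\cK)\oplus I_{\cK^\perp}$ with $\tilde T_{i,j}:=T_{i,j}|_\cK$, so reading ${\bf \Delta_{f,A}^p}(\Gamma)=\Gamma^{1/2}{\bf \Delta_{f,T}^p}(I)\Gamma^{1/2}\geq0$ on the dense subspace $\ran\Gamma^{1/2}\subseteq\cK$ forces ${\bf \Delta_{f,\tilde T}^p}(I_\cK)\geq0$, hence ${\bf \Delta_{f,T}^p}(I)\geq0$ for $0\leq{\bf p}\leq{\bf m}$, i.e. ${\bf T}\in{\bf D_f^m}(\cH)$. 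Equivalently, in the pure case one may take $T_{i,j}:=V^*({\bf S}_{i,j}\otimes I_\cR)V$, where ${\bf K}_\omega=V\Gamma^{1/2}$ is the polar decomposition of the constrained Berezin kernel of Theorem~\ref{lemma2}, using that $\overline{\ran{\bf K}_\omega}$ is invariant under each ${\bf S}_{i,j}^*\otimes I$.

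For the concluding equivalence, note that $\{\Phi_{f_i,T_i}^s(I)\}_s$ is a decreasing sequence of positive contractions (from ${\bf \Delta_{f,A}^{e_i}}(\Gamma)\geq0$ one gets $\Phi_{f_i,T_i}(I)\leq I$) supported on $\cK$. If $I_\cH\in C^{pure}_{\geq}({\bf \Delta_{f,T}^m})^+$, then $\Phi_{f_i,T_i}^s(I)\to0$ weakly, so $\Phi_{f_i,A_i}^s(\Gamma)=\Gamma^{1/2}\Phi_{f_i,T_i}^s(I)\Gamma^{1/2}\to0$ weakly and $\Gamma\in C^{pure}_{\geq}({\bf \Delta_{f,A}^m})^+$. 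Conversely, if $\Phi_{f_i,A_i}^s(\Gamma)\to0$ weakly then $\langle\Phi_{f_i,T_i}^s(I)\Gamma^{1/2}h,\Gamma^{1/2}h\rangle\to0$ for every $h$, and a Cauchy--Schwarz estimate using $\Phi_{f_i,T_i}^s(I)\leq I$ promotes this to $\Phi_{f_i,T_i}^s(I)\to0$ weakly on $\cK$, hence on all of $\cH$ because these operators vanish on $\cK^\perp$; thus $I_\cH\in C^{pure}_{\geq}({\bf \Delta_{f,T}^m})^+$.

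I expect the main difficulty to be the repeated passage from ``compressed'' conclusions on $\cK=\overline{\ran\Gamma^{1/2}}$ to honest statements on all of $\cH$: one must check that $\Phi_{f_i,T_i}$ is well defined (its defining series converges, which holds since, tested against $\Gamma^{1/2}h$, the partial sums are dominated by $\langle\Gamma h,h\rangle$), that $q({\bf T})$ and ${\bf \Delta_{f,T}^p}(I)$ are genuinely $0$, respectively positive, not merely so after compression, and that the degenerate summand on $\cK^\perp$, on which $T$ vanishes, is harmless.
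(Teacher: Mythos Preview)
Your argument is correct and follows essentially the same path as the paper: the paper defines $T_{i,j}^*$ directly as the continuous extension of $\Gamma^{1/2}x\mapsto\Gamma^{1/2}A_{i,j}^*x$ on $\cM:=\overline{\Gamma^{1/2}\cH}$ and then sets $T_{i,j}:=M_{i,j}\oplus 0$ on $\cM\oplus\cM^\perp$, which is precisely the Douglas factorization you invoke, and both proofs then read off ${\bf \Delta_{f,T}^p}(I)\geq 0$ and the pure equivalence from the same sandwich identities $\Gamma^{1/2}{\bf \Delta_{f,T}^p}(I)\Gamma^{1/2}={\bf \Delta_{f,A}^p}(\Gamma)$ and $\Gamma^{1/2}\Phi_{f_i,T_i}^s(I)\Gamma^{1/2}=\Phi_{f_i,A_i}^s(\Gamma)$. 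Your explicit verification of the cross-commutation $T_{p,j}T_{q,l}=T_{q,l}T_{p,j}$ and your flag that the passage from $q({\bf M})=0$ to $q({\bf T})=0$ needs the polynomials in $\cQ$ to have no constant term are points the paper's proof passes over without comment.
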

\begin{proof}
Assume that  ${\bf T}\in \cV_\cQ(\cH)$
and
  $A_{i,j} \Gamma^{1/2}= \Gamma^{1/2} T_{i,j}$ for $ i\in \{1,\ldots, k\}$ and $j\in \{1,\ldots, n_i\}$.
 Note that
 $$
  {\bf \Delta_{f,A}^p}(\Gamma)= \Gamma^{1/2}\left[{\bf \Delta_{f,T}^m}(I)\right] \Gamma^{1/2}
\geq 0
 $$
 for any ${\bf p}\in \ZZ_+^k$ with ${\bf p}\leq {\bf m}$.
Since $ \Phi_{f_i,A_i}^s(\Gamma)=\Gamma^{1/2}  \Phi_{f_i,T_i}^s(I)
\Gamma^{1/2} $, $s\in \NN$, we deduce that if $ \Phi_{f_i,T_i}^s(I)\to 0$
weakly as $s\to\infty$.  Therefore,  $\Gamma\in C^{pure}_{\geq}(f,A)^+$ .

 Now, we prove the converse.  Assume that $\Gamma\in B(\cH)$  is in $C_{\geq}({\bf \Delta_{f,A}^m})^+$. Let $f_i:=\sum_{\alpha\in \FF_{n_i}^+} a_{i,\alpha} Z_{i,\alpha}$ and note that
\begin{equation*}
\begin{split}
\left< \Phi_{f_i, A_i}(\Gamma)x,x\right>\leq \|\Gamma^{1/2} x\|^2
\end{split}
\end{equation*}
for any $x\in   \cH$. Hence, we deduce that  $a_{i,g_j^i}\|\Gamma^{1/2}A_{i,j}^*
x\|^2\leq \|\Gamma^{1/2} x\|^2$, for any $x\in   \cH$. Recall that
$a_{i,g_j^i}\neq 0$,  so we can define the operator
 $\Lambda_{i,j}:  \Gamma^{1/2}(\cH)\to
   \Gamma^{1/2}(\cH) $  by setting
 \begin{equation}\label{GiC}
 \Lambda_{i,j} \Gamma^{1/2}x:= \Gamma^{1/2} A_{i,j}^*x, \qquad  x\in \cH,
 \end{equation}
 for $ i\in \{1,\ldots, k\}$ and $j\in \{1,\ldots, n_i\}$.
It is obvious that $\Lambda_{i,j}$
   can    be extended to a bounded operator (also
denoted by $\Lambda_{i,j}$) on the subspace $\cM:=\overline{
\Gamma^{1/2}(\cH)}$. Set ${\bf M}=(M_1,\ldots, M_k)$ with $M_i:=(M_{i,1},\ldots, M_{i, n_i})$ and $M_{i,j}:=\Lambda_{i,j}^*$, and note
that
$$
\Gamma^{1/2}\left[{\bf \Delta_{f,M}^p}(I_\cM)\right] \Gamma^{1/2}=
{\bf \Delta_{f,A}^p}(\Gamma)\geq 0
$$
for ${\bf p}\in \ZZ_+^k$ with ${\bf p}\leq {\bf m}$.
An approximation argument shows that
$
{\bf \Delta_{f,M}^p}(I_\cM)  \geq 0.
$
 For each $ i\in \{1,\ldots, k\}$ and $j\in \{1,\ldots, n_i\}$, define $T_{i,j}:= M_{i,j}\oplus 0$
  with respect to the decomposition
 $\cH= \cM\oplus \cM^\perp$,
   and note that
 ${\bf \Delta_{f,T}^p}(I)\geq 0$.
 If $q\in \cQ$, then relation \eqref{GiC} implies
$
q({\bf M})^*\Gamma^{1/2} =\Gamma^{1/2} q({\bf A})^*=0.
$
Hence,  $q({\bf M})=0$ and, consequently,   $q({\bf T})=0$ for all $q\in \cQ$. Therefore,  ${\bf T}:=\{T_{i,j}\}\in
\cV_\cQ(\cH)$ and  $A_{i,j} \Gamma^{1/2}= \Gamma^{1/2} T_{i,j}$ for $ i\in \{1,\ldots, k\}$ and $j\in \{1,\ldots, n_i\}$.

 Assume   that  $\Gamma\in C^{pure}_{\geq}({\bf \Delta_{f,A}^m})^+$. Then, for each $i\in \{1,\ldots, k\}$,  $ \Phi_{f_i,A_i}^s(\Gamma)\to 0$ weakly,
   as $s\to\infty$.
Taking into account that
\begin{equation*}
\begin{split}
\left< \Phi_{f_i, T_i}^s(I) \Gamma^{1/2}x, \Gamma^{1/2} x\right> &= \left<
\Phi_{f_i,
 A_i}^s(\Gamma)x,  x\right>,\qquad x\in   \cH,
\end{split}
\end{equation*}
  we have \ WOT-$\lim_{s\to\infty}\Phi_{f_i,T_i}^s(I)y=0$ for
any $y\in \text{\rm range}~\Gamma^{1/2}$. Since
$\|\Phi_{f_i,T_i}^s(I)\|\leq 1$, $s\in \NN$,  an approximation argument
shows that WOT-$\lim_{s\to\infty}\Phi_{f_i,T_i}^s(I)y=0$ for any $y\in
\overline {\Gamma^{1/2}(\cH)}$. Note also that
$\Phi_{f_i,T_i}^s(I)z=0$ for any $z\in \cM^\perp$. Consequently,
 $I_\cH \in
C^{pure}_{\geq}({\bf \Delta_{f,T}^m})^+$.
 This completes the proof.
\end{proof}

\bigskip

\section{Analogues of  Rota's similarity results
 for noncommutative  polydomains}

Let ${\bf f}:=(f_1,\ldots, f_k)$ be a $k$-tuple of positive regular free holomorphic functions and let ${\bf m}=(m_1,\ldots, m_k)$ be in $ \NN^k$.
Consider ${\bf A}:=({ A}_1,\ldots, {A}_k)\in  B(\cH)^{n_1}\times\cdots \times B(\cH)^{n_k}$, where ${ A}_i:=(A_{i,1},\ldots, A_{i,n_i})\in B(\cH)^{n_i}$,  to be such that $\Phi_{f_i, A_i}(I)$ is well-defined in the weak operator topology, and let $\cQ$ be a set of noncommutative polynomials in indeterminates $\{Z_{i,j}\}$ with $i\in \{1,\ldots, k\}$ and $j\in \{1,\ldots, n_i\}$. Given another tuple ${\bf B}:=({ B}_1,\ldots, {B}_k)\in  B(\cK)^{n_1}\times\cdots \times B(\cK)^{n_k}$, where ${ B}_i:=(B_{i,1},\ldots, B_{i,n_i})\in B(\cK)^{n_i}$, we say the ${\bf A}$ is jointly similar to ${\bf B}$ if there exists an invertible operator $Y:\cK\to \cH$
such that
$$
A_{i,j}=Y B_{i,j} Y^{-1}
$$
for all  $i\in \{1,\ldots,k\}$ and  $j\in \{1,\ldots, n_i\}$.

In this section we provide necessary and sufficient conditions for a tuple ${\bf A}=({ A}_1,\ldots, {A}_k)$ to
be jointly similar  to a  tuple   ${\bf T}:=({ T}_1,\ldots, {T}_k)\in  B(\cH)^{n_1}\times\cdots \times B(\cH)^{n_k}$
satisfying one of the following properties:
\begin{enumerate}
\item[(i)]
${\bf T}\in {\cV}_{\cQ}(\cH):=\left\{ {\bf X}\in {\bf D_f^m}(\cH): \ q({\bf X})=0, q\in \cQ\right\} $;
\item[(ii)] ${\bf T}\in \left\{ {\bf X} \in {\cV}_{\cQ}(\cH): \
 {\bf \Delta_{f,X}^p}(I)>0  \text{ \rm  for } 0\leq {\bf p}\leq {\bf m}, {\bf p}\neq 0\right\}$;
\item[(iii)]  ${\bf T}$ is a pure  tuple in ${\cV}_{\cQ}(\cH)$, i.e. for each $i\in \{1,\ldots,k\}$, $\Phi_{f_i,T_i}^k(I)\to 0$ weakly as $k\to \infty$.
\end{enumerate}
  We show that these similarities are strongly related to the
existence of invertible positive solutions of the operator
inequalities ${\bf \Delta_{f,A}^p}(Y)\geq 0$ and  ${\bf \Delta_{f,A}^p}(Y)> 0$.

Let $f=\sum_{\alpha\in \FF_{n}^+}  a_\alpha X_\alpha$, $a_\alpha\in \CC$,
be a positive regular free holomorphic function. For any $n$-tuple of
operators $C:=(C_1,\ldots, C_n)\in B(\cH)^n$   such that
$\sum_{|\alpha|\geq 1} a_\alpha C_\alpha C_\alpha^* $ is convergent
in the weak operator topology,  define the joint spectral radius
with respect to the noncommutative domain ${\bf D}^m_f$ by setting
$$
r_f(C):=\lim_{k\to\infty}\|\Phi_{f,C}^k(I)\|^{1/2k},
$$
where the positive linear map $\Phi_{f,C}:B(\cH)\to B(\cH)$ is given
by
$$
\Phi_{f,C}(X):=\sum_{\alpha\in \FF_{n}^+} a_\alpha C_\alpha
XC_\alpha^*,\qquad X\in B(\cH),
$$
and  the convergence is in the week operator topology.
In the particular case when $f:=X_1+\cdots +X_n$, we obtain the
usual definition of the joint spectral radius for $n$-tuples of noncommuting
operators.

Our first result provides  necessary conditions for joint similarity
to tuples of operators in noncommutative varieties
$\cV_{\cQ}(\cH)$. Since the proof is straightforward, we leave it to the reader.

\begin{proposition} \label{properties} Let ${\bf f}:=(f_1,\ldots, f_k)$ be a $k$-tuple of positive regular free holomorphic functions with
  $$f_i:=\sum_{\alpha\in \FF_{n_i}^+} a_{i,\alpha} X_{i,\alpha}
  $$
  and let $\cQ$ be a set of noncommutative polynomials in indeterminates $\{Z_{i,j}\}$, where $i\in \{1,\ldots,k\}$ and  $j\in \{1,\ldots, n_i\}$.
   If  ${\bf T}:=({ T}_1,\ldots, {T}_k)\in \cV_{\cQ}(\cH)\subset {\bf D_f^m}(\cH)$ and
 ${\bf A}:=({ A}_1,\ldots, {A}_k)\in  B(\cK)^{n_1}\times\cdots \times B(\cK)^{n_k}$ are  two tuples of operators which are jointly
similar,
 then, for each $i\in \{1,\ldots,k\}$, the following statements hold:
  \begin{enumerate}
  \item[(i)]$({ A}_1,\ldots, {A}_k)\in  B(\cK)^{n_1}\times_c\cdots \times_c B(\cK)^{n_k}$ and $\sum_{\alpha_i\in \FF_{n_i}^+ } a_{i,\alpha_i} A_{i,\alpha_i} A_{i,\alpha_i}^*$ is convergent in the weak operator topology;
       \item[(ii)] $\Phi_{f_i,A_i}$ is a power bounded completely positive linear map;
      \item[(iii)] $r_{f_i}(A_i)\leq 1$;
     \item[(iv)] $q({\bf A})=0$ for all $q\in \cQ$;
     \item[(v)] if $\Phi_{f_i,T_i}^s(I)\to 0$
 weakly as $s\to \infty$, then $\Phi_{f_i,A_i}^s(I)\to 0$
 weakly.
      \end{enumerate}
\end{proposition}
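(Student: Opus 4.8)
The plan is to transport every assertion across the similarity. Fix an invertible operator $Y$ implementing the joint similarity; replacing $Y$ by $Y^{-1}$ if necessary we may assume $A_{i,j}=Y T_{i,j}Y^{-1}$ for all $i,j$, and hence $A_{i,\alpha}=Y T_{i,\alpha}Y^{-1}$ for every word $\alpha\in\FF_{n_i}^+$. The first step is the transfer identity for the associated completely positive maps: for each $i$, each $X\in B(\cK)$, and each $N$,
$$
\sum_{|\alpha|\le N}a_{i,\alpha}A_{i,\alpha}XA_{i,\alpha}^*
=Y\left(\sum_{|\alpha|\le N}a_{i,\alpha}T_{i,\alpha}\bigl(Y^{-1}X(Y^*)^{-1}\bigr)T_{i,\alpha}^*\right)Y^*.
$$
Because ${\bf T}\in{\bf D_f^m}(\cH)$ forces ${\bf \Delta_{f,T}^p}(I)\ge 0$ for ${\bf 0}\le{\bf p}\le{\bf m}$, in particular $(id-\Phi_{f_i,T_i})(I)\ge 0$, so $\Phi_{f_i,T_i}$ is a well-defined contractive completely positive map; thus the inner partial sums converge in the weak operator topology, and applying the bounded operators $Y$ and $Y^*$ preserves this convergence. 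Taking $X=I$ gives the WOT-convergence of $\sum_{\alpha}a_{i,\alpha}A_{i,\alpha}A_{i,\alpha}^*$ asserted in (i), and in general yields $\Phi_{f_i,A_i}(X)=Y\Phi_{f_i,T_i}\bigl(Y^{-1}X(Y^*)^{-1}\bigr)Y^*$. Commutation of the blocks $A_i$ and $A_s$ for $i\ne s$ is immediate: $A_{i,j}A_{s,t}=Y T_{i,j}T_{s,t}Y^{-1}=Y T_{s,t}T_{i,j}Y^{-1}=A_{s,t}A_{i,j}$, using that the $T$-blocks commute. This settles (i).

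Iterating the transfer identity gives $\Phi_{f_i,A_i}^s(X)=Y\Phi_{f_i,T_i}^s\bigl(Y^{-1}X(Y^*)^{-1}\bigr)Y^*$ for all $s\in\NN$. Since $\Phi_{f_i,T_i}(I)\le I$ implies $\|\Phi_{f_i,T_i}^s\|=\|\Phi_{f_i,T_i}^s(I)\|\le 1$, we obtain $\|\Phi_{f_i,A_i}^s\|\le\|Y\|^2\|Y^{-1}\|^2$ for every $s$, which is (ii); complete positivity of $\Phi_{f_i,A_i}$ follows because it is the WOT-limit of the completely positive maps $X\mapsto\sum_{|\alpha|\le N}a_{i,\alpha}A_{i,\alpha}XA_{i,\alpha}^*$. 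For (iii), $r_{f_i}(A_i)=\lim_s\|\Phi_{f_i,A_i}^s(I)\|^{1/2s}\le\lim_s\bigl(\|Y\|^2\|Y^{-1}\|^2\bigr)^{1/2s}=1$. For (iv), conjugation by $Y$ is a unital algebra homomorphism on noncommutative polynomials, so $q({\bf A})=Y\,q({\bf T})\,Y^{-1}=0$ whenever $q\in\cQ$, as $q({\bf T})=0$. For (v), put $P:=Y^{-1}(Y^*)^{-1}\ge 0$; then $0\le\Phi_{f_i,T_i}^s(P)\le\|P\|\,\Phi_{f_i,T_i}^s(I)$, so $\Phi_{f_i,T_i}^s(I)\to 0$ weakly forces $\Phi_{f_i,T_i}^s(P)\to 0$ weakly by a squeeze, and hence $\Phi_{f_i,A_i}^s(I)=Y\Phi_{f_i,T_i}^s(P)Y^*\to 0$ weakly by WOT-continuity of $Z\mapsto YZY^*$.

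The only point that needs a moment's attention — and the reason the argument is \emph{straightforward} rather than entirely trivial — is the bookkeeping of weak-operator convergence: one should verify the transfer identity and the squeeze at the level of partial sums before passing to WOT-limits, and check that the iteration step is legitimate. Once the transfer identity is in hand there is no genuine obstacle, and each of (i)--(v) drops out in essentially one line.
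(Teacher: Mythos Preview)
Your argument is correct and is exactly the approach the paper has in mind; the paper itself gives no proof, remarking only that it is ``straightforward'' and leaving it to the reader. Your transfer identity $\Phi_{f_i,A_i}(X)=Y\,\Phi_{f_i,T_i}\bigl(Y^{-1}X(Y^*)^{-1}\bigr)\,Y^*$ and its iterate are precisely the right tool, and each of (i)--(v) follows from it in the way you describe.
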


In what  follows, we assume that
  ${\bf f}:=(f_1,\ldots, f_k)$ is a $k$-tuple of positive regular free holomorphic functions with
  $f_i:=\sum_{\alpha\in \FF_{n_i}^+} a_{i,\alpha} Z_{i,\alpha}$ and ${\bf m}=(m_1,\ldots,m_k)\in \NN^k$. Moreover, let $\cQ$ be a set of noncommutative polynomials in indeterminates $\{Z_{i,j}\}$ and  let
 ${\bf A}:=({ A}_1,\ldots, {A}_k)\in  B(\cH)^{n_1}\times_c\cdots \times_c B(\cH)^{n_k}$, where ${ A}_i:=(A_{i,1},\ldots, A_{i,n_i})\in B(\cH)^{n_i}$ has the property that $\sum_{\alpha\in \FF_{n_i}^+} a_{i,\alpha} A_{i,\alpha}A_{i,\alpha}^*$
     is  weakly convergent and $q({\bf A})=0$ for any $ q\in \cQ$.

Now, we are ready to  provide necessary and sufficient conditions for the joint similarity to
  parts of the adjoints
  of  the universal model ${\bf S}:=({\bf S}_1,\ldots, {\bf S}_k)$, where ${\bf S}_i:=({\bf S}_{i,1}\ldots, {\bf S}_{i,n_i})$,  associated with the
abstract noncommutative variety $\cV_Q$.

\begin{theorem}\label{pure-contr}  Let $\cQ$ be a set of noncommutative polynomials in indeterminates $\{Z_{i,j}\}$, where   $i\in \{1,\ldots,k\}$ and  $j\in \{1,\ldots, n_i\}$, and  let
 ${\bf A}:=({ A}_1,\ldots, {A}_k)\in  B(\cH)^{n_1}\times_c\cdots \times_c B(\cH)^{n_k}$, where ${ A}_i:=(A_{i,1},\ldots, A_{i,n_i})\in B(\cH)^{n_i}$ has the property that $\sum_{\alpha\in \FF_{n_i}^+} a_{i,\alpha} A_{i,\alpha}A_{i,\alpha}^*$
     is  weakly convergent and $q({\bf A})=0$ for any $ q\in \cQ$.
Then the  following
statements are equivalent.
\begin{enumerate}
\item[(i)] There exists an invertible operator $Y:
\cH\to \cG$ such that
$$
A_{i,j}^*=Y^{-1}[({\bf S}_{i,j}^*\otimes I_\cH)|_\cG]Y
$$
for all  $i\in \{1,\ldots,k\}$ and  $j\in \{1,\ldots, n_i\}$,
where
 $\cG\subseteq \cN_\cQ\otimes \cH$ is an invariant  subspace under  each operator ${\bf S}_{i,j}^*\otimes
 I_\cH$.
 \item[(ii)] There is   an
 invertible operator $Q\in \cC_{\geq}({\bf \Delta_{f,A}^m})^+$ such that  $ \Phi_{f_i,A_i}^s(Q)\to 0$ weakly, as $s\to \infty$.
\item[(iii)]  There exist
constants $0<a\leq b$ and a positive operator $R\in B(\cH)$ such
that
$$
aI\leq \sum_{(s_1,\ldots,s_k)\in \ZZ_+^k}\left(\begin{matrix} s_1+m_1-1\\m_1-1\end{matrix}\right)\cdots \left(\begin{matrix} s_k+m_k-1\\m_k-1\end{matrix}\right)\Phi_{f_1,A_1}^{s_1}\circ \cdots \circ \Phi_{f_k, A_k}^{s_k}(R)\leq bI.
$$
\end{enumerate}
Moreover, under the condition (iii), one can choose the invertible operator $Y$ such that $\|Y\|\|Y^{-1}\|\leq \sqrt{\frac{b}{a}}$.
\end{theorem}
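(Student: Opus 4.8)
The plan is to establish the cyclic chain of implications (iii)$\Rightarrow$(i)$\Rightarrow$(ii)$\Rightarrow$(iii), and to extract the quantitative estimate $\|Y\|\,\|Y^{-1}\|\le\sqrt{b/a}$ directly from the construction in (iii)$\Rightarrow$(i). The two tools that do all the work are the constrained noncommutative Berezin kernel ${\bf K}_\omega$ from Theorem \ref{lemma2} and the Fourier type representation from Theorem \ref{reproducing}.

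To prove (iii)$\Rightarrow$(i) I would first observe that the standing hypotheses on $\cQ$ and ${\bf A}$ together with the upper bound in (iii) say exactly that $\omega:=({\bf f,m,A},R,\cQ)$ is a compatible tuple. Theorem \ref{lemma2} then yields an operator ${\bf K}_\omega:\cH\to\cN_\cQ\otimes\cR$, with $\cR:=\overline{R^{1/2}(\cH)}$, satisfying ${\bf K}_\omega A_{i,j}^*=({\bf S}_{i,j}^*\otimes I_\cR){\bf K}_\omega$ and ${\bf K}_\omega^*{\bf K}_\omega=\sum_{(s_1,\dots,s_k)\in\ZZ_+^k}\binom{s_1+m_1-1}{m_1-1}\cdots\binom{s_k+m_k-1}{m_k-1}\Phi_{f_1,A_1}^{s_1}\circ\cdots\circ\Phi_{f_k,A_k}^{s_k}(R)$. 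By (iii) this forces $aI\le{\bf K}_\omega^*{\bf K}_\omega\le bI$, so ${\bf K}_\omega$ is bounded below, hence injective with closed range $\cG:=\ran{\bf K}_\omega$, which sits inside $\cN_\cQ\otimes\cR\subseteq\cN_\cQ\otimes\cH$; the intertwining relation shows $({\bf S}_{i,j}^*\otimes I_\cH)\cG\subseteq\cG$. Taking $Y:={\bf K}_\omega$ viewed as an invertible operator of $\cH$ onto $\cG$, we obtain $A_{i,j}^*=Y^{-1}[({\bf S}_{i,j}^*\otimes I_\cH)|_\cG]Y$, and from $a\|h\|^2\le\langle{\bf K}_\omega^*{\bf K}_\omega h,h\rangle=\|Yh\|^2\le b\|h\|^2$ we read off $\|Y\|\le\sqrt b$ and $\|Y^{-1}\|\le a^{-1/2}$, i.e. $\|Y\|\,\|Y^{-1}\|\le\sqrt{b/a}$.

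For (i)$\Rightarrow$(ii) I would set $Q:=Y^*Y$ (positive and invertible since $Y$ is) and regard $Y$ as an operator from $\cH$ into $\cN_\cQ\otimes\cH$ with range $\cG$; since $\ran Y\subseteq\cG$, the relation in (i) becomes $YA_{i,j}^*=({\bf S}_{i,j}^*\otimes I_\cH)Y$, which is precisely the intertwining appearing in the characterization of $C^{pure}_{\geq}({\bf \Delta_{f,A}^m})^+$ recorded after Theorem \ref{poisson2}; hence $Q=Y^*Y\in C^{pure}_{\geq}({\bf \Delta_{f,A}^m})^+\subseteq\cC_{\geq}({\bf \Delta_{f,A}^m})^+$, so $\Phi_{f_i,A_i}^s(Q)\to0$ weakly for each $i$ and $Q$ is invertible, which is (ii). For (ii)$\Rightarrow$(iii), given an invertible $Q\in\cC_{\geq}({\bf \Delta_{f,A}^m})^+$ with $\Phi_{f_i,A_i}^s(Q)\to0$ weakly, the bound $Q\ge cI$ together with $0\le\Phi_{f_i,A_i}^s(I)\le c^{-1}\Phi_{f_i,A_i}^s(Q)$ shows that each $\Phi_{f_i,A_i}$ is pure, while membership in the cone gives $R:={\bf \Delta_{f,A}^m}(Q)\ge0$; Theorem \ref{reproducing} then recovers $Q=\sum_{(s_1,\dots,s_k)\in\ZZ_+^k}\binom{s_1+m_1-1}{m_1-1}\cdots\binom{s_k+m_k-1}{m_k-1}\Phi_{f_1,A_1}^{s_1}\circ\cdots\circ\Phi_{f_k,A_k}^{s_k}(R)$, and choosing $0<a\le b$ with $aI\le Q\le bI$ gives exactly (iii). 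This closes the cycle.

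The conceptual heart of the argument is already packaged in Theorem \ref{lemma2} and Theorem \ref{reproducing}, so what remains is essentially bookkeeping: the main point requiring care in (iii)$\Rightarrow$(i) is verifying that $\omega$ is a compatible tuple and that $\ran{\bf K}_\omega$ is genuinely invariant under the adjoints of the universal model, while in (i)$\Rightarrow$(ii) the delicate step (which can alternatively be checked by a direct induction using that $\cG$ is co-invariant and that the universal model ${\bf W}$ is pure, so that $\Phi_{f_i,{\bf S}_i}^s(I)\to0$ weakly) is the transfer of polydomain membership and of purity from ${\bf S}$ to the compression of ${\bf S}\otimes I_\cH$ to $\cG$.
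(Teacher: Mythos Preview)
Your proof is correct and follows essentially the same route as the paper: the same constrained Berezin kernel $Y={\bf K}_\omega$ in (iii)$\Rightarrow$(i), the same $Q=Y^*Y$ in (i)$\Rightarrow$(ii) (the paper carries out the direct computation you mention as an alternative rather than quoting the remark after Theorem~\ref{poisson2}), and the same choice $R={\bf \Delta_{f,A}^m}(Q)$ in (ii)$\Rightarrow$(iii). The only minor deviation is in (ii)$\Rightarrow$(iii): you first observe that $Q\ge cI$ forces each $\Phi_{f_i,A_i}$ to be pure and then invoke Theorem~\ref{reproducing}, whereas the paper instead applies Theorem~\ref{reproducing2} together with Proposition~\ref{pure2} to obtain the same identity $\sum\binom{\,\cdot\,}{\,\cdot\,}\Phi_{f_1,A_1}^{s_1}\circ\cdots\circ\Phi_{f_k,A_k}^{s_k}(R)=Q$ without first establishing purity of the maps.
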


\begin{proof} We prove that (i) $\Rightarrow$ (ii).  Assume
that (i) holds and let $a,b>0$ be such that $aI\leq Y^*Y\leq bI$.
Setting $Q:=Y^*Y$  simple
 calculations reveal that
$$
(id-\Phi_{f_1,A_1})^{p_1}\circ\cdots \circ(id-\Phi_{f_k,A_k})^{p_k}(Q)=Y^*\left\{P_\cG\left[ (id-\Phi_{f_1,{\bf S}_1})^{p_1}\circ\cdots \circ(id-\Phi_{f_k,{\bf S}_k})^{p_k}(I)\otimes I\right]|_\cG \right\}Y\geq 0
$$
 for any $p_i\in \{0,1,\ldots, m_i\}$ and
       $i\in \{1,\ldots,k\}$.
Therefore, $Q\in \cC_{\geq}({\bf \Delta_{f,A}^m})^+$. Since  ${\bf S}:=({\bf S}_1,\ldots, {\bf S}_k)$ is a pure
tuple, we have
$\Phi_{f_i,{\bf S}_i}^s(I)\to 0$ weakly, as $s\to\infty$. Taking into
account that $\Phi_{f_i,A_i}^s(Q)= Y^*\left[P_\cG(\Phi_{f_i,{\bf S}_i}^s(I)\otimes
I)|_\cG\right]Y$ for $s\in \NN$, we deduce that
 $\Phi_{f_i,A_i}^s(Q)\to 0$
weakly as $s\to\infty$. Therefore   item  (ii) holds.

Now, we prove the implication  (ii) $\Rightarrow$ (iii).   Let $Q\in \cC_{\geq}({\bf \Delta_{f,A}^m})^+$
be an  invertible operator such that
  $\Phi_{f_i,A_i}^s(Q)\to 0$ weakly as $s\to\infty$. Set
$R:={\bf \Delta_{f,A}^m}(Q)$ and note  that, using  Theorem \ref{reproducing2} and Proposition \ref{pure2}, we obtain
\begin{equation*}\begin{split}
 \sum_{(s_1,\ldots,s_k)\in \ZZ_+^k}&\left(\begin{matrix} s_1+m_1-1\\m_1-1\end{matrix}\right)\cdots \left(\begin{matrix} s_k+m_k-1\\m_k-1\end{matrix}\right)\Phi_{f_1,A_1}^{s_1}\circ \cdots \circ \Phi_{f_k, A_k}^{s_k}(R)\\
 &=\lim_{q_k\to\infty}\ldots \lim_{q_1\to\infty}  (id-\Phi_{f_k, A_k}^{q_k})\circ\cdots \circ(id-\Phi_{f_1,A_1}^{q_1})(Q)=Q
\end{split}
\end{equation*}
 where
 the convergence of the series is in the weak operator topology.
 Hence, we deduce item (iii).
It remains to show that    (iii) $\Rightarrow$ (i). Assume that item
(iii) holds.   Let   ${\bf K}_\omega:\cH\to \cN_\cQ\otimes \cH$ be  the constrained
Berezin kernel  associated with a compatible tuple
$\omega:=({\bf f,m, A},R,\cQ)$.  According to Theorem \ref{lemma2}, we have
\begin{equation}\label{KASK}
    {\bf K}_\omega { A}^*_{i,j}= ({\bf S}_{i,j}^*\otimes I_\cH)  {\bf K}_\omega,
\end{equation}
where   ${\bf S}=\{{\bf S}_{i,j}\}$
    is the  universal model
     associated
  with the abstract noncommutative
  variety $\cV_\cQ$. Moreover,
$$
{\bf K}_\omega^*\,{\bf K}_\omega=
\sum_{(s_1,\ldots,s_k)\in \ZZ_+^k}\left(\begin{matrix} s_1+m_1-1\\m_1-1\end{matrix}\right)\cdots \left(\begin{matrix} s_k+m_k-1\\m_k-1\end{matrix}\right)\Phi_{f_1,A_1}^{s_1}\circ \cdots \circ \Phi_{f_k, A_k}^{s_k}(R),
$$
where the convergence is  in the weak  operator topology. Consequently,
we have
\begin{equation*}
 a\|h\|^2\leq \|{\bf K}_\omega h\|^2 \leq b\|h\|^2,\qquad h\in \cH,
\end{equation*}
 and  the range of $ {\bf K}_\omega $
is a closed subspace of $\cN_\cQ\otimes \cH$.
  Since  the operator $Y:\cH\to \text{\rm range}\,{\bf K}_\omega $ defined by $Yh:={\bf K}_\omega h$, $h\in \cH$,  is invertible, relation \eqref{KASK} implies
\begin{equation*}
A_{i,j}^*=Y^{-1}[({\bf S}_{i,j}^*\otimes I_\cH)|_\cG]Y
\end{equation*}
for all  $i\in \{1,\ldots,k\}$ and  $j\in \{1,\ldots, n_i\}$,
where $\cG:=\text{\rm range}\, {\bf K}_\omega $.
   This proves (i).
   The proof is complete.
\end{proof}

We remark that under the conditions of Theorem \ref{pure-contr},
part (iii), one can
show that the mapping  $\Psi:\cA(\cV_\cQ)\to B(\cH)$ defined by
$$\Psi(g({\bf S}_{i,j})):=
 g(A_{i,j}),\qquad g\in \CC\left<Z_{i,j}\right>,$$  is completely bounded  with ~$\|\Psi\|_{cb}\leq
  \sqrt{ \frac {b} {a}}$.

Taking $R=I$ in Theorem \ref{pure-contr}, we can obtain the
following
  analogue of Rota's  model  theorem,  for  similarity to tuples of operators  in the  noncommutative
variety $\cV_\cQ(\cH)$.

\begin{corollary} Let $\cQ$ be a set of noncommutative polynomials in indeterminates $\{Z_{i,j}\}$,  where $i\in \{1,\ldots,k\}$ and  $j\in \{1,\ldots, n_i\}$, and  let
 ${\bf A}:=({ A}_1,\ldots, {A}_k)\in  B(\cH)^{n_1}\times_c\cdots \times_c B(\cH)^{n_k}$, where ${ A}_i:=(A_{i,1},\ldots, A_{i,n_i})\in B(\cH)^{n_i}$ has the property that $\sum_{\alpha\in \FF_{n_i}^+} a_{i,\alpha} A_{i,\alpha}A_{i,\alpha}^*$
     is  weakly convergent and $q({\bf A})=0$ for any $ q\in \cQ$.
 If
$$
 \sum_{(s_1,\ldots,s_k)\in \ZZ_+^k}\left(\begin{matrix} s_1+m_1-1\\m_1-1\end{matrix}\right)\cdots \left(\begin{matrix} s_k+m_k-1\\m_k-1\end{matrix}\right)\Phi_{f_1,A_1}^{s_1}\circ \cdots \circ \Phi_{f_k, A_k}^{s_k}(I)\leq bI
$$
for some constant  $ b>0$, then there exists an invertible
operator $Y:  \cH\to \cG$ such that
$$
A_{i,j}^*=Y^{-1}[({\bf S}_{i,j}^*\otimes I_\cH)|_\cG]Y
$$
for all  $i\in \{1,\ldots,k\}$ and  $j\in \{1,\ldots, n_i\}$,
where
 $\cG\subseteq \cN_\cQ\otimes \cH$ is an invariant  subspace under  each operator ${\bf S}_{i,j}^*\otimes
 I_\cH$,  and  ${\bf S}:=({\bf S}_1,\ldots, {\bf S}_k)$, with ${\bf S}_i:=({\bf S}_{i,1}\ldots, {\bf S}_{i,n_i})$, is the universal model associated with the abstract
noncommutative variety $\cV_Q$.
\end{corollary}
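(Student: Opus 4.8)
The plan is to obtain this statement as the special case $R = I_\cH$ of Theorem~\ref{pure-contr}, so essentially nothing new needs to be proved beyond checking that the hypotheses match up. First I would examine the series in the hypothesis. Its term indexed by $(s_1,\dots,s_k) = (0,\dots,0)$ equals
$$\binom{m_1-1}{m_1-1}\cdots\binom{m_k-1}{m_k-1}\,\Phi_{f_1,A_1}^{0}\circ\cdots\circ\Phi_{f_k,A_k}^{0}(I) = I_\cH,$$
while every remaining term is a positive operator, since each $\Phi_{f_i,A_i}$ is completely positive and the binomial coefficients are nonnegative. As the series converges in the weak operator topology to a positive operator, any WOT-limit of partial sums containing the $(0,\dots,0)$ index is $\ge I_\cH$; hence
$$I_\cH \le \sum_{(s_1,\dots,s_k)\in\ZZ_+^k}\binom{s_1+m_1-1}{m_1-1}\cdots\binom{s_k+m_k-1}{m_k-1}\,\Phi_{f_1,A_1}^{s_1}\circ\cdots\circ\Phi_{f_k,A_k}^{s_k}(I).$$
Combining this with the assumed upper bound $\sum\cdots\le bI$, condition (iii) of Theorem~\ref{pure-contr} holds with $R = I_\cH$, $a = 1$, and the given $b$ (note $a\le b$ automatically, since $I \le bI$ forces $b\ge 1$).

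Next I would invoke the implication (iii)$\Rightarrow$(i) of Theorem~\ref{pure-contr}. Applying it requires only that $\omega := ({\bf f},{\bf m},{\bf A},I,\cQ)$ be a compatible tuple, which is immediate: conditions (i)--(iii) for a compatible quadruple hold because ${\bf f}$ is a $k$-tuple of positive regular free holomorphic functions, $\sum_{\alpha\in\FF_{n_i}^+} a_{i,\alpha}A_{i,\alpha}A_{i,\alpha}^*$ is weakly convergent, and the displayed bound $\sum\cdots\le bI$ is precisely condition (iii) with $R = I$; the remaining requirement $q({\bf A}) = 0$ for $q\in\cQ$ is part of the hypothesis. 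Theorem~\ref{pure-contr} then produces the invertible $Y:\cH\to\cG$ with $A_{i,j}^* = Y^{-1}[({\bf S}_{i,j}^*\otimes I_\cH)|_\cG]Y$. Concretely $Y = {\bf K}_\omega$, which by Theorem~\ref{lemma2} satisfies $\|h\|^2 \le \|{\bf K}_\omega h\|^2 \le b\|h\|^2$ (so it is bounded below and has closed range), $\cG = \operatorname{range}{\bf K}_\omega\subseteq\cN_\cQ\otimes\cH$ is invariant under each ${\bf S}_{i,j}^*\otimes I_\cH$, and the intertwining identity is exactly the content of Theorem~\ref{lemma2}.

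There is no genuine obstacle here; the entire content lies in specializing the theorem. The only points worth a sentence are the identification of the $(0,\dots,0)$ term of the series with $I_\cH$ (which supplies the lower bound, so $R = I$ is admissible with $a = 1$) and the fact that the closed-range and invertibility assertions for $Y = {\bf K}_\omega$ follow from the two-sided estimate on $\|{\bf K}_\omega h\|$ furnished by Theorem~\ref{lemma2}. One may also record, from the ``Moreover'' clause of Theorem~\ref{pure-contr}, that $Y$ can be chosen with $\|Y\|\,\|Y^{-1}\|\le\sqrt{b}$.
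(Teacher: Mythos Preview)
Your proposal is correct and matches the paper's approach: the corollary is obtained simply by taking $R=I$ in Theorem~\ref{pure-contr}, with the lower bound $a=1$ coming from the $(0,\ldots,0)$ term of the series. Your additional remarks (verifying compatibility of $\omega$ and noting $\|Y\|\,\|Y^{-1}\|\le\sqrt{b}$) are accurate elaborations of what the paper leaves implicit.
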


Let ${\bf L}:=({\bf L}_1,\ldots, {\bf L}_k)$, with ${\bf L}_i:=({\bf L}_{i,1},\ldots, {\bf L}_{i,n_i})$,  be  the universal model associated with the closed noncommutative polyball
$[B(\cH)^{n_1}]_1^-\times \cdots \times [B(\cH)^{n_k}]_1^-$.  More precisely,
  the operator
$${\bf L}_{i,j}:=\underbrace{I\otimes\cdots\otimes I}_{\text{${i-1}$
times}}\otimes L_{i,j}\otimes \underbrace{I\otimes\cdots\otimes
I}_{\text{${k-i}$ times}},
$$
 is acting on the tensor Hilbert space
$F^2(H_{n_1})\otimes\cdots\otimes F^2(H_{n_k})$ and
 $L_{i,j}:F^2(H_{n_i})\to F^2(H_{n_i})$  is the left creation operator  defined by
 $L_{i,j} e_\alpha^i:=e^i_j\otimes e_{\alpha}^i$ for  $\alpha \in \FF_{n_i}^+$.
 Let $\pi_{{\bf L}_i}:\FF_{n_i}^+\to B(\cH)$ be the representation defined by
 $\pi_{{\bf L}_i}(\alpha):={\bf L}_{i,\alpha}$ for $\alpha \in \FF_{n_i}^+$, and let
 $\pi_{\bf L}:\FF_{n_1}^+\times \cdots \times \FF_{n_k}^+\to B(\cH)$ be the direct product representation
 defined by $
\sigma(\alpha_1,\ldots, \alpha_k)=\pi_{{\bf L}_1}(\alpha_1)\cdots \pi_{{\bf L}_k}(\alpha_k)$ for  $ (\alpha_1,\ldots, \alpha_k)\in \FF_{n_1}^+\times \cdots \times \FF_{n_k}^+.
$

A consequence  of Theorem \ref{pure-contr} is the following analogue of Rota's model theorem for  noncommutative polyballs.

\begin{corollary}\label{Rota-polyball} Let $\pi_i:\FF_{n_i}^+\to B(\cH)$, $i\in \{1,\ldots,k\}$, be representations with commuting ranges and let $\sigma: \FF_{n_1}^+\times \cdots \times \FF_{n_k}^+\to \cH$ be their direct product representation.
If
$$
\sum_{\alpha\in \FF_{n_1}^+\times\cdots \times \FF_{n_k}^+} \sigma(\alpha)\sigma(\alpha)^*\leq bI,
$$
for some contant $b>0$, then  then there exists an invariant subspace $\cG\subset F^2(H_{n_1})\otimes\cdots\otimes F^2(H_{n_k})\otimes \cH$ under each operator ${\bf L}_{i,j}\otimes I_\cH$, and an invertible operator $Y:\cH\to \cG$ such that
$$
\sigma(\alpha)^*=Y^{-1} \left[(\pi_{\bf L}(\alpha)^*\otimes I_\cH)|_{\cG}\right] Y,\qquad \alpha\in \FF_{n_1}^+\times\cdots \times \FF_{n_k}^+,
$$
and
$$\|Y^{-1}\|\|Y\|\leq \prod_{i=1}^k \left(\sum_{\alpha_i\in \FF_{n_i}}\|\pi_i(\alpha_i)\|^2\right)^{1/2}.
$$
\end{corollary}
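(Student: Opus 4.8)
The plan is to specialize Theorem \ref{pure-contr} to the polyball, that is, to take $m_i=1$ and $f_i:=Z_{i,1}+\cdots+Z_{i,n_i}$, so that the universal model $\{{\bf W}_{i,j}\}$ of ${\bf D_f^m}$ is exactly $\{{\bf L}_{i,j}\}$, and to take $\cQ=\{0\}$, so that $\cN_\cQ=\bigotimes_{i=1}^kF^2(H_{n_i})$, ${\bf S}_{i,j}={\bf L}_{i,j}$, and $\pi_{\bf S}=\pi_{\bf L}$. I would set $A_{i,j}:=\pi_i(g_j^i)$ and let $A_i:=(A_{i,1},\ldots,A_{i,n_i})$, ${\bf A}:=(A_1,\ldots,A_k)$. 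Since the representations $\pi_i$ have commuting ranges, ${\bf A}\in B(\cH)^{n_1}\times_c\cdots\times_c B(\cH)^{n_k}$, the requirement $q({\bf A})=0$ for $q\in\cQ$ is vacuous, and $\Phi_{f_i,A_i}(X)=\sum_{j=1}^{n_i}A_{i,j}XA_{i,j}^*$, so $\Phi_{f_i,A_i}(I)=\sum_jA_{i,j}A_{i,j}^*$ is a finite sum and all hypotheses on ${\bf f}$ and ${\bf A}$ are met. Note also that $\sigma(\alpha_1,\ldots,\alpha_k)=\pi_1(\alpha_1)\cdots\pi_k(\alpha_k)=A_{1,\alpha_1}\cdots A_{k,\alpha_k}$.

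The key point is to recognize the hypothesis as condition (iii) of Theorem \ref{pure-contr} with $R=I$. Iterating the formula for $\Phi_{f_i,A_i}$ and nesting the compositions gives, for every $(s_1,\ldots,s_k)\in\ZZ_+^k$,
$$
\Phi_{f_1,A_1}^{s_1}\circ\cdots\circ\Phi_{f_k,A_k}^{s_k}(I)=\sum_{|\alpha_1|=s_1}\cdots\sum_{|\alpha_k|=s_k}\sigma(\alpha_1,\ldots,\alpha_k)\sigma(\alpha_1,\ldots,\alpha_k)^*,
$$
and, since $m_i=1$ forces every binomial coefficient $\binom{s_i+m_i-1}{m_i-1}$ to equal $1$, summing over $(s_1,\ldots,s_k)\in\ZZ_+^k$ yields $\sum_{\alpha\in\FF_{n_1}^+\times\cdots\times\FF_{n_k}^+}\sigma(\alpha)\sigma(\alpha)^*$. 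Hence the hypothesis $\sum_\alpha\sigma(\alpha)\sigma(\alpha)^*\le bI$ is precisely the upper bound in (iii), while the lower bound holds with $a=1$ because the term indexed by $(g_0^1,\ldots,g_0^k)$ already contributes $I$ and the remaining terms are positive. Applying the implication (iii)$\Rightarrow$(i) of Theorem \ref{pure-contr} produces an invertible $Y:\cH\to\cG$, with $\cG\subseteq\cN_\cQ\otimes\cH$ invariant under each ${\bf L}_{i,j}^*\otimes I_\cH$, such that $A_{i,j}^*=Y^{-1}[({\bf L}_{i,j}^*\otimes I_\cH)|_\cG]Y$; multiplying these generator relations along the letters of $\alpha_i$ and then over $i$, and using the $\cG$-invariance to compose the restricted operators, gives $\sigma(\alpha)^*=Y^{-1}[(\pi_{\bf L}(\alpha)^*\otimes I_\cH)|_\cG]Y$ for all $\alpha\in\FF_{n_1}^+\times\cdots\times\FF_{n_k}^+$.

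For the norm estimate I would not invoke the crude bound $\|Y\|\|Y^{-1}\|\le\sqrt{b/a}$ stated in Theorem \ref{pure-contr}, but instead use the explicit $Y$ produced in its proof, namely $Yh={\bf K}_\omega h$ for the compatible tuple $\omega:=({\bf f},{\bf m},{\bf A},I,\cQ)$. By Theorem \ref{lemma2} (equivalently Theorem \ref{Berezin-prop}) combined with the computation above,
$$
\|Yh\|^2=\langle{\bf K}_\omega^*{\bf K}_\omega h,h\rangle=\sum_{\alpha}\|\sigma(\alpha)^*h\|^2 .
$$
The term $\alpha=(g_0^1,\ldots,g_0^k)$ gives $\|h\|\le\|Yh\|$, hence $\|Y^{-1}\|\le1$, and the submultiplicativity $\|\sigma(\alpha)\|\le\prod_{i=1}^k\|\pi_i(\alpha_i)\|$ gives $\|Yh\|^2\le\big(\prod_{i=1}^k\sum_{\alpha_i\in\FF_{n_i}^+}\|\pi_i(\alpha_i)\|^2\big)\|h\|^2$, so $\|Y\|\le\prod_{i=1}^k\big(\sum_{\alpha_i}\|\pi_i(\alpha_i)\|^2\big)^{1/2}$ and the claimed bound on $\|Y\|\|Y^{-1}\|$ follows (vacuously when the right-hand side is infinite). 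There is no genuine obstacle: the argument is essentially a dictionary translation of the polyball instance of Theorem \ref{pure-contr}, and the only mildly delicate point is obtaining the sharp constant by reading it off the Berezin kernel rather than from the generic estimate $\sqrt{b/a}$.
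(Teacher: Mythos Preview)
Your argument is correct and is exactly the specialization the paper intends: the corollary is stated without proof as a direct consequence of Theorem \ref{pure-contr}, and your choice $m_i=1$, $f_i=Z_{i,1}+\cdots+Z_{i,n_i}$, $\cQ=\{0\}$, $R=I$ together with the identification of the hypothesis as condition (iii) with $a=1$ is precisely what is needed. For the norm estimate you may even bypass the explicit Berezin kernel: since $\sum_\alpha\sigma(\alpha)\sigma(\alpha)^*\le\big(\prod_{i=1}^k\sum_{\alpha_i}\|\pi_i(\alpha_i)\|^2\big)I$, taking $b$ equal to this product and $a=1$ in the ``Moreover'' clause of Theorem \ref{pure-contr} already gives $\|Y\|\|Y^{-1}\|\le\sqrt{b/a}=\prod_{i=1}^k\big(\sum_{\alpha_i}\|\pi_i(\alpha_i)\|^2\big)^{1/2}$ directly.
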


A simple consequence of Corollary \ref{Rota-polyball} is the following von Neumann type inequality.  For $i\in \{1,\ldots, k\}$, let
$T_i:=(T_{i,1}, \ldots, T_{i,n_i})$ be such that $\|T_i\|\leq r<1$ and the entries of $T_i$ commute with those of $T_j$ for any $i\neq j$ in $\{1,\ldots, k\}$. Then
$$
\|[q_{s,t}(T_{i,j})]_{m\times m}\|\leq \frac{1}{(1-r^2)^{k/2}}  \|[q_{s,t}({\bf L}_{i,j})]_{m\times m}\|
$$
for any  matrix $[q_{s,t}]_{m\times m}$ of polynomials   in  variables $\{Z_{i,j}\}$ and any $m\in \NN$.

Another consequence  of Corollary \ref{Rota-polyball}  is  the following analogue of Rota's model theorem for the polydisc.

\begin{corollary} \label{rota-poly}Let $(C_1,\ldots, C_k)\in B(\cH)^k$ be a commuting  tuple of operators and let $S_1,\ldots, S_k$ be the unilateral shifts on the Hardy space $H^2(\DD^k)$ of the polydisc. If there is $b>0$ such that
$$
\sum_{(s_1,\ldots, s_k)\in \ZZ_+^k} C_1^{s_1}\cdots C_k^{s_k} (C_k^{s_k})^*\cdots (C_1^{s_1})^*\leq bI,
$$
then there exists an invariant subspace $\cG\subset H^2(\DD^k)\otimes \cH$ under each operator $S_i\otimes I_\cH$, and an invertible operator $Y:\cH\to \cG$ such that
$$
C_i^*=Y^{-1}[(S_i^*\otimes I_\cH)|_\cG]Y,\qquad i\in \{1,\ldots, k\}.
$$
Moreover,
$$
\|[q_{s,t}(C_1,\ldots, C_k)]_{m\times m}\|\leq \sqrt{b} \sup_{|z_i| \leq 1}\|[q_{s,t}(z_1,\ldots, z_k)]_{m\times m}\|
$$
for any  matrix $[q_{s,t}]_{m\times m}$ of polynomials   in $k$ variables and any $m\in \NN$.
\end{corollary}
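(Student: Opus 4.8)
The plan is to obtain this result as the special case $n_1=\cdots=n_k=1$ of Corollary~\ref{Rota-polyball}. When $n_i=1$ the free monoid $\FF_{n_i}^+=\{g_0^i,g_1^i,(g_1^i)^2,\ldots\}$ is canonically identified with $\ZZ_+$, so a representation $\pi_i:\FF_{n_i}^+\to B(\cH)$ is nothing but the single operator $C_i:=\pi_i(g_1^i)$; the requirement that the $\pi_i$ have commuting ranges is exactly that $C_1,\ldots,C_k$ commute, and the direct product representation is $\sigma(s_1,\ldots,s_k)=C_1^{s_1}\cdots C_k^{s_k}$, so that the hypothesis $\sum_\alpha\sigma(\alpha)\sigma(\alpha)^*\le bI$ of Corollary~\ref{Rota-polyball} is precisely the inequality assumed here. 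Moreover, for $n_i=1$ the full Fock space $F^2(H_{n_i})$ is the closed span of $1,e_1^i,(e_1^i)^{\otimes 2},\ldots$, which I would identify with $H^2(\DD)$ via $(e_1^i)^{\otimes p}\mapsto z^p$; under this identification the left creation operator $L_{i,1}$ becomes the unilateral shift, hence $F^2(H_{n_1})\otimes\cdots\otimes F^2(H_{n_k})\cong H^2(\DD^k)$ and ${\bf L}_{i,1}$ becomes $S_i$, multiplication by the $i$-th coordinate. With these identifications Corollary~\ref{Rota-polyball} directly yields the invariant subspace $\cG\subset H^2(\DD^k)\otimes\cH$ for each $S_i\otimes I_\cH$ and an invertible $Y:\cH\to\cG$ with $\sigma(\alpha)^*=Y^{-1}[(\pi_{\bf L}(\alpha)^*\otimes I_\cH)|_\cG]Y$ for all $\alpha$; specializing $\alpha=(0,\ldots,1,\ldots,0)$ gives $C_i^*=Y^{-1}[(S_i^*\otimes I_\cH)|_\cG]Y$.

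For the von Neumann inequality the plan is to invoke Corollary~\ref{VN} with a suitable invertible $D$. I would take $f_i:=Z_i$, $m_i:=1$, $n_i:=1$ and $\cQ:=\{0\}$ (so $\cN_\cQ$ is the full tensor product and ${\bf S}_{i,1}=S_i$), set ${\bf A}:=(C_1,\ldots,C_k)$, and note that the tuple $\omega:=({\bf f},{\bf m},{\bf A},I,\cQ)$ is compatible precisely because of the assumed inequality. By Theorem~\ref{lemma2} the constrained Berezin kernel satisfies ${\bf K}_\omega C_i^*=(S_i^*\otimes I_\cH){\bf K}_\omega$ and
\[
D:={\bf K}_\omega^*{\bf K}_\omega=\sum_{(s_1,\ldots,s_k)\in\ZZ_+^k} C_1^{s_1}\cdots C_k^{s_k}(C_k^{s_k})^*\cdots (C_1^{s_1})^* .
\]
The $(0,\ldots,0)$-term shows $D\ge I$, and the hypothesis gives $D\le bI$; in particular $D$ is invertible with $\|D^{-1/2}\|\le 1$ and $\|D^{1/2}\|\le\sqrt b$. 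Since ${\bf \Delta_{f,A}^p}(D)={\bf K}_\omega^*[{\bf \Delta_{f,S}^p}(I)\otimes I_\cH]{\bf K}_\omega\ge 0$ for $0\le{\bf p}\le{\bf m}$ (because the universal model lies in the polydomain), we have $D\in C_{\ge}({\bf \Delta_{f,A}^m})^+$, and Corollary~\ref{VN} makes $u(q({\bf S})):=q({\bf A})$ completely bounded with $\|u\|_{\mathrm{cb}}\le\|D^{-1/2}\|\|D^{1/2}\|\le\sqrt b$. (This is consistent with the first paragraph: for $R=I$ one may take $\cG=\operatorname{Ran}{\bf K}_\omega$ and $Y={\bf K}_\omega$, whence $D=Y^*Y$.)

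Finally I would identify the universal-model norm with a supremum over the polydisc: since $S_1,\ldots,S_k$ are the coordinate multiplications on $H^2(\DD^k)$, the norm-closed algebra $\cA(\cV_\cQ)$ generated by them and the identity is the polydisc algebra, and for any matrix $[q_{s,t}]_{m\times m}$ of polynomials one has $\|[q_{s,t}(S_1,\ldots,S_k)]_{m\times m}\|=\sup_{z\in\TT^k}\|[q_{s,t}(z)]_{m\times m}\|=\sup_{|z_i|\le1}\|[q_{s,t}(z)]_{m\times m}\|$, the multiplier norm on $M_m(H^\infty(\DD^k))$ being the supremum of the pointwise operator norms and the boundary supremum equalling the one over $\overline{\DD}^{\,k}$ by the maximum principle. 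Amplifying $u$ to $M_m$ and using $u(q_{s,t}({\bf S}))=q_{s,t}({\bf A})$ then gives $\|[q_{s,t}(C_1,\ldots,C_k)]_{m\times m}\|\le\|u\|_{\mathrm{cb}}\,\|[q_{s,t}(S_1,\ldots,S_k)]_{m\times m}\|\le\sqrt b\,\sup_{|z_i|\le1}\|[q_{s,t}(z_1,\ldots,z_k)]_{m\times m}\|$, which is the asserted inequality.

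No new idea beyond Corollaries~\ref{Rota-polyball} and~\ref{VN} is needed; the work is essentially bookkeeping. The points that will need the most care are the three identifications $\FF_1^+\cong\ZZ_+$, $F^2(H_1)^{\otimes k}\cong H^2(\DD^k)$, $L_{i,1}\leftrightarrow S_i$, and the standard-but-not-quite-trivial fact that the completely bounded norm of the polynomial functional calculus on the polydisc algebra is computed by the supremum over $\overline{\DD}^{\,k}$. One should also observe in passing that the assumed inequality already forces each $\Phi_{f_i,A_i}$ to be pure (keep one $s_j$ free and set the others to $0$), which is what makes Theorem~\ref{reproducing}, Theorem~\ref{lemma2} and Corollary~\ref{VN} applicable here.
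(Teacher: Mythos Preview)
Your proposal is correct and follows the approach the paper indicates: the corollary is stated there without proof, as the specialization $n_1=\cdots=n_k=1$, $m_i=1$, $f_i=Z_i$, $\cQ=\{0\}$ of Corollary~\ref{Rota-polyball} (hence of Theorem~\ref{pure-contr} with $R=I$, $a=1$). Your derivation of the von Neumann inequality via Corollary~\ref{VN} with $D={\bf K}_\omega^*{\bf K}_\omega$ is a clean way to get the constant $\sqrt{b}$; the paper instead points (in the remark after Theorem~\ref{pure-contr}) to the bound $\|\Psi\|_{cb}\le\sqrt{b/a}=\sqrt{b}$ coming directly from the similarity $Y={\bf K}_\omega$, which is the same computation in slightly different packaging. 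Your identification of $\|[q_{s,t}(S_1,\ldots,S_k)]\|$ with $\sup_{|z_i|\le1}\|[q_{s,t}(z)]\|$ via the polydisc algebra is the standard ingredient the paper leaves implicit.
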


\begin{corollary} \label{rota-poly2} Let $(C_1,\ldots, C_k)\in B(\cH)^k$ be a commuting  tuple of operators such that the spectral radius $r(C_i)<1$ for each $i\in \{1,\ldots, k\}$. Then the conclusion of Corollary \ref{rota-poly} holds  with $$b=\prod_{i=1}^k \left(\sum_{s_i=0}^\infty \|C_i^{s_i}\|^2\right).
$$
\end{corollary}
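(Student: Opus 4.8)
The plan is to reduce the statement to Corollary~\ref{rota-poly} by verifying its summability hypothesis with the explicit constant $b=\prod_{i=1}^k\left(\sum_{s_i=0}^\infty\|C_i^{s_i}\|^2\right)$.

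\emph{Step 1: the constant is finite.} Since $r(C_i)<1$ for each $i\in\{1,\dots,k\}$, Gelfand's spectral radius formula gives $\limsup_{s\to\infty}\|C_i^s\|^{1/s}=r(C_i)<1$, so by the root test the scalar series $\sum_{s=0}^\infty\|C_i^s\|^2$ converges; hence $b<\infty$.

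\emph{Step 2: the operator series is bounded by $bI$.} For $(s_1,\dots,s_k)\in\ZZ_+^k$ write $T_{(s)}:=C_1^{s_1}\cdots C_k^{s_k}$, so that $C_1^{s_1}\cdots C_k^{s_k}(C_k^{s_k})^*\cdots(C_1^{s_1})^*=T_{(s)}T_{(s)}^*$. Then for $h\in\cH$,
$$
\left\langle T_{(s)}T_{(s)}^*h,h\right\rangle=\|T_{(s)}^*h\|^2\le\|T_{(s)}\|^2\|h\|^2\le\|C_1^{s_1}\|^2\cdots\|C_k^{s_k}\|^2\,\|h\|^2
$$
by submultiplicativity of the operator norm. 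Summing over $(s_1,\dots,s_k)\in\ZZ_+^k$ and factoring the resulting product of scalar series, the right-hand side becomes $b\|h\|^2$. Hence the net of finite partial sums of the positive operators $T_{(s)}T_{(s)}^*$ is increasing and uniformly bounded above by $bI$, so it converges in the weak operator topology and
$$
\sum_{(s_1,\dots,s_k)\in\ZZ_+^k}C_1^{s_1}\cdots C_k^{s_k}(C_k^{s_k})^*\cdots(C_1^{s_1})^*\le bI.
$$

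\emph{Step 3: conclude.} This is precisely the hypothesis of Corollary~\ref{rota-poly} for the commuting tuple $(C_1,\dots,C_k)$ with this value of $b$, and invoking that corollary yields the invariant subspace $\cG\subset H^2(\DD^k)\otimes\cH$ under each $S_i\otimes I_\cH$, the similarity $C_i^*=Y^{-1}[(S_i^*\otimes I_\cH)|_\cG]Y$, and the von Neumann type inequality with constant $\sqrt b$. The argument is short and there is no genuine obstacle: the only point deserving a word of care is the passage from the scalar estimate to weak-operator-topology convergence of the operator series, which is immediate since an increasing net of positive operators bounded above is automatically WOT-convergent. (One could note that the estimate $\|T_{(s)}^*h\|\le\prod_i\|C_i^{s_i}\|\,\|h\|$ does not itself use commutativity of the $C_i$, but commutativity is needed anyway to apply Corollary~\ref{rota-poly}.)
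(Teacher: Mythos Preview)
Your proof is correct and follows exactly the intended route: the paper states this corollary without proof, and the implicit argument is precisely your verification that $r(C_i)<1$ forces $\sum_{(s_1,\ldots,s_k)}C_1^{s_1}\cdots C_k^{s_k}(C_k^{s_k})^*\cdots(C_1^{s_1})^*\le bI$ with the given $b$, after which Corollary~\ref{rota-poly} applies directly.
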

We remark if $(C_1,\ldots, C_k)\in B(\cH)^k$ is any commuting  tuple of operators with $\|C_i\|\leq r<1$ for $i\in \{1,\ldots, k\}$, then Corollary
\ref{rota-poly2} implies the inequality
$$
\|[q_{s,t}(C_1,\ldots, C_k)]_{m\times m}\|\leq \frac{1}{(1-r^2)^{k/2}} \sup_{|z_i| \leq 1}\|[q_{s,t}(z_1,\ldots, z_k)]_{m\times m}\|
$$
for any  matrix $[q_{s,t}]_{m\times m}$ of polynomials   in $k$ variables and any $m\in \NN$.

Another consequence of Theorem \ref{pure-contr} is
 the following analogue of Foia\c s \cite{Fo} (see also \cite{SzFBK-book}) and de Branges--Rovnyak \cite{BR}
  model theorem for pure tuples of operators in $\cV_\cQ(\cH)$.
\begin{corollary} \label{Fo-BR}
A tuple  ${\bf T}:=({ T}_1,\ldots, {T}_k)\in  B(\cH)^{n_1}\times_c\cdots \times_c B(\cH)^{n_k}$ with ${ T}_i:=(T_{i,1},\ldots, T_{i,n_i})\in B(\cH)^{n_i}$  is in the noncommutative
 variety $ \cV_\cQ(\cH)$ and it is pure if and only if
there exists a unitary  operator $U:
\cH\to \cG$ such that
$$
T_{i,j}^*=U^*[({\bf S}_{i,j}^*\otimes I_\cD)|_\cG]U
$$
for all  $i\in \{1,\ldots,k\}$ and  $j\in \{1,\ldots, n_i\}$,
where
 $\cD:=\overline{{\bf \Delta_{f,T}^m}(I)^{1/2}(\cH)}$, the subspace
$\cG\subseteq \cN_\cQ\otimes \cH$ is  invariant   under
each operator ${\bf S}_{i,j}^*\otimes
 I_\cD$,  and  ${\bf S}:=({\bf S}_1,\ldots, {\bf S}_k)$, with ${\bf S}_i:=({\bf S}_{i,1}\ldots, {\bf S}_{i,n_i})$, is the universal model associated with the abstract
noncommutative variety $\cV_Q$.
\end{corollary}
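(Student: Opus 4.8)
The plan is to deduce the statement from the constrained noncommutative Berezin transform machinery of Section~2, specifically from Theorem~\ref{lemma2} and the characterization --- recorded in the remark following Theorem~\ref{poisson2} --- that $I_\cH\in C^{pure}_{\geq}({\bf \Delta_{f,T}^m})^+$ precisely when there is a Hilbert space $\cD$ and an isometric intertwiner $K:\cH\to\cN_\cQ\otimes\cD$ with $KT_{i,j}^*=({\bf S}_{i,j}^*\otimes I_\cD)K$ and $K^*K=I_\cH$. Here the prescribed space is $\cD:=\overline{{\bf \Delta_{f,T}^m}(I)^{1/2}(\cH)}$. The direct implication will produce $K$ as an honest Berezin kernel ${\bf K}_\omega$ and read $\cG$ off its range; the converse will feed the hypothesized unitary into that characterization and then verify the variety relations $q({\bf T})=0$ separately.

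For the direct implication, suppose ${\bf T}\in\cV_\cQ(\cH)$ is pure. Since $\cV_\cQ(\cH)\subseteq{\bf D_f^m}(\cH)$ we have ${\bf \Delta_{f,T}^p}(I)\geq0$ for ${\bf 0}\leq{\bf p}\leq{\bf m}$, and purity gives $\Phi_{f_i,T_i}^s(I)\to0$ weakly for each $i$; hence $I_\cH\in C^{pure}_{\geq}({\bf \Delta_{f,T}^m})^+$. First I would verify that, with $R:={\bf \Delta_{f,T}^m}(I)\geq0$, the tuple $\omega:=({\bf f,m,T},R,\cQ)$ is compatible: condition~(iii) in the definition of a compatible quadruple is exactly the assertion that the Fourier type series of Theorem~\ref{reproducing} reconstructs $I_\cH$ from $R$ (valid since each $\Phi_{f_i,T_i}$ is pure), while $q({\bf T})=0$ for $q\in\cQ$ takes care of the remaining requirement. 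Theorem~\ref{lemma2} then yields the constrained Berezin kernel ${\bf K}_\omega:\cH\to\cN_\cQ\otimes\cD$ with ${\bf K}_\omega T_{i,j}^*=({\bf S}_{i,j}^*\otimes I_\cD){\bf K}_\omega$ and ${\bf K}_\omega^*{\bf K}_\omega$ equal to that same series, hence ${\bf K}_\omega^*{\bf K}_\omega=I_\cH$. Thus ${\bf K}_\omega$ is an isometry; putting $U:={\bf K}_\omega$ and $\cG:=\text{\rm range}\,{\bf K}_\omega$, the operator $U:\cH\to\cG$ is unitary, the intertwining relation makes $\cG$ invariant under each ${\bf S}_{i,j}^*\otimes I_\cD$, and $UT_{i,j}^*=[({\bf S}_{i,j}^*\otimes I_\cD)|_\cG]U$ rearranges (multiply by $U^*$) to $T_{i,j}^*=U^*[({\bf S}_{i,j}^*\otimes I_\cD)|_\cG]U$.

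For the converse, suppose such a unitary $U:\cH\to\cG$ exists. Passing to adjoints, the operator $V_{i,j}:=[({\bf S}_{i,j}^*\otimes I_\cD)|_\cG]^*=P_\cG({\bf S}_{i,j}\otimes I_\cD)|_\cG$ satisfies $T_{i,j}=U^*V_{i,j}U$, so ${\bf T}$ is unitarily equivalent to the compression ${\bf V}:=P_\cG({\bf S}\otimes I_\cD)|_\cG$. Because $\cG$ is invariant under the adjoints ${\bf S}_{i,j}^*\otimes I_\cD$, its orthogonal complement is invariant under the ${\bf S}_{i,j}\otimes I_\cD$, so compression to $\cG$ is multiplicative on the unital algebra generated by $\{{\bf S}_{i,j}\otimes I_\cD\}$; since ${\bf S}$ is the universal model of $\cV_\cQ$ we get $q({\bf V})=P_\cG(q({\bf S})\otimes I_\cD)|_\cG=0$, hence $q({\bf T})=U^*q({\bf V})U=0$ for every $q\in\cQ$. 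To see in addition that ${\bf T}$ is a pure element of ${\bf D_f^m}(\cH)$, I would take $K:=U$ in the remark following Theorem~\ref{poisson2}: then $I_\cH=K^*K$ and $KT_{i,j}^*=({\bf S}_{i,j}^*\otimes I_\cD)K$, so $I_\cH\in C^{pure}_{\geq}({\bf \Delta_{f,T}^m})^+$, i.e. ${\bf \Delta_{f,T}^p}(I)\geq0$ for ${\bf 0}\leq{\bf p}\leq{\bf m}$ and $\Phi_{f_i,T_i}^s(I)\to0$ weakly. Together with $q({\bf T})=0$ and the standing hypothesis ${\bf T}\in B(\cH)^{n_1}\times_c\cdots\times_c B(\cH)^{n_k}$, this says exactly that ${\bf T}\in\cV_\cQ(\cH)$ and ${\bf T}$ is pure.

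The substantive content is already packed inside Theorems~\ref{reproducing} and~\ref{lemma2}, so what remains is essentially bookkeeping; the two places where I expect to have to be careful are the following. In the direct implication one must check that $R={\bf \Delta_{f,T}^m}(I)$ is at once small enough for the generalized Berezin kernel to be bounded and large enough that its Gram operator collapses to $I_\cH$ --- this is the compatibility verification --- and that the range space $\cR=\overline{R^{1/2}(\cH)}$ appearing in Theorem~\ref{lemma2} is literally the prescribed space $\cD$. In the converse the delicate point is that $q$ need not be homogeneous, so no Berezin transform identity as in Corollary~\ref{VN} is available; the vanishing $q({\bf T})=0$ must instead be squeezed out of the co-invariance of $\cG$ via multiplicativity of compression, which is precisely why this corollary holds for arbitrary $\cQ$ with $\cN_\cQ\neq\{0\}$ rather than only for homogeneous $\cQ$. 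Beyond this I expect no serious obstacle.
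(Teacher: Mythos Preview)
Your proof is correct and, for the forward implication, takes essentially the same route as the paper: the paper simply points to the proof of Theorem~\ref{pure-contr} with ${\bf A}={\bf T}$ and $Q=I_\cH$, observes that the Berezin kernel ${\bf K}_\omega$ attached to $\omega=({\bf f,m,T},{\bf \Delta_{f,T}^m}(I),\cQ)$ satisfies ${\bf K}_\omega^*{\bf K}_\omega=I$ (hence is an isometry), and reads off $U$ and $\cG$ from relation \eqref{KASK}. You instead bypass Theorem~\ref{pure-contr} and go straight to Theorems~\ref{reproducing} and~\ref{lemma2}; this is the same argument unpacked one layer.

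Where you genuinely add content is in the converse. The paper's proof is silent on the ``if'' direction, presumably regarding it as standard that a compression of ${\bf S}\otimes I_\cD$ to a co-invariant subspace inherits purity, the defect positivity, and the variety relations. You spell this out carefully: the co-invariance of $\cG$ makes the compression map multiplicative on the algebra generated by the ${\bf S}_{i,j}\otimes I_\cD$, so $q({\bf T})=U^*P_\cG(q({\bf S})\otimes I_\cD)|_\cG\,U=0$; and the remark after Theorem~\ref{poisson2} (applied with $K$ equal to $U$ followed by the inclusion $\cG\hookrightarrow\cN_\cQ\otimes\cD$, so that $K^*K=I_\cH$) delivers $I_\cH\in C^{pure}_{\geq}({\bf \Delta_{f,T}^m})^+$. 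Both steps are sound, and your observation that this argument works for arbitrary $\cQ$ with $\cN_\cQ\neq\{0\}$---not just homogeneous $\cQ$---is exactly on point.
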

\begin{proof}  A
closer look at the proof of Theorem \ref{pure-contr}, when ${\bf A}={\bf T}$ and
$Q=I_\cH$, reveals that
$$
{\bf K}_\omega^*\,{\bf K}_\omega=
\sum_{(s_1,\ldots,s_k)\in \ZZ_+^k}\left(\begin{matrix} s_1+m_1-1\\m_1-1\end{matrix}\right)\cdots \left(\begin{matrix} s_k+m_k-1\\m_k-1\end{matrix}\right)\Phi_{f_1,A_1}^{s_1}\circ \cdots \circ \Phi_{f_k, A_k}^{s_k}(R)=I,
$$
where
$\omega:=({\bf f,m, T},R,\cQ)$ and $R:={\bf \Delta_{f,T}^m}(I)$. Consequently, ${\bf K}_\omega$ is an isometry
and   the operator   $U:\cH\to   {{\bf K}_\omega(\cH)}$, defined by $Uh:={\bf K}_\omega h$,
$h\in \cH$, is unitary. Now, one can use  relation \eqref{KASK}  to
complete the proof.
\end{proof}

A version of Rota's model theorem (see \cite{R}, \cite{H1}) asserts
that any operator with spectral radius less than
 one is similar to a strict contraction. In what follows we present an
 analogue of this result  in our multivariable noncommutative
 setting.

\begin{theorem} \label{simi2}
 Let $\cQ$ be a set of noncommutative polynomials in indeterminates $\{Z_{i,j}\}$ and  let
 ${\bf A}:=({ A}_1,\ldots, {A}_k)\in  B(\cH)^{n_1}\times_c\cdots \times_c B(\cH)^{n_k}$, where ${ A}_i:=(A_{i,1},\ldots, A_{i,n_i})\in B(\cH)^{n_i}$ has the property that $\sum_{\alpha\in \FF_{n_i}^+} a_{i,\alpha} A_{i,\alpha}A_{i,\alpha}^*$
     is  weakly convergent and $q({\bf A})=0$ for any $ q\in \cQ$. If ${\bf m}\in \ZZ_+^k$, then the following
statements are equivalent.
\begin{enumerate}
\item[(i)]
  There  is a tuple  ${\bf T}:=({ T}_1,\ldots, {T}_k)\in  B(\cH)^{n_1}\times_c\cdots \times_c B(\cH)^{n_k}$, with ${ T}_i:=(T_{i,1},\ldots, T_{i,n_i})\in B(\cH)^{n_i}$,  in the noncommutative
 variety $ \cV_\cQ(\cH)$ such that
 ${\bf \Delta_{f,T}^m}(I)>0$
 and an invertible operator $Y\in B(\cH)$ such that
$$
A_{i,j}=Y^{-1}{T}_{i,j}Y
$$
for all  $i\in \{1,\ldots,k\}$ and  $j\in \{1,\ldots, n_i\}$.
 \item[(ii)] There exists a positive operator $Q\in B(\cH)$
  such that ${\bf \Delta_{f,A}^p}(Q)\geq 0$ for any ${\bf p}\in \ZZ_+^k$ with ${\bf p}\leq {\bf m}$, and
 $${\bf \Delta_{f,A}^m}(Q)>0.
 $$
\item[(iii)] $r_{f_i}(A_{i,1},\ldots,A_{i,n_i})<1$ for each $i\in \{1,\ldots, k\}$.
  \item[(iv)] $\lim\limits_{s\to \infty} \|\Phi_{f_i,A_i}^s(I)\|=0$ for each $i\in \{1,\ldots, k\}$.
\item[(v)] For each $i\in \{1,\ldots, k\}$, the completely positive map $\Phi_{f_i,A_i}$ is power bounded and pure, and   there is an
 invertible positive operator $R\in B(\cH)$, such that
  the equation
  \begin{equation*}
  {\bf \Delta_{f,A}^m}(X)=R
  \end{equation*}
  has a positive  solution  $X$ in $B(\cH)$ such that ${\bf \Delta_{f,A}^p}(X)\geq 0$ for any ${\bf p}\in \ZZ_+^k$ with ${\bf p}\leq {\bf m}$.
\end{enumerate}
 Moreover, in this case, for any
 invertible positive operator $R\in B(\cH)$,
  the equation
  $
  {\bf \Delta_{f,A}^m}(X)=R
  $
  has a
  unique  positive solution, namely,
  $$
  X:=\sum_{(s_1,\ldots,s_k)\in \ZZ_+^k}\left(\begin{matrix} s_1+m_1-1\\m_1-1\end{matrix}\right)\cdots \left(\begin{matrix} s_k+m_k-1\\m_k-1\end{matrix}\right)\Phi_{f_1,A_1}^{s_1}\circ \cdots \circ \Phi_{f_k, A_k}^{s_k}(R),
  $$
  where the convergence is in the uniform topology, which is an invertible operator.
\end{theorem}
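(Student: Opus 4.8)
The plan is to establish the chain of implications (iii)$\Leftrightarrow$(iv), (iv)$\Rightarrow$(v)$\Rightarrow$(ii)$\Rightarrow$(i)$\Rightarrow$(iv), and then to read the ``moreover'' part off the purity obtained along the way. First I would record (iii)$\Leftrightarrow$(iv): since $\Phi_{f_i,A_i}$ is completely positive, $\|\Phi_{f_i,A_i}^s\|=\|\Phi_{f_i,A_i}^s(I)\|$ and $\Phi_{f_i,A_i}^{s+t}(I)\le\|\Phi_{f_i,A_i}^t(I)\|\,\Phi_{f_i,A_i}^s(I)$, so $s\mapsto\|\Phi_{f_i,A_i}^s(I)\|$ is submultiplicative and, by Fekete's lemma, $r_{f_i}(A_i)^2=\lim_s\|\Phi_{f_i,A_i}^s(I)\|^{1/s}=\inf_s\|\Phi_{f_i,A_i}^s(I)\|^{1/s}$; hence $r_{f_i}(A_i)<1$ precisely when some iterate has norm $<1$, equivalently $\|\Phi_{f_i,A_i}^s(I)\|\to0$.

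For (iv)$\Rightarrow$(v) one notes that (iv) makes each $\Phi_{f_i,A_i}$ power bounded and pure, and then, fixing any invertible positive operator $R$ (say $R=I$), one sets
$$
X:=\sum_{(s_1,\ldots,s_k)\in\ZZ_+^k}\binom{s_1+m_1-1}{m_1-1}\cdots\binom{s_k+m_k-1}{m_k-1}\,\Phi_{f_1,A_1}^{s_1}\circ\cdots\circ\Phi_{f_k,A_k}^{s_k}(R).
$$
Since $r_{f_i}(A_i)<1$ gives $\|\Phi_{f_i,A_i}^s(I)\|\le C_i\rho_i^{2s}$ with $\rho_i<1$, positivity and monotonicity of the maps yield $\|\Phi_{f_1,A_1}^{s_1}\circ\cdots\circ\Phi_{f_k,A_k}^{s_k}(R)\|\le\|R\|\prod_iC_i\rho_i^{2s_i}$, so the series converges absolutely in the operator norm; a termwise collapsing computation (exactly as in the proof of Theorem~\ref{reproducing}) gives ${\bf \Delta_{f,A}^m}(X)=R$ and ${\bf \Delta_{f,A}^p}(X)\ge0$ for ${\bf 0}\le{\bf p}\le{\bf m}$, while $X\ge R$ makes $X$ invertible. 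The implication (v)$\Rightarrow$(ii) is then immediate by taking $Q:=X$.

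For (ii)$\Rightarrow$(i), given such a $Q$, Proposition~\ref{Delta-ineq}(i) (applied to $Y=Q$) yields $Q>0$, so $Q$ is invertible; Theorem~\ref{pure} then furnishes a tuple ${\bf T}\in\cV_\cQ(\cH)$ with $A_{i,j}Q^{1/2}=Q^{1/2}T_{i,j}$, whence ${\bf A}=Y^{-1}{\bf T}Y$ for the invertible operator $Y:=Q^{-1/2}$, and the intertwining identity ${\bf \Delta_{f,A}^p}(Q)=Q^{1/2}[{\bf \Delta_{f,T}^p}(I)]Q^{1/2}$ from the proof of Theorem~\ref{pure} gives ${\bf \Delta_{f,T}^m}(I)=Q^{-1/2}[{\bf \Delta_{f,A}^m}(Q)]Q^{-1/2}>0$; thus (i) holds. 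For (i)$\Rightarrow$(iv) one first observes $\Phi_{f_i,A_i}^s(I)=Y^{-1}\Phi_{f_i,T_i}^s(YY^*)(Y^{-1})^*$, so $\|\Phi_{f_i,A_i}^s(I)\|\le\|Y^{-1}\|^2\|Y\|^2\,\|\Phi_{f_i,T_i}^s(I)\|$, and it suffices to show $\|\Phi_{f_i,T_i}^s(I)\|\to0$ whenever ${\bf T}\in{\bf D_f^m}(\cH)$ and ${\bf \Delta_{f,T}^m}(I)\ge\epsilon I$. Fixing $i=1$ and putting $D:=(id-\Phi_{f_2,T_2})^{m_2}\circ\cdots\circ(id-\Phi_{f_k,T_k})^{m_k}(I)$, one has $(id-\Phi_{f_1,T_1})^{j}(D)={\bf \Delta_{f,T}^{(j,m_2,\ldots,m_k)}}(I)\ge0$ for $0\le j\le m_1$, $(id-\Phi_{f_1,T_1})^{m_1}(D)\ge\epsilon I$, and $\Phi_{f_1,T_1}(D)\le D$. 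Writing $G_s:=\Phi_{f_1,T_1}^{s}\bigl((id-\Phi_{f_1,T_1})^{m_1-1}(D)\bigr)\ge0$, the telescoping identity $\Phi_{f_1,T_1}^{s}\bigl((id-\Phi_{f_1,T_1})^{m_1}(D)\bigr)=G_s-G_{s+1}$ gives $\epsilon\sum_{s=0}^{N}\Phi_{f_1,T_1}^{s}(I)\le\sum_{s=0}^{N}(G_s-G_{s+1})\le G_0\le cI$ with $c:=\|(id-\Phi_{f_1,T_1})^{m_1-1}(D)\|$; since $\{\Phi_{f_1,T_1}^{s}(I)\}_s$ is decreasing, $(N+1)\,\Phi_{f_1,T_1}^{N}(I)\le\sum_{s=0}^{N}\Phi_{f_1,T_1}^{s}(I)\le(c/\epsilon)I$, whence $\|\Phi_{f_1,T_1}^{N}(I)\|\le c/\bigl(\epsilon(N+1)\bigr)\to0$.

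Finally, under the equivalent conditions each $\Phi_{f_i,A_i}$ is pure, so Theorem~\ref{reproducing} shows ${\bf \Delta_{f,A}^m}$ is one-to-one; this yields uniqueness of the (positive, indeed of any) solution of ${\bf \Delta_{f,A}^m}(X)=R$ for an invertible positive $R$, while the displayed series is a solution, converges uniformly because $r_{f_i}(A_i)<1$, and dominates $R$ and so is invertible. The main obstacle is the implication (i)$\Rightarrow$(iv): one must upgrade the weak convergence $\Phi_{f_i,T_i}^s(I)\to0$ to norm convergence, the only nonroutine point being to squeeze a $1/N$--rate out of the domination $\epsilon\sum_s\Phi_{f_1,T_1}^s(I)\le cI$ together with the monotonicity of $\{\Phi_{f_1,T_1}^s(I)\}_s$; everything else is bookkeeping with the Berezin--kernel and $\Lambda$--operator machinery of Sections~1 and~2.
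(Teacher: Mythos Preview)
Your proof is correct and follows the same overall cycle as the paper, with one notable difference in the step closing the loop.

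For (i)$\Rightarrow$(iv) you work rather hard: you telescope $\Phi_{f_1,T_1}^s\bigl((id-\Phi_{f_1,T_1})^{m_1}(D)\bigr)$ to bound $\sum_s\Phi_{f_1,T_1}^s(I)$ and then use monotonicity to extract a $1/N$ rate. This is valid, but the paper does (i)$\Rightarrow$(iii) in one line: since ${\bf T}\in{\bf D_f^m}(\cH)$, Proposition~\ref{Delta-ineq}(i) with $Y=I$ and ${\bf q}=e_i$ gives $0<{\bf \Delta_{f,T}^m}(I)\le(id-\Phi_{f_i,T_i})(I)$, so $\|\Phi_{f_i,T_i}(I)\|<1$, and hence
\[
r_{f_i}(A_i)=r_{f_i}(T_i)\le\|\Phi_{f_i,T_i}(I)\|^{1/2}<1,
\]
the first equality being the similarity invariance of the spectral radius that you also use. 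Your summability argument buys a quantitative decay $\|\Phi_{f_i,T_i}^N(I)\|\le c/(\epsilon(N+1))$, but that extra information is not needed for the theorem. In (ii)$\Rightarrow$(i) you route through Theorem~\ref{pure}, whereas the paper simply sets $T_{i,j}:=Q^{-1/2}A_{i,j}Q^{1/2}$ and computes; since the proof of Theorem~\ref{pure} does exactly that construction when $\Gamma$ is invertible, the two approaches are the same. One small omission: in (iv)$\Rightarrow$(v) you assert ${\bf \Delta_{f,A}^p}(X)\ge0$ without saying why; this follows from Proposition~\ref{Delta-ineq}(ii) since each $\Phi_{f_i,A_i}$ is pure and ${\bf \Delta_{f,A}^m}(X)=R\ge0$.
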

\begin{proof} First we prove the equivalence  (i) $\Leftrightarrow$ (ii).
Assume that (i) holds and   ${\bf \Delta_{f,T}^m}(I)\geq cI$ for some $c>0$. Then we have
$$
Y\left[{\bf \Delta_{f,A}^m}(Y^{-1} (Y^{-1})^*)\right] Y^*={\bf \Delta_{f,T}^m}(I)\geq cI.
$$
Setting $Q:=Y^{-1} (Y^{-1})^*$ we deduce that ${\bf \Delta_{f,A}^m}(Q)>0$.
 Since ${\bf T}\in \cV_\cQ(\cH)$, we have $${\bf \Delta_{f,A}^p}(Q)=Y^{-1}{\bf \Delta_{f,T}^p}(I)(Y^{-1})^*\geq 0$$ for any ${\bf p}\leq {\bf m}$.
Conversely, assume that  item (ii) holds and let $Q\in B(\cH)$ be  a positive
operator  such that${\bf \Delta_{f,A}^p}(Q)\geq 0$ for any ${\bf p}\in \ZZ_+^k$ with ${\bf p}\leq {\bf m}$, and
 ${\bf \Delta_{f,A}^m}(Q)>0$. Since  $\Phi_{f_i,A_i}$ is a positive linear map, we deduce that, for each $i\in \{1,\ldots, k\}$,
$$
0<{\bf \Delta_{f,A}^m}(Q)\leq (id-\Phi_{f_i,A_i})^{m_i}(Q)\leq \cdots \leq (id-\Phi_{f_i,A_i})(Q)\leq Q.
$$
Therefore, $Q$ is an invertible positive operator.
 Since
 $
{\bf \Delta_{f,A}^m}(Q)\geq b I
$
 for some constant $b>0$, we can choose $c>0$ such that $bI\geq cQ$, and  deduce that
 $$Q^{-1/2}[{\bf \Delta_{f,A}^m}(Q)]Q^{-1/2}\geq cI.$$
 Setting $T_i:=Q^{-1/2} A_iQ^{1/2}$, $i=1,\ldots,n$, the latter inequality implies
 ${\bf \Delta_{f,T}^m}(I)>0$.   As above,  we deduce that ${\bf \Delta_{f,T}^p}(I)\geq 0$,
 for any ${\bf p}\leq {\bf m}$, which shows that ${\bf T}\in {\bf D_f^m}(\cH)$.
 Since  $q({\bf A})=0$,\quad $q\in \cQ$, we deduce that ${\bf T} \in \cV_\cQ(\cH)$.
Therefore, item (i) holds.

Now we prove the  equivalence (iii) $\Leftrightarrow$ (iv).
Assume that item  (iii) holds and let $a>0$ be such that  $ r_{f_i}( A_i)< a<1$. Then there is $m_0\in \NN$ such that $\|\Phi_{f_i,A_i}^s(I)\|\leq
a^s$ for any $s\geq m_0$. This clearly implies condition (iv).  Now, we  assume that (iv) holds.
 Note that, for each $i\in \{1,\ldots, k\}$ and $s\in\NN$, we have
\begin{equation*}
\begin{split}
r_{f_i}( A_i)^s&=\lim_{p\to
\infty}\left[\|\Phi_{f_i,A_i}^{sp}(I)\|^{1/2ps}\right]^s\\
&=\lim_{p\to \infty}
\|\Phi_{f_i,A_i}^{s(p-1)}(\Phi_{f_i,A_i}^s(I))\|^{1/2p}\\
&\leq \lim_{p\to
\infty}\left(\|\Phi_{f_i,A_i}^s(I)\|^p\right)^{1/2p}
=\|\Phi_{f_i,A_i}^s(I)\|^{1/2}< a^{s/2}
\end{split}
\end{equation*}
 for any $s\in \NN$. Consequently,  $r_{f_i}( A_i)<1$, so item (iii) holds.
The implication (v) $\Rightarrow$ (ii) is obvious.

 In what follows
we prove that
  (i) $\Rightarrow$ (iii).
  Assume that  item (i) holds.
    Let ${\bf T}:=({ T}_1,\ldots, {T}_k)\in  B(\cH)^{n_1}\times_c\cdots \times_c B(\cH)^{n_k}$, with ${ T}_i:=(T_{i,1},\ldots, T_{i,n_i})\in B(\cH)^{n_i}$,  be in  the noncommutative
 variety $ \cV_\cQ(\cH)$ such that
 ${\bf \Delta_{f,T}^m}(I)>0$
 and let $Y\in B(\cH)$ be  an invertible operator such that
$$
A_{i,j}=Y^{-1}{T}_{i,j}Y
$$
for all  $i\in \{1,\ldots,k\}$ and  $j\in \{1,\ldots, n_i\}$.
Recall that under these conditions we have $(id-\Phi_{f_i,T_i})(I)>0$, which implies    $\|\Phi_{f_i,T_i}(I)\|<1$ for $i\in \{1,\ldots, k\}$.
  On the other hand, note that
\begin{equation*}
\begin{split} r_{f_i}(T_i)&=
 \lim_{s\to\infty}\|\Phi_{f_i,T_i}^s(I)\|^{1/2s}\\
&\leq \lim_{s\to\infty}\|Y\|^{1/s}\|Y^{-1}\|^{1/s}\|\Phi_{f_i,A_i}^s(I)\|^{1/2s}\\
&=r_{f_i}(A_i).
\end{split}
\end{equation*}
Similarly, we obtain the inequality
   $
r_{f_i}(A_i)\leq r_{f_i}(T_i).
$
Therefore, we have
\begin{equation*}
\begin{split}
r_{f_i}(A_i)=r_{f_i}(T_i) =\lim_{s\to\infty} \|\Phi_{f_i,T_i}^s(I)\|^{1/2s}\leq
\|\Phi_{f_i,T_i}(I)\|^{1/2}<1.
\end{split}
\end{equation*}
Therefore,  item (iii) holds. Now,  we prove the implication  (iii)
  $\Rightarrow$ (v). To this end,  assume that $r_{f_i}(A_i)<1$ for each $i\in \{1,\ldots, k\}$ and let $R\in B(\cH)$ be  an invertible positive operator.
   We have
    \begin{equation*}
    \begin{split}
    \frac {1} {\|R^{-1}\|} \, I &\leq R
    \leq \sum_{(s_1,\ldots,s_k)\in \ZZ_+^k}\left(\begin{matrix} s_1+m_1-1\\m_1-1\end{matrix}\right)\cdots \left(\begin{matrix} s_k+m_k-1\\m_k-1\end{matrix}\right)\Phi_{f_1,A_1}^{s_1}\circ \cdots \circ \Phi_{f_k, A_k}^{s_k}(R)\\
   & \leq \|R\| \left( \sum_{s_1=0}^\infty \left(\begin{matrix} s_1+m_1-1\\ m_1-1
\end{matrix}\right)\|\Phi_{f_1,A_1}^{s_1}(I)\|\right)\cdots \left( \sum_{s_k=0}^\infty \left(\begin{matrix} s_k+m_k-1\\ m_k-1
\end{matrix}\right)\|\Phi_{f_k,A_k}^{s_k}(I)\|\right)\, I.
    \end{split}
    \end{equation*}
 Note that
 $$
 \lim_{s_i\to\infty} \left[\left(\begin{matrix} s_i+m_i-1\\ m_i-1
\end{matrix}\right)\|\Phi_{f_i,A_i}^{s_i}(I)\|\right]^{1/2s_i}
=r_{f_i}(T_i)<1.
$$
Consequently,
   \begin{equation}\label{ab}
 aI \leq \sum_{(s_1,\ldots,s_k)\in \ZZ_+^k}\left(\begin{matrix} s_1+m_1-1\\m_1-1\end{matrix}\right)\cdots \left(\begin{matrix} s_k+m_k-1\\m_k-1\end{matrix}\right)\Phi_{f_1,A_1}^{s_1}\circ \cdots \circ \Phi_{f_k, A_k}^{s_k}(R) \leq bI
 \end{equation}
 for some  constants  $0<a<b$, where the convergence of the series is in the operator norm topology.
Since $r_{f_i}(A_i)<1$, we have $ \lim_{s\to \infty} \|\Phi_{f_i,A_i}^s(I)\|=0$. Therefore, $\Phi_{f_i,A_i}$ is  a power bounded,  pure completely positive map which is WOT-continuous on bounded sets. Now,   we can use Theorem  \ref{reproducing}
 to obtain the equality
 $$
 {\bf \Delta_{f,A}^m}\left[\sum_{(s_1,\ldots,s_k)\in \ZZ_+^k}\left(\begin{matrix} s_1+m_1-1\\m_1-1\end{matrix}\right)\cdots \left(\begin{matrix} s_k+m_k-1\\m_k-1\end{matrix}\right)\Phi_{f_1,A_1}^{s_1}\circ \cdots \circ \Phi_{f_k, A_k}^{s_k}(R)\right]=R.
 $$
 Consequently, and due to  relation \eqref{ab},
  $$
  X:=\sum_{(s_1,\ldots,s_k)\in \ZZ_+^k}\left(\begin{matrix} s_1+m_1-1\\m_1-1\end{matrix}\right)\cdots \left(\begin{matrix} s_k+m_k-1\\m_k-1\end{matrix}\right)\Phi_{f_1,A_1}^{s_1}\circ \cdots \circ \Phi_{f_k, A_k}^{s_k}(R)
  $$
  is an invertible positive solution of the equation  ${\bf \Delta_{f,A}^m}(X)=R$. Since ${\bf \Delta_{f,A}^m}(X)\geq 0$ and $\Phi_{f_i,A_i}$ is pure, we use  Proposition \ref{Delta-ineq} part (ii) to  deduce that  ${\bf \Delta_{f,A}^p}(X)\geq 0$ for any ${\bf p}\in \ZZ_+^k$ with ${\bf p}\leq {\bf m}$.
 Therefore, item (v) holds.

   To prove the last part of the theorem,
   let $X'\geq 0$ be an invertible operator such
   ${\bf \Delta_{f,A}^m}(X')=R$, where $R\geq 0$ is a fixed   arbitrary invertible operator. Then, using again Theorem  \ref{reproducing},  we deduce that
   \begin{equation*}\begin{split}
  \sum_{(s_1,\ldots,s_k)\in \ZZ_+^k}\left(\begin{matrix} s_1+m_1-1\\m_1-1\end{matrix}\right)\cdots \left(\begin{matrix} s_k+m_k-1\\m_k-1\end{matrix}\right)\Phi_{f_1,A_1}^{s_1}\circ \cdots \circ \Phi_{f_k, A_k}^{s_k}(R)  =X'.
\end{split}
\end{equation*}
 Therefore,  there is  unique  positive solution
   of the inequality ${\bf \Delta_{f,A}^m}(X)=R$.
   The proof is complete.
\end{proof}

Now we  can obtain the following
multivariable generalization of Rota's  similarity result  (see Paulsen's book \cite{Pa-book}).

\begin{corollary} \label{rota}
Under the hypotheses of Theorem \ref{simi2}, if the joint spectral
radius $r_{f_i}(A_i)<1$ for each $i\in\{1,\ldots,k\}$, then the tuple
 ${\bf T}:=({ T}_1,\ldots, {T}_k)\in  B(\cH)^{n_1}\times_c\cdots \times_c B(\cH)^{n_k}$, with
 $${ T}_i:=(P^{-1/2}A_{i,1}P^{1/2},\ldots,P^{-1/2} A_{i,n_i}P^{1/2})\in B(\cH)^{n_i},$$
 is in the noncommutative variety $\cV_\cQ(\cH)$
and  ${\bf \Delta_{f,T}^m}(I)>0$, where
$$P:=\sum_{(s_1,\ldots,s_k)\in \ZZ_+^k}\left(\begin{matrix} s_1+m_1-1\\m_1-1\end{matrix}\right)\cdots \left(\begin{matrix} s_k+m_k-1\\m_k-1\end{matrix}\right)\Phi_{f_1,A_1}^{s_1}\circ \cdots \circ \Phi_{f_k, A_k}^{s_k}(I)$$
 is convergent
in the operator  norm topology  and
 $$
 \|P^{1/2}\|^2\|P^{-1/2}\|^2\leq \sum_{(s_1,\ldots,s_k)\in \ZZ_+^k}\left(\begin{matrix} s_1+m_1-1\\m_1-1\end{matrix}\right)\cdots \left(\begin{matrix} s_k+m_k-1\\m_k-1\end{matrix}\right)\|\Phi_{f_1,A_1}^{s_1}(I)\| \cdots  \|\Phi_{f_k, A_k}^{s_k}(I)\|.
$$
In particular, if  each $f_i$ is a positive regular noncommutative
polynomial, then $P$ is in the $C^*$-algebra generated by
$A_{i,j}$ and the identity.
\end{corollary}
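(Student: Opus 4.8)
The plan is to read the conclusion off Theorem~\ref{simi2} together with a short computation involving the conjugation $X\mapsto P^{-1/2}XP^{1/2}$. Since $r_{f_i}(A_i)<1$ for each $i$, we are in the situation of Theorem~\ref{simi2}; applying the implication (iii)$\Rightarrow$(v) with $R=I$ and invoking the ``moreover'' clause of that theorem, the series defining $P$ converges in the operator norm topology to an invertible positive operator $P$ with ${\bf \Delta_{f,A}^m}(P)=I$ and ${\bf \Delta_{f,A}^p}(P)\ge 0$ for all ${\bf p}\le{\bf m}$. The very estimates used in the proof of Theorem~\ref{simi2} (the chain of inequalities leading to \eqref{ab}, taken with $R=I$) also give $I\le P\le bI$, where $b$ is exactly the right-hand side of the asserted inequality: one uses that each $\Phi_{f_i,A_i}$ is completely positive, so $\|\Phi_{f_i,A_i}^{s}(X)\|\le\|X\|\,\|\Phi_{f_i,A_i}^{s}(I)\|$ for $X\ge0$, applies this iteratively, and notes that the sum over $\ZZ_+^k$ factors as a product of $k$ one-variable sums.

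The second step is to set $T_{i,j}:=P^{-1/2}A_{i,j}P^{1/2}$ and establish the conjugation identity $\Phi_{f_i,T_i}(P^{-1/2}YP^{-1/2})=P^{-1/2}\Phi_{f_i,A_i}(Y)P^{-1/2}$ for all $Y\in B(\cH)$, which is immediate once one notes that $T_{i,\alpha}=P^{-1/2}A_{i,\alpha}P^{1/2}$ and that the interior $P^{\pm1/2}$ factors cancel. Iterating in each variable and composing over $i$ yields ${\bf \Delta_{f,T}^p}(I)=P^{-1/2}\,{\bf \Delta_{f,A}^p}(P)\,P^{-1/2}$ for every ${\bf p}\le{\bf m}$; hence ${\bf \Delta_{f,T}^p}(I)\ge0$, so ${\bf T}\in{\bf D_f^m}(\cH)$, and in particular ${\bf \Delta_{f,T}^m}(I)=P^{-1}>0$. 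Since conjugation by the invertible operator $P^{1/2}$ is a unital algebra homomorphism, the entries of $T_i$ commute with those of $T_j$ for $i\ne j$ (inherited from ${\bf A}$), and $q({\bf T})=P^{-1/2}q({\bf A})P^{1/2}=0$ for all $q\in\cQ$, so ${\bf T}\in\cV_\cQ(\cH)$. The condition-number bound then follows at once from the first step: $\|P^{1/2}\|^2\|P^{-1/2}\|^2=\|P\|\,\|P^{-1}\|\le b\cdot1$, using $P\le bI$ for $\|P\|\le b$ and $P\ge I$ for $\|P^{-1}\|\le1$.

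For the final ``in particular'' assertion, if each $f_i$ is a positive regular noncommutative polynomial then $\Phi_{f_i,A_i}(X)=\sum_\alpha a_{i,\alpha}A_{i,\alpha}XA_{i,\alpha}^*$ is a finite sum, so every term $\Phi_{f_1,A_1}^{s_1}\circ\cdots\circ\Phi_{f_k,A_k}^{s_k}(I)$ is a finite $*$-polynomial in the operators $A_{i,j}$, hence lies in the $*$-algebra generated by $\{A_{i,j}\}$ and $I$; since the series for $P$ converges in norm, $P$ lies in the norm closure of that $*$-algebra, i.e.\ in the $C^*$-algebra generated by $\{A_{i,j}\}$ and the identity. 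I do not expect a genuine obstacle, as this is essentially a specialization of Theorem~\ref{simi2}; the only points requiring care are the bookkeeping in the conjugation identity and checking that the componentwise ($\times_c$) commutativity of ${\bf T}$ is preserved.
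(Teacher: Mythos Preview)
Your proposal is correct and follows essentially the same approach as the paper's own proof, which simply says ``a closer look at the proof of Theorem~\ref{simi2} and taking $R=I$ leads to the desired result.'' You have carefully unpacked exactly that: the implication (iii)$\Rightarrow$(v) and the ``moreover'' clause give the norm-convergent series $P$ with ${\bf \Delta_{f,A}^m}(P)=I$, the chain of inequalities preceding \eqref{ab} with $R=I$ yields $I\le P\le bI$ (whence the condition-number bound), and the conjugation $T_{i,j}=P^{-1/2}A_{i,j}P^{1/2}$ is precisely the construction used in the proof of (ii)$\Rightarrow$(i) of Theorem~\ref{simi2} with $Q=P$.
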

\begin{proof}   A closer look at the proof of  Theorem
\ref{simi2} and taking  $R=I$ leads to the desired result. The last
part of this corollary is now obvious.
\end{proof}

We say that  $\pi_i:\FF_{n_i}^+\to B(\cH)$ is  a strictly row contractive  representation if its generators form a strict row contraction, i.e.
$\|[\pi_i(g_1^i)\cdots \pi_i(g_{n_i}^i)]\|<1$. We  denote
$$
r(\pi_i):=r( \pi_i(g_1^i),\ldots, \pi_i(g_{n_i}^i))
$$
and call it the spectral radius of  $\pi_i$.

\begin{corollary} \label{strictly} Let $\pi_i:\FF_{n_i}^+\to B(\cH)$, $i\in \{1,\ldots,k\}$, be representations with commuting ranges and let $\sigma: \FF_{n_1}^+\times \cdots \times \FF_{n_k}^+\to \cH$ be the direct product representation defined by
$$
\sigma(\alpha_1,\ldots, \alpha_k)=\pi_1(\alpha_1)\cdots \pi_k(\alpha_k),\qquad (\alpha_1,\ldots, \alpha_k)\in \FF_{n_1}^+\times \cdots \times \FF_{n_k}^+.
$$
Then the following statements are equivalent:
\begin{enumerate}
\item[(i)] There is an invertible operator $Y \in B(\cH)$ such that $Y^{-1}\sigma(\cdot) Y$ is the direct product of strictly row contractive representations, i.e. $Y^{-1}\pi_i(\cdot) Y$ is a  strictly row contractive representation for each $i\in \{1,\ldots, k\}$.

\item[(ii)] $r(\pi_i)<1$  for each $i\in \{1,\ldots, k\}$.
\end{enumerate}
\end{corollary}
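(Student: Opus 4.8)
The plan is to derive this corollary from Theorem~\ref{simi2}, and more precisely from its explicit refinement Corollary~\ref{rota}, specialised to the polyball. First I would take $f_i:=Z_{i,1}+\cdots+Z_{i,n_i}$, $m_i:=1$ for every $i$, and $\cQ:=\{0\}$, so that $\cV_\cQ(\cH)={\bf D_f^m}(\cH)$ is the closed polyball and the associated universal model is the polyball model ${\bf L}=\{{\bf L}_{i,j}\}$. Setting $A_{i,j}:=\pi_i(g_j^i)$, the tuple ${\bf A}:=(A_1,\ldots,A_k)$ lies in $B(\cH)^{n_1}\times_c\cdots\times_c B(\cH)^{n_k}$ because the $\pi_i$ have commuting ranges; moreover $\Phi_{f_i,A_i}(X)=\sum_{j=1}^{n_i}A_{i,j}XA_{i,j}^{*}$, $\sigma(\alpha)=A_{(\alpha)}$, and $r_{f_i}(A_i)=r(\pi_i)$. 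Under this dictionary, (ii) says $r_{f_i}(A_i)<1$ for all $i$, while (i) says that for some invertible $Y$ each $T_{i,j}:=Y^{-1}A_{i,j}Y$ satisfies $\|[T_{i,1}\cdots T_{i,n_i}]\|<1$.

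For the implication (ii)$\Rightarrow$(i) I would invoke Corollary~\ref{rota} with $R=I$. Since $r_{f_i}(A_i)<1$ for each $i$, the operator
$$
P:=\sum_{(s_1,\ldots,s_k)\in\ZZ_+^k}\Phi_{f_1,A_1}^{s_1}\circ\cdots\circ\Phi_{f_k,A_k}^{s_k}(I)
$$
converges in the operator norm topology; its $(0,\ldots,0)$-term is $I$ and every other summand is positive, so $P\geq I$, whence $P$ is positive and invertible with $\|P^{-1}\|\leq1$. I would then put $Y:=P^{1/2}$ and $T_{i,j}:=P^{-1/2}A_{i,j}P^{1/2}$. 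From the identity $\Phi_{f_i,T_i}(X)=P^{-1/2}\Phi_{f_i,A_i}(P^{1/2}XP^{1/2})P^{-1/2}$ one gets $(id-\Phi_{f_i,T_i})(I)=P^{-1/2}\bigl(P-\Phi_{f_i,A_i}(P)\bigr)P^{-1/2}$. Now I would use that the maps $\Phi_{f_1,A_1},\ldots,\Phi_{f_k,A_k}$ commute (the ranges of $A_1,\ldots,A_k$ commute pairwise) and that $\Phi_{f_i,A_i}$ is norm continuous, so it may be carried through the series and the $i$-th summation variable reindexed, yielding
$$
P-\Phi_{f_i,A_i}(P)=\sum_{(s_1,\ldots,s_k)\in\ZZ_+^k,\ s_i=0}\Phi_{f_1,A_1}^{s_1}\circ\cdots\circ\Phi_{f_k,A_k}^{s_k}(I)\geq I.
$$
Hence $(id-\Phi_{f_i,T_i})(I)\geq P^{-1}\geq\|P\|^{-1}I>0$, i.e. $\sum_{j=1}^{n_i}T_{i,j}T_{i,j}^{*}\leq(1-\|P\|^{-1})I$, so $\|[T_{i,1}\cdots T_{i,n_i}]\|<1$ for each $i$. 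Since $Y^{-1}\sigma(\alpha_1,\ldots,\alpha_k)Y=\prod_{i=1}^{k}Y^{-1}\pi_i(\alpha_i)Y$, the representation $Y^{-1}\sigma(\cdot)Y$ is the direct product of the strictly row contractive representations $Y^{-1}\pi_i(\cdot)Y$, which is (i).

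For the converse (i)$\Rightarrow$(ii), I would suppose $Y$ is invertible and $c_i:=\|[T_{i,1}\cdots T_{i,n_i}]\|<1$, where $T_{i,j}:=Y^{-1}A_{i,j}Y$. Then $\Phi_{f_i,T_i}(I)\leq c_i^{2}I$, and since $\Phi_{f_i,T_i}$ is completely positive, iterating gives $\Phi_{f_i,T_i}^{s}(I)\leq c_i^{2s}I$, so $r_{f_i}(T_i)=\lim_{s\to\infty}\|\Phi_{f_i,T_i}^{s}(I)\|^{1/2s}\leq c_i<1$. From $A_{i,j}=YT_{i,j}Y^{-1}$ one obtains $\Phi_{f_i,A_i}^{s}(I)=Y\Phi_{f_i,T_i}^{s}\bigl(Y^{-1}(Y^{*})^{-1}\bigr)Y^{*}$ for all $s$, and the elementary positivity estimate together with its symmetric counterpart gives $r_{f_i}(A_i)=r_{f_i}(T_i)<1$, that is, $r(\pi_i)<1$ for each $i$; this is the similarity invariance of the joint spectral radius already exploited in the proof of Theorem~\ref{simi2}.

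I expect the routine parts to be the algebra of the defect maps, the interchange of $\Phi_{f_i,A_i}$ with the norm-convergent series, and the positivity estimates. The step that needs genuine care, and which I regard as the main obstacle, is recognising that the similarity $Y=P^{1/2}$ furnished by Rota's theorem makes each individual tuple $T_i$ strictly row contractive, rather than merely making the product-type defect ${\bf\Delta_{f,T}^m}(I)$ positive; this is exactly what the telescoping identity for $P-\Phi_{f_i,A_i}(P)$ supplies, and it is the place where the full hypothesis $r(\pi_i)<1$ for every $i$ (not just for the ``product'' operators) is used.
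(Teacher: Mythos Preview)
Your proposal is correct and follows essentially the same route as the paper: the corollary is the specialisation of Theorem~\ref{simi2} (and its refinement Corollary~\ref{rota}) to the polyball data $f_i=Z_{i,1}+\cdots+Z_{i,n_i}$, $m_i=1$, $\cQ=\{0\}$, so that condition~(iii) of Theorem~\ref{simi2} is exactly $r(\pi_i)<1$ for all $i$, and condition~(i) yields an invertible $Y$ and a tuple ${\bf T}\in{\bf D_f^m}(\cH)$ with ${\bf\Delta_{f,T}^m}(I)>0$.

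The only methodological difference is in the passage from ${\bf\Delta_{f,T}^m}(I)>0$ (which is what Theorem~\ref{simi2}/Corollary~\ref{rota} delivers) to strict row contractivity of each individual $T_i$. The paper's intended route is Proposition~\ref{Delta-ineq}(i): since ${\bf T}\in{\bf D_f^m}(\cH)$ one has $I\in\cC_{\geq}({\bf\Delta_{f,T}^m})^+$, and then ${\bf\Delta_{f,T}^m}(I)>0$ forces ${\bf\Delta_{f,T}^p}(I)>0$ for every nonzero ${\bf p}\leq{\bf m}$, in particular $(id-\Phi_{f_i,T_i})(I)>0$. Your telescoping identity $P-\Phi_{f_i,A_i}(P)=\sum_{s_i=0}\Phi^{s_1}_{f_1,A_1}\circ\cdots\circ\Phi^{s_k}_{f_k,A_k}(I)\geq I$ establishes the same fact by a direct computation on the explicit $P$, bypassing Proposition~\ref{Delta-ineq}; this is a legitimate and slightly more self-contained alternative, at the cost of reproving a special case of that proposition.
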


In the  particular case when $n_1=\cdots n_k=1$, Corollary \ref{strictly} shows that
 a $k$-tuple of commuting operators $(C_1,\ldots, C_k)\in B(\cH)^k$ is jointly similar to a $k$-tuple of commuting strict contractions $(G_1,\ldots, G_k)\in B(\cH)$    if and only if
$$ r(C_i)<1,\qquad  i\in \{1,\ldots,k\},
$$
where $r(C_i)$ denotes the spectral radius of $C_i$.

The next result provides necessary and sufficient conditions for  tuples of operators to
 be  similar
 to  a tuple in the noncommutative variety $\cV_\cQ(\cH)$. Since the proof is straightforward, we shall omit it.

\begin{proposition} \label{simi4} Let
 ${\bf A}:=({ A}_1,\ldots, {A}_k)\in  B(\cH)^{n_1}\times_c\cdots \times_c B(\cH)^{n_k}$, where ${ A}_i:=(A_{i,1},\ldots, A_{i,n_i})\in B(\cH)^{n_i}$ has the property that $\sum_{\alpha_i\in \FF_{n_i}^+} a_{i,\alpha} A_{i,\alpha}A_{i,\alpha}^*$
     is  weakly convergent and $q({\bf A})=0$ for any $ q\in \cQ$. Then the following
statements are equivalent.
\begin{enumerate}
\item[(i)]
  There  is a tuple  ${\bf T}:=({ T}_1,\ldots, {T}_k)\in  B(\cH)^{n_1}\times_c\cdots \times_c B(\cH)^{n_k}$, with ${ T}_i:=(T_{i,1},\ldots, T_{i,n_i})\in B(\cH)^{n_i}$,  in the noncommutative
 variety $ \cV_\cQ(\cH)$,
   and an invertible operator $Y\in B(\cH)$ such that
$$
A_{i,j}=Y^{-1}{T}_{i,j}Y
$$
for all  $i\in \{1,\ldots,k\}$ and  $j\in \{1,\ldots, n_i\}$.
\item[(ii)]  There is an
 invertible positive operator $R\in B(\cH)$, such that
 $$
  {\bf \Delta_{f,A}^p}(R)\geq 0
  $$
  for any ${\bf p}\in \ZZ_+$ with ${\bf p}\leq {\bf m}$.
\end{enumerate}
\end{proposition}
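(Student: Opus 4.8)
The plan is to prove the two implications separately. I expect $(i)\Rightarrow(ii)$ to be a routine conjugation computation, while $(ii)\Rightarrow(i)$ should follow at once from the description of the noncommutative cone $C_{\geq}({\bf \Delta_{f,A}^m})^+$ furnished by Theorem \ref{pure}.

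For $(i)\Rightarrow(ii)$ I would start from $A_{i,j}=Y^{-1}T_{i,j}Y$ with ${\bf T}\in\cV_\cQ(\cH)\subset{\bf D_f^m}(\cH)$ and $Y$ invertible, observe that $A_{i,\alpha}=Y^{-1}T_{i,\alpha}Y$ for every word $\alpha$, and set $R:=Y^{-1}(Y^{*})^{-1}$, a positive invertible operator. The key step is the intertwining identity
$$
\Phi_{f_i,A_i}\bigl(Y^{-1}X(Y^{*})^{-1}\bigr)=Y^{-1}\Phi_{f_i,T_i}(X)(Y^{*})^{-1},\qquad X\in B(\cH),\ i\in\{1,\ldots,k\},
$$
which follows from $A_{i,\alpha}Y^{-1}=Y^{-1}T_{i,\alpha}$, $(Y^{*})^{-1}A_{i,\alpha}^{*}=T_{i,\alpha}^{*}(Y^{*})^{-1}$, and term-by-term conjugation of the series defining $\Phi_{f_i,A_i}$ (legitimate by the assumed weak convergence of $\sum_\alpha a_{i,\alpha}A_{i,\alpha}A_{i,\alpha}^{*}$). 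Iterating this identity, together with the trivial fact that $id$ conjugates in the same manner, across all $k$ factors of ${\bf \Delta_{f,A}^p}$ gives ${\bf \Delta_{f,A}^p}(R)=Y^{-1}\bigl[{\bf \Delta_{f,T}^p}(I)\bigr](Y^{*})^{-1}$ for every ${\bf p}\in\ZZ_+^k$ with ${\bf p}\leq{\bf m}$; since ${\bf T}\in{\bf D_f^m}(\cH)$ forces ${\bf \Delta_{f,T}^p}(I)\geq 0$, this yields ${\bf \Delta_{f,A}^p}(R)\geq 0$, which is $(ii)$.

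For $(ii)\Rightarrow(i)$, the hypothesis says exactly that $R$ is an invertible positive element of $C_{\geq}({\bf \Delta_{f,A}^m})^+$. I would then apply Theorem \ref{pure} with $\Gamma:=R$ to obtain a tuple ${\bf T}=({ T}_1,\ldots,{ T}_k)\in\cV_\cQ(\cH)$ with $A_{i,j}R^{1/2}=R^{1/2}T_{i,j}$ for all $i,j$; since $R^{1/2}$ is invertible, this is precisely $A_{i,j}=Y^{-1}T_{i,j}Y$ with the invertible operator $Y:=R^{-1/2}\in B(\cH)$, giving $(i)$. The only genuinely delicate point in the whole argument is the term-by-term conjugation of the series defining $\Phi_{f_i,A_i}$ in the first implication; everything else is formal, which is presumably why the authors call the proof straightforward.
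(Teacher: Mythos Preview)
Your proof is correct, and the paper omits its own proof, saying only that it is ``straightforward.'' Your route for $(ii)\Rightarrow(i)$ via Theorem~\ref{pure} is perfectly valid; note, however, that one can equally well bypass Theorem~\ref{pure} and argue directly, as the paper does in the analogous step of Theorem~\ref{simi2}: given an invertible positive $R$ with ${\bf \Delta_{f,A}^p}(R)\geq 0$ for all ${\bf p}\leq{\bf m}$, simply set $T_{i,j}:=R^{-1/2}A_{i,j}R^{1/2}$ and verify by the same conjugation identity you used in $(i)\Rightarrow(ii)$ that ${\bf \Delta_{f,T}^p}(I)=R^{-1/2}{\bf \Delta_{f,A}^p}(R)R^{-1/2}\geq 0$ and $q({\bf T})=R^{-1/2}q({\bf A})R^{1/2}=0$, so ${\bf T}\in\cV_\cQ(\cH)$. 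This is exactly what Theorem~\ref{pure} produces when $\Gamma=R$ is invertible (since then $\cM=\overline{R^{1/2}(\cH)}=\cH$ and no direct summand with $0$ is needed), so the two approaches coincide in substance; the direct one is just marginally more self-contained.
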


\bigskip

\section{Analogue of  Sz.-Nagy's similarity result
 for noncommutative  polydomains}

Let ${\bf f}:=(f_1,\ldots, f_k)$ be a $k$-tuple of positive regular free holomorphic functions and let ${\bf m}=(m_1,\ldots, m_k)$ be in $ \NN^k$.
Consider ${\bf A}:=({ A}_1,\ldots, {A}_k)\in  B(\cH)^{n_1}\times\cdots \times B(\cH)^{n_k}$, where ${ A}_i:=(A_{i,1},\ldots, A_{i,n_i})\in B(\cH)^{n_i}$,  to be such that $\Phi_{f_i, A_i}(I)$ is well-defined in the weak operator topology, and let $\cQ$ be a set of noncommutative polynomials in indeterminates $\{Z_{i,j}\}$ with $i\in \{1,\ldots, k\}$ and $j\in \{1,\ldots, n_i\}$.
In this section we provide necessary and sufficient conditions for a tuple ${\bf A}=({ A}_1,\ldots, {A}_k)$ to
be jointly similar  to a  tuple   ${\bf T}:=({ T}_1,\ldots, {T}_k)\in  B(\cH)^{n_1}\times\cdots \times B(\cH)^{n_k}$
satisfying  the property

$${\bf T}\in  \left\{{\bf  X} \in {\cV}_{\cQ}(\cH): \
 {\bf \Delta_{f,X}^p}(I)= 0  \text{ \rm  for } 0\leq {\bf p}\leq {\bf m}, {\bf p}\neq 0\right\}$$

  We show that this similarity  is strongly related to the
existence of invertible positive solutions of the operator
 equation  $
{\bf \Delta_{f,A}^p}(Y)= 0$.  Here is our analogue of  Sz.-Nagy's similarity result
\cite{SzN} for noncommutative  polydomains.

\begin{theorem}\label{simi} Let
 ${\bf A}:=({ A}_1,\ldots, {A}_k)\in  B(\cH)^{n_1}\times_c\cdots \times_c B(\cH)^{n_k}$, where ${ A}_i:=(A_{i,1},\ldots, A_{i,n_i})\in B(\cH)^{n_i}$ has the property that $\sum_{\alpha\in \FF_{n_i}^+} a_{i,\alpha} A_{i,\alpha}A_{i,\alpha}^*$
     is  weakly convergent and $q({\bf A})=0$ for any $ q\in \cQ$.  Then the following
statements are equivalent.
\begin{enumerate}
 \item[(i)] There  is ${\bf T}:=({ T}_1,\ldots, {T}_k)\in  B(\cH)^{n_1}\times_c\cdots \times_c B(\cH)^{n_k}$, with ${ T}_i:=(T_{i,1},\ldots, T_{i,n_i})\in B(\cH)^{n_i}$,  in the noncommutative
 variety $ \cV_\cQ(\cH)$, such that
 $$\Phi_{f_i, T_i}(I)=I, \quad i\in \{1,\ldots, k\},
 $$
   and an invertible operator $Y\in B(\cH)$ such that
$$
A_{i,j}=Y^{-1}{T}_{i,j}Y
$$
for all  $i\in \{1,\ldots,k\}$ and  $j\in \{1,\ldots, n_i\}$.
\item[(ii)]
 There
 exist
 positive constants
 $0<c\leq d$ such that
 $$
 cI\leq  \Phi_{f_1,A_1}^{s_1}\circ \cdots \circ \Phi_{f_k,A_k}^{s_k} (I)\leq dI, \qquad  s_1,\ldots, s_k\in \ZZ_+.
 $$
 \item[(iii)] There
 exist
 positive constants
 $0<c\leq d$ such that
 $$
 cI\leq \frac {1} {p^{(1)}\cdots p^{(k)}} \sum_{s_k=0}^{p^{(k)}-1}\cdots \sum_{s_1=0}^{p^{(1)}-1} \Phi_{f_k,A_k}^{s_k}\circ \cdots \circ \Phi_{f_1,A_1}^{s_1}  (I)\leq dI
 $$
 for any $p^{(1)},\ldots, p^{(k)}\in \NN$.
 \item[(iv)] There is a positive invertible operator $Q\in B(\cH)$ such that  $\Phi_{f_i, A_i}(Q)=Q$ for any $i\in \{1,\ldots,k\}$. Moreover, the operator $Q$ can be chosen  in the von Neumann algebra generated by $\{A_{i,j}\}$  and the identity such that $cI\leq Q\leq dI$.
\end{enumerate}
\end{theorem}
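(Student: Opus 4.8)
The plan is to prove the cycle of implications (i) $\Rightarrow$ (ii) $\Rightarrow$ (iii) $\Rightarrow$ (iv) $\Rightarrow$ (i), with the supplementary ``moreover'' in (iv) --- that $Q$ may be taken in the von Neumann algebra $\cW$ generated by $\{A_{i,j}\}$ and the identity, with $cI\le Q\le dI$ --- delivered by the construction used for (iii) $\Rightarrow$ (iv). Throughout, abbreviate $\Phi^{\bf t}:=\Phi_{f_1,A_1}^{t_1}\circ\cdots\circ\Phi_{f_k,A_k}^{t_k}$ for ${\bf t}=(t_1,\dots,t_k)\in\ZZ_+^k$; since ${\bf A}$ has commuting blocks, these factors commute, so the order of composition in (ii) and (iii) is immaterial, and each $\Phi_{f_i,A_i}$ is a positive map which is WOT-continuous on bounded sets. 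The implication (ii) $\Rightarrow$ (iii) is immediate, as every averaged operator in (iii) is a convex combination of operators $\Phi^{\bf t}(I)$ all lying between $cI$ and $dI$. For (i) $\Rightarrow$ (ii), write $A_{i,j}=Y^{-1}T_{i,j}Y$; then $A_{i,\alpha}=Y^{-1}T_{i,\alpha}Y$, whence $\Phi_{f_i,A_i}(X)=Y^{-1}\Phi_{f_i,T_i}(YXY^*)(Y^*)^{-1}$, and since $X\mapsto YXY^*$ is inverted by $X\mapsto Y^{-1}X(Y^*)^{-1}$ this iterates to
$$
\Phi^{\bf s}(I)=Y^{-1}\big[\Phi_{f_1,T_1}^{s_1}\circ\cdots\circ\Phi_{f_k,T_k}^{s_k}(YY^*)\big](Y^*)^{-1}.
$$
Because $\Phi_{f_i,T_i}(I)=I$ for each $i$, successive substitution gives $\Phi_{f_1,T_1}^{s_1}\circ\cdots\circ\Phi_{f_k,T_k}^{s_k}(I)=I$, so by monotonicity of the positive maps $\Phi_{f_i,T_i}$, bounding $YY^*$ above and below by positive multiples of $I$ forces $cI\le\Phi^{\bf s}(I)\le dI$ with $c,d$ depending only on $\|Y\|$ and $\|Y^{-1}\|$; this is (ii).

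The heart of the argument is (iii) $\Rightarrow$ (iv). For ${\bf p}=(p^{(1)},\dots,p^{(k)})\in\NN^k$ set
$$
Q_{\bf p}:=\frac{1}{p^{(1)}\cdots p^{(k)}}\sum_{0\le t_i<p^{(i)}}\Phi^{\bf t}(I);
$$
then $cI\le Q_{\bf p}\le dI$ by (iii), and $Q_{\bf p}\in\cW$ since each $\Phi^{\bf t}(I)$ is a weakly convergent sum of operators of the form $A_{(\alpha)}A_{(\alpha)}^*\in\cW$ and $\cW$ is WOT-closed. Fix $i$, let ${\bf p}'$ be ${\bf p}$ with its $i$th coordinate doubled, and let $S^{(i)}_{\bf p}$ be the average of $\Phi^{\bf t}(I)$ over the coordinates $t_j$, $j\ne i$ (so $0\le S^{(i)}_{\bf p}\le dI$, being one of the operators $Q_{\bf q}$ appearing in (iii)). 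Rearranging WOT-convergent sums of positive operators and telescoping in the $i$th variable gives
$$
\Phi_{f_i,A_i}(Q_{\bf p})-Q_{\bf p}=\tfrac{1}{p^{(i)}}\big(\Phi_{f_i,A_i}^{p^{(i)}}(S^{(i)}_{\bf p})-S^{(i)}_{\bf p}\big),\qquad 2Q_{{\bf p}'}=Q_{\bf p}+\Phi_{f_i,A_i}^{p^{(i)}}(Q_{\bf p}).
$$
From the second identity and $0\le Q_{\bf p}\le dI$ one gets $0\le\Phi_{f_i,A_i}^{p^{(i)}}(Q_{\bf p})\le 2dI$, and since $Q_{\bf p}\ge cI$ and $\Phi_{f_i,A_i}^{p^{(i)}}$ is positive it follows that $\Phi_{f_i,A_i}^{p^{(i)}}(I)\le\tfrac{2d}{c}I$ for \emph{every} $p^{(i)}$; feeding this into the first identity yields $\|\Phi_{f_i,A_i}(Q_{\bf p})-Q_{\bf p}\|\le C/p^{(i)}$ with $C$ independent of ${\bf p}$. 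Now take a $w^*$-convergent subnet of $\{Q_{\bf p}\}_{{\bf p}\in\NN^k}$ (which exists by Banach--Alaoglu), with limit $Q$; then $cI\le Q\le dI$, and $Q\in\cW$ since $w^*$- and WOT-topologies agree on bounded sets and $\cW$ is WOT-closed. Along the subnet each coordinate $p^{(i)}$ tends to infinity, so $\Phi_{f_i,A_i}(Q_{\bf p})-Q_{\bf p}\to0$ in norm for every $i$, and the WOT-continuity of $\Phi_{f_i,A_i}$ on bounded sets gives $\Phi_{f_i,A_i}(Q)=Q$ for all $i$. I expect this to be the main obstacle: the naive estimate for $\Phi_{f_i,A_i}(Q_{\bf p})-Q_{\bf p}$ is only $O(1)$, and it is the doubling identity together with the lower bound $cI\le Q_{\bf p}$ that forces the powers $\Phi_{f_i,A_i}^{s}(I)$ to be uniformly norm-bounded and rescues the argument.

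Finally, (iv) $\Rightarrow$ (i). Given a positive invertible $Q$ with $\Phi_{f_i,A_i}(Q)=Q$ for all $i$, put $Y:=Q^{-1/2}$ and $T_{i,j}:=Q^{-1/2}A_{i,j}Q^{1/2}$, so that $A_{i,j}=Y^{-1}T_{i,j}Y$. The ranges of the $T_i$ commute across blocks because those of the $A_i$ do, and $q({\bf T})=Q^{-1/2}q({\bf A})Q^{1/2}=0$ for $q\in\cQ$. Moreover $\Phi_{f_i,T_i}(I)=\sum_\alpha a_{i,\alpha}T_{i,\alpha}T_{i,\alpha}^*=Q^{-1/2}\Phi_{f_i,A_i}(Q)Q^{-1/2}=I$, hence $(id-\Phi_{f_i,T_i})(I)=0$; since the maps commute, for any ${\bf p}\le{\bf m}$ with ${\bf p}\ne0$ one factor $id-\Phi_{f_i,T_i}$ annihilates the identity operator, so ${\bf \Delta_{f,T}^p}(I)=0\ge0$, while the case ${\bf p}=0$ is trivial. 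Thus ${\bf T}\in{\bf D_f^m}(\cH)$, hence ${\bf T}\in\cV_\cQ(\cH)$ with $\Phi_{f_i,T_i}(I)=I$, which is (i), completing the cycle.
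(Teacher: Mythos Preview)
Your proof is correct. The implications (i)$\Rightarrow$(ii), (ii)$\Rightarrow$(iii), and (iv)$\Rightarrow$(i) match the paper's arguments almost verbatim (you in fact give more detail for (iv)$\Rightarrow$(i), explicitly checking ${\bf \Delta_{f,T}^p}(I)=0$, which the paper leaves implicit).

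The genuinely different step is (iii)$\Rightarrow$(iv). The paper proceeds \emph{iteratively}: it first forms the one-variable Ces\`aro averages $Q_{p^{(1)},A_1}=\frac{1}{p^{(1)}}\sum_{s_1<p^{(1)}}\Phi_{f_1,A_1}^{s_1}(I)$, proves a small lemma (if the running averages of a positive invertible $P$ are bounded then $\tfrac{1}{t}\Phi^t(P)\to 0$ in norm), passes to a weakly convergent subsequence to get a fixed point $Q_{A_1}$ for $\Phi_{f_1,A_1}$, then averages $Q_{A_1}$ in the second variable, and so on, producing $Q_{A_k,\dots,A_1}$ after $k$ rounds. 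Your approach handles all variables simultaneously: from the full multi-average $Q_{\bf p}$ you extract uniform power-boundedness $\|\Phi_{f_i,A_i}^s(I)\|\le 2d/c$ via the doubling identity $2Q_{{\bf p}'}=Q_{\bf p}+\Phi_{f_i,A_i}^{p^{(i)}}(Q_{\bf p})$, which immediately gives $\|\Phi_{f_i,A_i}(Q_{\bf p})-Q_{\bf p}\|=O(1/p^{(i)})$, and then a single Banach--Alaoglu subnet of $\{Q_{\bf p}\}_{{\bf p}\in\NN^k}$ (cofinality in $\NN^k$ forces each coordinate to infinity) produces the common fixed point in one stroke. Your route is shorter and avoids both the auxiliary Ces\`aro lemma and the $k$-fold bookkeeping; the paper's route is the straightforward transplant of the classical one-variable mean-ergodic argument, applied one map at a time.
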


\begin{proof}
We prove that (i)$\implies$ (ii). Assume that item (i) holds. Then we have
\begin{equation*}
\begin{split}
\Phi_{f_1,A_1}^{s_1}\circ \cdots \circ \Phi_{f_k,A_k}^{s_k} (I)
&=Y^{-1}\left[\Phi_{f_1,T_1}^{s_1}\circ \cdots \circ \Phi_{f_k,T_k}^{s_k} (YY^*)\right]{Y^*}^{-1}\\
&\leq \|YY^*\|Y^{-1}\left[\Phi_{f_1,T_1}^{s_1}\circ \cdots \circ \Phi_{f_k,T_k}^{s_k} (I)\right]{Y^*}^{-1}\\
&\leq \|Y\|^2 \|Y^{-1}\|^2 I.
\end{split}
\end{equation*}
On the other hand, since $\Phi_{f_i, T_i}(I)=I$ for $i\in \{1,\ldots, k\}$, we deduce that
\begin{equation*}
\begin{split}
I&=\Phi_{f_1,T_1}^{s_1}\circ \cdots \circ \Phi_{f_k,T_k}^{s_k} (I)
=Y\left[\Phi_{f_1,A_1}^{s_1}\circ \cdots \circ \Phi_{f_k,A_k}^{s_k}(Y^{-1} {Y^*}^{-1})\right]Y^*\\
&\leq \|Y^{-1}{Y^*}^{-1}\|Y\left[\Phi_{f_1,A_1}^{s_1}\circ \cdots \circ \Phi_{f_k,A_k}^{s_k}(I)\right]Y^*.
\end{split}
\end{equation*}
Hence, we have
$$
Y^{-1} {Y^*}^{-1}\leq  \|Y^{-1}{Y^*}^{-1}\| \Phi_{f_1,A_1}^{s_1}\circ \cdots \circ \Phi_{f_k,A_k}^{s_k}(I)
$$
which implies
$$
\Phi_{f_1,A_1}^{s_1}\circ \cdots \circ \Phi_{f_k,A_k}^{s_k}(I)\geq \frac{1}{\|Y^{-1}\|} Y^{-1} {Y^*}^{-1}\geq \frac{1}{\|Y\|^2 \|Y^{-1}\|} I.
$$
Note that the implication (ii)$\implies$(iii) is obvious.
Now, we prove that (iii)$\implies$ (iv). Assume that item (iii) holds.
For each $(p^{(1)},\ldots, p^{(k)})\in \NN^k$, we define the operator
$$
Q_{(p^{(1)},\ldots, p^{(k)})}:=  \frac {1} {p^{(1)}\cdots p^{(k)}} \sum_{s_k=0}^{p^{(k)}-1}\cdots \sum_{s_1=0}^{p^{(1)}-1} \Phi_{f_k,A_k}^{s_k}\circ \cdots \circ \Phi_{f_1,A_1}^{s_1}  (I).
$$
 In what follows, we show that there are subsequences  $\{p_{j_1}^{(1)}\}_{j_1=1}^\infty,\ldots, \{p_{j_k}^{(k)}\}_{j_k=1}^\infty$ such that
$$
Q:=\lim_{j_k\to\infty}\ldots \lim_{j_1\to\infty} Q_{(p^{(1)}_{j_1},\ldots, p^{(k)}_{j_k})}
$$
exists, where the limits are taken in the weak operator topology, and $Q$ is a positive invertible operator with the property that $\Phi_{f_i, A_i}(Q)=Q$ for any $i\in \{1,\ldots, k\}$.

Define  the sequence of operators $\{Q_{p^{(1)}, A_1}\}_{p^{(1)}=1}^\infty$ by setting
$$
Q_{p^{(1)}, A_1}:=\frac{1}{p^{(1)}}\sum_{s_1=0}^{p^{(1)}-1} \Phi_{f_1,A_1}^{s_1}(I).
$$
Note that $cI\leq Q_{p^{(1)}, A_1}\leq dI$ for any $p^{(1)}\in \NN$.
Since the closed unit ball of $B(\cH)$ is weakly compact, there is a subsequence
  $\{Q_{p^{(1)}_{j_1}, A_1}\}_{j_1=1}^\infty $
 weakly
 convergent   to an operator $Q_{A_1}\in B(\cH)$.
 It is clear that $Q_{A_1}$ is an invertible positive
 operator and
 $cI\leq Q_{A_1}\leq dI$.

 Let $P\in B(\cH)$ be an invertible positive operator with the property that
 \begin{equation*}
 \frac{1}{j+1} \sum_{s_1=0}^j \Phi_{f_1, A_1}^{s_1}(P)\leq bI,\qquad j\in \ZZ_+.
 \end{equation*}
 Note that this inequality is satisfied when $P=I$. Using the fact that $\Phi_{f_1, A_1}$ is a positive linear map, for any $t\in\ZZ_+$, we have
 \begin{equation*}
 \begin{split}
 \Phi_{f_1, A_1}^t(P)\left(\sum_{j=0}^t \frac{1}{j+1}\right)
 &\leq \sum_{j=0}^t \frac{1}{j+1} \|\Phi_{f_1, A_1}^j(P)\|\Phi_{f_1, A_1}^{t-j}(I)\\
 &\leq b\sum_{j=0}^t \Phi_{f_1, A_1}^{t-j}(I)= b\sum_{j=0}^t \Phi_{f_1, A_1}^{j}(I)\\
 &\leq b\|P^{-1}\|\sum_{j=0}^t  \Phi_{f_1, A_1}^j(P)\leq b^2(t+1)I.
 \end{split}
 \end{equation*}
 Hence, we deduce that
 $$
 \frac{1}{t}  \Phi_{f_1, A_1}^t(P) \leq \frac{b^2\frac{t+1}{t}}{\sum_{j=0}^t \frac{1}{j+1}}I,\qquad t\in \NN,
 $$
 which implies $\frac{1}{t}  \Phi_{f_1, A_1}^t(P)\to 0$ in norm as $t\to\infty$. In particular, this convergence holds when $P=I$.

 On the other hand,
 since
 $$
 Q_{p^{(1)}_{j_1}, A_1}-\Phi_{f_1, A_1}(Q_{p^{(1)}_{j_1}, A_1})=\frac{1}{p_{j_1}^{(1)}}I-\frac{1}{p_{j_1}^{(1)}}\Phi_{f_1, A_1}^{p_{j_1}^{(1)}}(I)
 $$
 and $\frac{1}{p_{j_1}^{(1)}}\Phi_{f_1, A_1}^{p_{j_1}^{(1)}}(I)\to 0$ in norm as $j_1\to\infty$, we deduce that $Q_{p^{(1)}_{j_1}, A_1}-\Phi_{f_1, A_1}(Q_{p^{(1)}_{j_1}, A_1})\to 0$ in norm as $j_1\to\infty$.
 Since $\Phi_{f_1, A_1}$ is weakly continuous on bounded sets and
 $Q_{p^{(1)}_{j_1}, A_1}\to Q_{A_1}$ weakly, we deduce that
 $$
 \Phi_{f_1, A_1}(Q_{A_1})=Q_{A_1}.
 $$
Define  the sequence of operators $\{Q_{p^{(2)}, A_2}\}_{p^{(2)}=1}^\infty$ by setting
$$
Q_{p^{(2)}, A_2}:=\frac{1}{p^{(2)}}\sum_{s_2=0}^{p^{(2)}-1} \Phi_{f_2,A_2}^{s_2}(Q_{A_1}).
$$
Note that $cI\leq Q_{p^{(2)}, A_2}\leq dI$ for any $p^{(2)}\in \NN$.
As above, one can prove that $\frac{1}{t}  \Phi_{f_2, A_2}^t(Q_{A_1})\to 0$ in norm as $t\to\infty$.
Since the closed unit ball of $B(\cH)$ is weakly compact, there is a subsequence
  $\{Q_{p^{(2)}_{j_2}, A_2}\}_{j_2=1}^\infty $
 weakly
 convergent   to an operator $Q_{A_2,A_1}\in B(\cH)$.
 It is clear that $Q_{A_2,A_1}$ is an invertible positive
 operator and
 $cI\leq Q_{A_2,A_1}\leq dI$. Since
 $$
 Q_{p^{(2)}_{j_2}, A_2}-\Phi_{f_2, A_2}(Q_{p^{(2)}_{j_2}, A_2})=\frac{1}{p_{j_2}^{(2)}}Q_{A_1}-\frac{1}{p_{j_2}^{(2)}}\Phi_{f_2, A_2}^{p_{j_2}^{(2)}}(Q_{A_1})
 $$
 and $\frac{1}{p_{j_2}^{(1)}}\Phi_{f_2, A_2}^{p_{j_1}^{(1)}}(Q_{A_1})\to 0$ in norm as $j_2\to\infty$, we deduce that $Q_{p^{(2)}_{j_2}, A_2}-\Phi_{f_2, A_2}(Q_{p^{(2)}_{j_2}, A_2})\to 0$ in norm as $j_2\to\infty$.
 Since $\Phi_{f_2, A_2}$ is weakly continuous on bounded sets and
 $Q_{p^{(2)}_{j_2}, A_2}\to Q_{A_2,A_1}$ weakly, we deduce that
 $$
 \Phi_{f_2, A_2}(Q_{A_2,A_1})=Q_{A_2,A_1}.
 $$
Since $\Phi_{f_1, A_1}$ is WOT-continuous on bounded sets, $\Phi_{f_1, A_1}$ commutes with $\Phi_{f_2, A_2}$, and $\Phi_{f_1, A_1}(Q_{A_1})=Q_{A_1}$, we deduce  that
\begin{equation*}
\begin{split}
\Phi_{f_1, A_1}(Q_{A_2,A_1})&=\text{\rm WOT-}\lim_{j_2\to\infty} \Phi_{f_1, A_1}(Q_{A_2,p_{j_2}^{(2)}})\\
&=
\text{\rm WOT-}\lim_{j_2\to\infty}\left(\frac{1}{p^{(2)}_{j_2}}\sum_{s_2=0}^{p^{(2)}_{j_2}-1} \Phi_{f_2,A_2}^{s_2}(\Phi_{f_1, A_1}(Q_{A_1}))\right)\\
&=
\text{\rm WOT-}\lim_{j_2\to\infty}\left(\frac{1}{p^{(2)}_{j_2}}\sum_{s_2=0}^{p^{(2)}_{j_2}-1} \Phi_{f_2,A_2}^{s_2} (Q_{A_1})\right)\\
&=Q_{A_2, A_1}.
\end{split}
\end{equation*}

Continuing this process, we find   an invertible positive
 operator   $Q_{A_k,\ldots, A_1}$  with the property that
 $cI\leq Q_{A_k,\ldots, A_1}\leq dI$ and
 $$
 \Phi_{f_i, A_i}(Q_{A_k,\ldots, A_1})=Q_{A_k,\ldots, A_1},\qquad i\in\{1,\ldots, k\}.
 $$
Therefore, item (iv) holds. To prove that (iv)$\implies$(i) we assume that
  there is a positive invertible operator $Q\in B(\cH)$ such that  $\Phi_{f_i, A_i}(Q)=Q$ for any $i\in \{1,\ldots,k\}$.
Set $T_{i,j}:=Q^{-1/2}A_{i,j} Q^{1/2}$ for all  $i\in \{1,\ldots,k\}$ and  $j\in \{1,\ldots, n_i\}$, and note that
$$
\Phi_{f_i, T_i}(I)=Q^{-1/2}\Phi_{f_i, A_i}(Q) Q^{-1/2}= I, \qquad i\in \{1,\ldots, k\}.
$$
  The proof is complete.
\end{proof}

We say that  $\pi_i:\FF_{n_i}^+\to B(\cH)$ is row contractive (resp. coisometric, Cuntz) representation if its generators form a row contraction (resp. coisometry, unitary), i.e. the operator matrix
$[\pi_i(g_1^i)\cdots \pi_i(g_{n_i}^i)]$  is contractive (resp. coisometric, unitary) from the direct sum $\cH^{(n_i)}:=\cH\oplus\cdots \oplus \cH$ to $\cH$.

\begin{corollary} \label{iso} Let $\pi_i:\FF_{n_i}^+\to B(\cH)$, $i\in \{1,\ldots,k\}$, be representations with commuting ranges and let $\sigma: \FF_{n_1}^+\times \cdots \times \FF_{n_k}^+\to \cH$ be the direct product representation defined by
$$
\sigma(\alpha_1,\ldots, \alpha_k)=\pi_1(\alpha_1)\cdots \pi_k(\alpha_k),\qquad (\alpha_1,\ldots, \alpha_k)\in \FF_{n_1}^+\times \cdots \times \FF_{n_k}^+.
$$
Then the following statements are equivalent:
\begin{enumerate}
\item[(i)] There is an invertible operator $Y \in B(\cH)$ such that $Y^{-1}\sigma(\cdot) Y$ is the direct product of row coisometric representations, i.e. $Y^{-1}\pi_i(\cdot) Y$ is a row coisometric representation for each $i\in \{1,\ldots, k\}$.

\item[(ii)]  There exist constants $0<c\leq d$ such
$$
c\|h\|^2\leq \|\sigma(\alpha_1,\ldots, \alpha_k)h\|^2\leq d\|h\|^2,\qquad h\in \cH,
$$
for any $(\alpha_1,\ldots, \alpha_k)\in \FF_{n_1}^+\times \cdots \times \FF_{n_k}^+$.
\end{enumerate}
\end{corollary}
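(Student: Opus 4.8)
The plan is to obtain Corollary~\ref{iso} as a direct specialization of Theorem~\ref{simi} to the closed polyball, i.e.\ to the case in which $n_1,\dots,n_k$ are arbitrary, $f_i:=Z_{i,1}+\cdots+Z_{i,n_i}$, ${\bf m}:=(1,\dots,1)$, and $\cQ:=\{0\}$, so that $\cN_\cQ=\bigotimes_{i=1}^k F^2(H_{n_i})$, the universal model ${\bf S}$ coincides with the left‑creation model ${\bf W}={\bf L}$, and $\cV_\cQ(\cH)={\bf D_f^m}(\cH)$. I would put $A_{i,j}:=\pi_i(g_j^i)$. The commuting‑ranges hypothesis on the $\pi_i$'s is precisely the statement that ${\bf A}:=(A_1,\dots,A_k)\in B(\cH)^{n_1}\times_c\cdots\times_c B(\cH)^{n_k}$. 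Because each $f_i$ is a polynomial, $\Phi_{f_i,A_i}(X)=\sum_{j=1}^{n_i}A_{i,j}XA_{i,j}^{*}$ is a finite sum, so $\Phi_{f_i,A_i}(I)$ is manifestly well defined and $\sum_{\alpha}a_{i,\alpha}A_{i,\alpha}A_{i,\alpha}^{*}$ converges weakly; also $q({\bf A})=0$ for every $q\in\cQ$ trivially. Thus all the standing hypotheses of Theorem~\ref{simi} are in force, and the task reduces to transcribing the equivalence (i)$\Leftrightarrow$(ii) of that theorem into the language of representations.

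The transcription rests on one identity and one bookkeeping remark. First, iterating $\Phi_{f_i,A_i}$ and rearranging the resulting WOT‑convergent sums of positive operators with the help of the commuting ranges gives
\[
\Phi_{f_1,A_1}^{s_1}\circ\cdots\circ\Phi_{f_k,A_k}^{s_k}(I)
=\sum_{|\alpha_i|=s_i,\ i=1,\dots,k}\sigma(\alpha_1,\dots,\alpha_k)\,\sigma(\alpha_1,\dots,\alpha_k)^{*},
\qquad (s_1,\dots,s_k)\in\ZZ_+^k,
\]
since $\sigma(\alpha_1,\dots,\alpha_k)=\pi_1(\alpha_1)\cdots\pi_k(\alpha_k)=A_{1,\alpha_1}\cdots A_{k,\alpha_k}$; hence the two‑sided estimate $cI\le\Phi_{f_1,A_1}^{s_1}\circ\cdots\circ\Phi_{f_k,A_k}^{s_k}(I)\le dI$ of Theorem~\ref{simi}(ii) is exactly the estimate in condition (ii) of the corollary. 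Second, for an invertible $Y$ put $T_{i,j}:=Y^{-1}A_{i,j}Y$; then $Y^{-1}\pi_i(\cdot)Y$ is the representation of $\FF_{n_i}^{+}$ with generators $T_{i,1},\dots,T_{i,n_i}$, and $\Phi_{f_i,T_i}(I)=\sum_{j=1}^{n_i}T_{i,j}T_{i,j}^{*}=I$ says precisely that the row $[T_{i,1}\ \cdots\ T_{i,n_i}]$ is a coisometry, i.e.\ that $Y^{-1}\pi_i(\cdot)Y$ is a row coisometric representation. Conversely, if each $Y^{-1}\pi_i(\cdot)Y$ is row coisometric then $\Phi_{f_i,T_i}(I)=I$, whence ${\bf \Delta_{f,T}^p}(I)=0$ for all $0\le{\bf p}\le{\bf m}$, $q({\bf T})=0$ for $q\in\cQ$, and the commuting‑range structure is inherited from ${\bf A}$, so ${\bf T}:=(T_1,\dots,T_k)$ lies in $\cV_\cQ(\cH)$ with $\Phi_{f_i,T_i}(I)=I$; this is condition (i) of Theorem~\ref{simi}. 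Finally $Y^{-1}\sigma(\alpha_1,\dots,\alpha_k)Y=\prod_{i=1}^{k}Y^{-1}\pi_i(\alpha_i)Y$ shows that $Y^{-1}\sigma(\cdot)Y$ is the direct product of the $Y^{-1}\pi_i(\cdot)Y$, matching the wording of (i) in the corollary.

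With these translations in place the chain (i)$_{\mathrm{Cor}}\Leftrightarrow$(i)$_{\mathrm{Thm}}\Leftrightarrow$(ii)$_{\mathrm{Thm}}\Leftrightarrow$(ii)$_{\mathrm{Cor}}$ closes. I expect no substantial obstacle beyond building this dictionary carefully: the two points that genuinely require attention are (a) verifying that "$\Phi_{f_i,T_i}(I)=I$ together with the commuting‑range structure" is really equivalent to "$Y^{-1}\pi_i(\cdot)Y$ is a direct product of row coisometric representations" — in particular that no additional defect positivity is needed, which is exactly why one chooses ${\bf m}=(1,\dots,1)$ so that every ${\bf \Delta_{f,T}^p}(I)$ with ${\bf p}\neq 0$ automatically vanishes — and (b) justifying the rearrangement in the displayed identity, where the commutation of the ranges of distinct $\pi_i$'s is what allows the iterated sums to be recombined. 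If one prefers a more concrete conclusion, condition (iv) of Theorem~\ref{simi} may be used instead, phrasing (i) of the corollary through the existence of a positive invertible $Q$ with $\Phi_{f_i,A_i}(Q)=Q$ for every $i$, in which case $Q^{1/2}$ is an explicit similarity operator.
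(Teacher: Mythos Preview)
Your setup is correct---this is the polyball specialization of Theorem~\ref{simi} with $f_i=Z_{i,1}+\cdots+Z_{i,n_i}$, ${\bf m}=(1,\dots,1)$, $\cQ=\{0\}$, $A_{i,j}:=\pi_i(g_j^i)$---and your translation of item~(i) is right. The gap is in the translation of item~(ii).

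Your displayed identity
\[
\Phi_{f_1,A_1}^{s_1}\circ\cdots\circ\Phi_{f_k,A_k}^{s_k}(I)
=\sum_{|\alpha_i|=s_i}\sigma(\alpha_1,\dots,\alpha_k)\,\sigma(\alpha_1,\dots,\alpha_k)^{*}
\]
is correct, but pairing with $h$ gives
\[
\bigl\langle\Phi_{f_1,A_1}^{s_1}\circ\cdots\circ\Phi_{f_k,A_k}^{s_k}(I)h,h\bigr\rangle
=\sum_{|\alpha_i|=s_i}\|\sigma(\alpha_1,\dots,\alpha_k)^{*}h\|^{2}.
\]
This differs from the corollary's~(ii) in two respects: it is a \emph{sum} over all multiwords of prescribed lengths rather than a bound on a single $\sigma(\alpha)$, and it involves $\sigma(\alpha)^{*}h$ rather than $\sigma(\alpha)h$. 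So the claim that ``Theorem~\ref{simi}(ii) is exactly the estimate in condition~(ii) of the corollary'' is not justified.

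The discrepancy is not cosmetic. Already for $k=n_1=1$ the theorem's~(ii) reads $c\|h\|^{2}\le\|C^{*s}h\|^{2}\le d\|h\|^{2}$, which by Sz.-Nagy is similarity of $C$ to a coisometry---matching~(i). The corollary's~(ii) as printed reads $c\|h\|^{2}\le\|C^{s}h\|^{2}\le d\|h\|^{2}$; taking $C$ to be the backward shift (a coisometry, so (i) holds with $Y=I$), one has $Ce_0=0$ and the lower bound fails. Hence the two versions of~(ii) are genuinely inequivalent and your dictionary does not close the chain. The paper states Corollary~\ref{iso} without proof as a direct specialization of Theorem~\ref{simi}; what that specialization actually yields is the two-sided bound on $\sum_{|\alpha_i|=s_i}\|\sigma(\alpha)^{*}h\|^{2}$, and the printed~(ii) should be read with that in mind.
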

We should remark that another consequence of Theorem \ref{simi}  regarding the similarity to  a direct product of   Cuntz representations was mentioned in the introduction.
Note also that  in the particular case when $n_1=\cdots= n_k=1$  Corollary \ref{iso} implies  the following result for the polydisc.

\begin{corollary}\label{uniform-polydisc} A $k$-tuple of commuting operators $(C_1,\ldots, C_k)\in B(\cH)^k$ is jointly similar to a $k$-tuple of commuting isometries $(V_1,\ldots, V_k)\in B(\cH)$ if and only if there are constants $0<c\leq d$ such that
$$
c\|h\|^2\leq \|C_1^{s_1}\cdots C_k^{s_k}h\|^2\leq d\|h\|^2, \quad h\in \cH,
$$
for any $s_1,\ldots, s_k\in \ZZ^+$. Moreover, there is an  invertible operator $\xi:\cH\to \cH$  such that $V_i=\xi C_i \xi^{-1}$  for $i\in \{1,\ldots, k\}$ and $\xi$ is in the von Neumann algebra generated by $C_1,\ldots, C_n$ and the identity.
\end{corollary}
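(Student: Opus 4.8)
The plan is to read off this statement from Theorem \ref{simi} (equivalently, Corollary \ref{iso}) in the particular case $n_1=\cdots=n_k=1$, $f_i=Z_i$, $\cQ=\{0\}$ --- so that $\cN_\cQ$ is the Hardy space $H^2(\DD^k)$ and ${\bf S}=({\bf S}_1,\ldots,{\bf S}_k)$ is the $k$-tuple of coordinate shifts --- but applied to the \emph{adjoint} tuple ${\bf A}:=(C_1^*,\ldots,C_k^*)$. This tuple lies in $B(\cH)^1\times_c\cdots\times_c B(\cH)^1$ and meets the standing hypotheses of Theorem \ref{simi}, since the choice $f_i=Z_i$ makes $\sum_\alpha a_{i,\alpha}A_{i,\alpha}A_{i,\alpha}^*=C_i^*C_i$ a single bounded operator and $q({\bf A})=0$ holds trivially for $q\in\cQ=\{0\}$.

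First I would dispose of the forward implication by a short norm estimate: if $V_i=\xi C_i\xi^{-1}$ with $\xi$ invertible and $V_1,\ldots,V_k$ commuting isometries, then $C_1^{s_1}\cdots C_k^{s_k}=\xi^{-1}(V_1^{s_1}\cdots V_k^{s_k})\xi$, and since a product of isometries is again an isometry, the two-sided bound follows at once with $c=(\|\xi\|\,\|\xi^{-1}\|)^{-2}$ and $d=(\|\xi\|\,\|\xi^{-1}\|)^{2}$.

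For the converse, the preparatory step is the identity
\[
\Phi_{f_1,C_1^*}^{s_1}\circ\cdots\circ\Phi_{f_k,C_k^*}^{s_k}(I)=(C_1^{s_1}\cdots C_k^{s_k})^*(C_1^{s_1}\cdots C_k^{s_k}),\qquad s_1,\ldots,s_k\in\ZZ_+,
\]
proved by a short induction on $k$ using only the commutativity of the $C_i$ together with $\Phi_{f_i,C_i^*}(X)=C_i^*XC_i$. With this identity in hand, the hypothesis $c\|h\|^2\le\|C_1^{s_1}\cdots C_k^{s_k}h\|^2\le d\|h\|^2$ is exactly condition (ii) of Theorem \ref{simi} for the tuple $(C_1^*,\ldots,C_k^*)$. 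I would then invoke the equivalence of that condition with condition (iv) of Theorem \ref{simi}: there is a positive invertible operator $Q$, belonging to the von Neumann algebra generated by $C_1^*,\ldots,C_k^*$ and the identity --- which is the same as the von Neumann algebra generated by $C_1,\ldots,C_k$ and the identity --- such that $C_i^*QC_i=Q$ for every $i$. Setting $\xi:=Q^{1/2}$ (still in that von Neumann algebra, by functional calculus) and $V_i:=\xi C_i\xi^{-1}=Q^{1/2}C_iQ^{-1/2}$, one computes $V_i^*V_i=Q^{-1/2}(C_i^*QC_i)Q^{-1/2}=I$, so each $V_i$ is an isometry; the $V_i$ commute because conjugation by $\xi$ is an algebra homomorphism; and $V_i=\xi C_i\xi^{-1}$ by construction. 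This yields the joint similarity to commuting isometries together with the refinement that $\xi$ lies in the von Neumann algebra generated by $C_1,\ldots,C_k$ and the identity.

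The one point requiring care --- and essentially the only place one can go astray --- is the adjoint bookkeeping: Theorem \ref{simi} must be invoked for $(C_1^*,\ldots,C_k^*)$, not for $(C_1,\ldots,C_k)$, since the latter choice would produce a joint similarity to commuting \emph{coisometries} and a bound phrased in terms of $\|(C_1^{s_1}\cdots C_k^{s_k})^*h\|$ rather than $\|C_1^{s_1}\cdots C_k^{s_k}h\|$; correspondingly, the conclusion of the theorem has to be transported back through the adjoint and the square-root functional calculus. All the remaining steps are routine verifications.
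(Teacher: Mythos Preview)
Your proposal is correct and follows the same route the paper intends: the corollary is simply the specialization of Theorem~\ref{simi} (equivalently Corollary~\ref{iso}) to the case $n_1=\cdots=n_k=1$, $f_i=Z_i$, $\cQ=\{0\}$. You have in fact been more careful than the paper, which merely asserts that Corollary~\ref{iso} implies the result in this case; your observation that one must apply Theorem~\ref{simi} to the adjoint tuple $(C_1^*,\ldots,C_k^*)$ in order to land on isometries rather than coisometries is exactly the bookkeeping the paper leaves implicit, and your derivation via item (iv) and $\xi=Q^{1/2}$ is clean and correct.
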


We remark that under the conditions of Corollary \ref{uniform-polydisc},
 we have  the  inequality
$$
\|[q_{s,t}(C_1,\ldots, C_k)]_{m\times m}\|\leq \sqrt{\frac{d}{c}} \sup_{|z_i| \leq 1}\|[q_{s,t}(z_1,\ldots, z_k)]_{m\times m}\|
$$
for any  matrix $[q_{s,t}]_{m\times m}$ of polynomials   in $k$ variables and any $m\in \NN$.

As a consequence of Corollary \ref{uniform-polydisc}, we deduce the well-known result (see \cite{Di}, \cite{Da}) that any uniformly bounded representation $u:\ZZ^k\to B(\cH)$ is similar to a unitary representation.
More precisely there is an invertible operator $\xi:\cH\to \cH$ such that $\xi u(\cdot) \xi^{-1}$ is a unitary representation, and $\xi$ can be chosen in the von Neumann  algebra generated by $u(\ZZ^k)$. In the particular case  when $k=1$, we recover Sz-Nagy similarity result \cite{SzN}.

\bigskip

\section{Joint similarity of positive linear maps}

In what follows,  we provide analogues  of all the similarity results presented in the previous sections in the context of joint similarity of commuting tuples  of positive linear maps on the algebra of bounded linear operators on a separable Hilbert space.

We say that a commuting $k$-tuple $\Lambda:=(\lambda_1,\ldots, \lambda_k)$ of   positive linear maps on $B(\cH)$ is pure if, for each $i\in \{1,\ldots, k\}$,   $ \lambda_i^s(I)\to 0$ weakly as $s\to \infty$.
 Let
$\Phi:=(\varphi_1,\ldots, \varphi_k)$   be another $k$-tuples of commuting  positive linear maps on $B(\cK)$.
We say that  $\Phi$ is jointly similar to $\Lambda$ if there is an invertible operator $R\in B(\cH,\cK)$ such that
$$
\varphi_i(RXR^*)=R\lambda_i(X)R^*,\qquad X\in B(\cH),
$$
for any $i\in \{1,\ldots, k\}$. This relation is equivalent to $\varphi_i=\psi_R\circ \lambda_i\circ \psi_R^{-1}$ for $i\in \{1,\ldots, k\}$, where
$\psi_R(X):=RXR^*$. Note that the relation above shows that the discrete semigroups of positive linear maps $\{\varphi_1^{p_1}\circ\cdots \circ \varphi_k^{p_k}\}_{(p_1,\ldots, p_k)\in \ZZ_+^k}$ and
 $\{\lambda_1^{p_1}\circ\cdots \circ \lambda_k^{p_k}\}_{(p_1,\ldots, p_k)\in \ZZ_+^k}$ are also similar. We also remark that
 ${\bf \Delta}_{\bf \Phi}^{\bf p}(RXR^*)=R{\bf \Delta}_{\bf \Lambda}^{\bf p}(X)R^*$ for any ${\bf p}\in \ZZ_+^k$ and $X\in B(\cH)$. Consequently,
 $D\in \cC_{\geq}({\bf \Delta}_{\Lambda}^{\bf m})^+$ if and only if
 $RDR^*\in \cC_{\geq}({\bf \Delta}_{\Phi}^{\bf m})^+$. In particular, we have
 $I\in \cC_{\geq}({\bf \Delta}_{\Lambda}^{\bf m})^+$ if and only if
 $RR^*\in \cC_{\geq}({\bf \Delta}_{\Phi}^{\bf m})^+$.

 We recall (see e.g. \cite{EL}) that any $w^*$-continuous completely positive map $\varphi$ on $B(\cH)$ is determined by a sequence $\{C_\kappa\}_{\kappa=1}^n$ ($n\in \NN$ or $n=\infty$) of bounded operators on $\cH$, in the sense that
$$
\varphi(X)=\sum_{j=1}^n C_jXC_j^*,\qquad X\in B(\cH),
$$
where, if $n=\infty$, the convergence is in the $w^*$-topology.
The next result is an analogue of Theorem \ref{pure-contr}  for commuting $k$-tuples of $w^*$-continuous completely positive linear maps.

\begin{theorem} \label{w*-pure}  Let $\Phi:=(\varphi_1,\ldots, \varphi_k)$
be a commuting $k$-tuple of $w^*$-continuous completely positive linear maps on $B(\cH)$ and let ${\bf m}\in \NN_+^k$. Then the following statements are equivalent.
\begin{enumerate}
\item[(i)] $\Phi$ is jointly similar to a commuting $k$-tuple $\Lambda:=(\lambda_1,\ldots, \lambda_k)$ of pure $w^*$-continuous positive linear maps on $B(\cG)$, where $\cG$ is a Hilbert space, such that  $I\in \cC_{\geq}({\bf \Delta}_{\Lambda}^{\bf m})^+$.
\item[(ii)] There is   an
 invertible operator $Q\in \cC_{\geq}({\bf \Delta_{\Phi}^m})^+$ such that, for each $i\in \{1,\ldots, k\}$,   $ \varphi_i^s(Q)\to 0$ weakly as $s\to \infty$.
\item[(iii)]  There exist
constants $0<a\leq b$ and a positive operator $R\in B(\cH)$ such
that
$$
aI\leq \sum_{(s_1,\ldots,s_k)\in \ZZ_+^k}\left(\begin{matrix} s_1+m_1-1\\m_1-1\end{matrix}\right)\cdots \left(\begin{matrix} s_k+m_k-1\\m_k-1\end{matrix}\right)\varphi_1^{s_1}\circ \cdots \circ \varphi_k^{s_k}(R)\leq bI.
$$
  \end{enumerate}
\end{theorem}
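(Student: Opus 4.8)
The plan is to prove $(i)\Leftrightarrow(ii)$ by a direct conjugation argument and $(ii)\Leftrightarrow(iii)$ via the Fourier-type representation of Theorem \ref{reproducing2} together with Proposition \ref{pure2}, running the cycle $(i)\Rightarrow(ii)\Rightarrow(iii)\Rightarrow(ii)\Rightarrow(i)$. Throughout, only positivity, commutativity, and $w^*$-continuity (equivalently, WOT-continuity on bounded sets) of the $\varphi_i$ enter; complete positivity is used only to guarantee these hypotheses. The overall shape follows the proof of Theorem \ref{pure-contr}, but the concrete universal model is replaced by the \emph{abstract} model obtained by conjugating $\Phi$ itself.

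For $(ii)\Rightarrow(i)$, given an invertible $Q\in\cC_{\geq}({\bf \Delta}_{\Phi}^{\bf m})^+$ with $\varphi_i^s(Q)\to 0$ weakly, I would take $\cG:=\cH$, $R:=Q^{1/2}$, and set $\lambda_i(X):=Q^{-1/2}\varphi_i(Q^{1/2}XQ^{1/2})Q^{-1/2}$. Then $\Lambda:=(\lambda_1,\dots,\lambda_k)$ is a commuting $k$-tuple of $w^*$-continuous positive linear maps with $\varphi_i(RXR^*)=R\lambda_i(X)R^*$, and from ${\bf \Delta}_{\Lambda}^{\bf p}(X)=Q^{-1/2}{\bf \Delta}_{\Phi}^{\bf p}(Q^{1/2}XQ^{1/2})Q^{-1/2}$ one reads off $\lambda_i^s(I)=Q^{-1/2}\varphi_i^s(Q)Q^{-1/2}\to 0$ weakly and ${\bf \Delta}_{\Lambda}^{\bf p}(I)=Q^{-1/2}{\bf \Delta}_{\Phi}^{\bf p}(Q)Q^{-1/2}\ge 0$, so $\Lambda$ is pure and $I\in\cC_{\geq}({\bf \Delta}_{\Lambda}^{\bf m})^+$. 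Conversely, for $(i)\Rightarrow(ii)$, if $R$ implements the similarity and $\Lambda$ is pure with $I\in\cC_{\geq}({\bf \Delta}_{\Lambda}^{\bf m})^+$, I would put $Q:=RR^*$: this is positive and invertible, and the intertwining relations recorded at the start of Section 5 give ${\bf \Delta}_{\Phi}^{\bf p}(Q)=R\,{\bf \Delta}_{\Lambda}^{\bf p}(I)\,R^*\ge 0$ and $\varphi_i^s(Q)=R\,\lambda_i^s(I)\,R^*\to 0$ weakly.

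For $(ii)\Rightarrow(iii)$, set $R:={\bf \Delta}_{\Phi}^{\bf m}(Q)\ge 0$. Since each $\varphi_i$ is WOT-continuous on bounded sets and $Q$ is pure in $\cC_{\geq}({\bf \Delta}_{\Phi}^{\bf m})^+$, Proposition \ref{pure2}(ii) and Theorem \ref{reproducing2} give
\[
\sum_{(s_1,\dots,s_k)\in\ZZ_+^k}\binom{s_1+m_1-1}{m_1-1}\cdots\binom{s_k+m_k-1}{m_k-1}\,\varphi_1^{s_1}\circ\cdots\circ\varphi_k^{s_k}(R)=Q,
\]
the convergence being in the weak operator topology; as $Q$ is positive and invertible, $\|Q^{-1}\|^{-1}I\le Q\le\|Q\|I$, so $(iii)$ holds with $a=\|Q^{-1}\|^{-1}$, $b=\|Q\|$. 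Finally, $(iii)\Rightarrow(ii)$ is the heart of the matter: let $Q$ be the series on the left of the inequality in $(iii)$, an increasing norm-bounded net of positive operators, hence WOT-convergent, with $aI\le Q\le bI$, so $Q$ is positive invertible. One then shows, by passing $\varphi_i^s$ through the series (legitimate by WOT-continuity on bounded sets), using commutativity and the elementary estimate $\binom{u-s+m_i-1}{m_i-1}\le\binom{u+m_i-1}{m_i-1}$, that $0\le\varphi_i^s(Q)$ is dominated by a tail of the series defining $Q$, so $\varphi_i^s(Q)\to 0$ weakly; since $Q\ge aI$ this forces $\varphi_i^s(I)\le a^{-1}\varphi_i^s(Q)\to 0$ weakly, i.e. every $\varphi_i$ is pure; and the telescoping identity $(id-\varphi_i)\sum_{s\ge 0}\varphi_i^s(Y)=Y$ (valid, with no boundary term, whenever the series converges in WOT), iterated over the coordinates as in the proof of Theorem \ref{reproducing}, yields ${\bf \Delta}_{\Phi}^{\bf m}(Q)=R\ge 0$. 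Now Proposition \ref{Delta-ineq}(ii), applied to the pure tuple $\Phi$, shows $Q\in\cC_{\geq}^{pure}({\bf \Delta}_{\Phi}^{\bf m})^+$; together with invertibility of $Q$ this is exactly $(ii)$. The main obstacle is the bookkeeping in this last step — justifying the rearrangements and the passage of the $\varphi_i$ through the WOT-convergent multi-indexed series of positive operators, and controlling the convergence — all of which is handled precisely as in the proofs of Theorem \ref{reproducing} and Theorem \ref{simi2}(v).
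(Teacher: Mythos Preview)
Your proof is correct, and for the implication $(iii)\Rightarrow(i)$ it takes a genuinely different route from the paper. The paper proves $(iii)\Rightarrow(i)$ directly by invoking the Kraus representation $\varphi_i(X)=\sum_j A_{i,j}XA_{i,j}^*$ of a $w^*$-continuous completely positive map, building the generalized Berezin kernel ${\bf K}_{\bf q,A}^R$ associated with $({\bf q},{\bf m},{\bf A},R)$ (Theorem~\ref{Berezin-prop} via Remark~\ref{more}), and then taking $\cG$ to be the range of this kernel and $\lambda_i$ to be the compression of $\Phi_{q_i,{\bf W}_i}\otimes id$ to $\cG$. You instead close the cycle through $(ii)$: you show $(iii)\Rightarrow(ii)$ by proving that the series itself is an invertible pure element of the cone (tail estimate $\Rightarrow$ $\varphi_i^s(Q)\to 0$, hence each $\varphi_i$ is pure, and then the telescoping identity recovers ${\bf\Delta}_\Phi^{\bf m}(Q)=R$, so Proposition~\ref{Delta-ineq}(ii) applies), and then $(ii)\Rightarrow(i)$ by the bare conjugation $\lambda_i(X):=Q^{-1/2}\varphi_i(Q^{1/2}XQ^{1/2})Q^{-1/2}$ on $\cG=\cH$.

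What each approach buys: your argument is more elementary --- it never touches the Berezin kernel or the universal model, and in fact uses only positivity and WOT-continuity on bounded sets, so complete positivity is not needed (your parenthetical remark to this effect is justified). It also produces a similarity on the \emph{same} Hilbert space $\cH$. The paper's route, by contrast, exhibits a concrete geometric model: $\Lambda$ is realized as a compression of the universal tuple $\{\Phi_{q_i,{\bf W}_i}\}$ to an explicit co-invariant subspace, which carries more structural information even though it requires the CP hypothesis. The implications $(i)\Rightarrow(ii)$ and $(ii)\Rightarrow(iii)$ are handled identically in both proofs.
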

\begin{proof} Assume that condition (i) holds. Then there is an invertible operator $ Y\in B(\cG, \cH)$ such that
$$
\varphi_i(YXY^*)=Y\lambda_i(X)Y^*,\qquad X\in B(\cG),
$$
for any $i\in \{1,\ldots, k\}$, and ${\bf \Delta}_{\bf \Lambda}^{\bf p}(I)\geq 0$ for any ${\bf p}\in \ZZ_+^k$ with ${\bf p}\leq {\bf m}$. Since
${\bf \Delta}_{\bf \Phi}^{\bf p}(YY^*)=Y{\bf \Delta}_{\bf \Lambda}^{\bf p}(I)Y^*$, we deduce that ${\bf \Delta}_{\bf \Phi}^{\bf p}(Q)\geq 0$, where $Q:=YY^*$ is an invertible positive operator. On the other hand, since $\varphi_i^s(Q)=Y\lambda_i^s(I)Y^*$, $s\in \NN$, we conclude that item (ii) holds. Now, we prove that (ii)$\implies$ (iii). Let $Q\in \cC_{\geq}({\bf \Delta_{\Phi}^m})^+$ be an invertible operator such that, for each $i\in \{1,\ldots, k\}$,   $ \varphi_i^s(Q)\to 0$ weakly as $s\to \infty$.
Setting
$R:={\bf \Delta_{\Phi}^m}(Q)$ and using  Theorem \ref{reproducing2} and Proposition \ref{pure2}, we obtain
\begin{equation*}\begin{split}
 \sum_{(s_1,\ldots,s_k)\in \ZZ_+^k}&\left(\begin{matrix} s_1+m_1-1\\m_1-1\end{matrix}\right)\cdots \left(\begin{matrix} s_k+m_k-1\\m_k-1\end{matrix}\right)\varphi_1^{s_1}\circ \cdots \circ \varphi_k^{s_k}(R) =Q
\end{split}
\end{equation*}
 where
 the convergence of the series is in the weak operator topology.
 Hence, we deduce item (iii). To prove the implication (iii)$\implies$ (i) we assume that item (iii) holds. Since each $\varphi_i$ is a $w^*$-continuous completely positive linear map on $B(\cH)$, there is a sequence $\{A_{i,j}\}_{j=1}^{n_i}$ $(n_i\in \cH$ or $n_i=\infty$) of bounded operators on $B(\cH)$ such that
 $\varphi_i(X)=\sum_{j=1}^{n_i} A_{i,j} X A_{i,j}^*$ for any $X\in B(\cH)$.
According to Remark \ref{more}, Theorem \ref{Berezin-prop} holds true when $f_i=q_i:=Z_{i,1}+\cdots +Z_{i,n_i}$ (even when $n_i=\infty$) and ${\bf q}:=(q_1,\ldots, q_k)$. In this
case, the generalized Berezin kernel associated with the compatible quadruple $({\bf q},{\bf m}, {\bf A}, R)$ has the property that
 for any $i\in \{1,\ldots, k\}$ and $j\in \{1,\ldots, n_i\}$,
   \begin{equation}
   \label{KASK2}{\bf K}_{\bf q,A}^R { A}^*_{i,j}= ({\bf W}_{i,j}^*\otimes I_\cR)  {\bf K}_{\bf q,A}^R,
    \end{equation}
    where $\cR:=\overline{R^{1/2} \cH}\subseteq\cH$ and ${\bf W}=\{{\bf W}_{i,j}\}$
    is the  universal model
     associated
  with the abstract noncommutative
  polydomain ${\bf D_q^m}$. Moreover, we have
  $$
({\bf K}_{\bf q,A}^R)^*\,{\bf K}_{\bf q,A}^R=
\sum_{(s_1,\ldots,s_k)\in \ZZ_+^k}\left(\begin{matrix} s_1+m_1-1\\m_1-1\end{matrix}\right)\cdots \left(\begin{matrix} s_k+m_k-1\\m_k-1\end{matrix}\right)\varphi_1^{s_1}\circ \cdots \circ \varphi_k^{s_k}(R),
$$
where the convergence is  in the weak  operator topology, which implies
\begin{equation*}
 a\|h\|^2\leq \|{\bf K}_{\bf q,A}^R h\|^2 \leq b\|h\|^2,\qquad h\in \cH.
\end{equation*}
Then $\cG:=\text{\rm range}\, {\bf K}_{\bf q,A}^R$ is a closed subspace of
$F^2(H_{n_1})\otimes \cdots F^2(H_{n_k})\otimes \cH$ and invariant under each operator
${\bf W}_{i,j}^*\otimes I_\cH$.
 Since  the operator $Y:\cH\to\cG$ defined by $Yh:={\bf K}_{\bf q,A}^R h$, $h\in \cH$,  is invertible, relation \eqref{KASK2} implies
\begin{equation} \label{AY}
A_{i,j}^*=Y^{-1}[({\bf S}_{i,j}^*\otimes I_\cH)|_\cG]Y.
\end{equation}
For all  $i\in \{1,\ldots,k\}$ and  $j\in \{1,\ldots, n_i\}$, set $T_{i,j}:=P_\cG({\bf W}_{i,j}\otimes I_\cH)|_\cG$ and define
$\lambda_i(X):=\sum_{j=1}^{n_i} T_{i,j} XT_{i,j}^*$ for any $X\in B(\cG)$.
Note that
${\bf \Delta}_{\bf \Lambda}^{\bf p}(I)=P_\cG({\bf \Delta}_{\bf q,W}^{\bf p}(I)\otimes I_\cH)|_\cG\geq  0$ for any ${\bf p}\in \ZZ_+^k$ with ${\bf p}\leq {\bf m}$.
Since   $\lambda_i^s(I)=P_\cG(\Phi_i^s(I)\otimes I_\cH)|_\cG$, $s\in \NN$,
and $\Phi_i^s(I)\to 0$ weakly  as $s\to \infty$, we deduce that
$\Lambda:=(\lambda_1,\ldots, \lambda_k)$ is a  tuple of  pure $w^*$-continuous completely positive linear maps on $B(\cG)$. On the other hand, due to relation \eqref{AY}, we have
$\varphi_i(Y^*XY)=Y^*\lambda_i(X)Y$ for any $X\in B(\cG)$ and $i\in \{1,\ldots, k\}$. Therefore, item (i) holds and the proof is complete.
\end{proof}

We remark that there is an  analogue of Proposition \ref{simi4} for  commuting $k$-tuple of   positive linear maps. Indeed, one can easily see that  if
$\Phi:=(\varphi_1,\ldots, \varphi_k)$
 is a commuting $k$-tuple of   positive linear maps on $B(\cH)$, then  $\Phi$ is jointly similar to a commuting $k$-tuple $\Lambda:=(\lambda_1,\ldots, \lambda_k)$ of   positive linear maps on $B(\cH)$ such  that
 $$ {\bf \Delta}_{\Lambda}^{\bf p}(I)\geq 0,\qquad {\bf p}\in \ZZ_+, {\bf p}\leq {\bf m},$$ if and only if there is an invertible positive  operator $R\in B(\cH)$ such that
 ${\bf \Delta}_{\bf \Phi}^{\bf p}(R)\geq 0$  for any ${\bf p}\in \ZZ_+$ with ${\bf p}\leq {\bf m}$.

We recall that the  spectral radius of a positive linear map $\varphi$ on $B(\cH)$ is defined by
$r(\varphi):=\lim_{s\to\infty}\|\varphi^k\|^{1/k}$. The analogue of Theorem \ref{simi2} for  commuting $k$-tuple of   positive linear maps is the following.

\begin{theorem} \label{w*-strict}  Let $\Phi:=(\varphi_1,\ldots, \varphi_k)$
be a commuting $k$-tuple of   positive linear maps on $B(\cH)$. Then the following statements are equivalent.
\begin{enumerate}
\item[(i)]
 $r(\varphi_i)<1$ for each $i\in \{1,\ldots,k\}$.
 \item[(ii)] $\Phi$ is jointly similar to a commuting $k$-tuple $\Lambda:=(\lambda_1,\ldots, \lambda_k)$ of  positive linear maps on $B(\cH)$, with $\lambda_i(I)<I$ for any $i\in \{1,\ldots, k\}$.
     \item[(iii)] For each ${\bf m}\in \NN_+^k$, $\Phi$ is jointly similar to a commuting $k$-tuple $\Lambda:=(\lambda_1,\ldots, \lambda_k)$ of  positive linear maps on $B(\cH)$ with  $I\in \cC_{>}({\bf \Delta}_{\Lambda}^{\bf m})^+$.
\end{enumerate}
\end{theorem}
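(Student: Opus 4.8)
The plan is to treat Theorem~\ref{w*-strict} as the positive-map analogue of Theorem~\ref{simi2} and to run the cycle of implications (i)$\Rightarrow$(iii)$\Rightarrow$(ii)$\Rightarrow$(i). The key observation is that none of the three statements involves the Berezin-kernel apparatus of Section~2, so the only tools needed are the ``cone'' results Theorem~\ref{reproducing} and Proposition~\ref{Delta-ineq}, all of which require nothing beyond positivity of the maps; moreover, because $r(\varphi_i)<1$, the similarity can be implemented by a conjugation $\psi_R(X):=RXR^*$ on $B(\cH)$ itself, with no passage to a larger Hilbert space. I recall from the preamble of this section that joint similarity of $\Phi$ to $\Lambda$ via an invertible $R$ means $\varphi_i\circ\psi_R=\psi_R\circ\lambda_i$, hence ${\bf \Delta}_{\bf\Phi}^{\bf p}\circ\psi_R=\psi_R\circ{\bf \Delta}_{\bf\Lambda}^{\bf p}$ for every ${\bf p}\in\ZZ_+^k$, and $D\in\cC_{\geq}({\bf \Delta}_{\bf\Lambda}^{\bf m})^+$ if and only if $\psi_R(D)\in\cC_{\geq}({\bf \Delta}_{\bf\Phi}^{\bf m})^+$.

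For (i)$\Rightarrow$(iii), fix ${\bf m}\in\NN_+^k$. From $r(\varphi_i)<1$ one gets $\rho\in(0,1)$ and $C>0$ with $\|\varphi_i^s\|\leq C\rho^s$ for all $s$, and since each weight $\binom{s+m_i-1}{m_i-1}$ grows only polynomially in $s$, the iterated series
$$
P:=\sum_{(s_1,\ldots,s_k)\in\ZZ_+^k}\binom{s_1+m_1-1}{m_1-1}\cdots\binom{s_k+m_k-1}{m_k-1}\,\varphi_1^{s_1}\circ\cdots\circ\varphi_k^{s_k}(I)
$$
converges absolutely in the operator norm. Its term with all $s_i=0$ equals $I$ and the remaining terms are positive, so $I\leq P\leq bI$ for some $b>0$; in particular $P$ is positive and invertible. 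Theorem~\ref{reproducing} (or, directly, the telescoping computation in its proof, valid since each $\varphi_i$ is pure because $\|\varphi_i^s(I)\|\leq\|\varphi_i^s\|\to 0$) gives ${\bf \Delta}_{\bf\Phi}^{\bf m}(P)=I$, and then Proposition~\ref{Delta-ineq}(ii) yields ${\bf \Delta}_{\bf\Phi}^{\bf p}(P)\geq 0$ for $0\leq{\bf p}\leq{\bf m}$. Now put $R:=P^{1/2}$ and $\lambda_i:=\psi_R^{-1}\circ\varphi_i\circ\psi_R$ (so $\lambda_i(X)=P^{-1/2}\varphi_i(P^{1/2}XP^{1/2})P^{-1/2}$); each $\lambda_i$ is a positive linear map, the $\lambda_i$ commute because the $\varphi_i$ do, and $\Phi$ is jointly similar to $\Lambda:=(\lambda_1,\ldots,\lambda_k)$ via $R$. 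By the intertwining relation, ${\bf \Delta}_{\bf\Lambda}^{\bf p}(I)=P^{-1/2}\,{\bf \Delta}_{\bf\Phi}^{\bf p}(P)\,P^{-1/2}\geq 0$ for $0\leq{\bf p}\leq{\bf m}$, and ${\bf \Delta}_{\bf\Lambda}^{\bf m}(I)=P^{-1/2}IP^{-1/2}=P^{-1}\geq\|P\|^{-1}I>0$. Thus $I\in\cC_{\geq}({\bf \Delta}_{\bf\Lambda}^{\bf m})^{sa}$ with $I\geq 0$ and ${\bf \Delta}_{\bf\Lambda}^{\bf m}(I)>0$, so Proposition~\ref{Delta-ineq}(i) gives $I\in\cC_{>}({\bf \Delta}_{\bf\Lambda}^{\bf m})^+$, which is (iii).

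The implication (iii)$\Rightarrow$(ii) is immediate: apply (iii) with ${\bf m}=(1,\ldots,1)$, and read off from $I\in\cC_{>}({\bf \Delta}_{\bf\Lambda}^{(1,\ldots,1)})^+$, taking ${\bf p}=e_i$, that $I-\lambda_i(I)>0$, i.e. $\lambda_i(I)<I$ for each $i$. For (ii)$\Rightarrow$(i), suppose $\Phi$ is jointly similar to $\Lambda$ via $R$ with $\lambda_i(I)<I$; then $\lambda_i(I)\leq\rho_i I$ for some $\rho_i\in(0,1)$, so by positivity and induction $\lambda_i^s(I)\leq\rho_i^s I$, hence $\|\lambda_i^s\|\leq 2\|\lambda_i^s(I)\|\leq 2\rho_i^s$ (the crude bound suffices, though in fact $\|\lambda_i^s\|=\|\lambda_i^s(I)\|$ for positive maps on the unital algebra $B(\cH)$), and therefore $r(\lambda_i)\leq\rho_i<1$. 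Finally $r(\cdot)$ is invariant under similarity: $\varphi_i^s=\psi_R\circ\lambda_i^s\circ\psi_R^{-1}$ gives $\|\varphi_i^s\|\leq\|\psi_R\|\,\|\psi_R^{-1}\|\,\|\lambda_i^s\|$ and, symmetrically, $\|\lambda_i^s\|\leq\|\psi_R^{-1}\|\,\|\psi_R\|\,\|\varphi_i^s\|$, so $r(\varphi_i)=r(\lambda_i)<1$, establishing (i).

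The only genuinely delicate point is the norm convergence of the series defining $P$ in the step (i)$\Rightarrow$(iii): the hypothesis must be used as a bound on $\|\varphi_i^s\|$ (the spectral radius of $\varphi_i$ viewed as an operator on $B(\cH)$), not merely on $\|\varphi_i^s(I)\|$, and it is precisely this, together with the polynomial growth of the binomial weights, that makes the multi-indexed series absolutely norm-convergent and forces $P$ to be bounded and bounded below. Everything else is a routine transcription of the operator-tuple argument of Theorem~\ref{simi2} into the language of the conjugation maps $\psi_R$ on $B(\cH)$, together with the bookkeeping that the conjugated maps $\lambda_i$ inherit positivity, commutativity, and the cone membership $I\in\cC_{>}({\bf \Delta}_{\bf\Lambda}^{\bf m})^+$.
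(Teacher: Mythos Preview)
Your proof is correct and follows essentially the same approach as the paper's: the same cycle of implications, the same norm-convergent series $P$ (the paper allows an arbitrary positive invertible $R$ in place of your $I$, but this generality is irrelevant for the equivalence), the same appeal to Theorem~\ref{reproducing} and Proposition~\ref{Delta-ineq}, and the same conjugation $\lambda_i(X)=P^{-1/2}\varphi_i(P^{1/2}XP^{1/2})P^{-1/2}$. The only cosmetic differences are that the paper argues similarity-invariance of the spectral radius via $\|\varphi_i^s(I)\|$ rather than $\|\varphi_i^s\|$ (equivalent for positive maps), and deduces $\lambda_i(I)<I$ from $I\in\cC_{>}({\bf \Delta}_{\Lambda}^{\bf m})^+$ for a general ${\bf m}$ rather than specializing to ${\bf m}=(1,\ldots,1)$.
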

\begin{proof} First we prove that (iii)$\implies$ (ii)$\implies$ (i). Assume that (iii) holds and fix ${\bf m}\in \NN_+^k$. Then there is an invertible operator $R\in B(\cH)$ such that $$
\varphi_i(RXR^*)=R\lambda_i(X)R^*,\qquad X\in B(\cH),
$$
for any $i\in \{1,\ldots, k\}$, and ${\bf \Delta}_{\bf \Lambda}^{\bf p}(I)> 0$ for any ${\bf p}\in \ZZ_+^k$ with ${\bf p}\leq {\bf m}$.  Consequently, $\lambda_i(I)<I$ for $i\in \{1,\ldots, k\}$. Therefore item (ii) holds. Now, we prove that (ii)$\implies$ (i).
Note that $\varphi_i^s(RR^*)=R\lambda_i^s(I)R^*$, $s\in \NN$, and
\begin{equation*}
\begin{split}
r(\lambda_i)&=\lim_{s\to\infty}\|\lambda_i^s(I)\|^{1/2s}\\
&\leq \lim_{s\to \infty}\left(\|R^{-1}\|^2 \|R\|^2 \|\varphi_i^s(I)\|\right)^{1/2s}\leq r(\varphi_i).
\end{split}
\end{equation*}
Similarly, we obtain the inequality $r(\varphi_i)\leq r(\lambda_i)$. Therefore,
$$ r(\varphi_i)= r(\lambda_i)=\lim_{s\to\infty}\|\lambda_i^s(I)\|^{1/2s}\leq \|\lambda_i(I)\|^{1/2}<1
$$
for $i\in \{1,\ldots, k\}$, which proves our assertion. Now, we prove that (i)$\implies$ (iii). Assume that $r(\varphi_i)<1$ for each $i\in \{1,\ldots,k\}$ and let $R\in B(\cH)$ be an arbitrary invertible operator. As in the proof of Theorem \ref{simi2} (implication (iii)$\implies$ (v)), we can deduce that
$$
aI\leq Q:=\sum_{(s_1,\ldots,s_k)\in \ZZ_+^k}\left(\begin{matrix} s_1+m_1-1\\m_1-1\end{matrix}\right)\cdots \left(\begin{matrix} s_k+m_k-1\\m_k-1\end{matrix}\right)\varphi_1^{s_1}\circ \cdots \circ \varphi_{k}^{s_k}(R)\leq bI
$$
for some constants $0<a\leq b$, where the convergence is in the operator norm.
Since $r(\varphi_i)<1$, we also have $\lim_{s\to\infty} \|\varphi^s(I)\|=0$, which shows that $\varphi_i$ is pure. Using Theorem  \ref{reproducing} and the continuity in norm of $\varphi_i$,
 we obtain
 $$
 {\bf \Delta_{\Phi}^m}\left[Q\right]=R>0.
 $$
 Since $\varphi_i$ is pure, Proposition \ref{Delta-ineq}  part (ii) implies
${\bf \Delta}_{\bf \Phi}^{\bf p}(Q)\geq 0$ for any ${\bf p}\in \ZZ_+^k$ with ${\bf p}\leq {\bf m}$. Consequently and using the fact that ${\bf \Delta_{\Phi}^m}\left[Q\right]>0$, we deduce that ${\bf \Delta}_{\bf \Phi}^{\bf p}(Q)>0$ for any ${\bf p}\in \ZZ_+^k$ with ${\bf p}\leq {\bf m}$.
For each $i\in \{1,\ldots, k\}$, set
$$\lambda_i:=Q^{-1/2} \varphi_i(Q^{1/2} XQ^{1/2})Q^{-1/2}
$$
and ${\bf \Lambda}=(\lambda_1,\ldots, \lambda_k)$. Now it is clear that
${\bf \Delta}_{\bf \Lambda}^{\bf p}(I)=Q^{-1/2} {\bf \Delta}_{\bf \Phi}^{\bf p}(Q)Q^{-1/2}> 0$ for any ${\bf p}\in \ZZ_+^k$ with ${\bf p}\leq {\bf m}$.
Therefore, ${\bf \Phi}$ is jointly similar to ${\bf \Lambda}$ and
$I\in \cC_{>}({\bf \Delta}_{\Lambda}^{\bf m})^+$. This completes the proof.
\end{proof}

We remark that the condition  $I\in \cC_{>}({\bf \Delta}_{\Lambda}^{\bf m})^+$ implies $\lambda_i(I)<I$ for each $i\in \{1,\ldots, k\}$, but the converse is not true. On the other hand, if $r(\varphi_i)<1$ for each $i\in \{1,\ldots,k\}$, then
the equation
  $
  {\bf \Delta_\Phi^m}(X)=R,
  $
  where
  $R\in B(\cH)$ is an invertible positive operator,
  has a
  unique  positive solution, namely,
  $$
  X:=\sum_{(s_1,\ldots,s_k)\in \ZZ_+^k}\left(\begin{matrix} s_1+m_1-1\\m_1-1\end{matrix}\right)\cdots \left(\begin{matrix} s_k+m_k-1\\m_k-1\end{matrix}\right)\varphi_1^{s_1}\circ \cdots \circ \varphi_{k}^{s_k}(R),
  $$
  where the convergence is in the uniform topology. Moreover, $X$  is an invertible operator in   $\cC_{>}({\bf \Delta}_{\Lambda}^{\bf m})^+$.

The next result is an analogue of Theorem \ref{simi} for commuting $k$-tuple of $w^*$-continuous positive linear maps. Since the proof is similar, we shall omit it.

\begin{theorem} \label{w*}  Let $\Phi:=(\varphi_1,\ldots, \varphi_k)$
be a commuting $k$-tuple of $w^*$-continuous positive linear maps on $B(\cH)$. Then the following statements are equivalent.
\begin{enumerate}
\item[(i)] $\Phi$ is jointly similar to a commuting $k$-tuple $\Lambda:=(\lambda_1,\ldots, \lambda_k)$ of $w^*$-continuous positive linear maps on $B(\cH)$, with $\lambda_i(I)=I$ for $i\in \{1,\ldots, k\}$.
\item[(ii)]
There exist constants $0<c\leq d$ such that
$$cI\leq \varphi_1^{s_1}\circ \cdots \circ  \varphi_k^{s_k}(I)\leq dI,\qquad (s_1,\ldots, s_k)\in \ZZ_+^k.
$$

\item[(iii)] There
 exist
 positive constants
 $0<c\leq d$ such that
 $$
 cI\leq \frac {1} {p^{(1)}\cdots p^{(k)}} \sum_{s_k=0}^{p^{(k)}-1}\cdots \sum_{s_1=0}^{p^{(1)}-1} \varphi_k^{s_k}\circ \cdots \circ \varphi_1^{s_1}  (I)\leq dI
 $$
 for any $p^{(1)},\ldots, p^{(k)}\in \NN$.
 \item[(iv)] There is a positive invertible operator $Q\in B(\cH)$ such that  $\varphi_i(Q)=Q$ for any $i\in \{1,\ldots,k\}$.
\end{enumerate}
\end{theorem}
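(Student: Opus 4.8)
The plan is to run, essentially verbatim, the argument that proves Theorem \ref{simi}, translating each completely positive map $\Phi_{f_i,A_i}$ into the positive linear map $\varphi_i$ and each conjugation $A_{i,j}=Y^{-1}T_{i,j}Y$ into the relation $\varphi_i=\psi_R\circ\lambda_i\circ\psi_R^{-1}$, where $\psi_R(X):=RXR^*$. As recalled at the beginning of Section 5, this is precisely what joint similarity of tuples of positive maps means, and one has ${\bf \Delta}_{\bf \Phi}^{\bf p}(RXR^*)=R\,{\bf \Delta}_{\bf \Lambda}^{\bf p}(X)\,R^*$ for all ${\bf p}\in\ZZ_+^k$, together with $\psi_R^{-1}(I)=(R^*R)^{-1}$. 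I would prove the cycle $(i)\Rightarrow(ii)\Rightarrow(iii)\Rightarrow(iv)\Rightarrow(i)$, in parallel with Theorem \ref{simi}.

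For $(i)\Rightarrow(ii)$: from $\varphi_i=\psi_R\circ\lambda_i\circ\psi_R^{-1}$ and $\lambda_i(I)=I$ one obtains
$$
\varphi_1^{s_1}\circ\cdots\circ\varphi_k^{s_k}(I)=R\left[\lambda_1^{s_1}\circ\cdots\circ\lambda_k^{s_k}\bigl((R^*R)^{-1}\bigr)\right]R^*,
$$
and since $\frac{1}{\|R\|^2}I\le(R^*R)^{-1}\le\|R^{-1}\|^2 I$ while $\lambda_1^{s_1}\circ\cdots\circ\lambda_k^{s_k}(I)=I$, positivity of the $\lambda_i$ gives the two-sided bound of $(ii)$ with $c=(\|R\|\|R^{-1}\|)^{-2}$ and $d=(\|R\|\|R^{-1}\|)^{2}$. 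The implication $(ii)\Rightarrow(iii)$ is immediate, since an average of operators lying in the order interval $[cI,dI]$ again lies in $[cI,dI]$.

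The substance is $(iii)\Rightarrow(iv)$, which I would carry out exactly as the corresponding step of Theorem \ref{simi}; this iterative weak-compactness construction is the step I expect to be \emph{the main obstacle}, even though it is entirely parallel to the cited proof. First I would record the auxiliary estimate that $\frac1t\varphi_i^t(P)\to0$ in operator norm whenever $P\ge0$ is invertible and $\frac{1}{j+1}\sum_{s=0}^{j}\varphi_i^s(P)\le bI$ for all $j$ (which holds for $P=I$ by $(iii)$, with $b:=d$); indeed, positivity of $\varphi_i$ and the bound $\|\varphi_i^j(P)\|\le(j+1)b$ yield
$$
\varphi_i^t(P)\sum_{j=0}^{t}\frac{1}{j+1}\le\sum_{j=0}^{t}\frac{1}{j+1}\|\varphi_i^j(P)\|\,\varphi_i^{t-j}(I)\le b\sum_{j=0}^{t}\varphi_i^{j}(I)\le b\|P^{-1}\|\sum_{j=0}^{t}\varphi_i^{j}(P)\le b^2\|P^{-1}\|(t+1)I,
$$
so the conclusion follows because the harmonic partial sums diverge. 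Then I would build $Q$ by successive weak-compactness extractions: set $Q_{p,\varphi_1}:=\frac1p\sum_{s=0}^{p-1}\varphi_1^s(I)$, which lies in $[cI,dI]$; extract a subsequence converging weakly to an invertible positive $Q_{\varphi_1}\in[cI,dI]$, and use $Q_{p_{j_1},\varphi_1}-\varphi_1(Q_{p_{j_1},\varphi_1})=\frac{1}{p_{j_1}}\bigl(I-\varphi_1^{p_{j_1}}(I)\bigr)\to0$ in norm, together with the fact that a $w^*$-continuous positive map is WOT-continuous on bounded sets, to conclude $\varphi_1(Q_{\varphi_1})=Q_{\varphi_1}$. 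Repeating with $Q_{p,\varphi_2}:=\frac1p\sum_{s=0}^{p-1}\varphi_2^s(Q_{\varphi_1})$, and so on through $\varphi_k$, while invoking the commutativity and WOT-continuity of the $\varphi_i$ to carry each already-established fixed-point identity through the next averaging, produces an invertible positive $Q$ with $cI\le Q\le dI$ and $\varphi_i(Q)=Q$ for every $i$, so $(iv)$ holds.

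Finally $(iv)\Rightarrow(i)$: given such a $Q$, put $R:=Q^{1/2}$ and $\lambda_i(X):=Q^{-1/2}\varphi_i(Q^{1/2}XQ^{1/2})Q^{-1/2}$. Each $\lambda_i$ is $w^*$-continuous and positive, the $\lambda_i$ commute since the $\varphi_i$ do, $\lambda_i(I)=Q^{-1/2}\varphi_i(Q)Q^{-1/2}=I$, and $\varphi_i(RXR^*)=R\lambda_i(X)R^*$; hence $\Phi$ is jointly similar to $\Lambda=(\lambda_1,\ldots,\lambda_k)$ with $\lambda_i(I)=I$, which closes the cycle and completes the proof.
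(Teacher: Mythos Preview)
Your proposal is correct and takes essentially the same approach as the paper: the paper omits the proof entirely, saying it is similar to that of Theorem~\ref{simi}, and you have faithfully carried out that translation step by step. The only points worth noting are that the $w^*$-continuity hypothesis is exactly what supplies WOT-continuity on bounded sets (needed to pass weak limits through $\varphi_i$ in the $(iii)\Rightarrow(iv)$ extraction), and that your auxiliary estimate even records the $\|P^{-1}\|$ factor correctly.
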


\bigskip

       %

      \end{document}